\documentclass{article}
\usepackage{amsmath}
\usepackage{color}
\usepackage{amsthm,amssymb}
\usepackage{amscd}
\usepackage{graphicx}
\usepackage{ dsfont }
\newtheorem{theorem}{Theorem}[section]
\newtheorem{lemma}{Lemma}[section]
\newtheorem{definition}{Definition}[section]
\newtheorem{proposition}{Proposition}[section]
\newtheorem{corollary}{Corollary}[section]
\newtheorem{remark}{Remark}[section]
\newcommand{\D}{{\bf D}}

\newcommand{\ep}{\varepsilon}

\newcommand{\lan}{\lambda}
\newcommand{\Dim}{\mbox{\rm Dim}}
\newcommand{\1}{\mathds{1}}
\newcommand{\N}{{\mathbb N}}
\newcommand{\C}{{\mathbb C}}
\newcommand{\R}{{\mathbb R}}
\newcommand{\Z}{{\mathbb Z}}
\newcommand{\p}{\partial}

\newcommand{\overs}{{\overline{s}}}
\newcommand{\unders}{{\underline{s}}}

\newcommand{\diam}{\text{\rm diam\,}}

\newcommand{\Pibf}{{\bf\Pi}}

\newcommand{\musymb}{{\widehat\mu}}
\newcommand{\mhat}{\widehat{\bf m}}

\newcommand{\medsymb}{\musymb}
\newcommand{\inv}{G^{-1}}
\newcommand{\Wsimbolico}{{W}_{\sigma}(\widehat{P},\ell_n,w)}

\setlength{\textheight}{23.0cm}
\setlength{\textwidth}{15.25cm}
\setlength{\topmargin}{-0.5cm}
\setlength{\oddsidemargin}{0.3cm}
\setlength{\evensidemargin}{0.3cm}

\begin{document}

\title{Targets, local weak $\sigma$-Gibbs measures and a generalized Bowen dimension formula}

\author{Mar\'\i a Victoria Meli\'an P\'erez$^1$}
\date{}
\maketitle


\footnotetext[1]{Research supported by Grants MTM2013-46374-P and MTM2015-69323-REDT
Ministerio de Econom'a y Competitividad, Spain. }

\begin{abstract}
For a dynamical system, we study the set of points $\cal W$ whose orbit  approximates any chosen point at certain specified rates.
Our basic setting is  that of  left shift acting on topological Markov chains endowed with a local weak Gibbs measure. Our rates of recurrence are so fast that the corresponding set $\cal W$ has   measure zero, but  we obtain a generalized Bowen formula for  Carath{\'e}odory dimension.  For the  case of Markov transformations with countable partition and big image (BI) property a
Bowen-type formula is  obtained for the Hausdorff dimension of those exceptional sets. In particular,  we apply our general results to  Gauss and Luroth maps, a not Bernoulli modification of Gauss map and some inner functions. Since we  
only require  the existence of weak Gibbs measures we can deal with non 
H\"older  potentials and  we  can  also consider  intermittent systems as the Manneville-Pomeau map.

\smallskip
\noindent Mathematics Subject Classification (2010):  37D35, 37C45,  37D25, 30D05, 11K50.
\end{abstract}

\medskip

\section{Introduction}

Let  $(X,d)$  be a locally complete separable
metric space endowed with a finite measure $\mu$ over the
Borel sets. 
The classical recurrence theorem of Poincar\'e 
says that
if  a measurable transformation $T:X \longrightarrow X$ preserves the measure $\mu$,
then $\mu$-almost every point of $X$ is recurrent, in the sense that
$$
\liminf_{n\to \infty} d(T^n(x),x)=0 \,.  
$$
Here $T^n$ denotes the $n$-th fold composition $T^n=T\circ T \circ \cdots \circ T$.

It is natural to ask if the orbit $\{T^n(x)\}$ of the point $x$
returns regularly not only to every neighborhood of $x$ itself as
Poincar\'e's theorem asserts, but whether it also visits every
neighborhood of a previously chosen point $y \in X$. Under the
additional hypothesis of ergodicity it is easy to check (see  e.g. \cite{FMP1}) that for
any $y\in X$, we have that
\begin{equation} \label{recurrence}
\liminf_{n\to \infty}  d(T^n(x),y) = 0 \,, \qquad \hbox{for $\mu$-almost all $x\in X$.}
\end{equation}
Recall that $\mu$ is ergodic if the only $T$-invariants sets (up to sets of $\mu$-measure zero) are trivial, i.e. they have zero $\mu$-measure or their complement have zero $\mu$-measure.

In this paper we will consider quantitative versions of (\ref{recurrence}). We are interested in studying the size of the set of
points of $X$ such that $\liminf_{n\to\infty} d(T^n(x),y)/r_n =0$, where $\{r_n\}$ is a given sequence of positive numbers and $y$
is a previously chosen point in $X$.
So, we study the size of the set
\begin{equation} \label{Wradios}
{\cal W}_T(X,r_n,y) = \{x\in X: \ d(T^n (x),y)< r_n \
\mbox{for infinitely many} \ n\}\,.
\end{equation}
If $d(T^k (x),y)< r_k$, then  the orbit of $x$ ``hits" at time $k$ the target-ball of center $y$ and radius $r_k$. We will refer to ${\cal W}_T(X,r_n,y) $ as the target-ball set.

\newpage

\noindent {\it Aim of this work}

\medskip

If $\mu$ is ergodic and the sequence $\{r_n\}$ is bounded below by some positive number,   it follows from (\ref{recurrence}) that the set   ${\cal W}_T(X, r_n,y) $ has full $\mu$-measure. In \cite{FMP2},  
under so called  eventual quasi-independence property, we proved  that 
if $\sum_n\mu(B(y,r_n))=\infty$ then the set ${\cal W}_T(X, r_n,y)$ has positive $\mu$-measure. 
  Now, we would like to study in more detail the case ${\cal W}_T(X,r_n,y)$ being  an  exceptional set, i.e with  zero $\mu$-measure.
Notice that by the direct part of Borel-Cantelli lemma we know that  if
$\sum_n\mu(B(y,r_n))<\infty$, then ${\cal W}_T(X,r_n,y) $ has zero $\mu$-measure.

\medskip

In this paper 
we consider the generic symbolic setting consisting of the left-shift acting on a topological Markov  chain endowed with  a local weak Gibbs measure, and we obtain  a generalized Bowen formula  for a  Caratheodory dimension  of the target-ball set.

This setting is particularly interesting due to the fact that
hyperbolic systems  are modeled  by countable Markov chains (via Markov partitions). And so one can expect this study to
provide a useful framework  when  considering similar questions in such systems.  In particular,  when the transformation is conformal enough, the Hausdorff dimension  for the target-ball set would follow from  the corresponding results for topological Markov chains.
Following this aproach, we also consider in this paper  the more concrete setting of Markov transformations with the ACIP measure (absolutely continuous with respect to Lebesgue measure invariant probability). 
  We use  the Carath{\'e}odory dimension results  in the symbolic setting to establish  Hausdorff dimension results for Markov transformations.  
  
Since  in our symbolic setting we just ask for {\it weak } Gibbs measures we are  able to deal with hyperbolic system with less regularity, and in particular we get   Hausdorff dimension results for the target-ball set in the case of  intermittent systems.  In a forthcomming work \cite{M} we consider the non-additive thermodynamic formalisms for countable Markov shifts and  this  allows the study of more general target sets.

\medskip

\noindent {\it Some previous results}

\medskip

If the set $T^{-n} (B(y,r_n))$ is a countable union ${\cal W}_n$ of pairwise disjoint balls, then the points in the target ball are those 
$x$ such that
$d(x,c_i)\leq R_i$ for infinitely many $i$ with $c_i$ the center and $R_i$ the radius of a ball in ${\cal W}:=\cup_n {\cal W}_n$. These points are called  {\it  well approximable points} by the system ${\cal W}$ in diophantine approximation theory. 
Having  a good distribution by size of the balls of the system one gets Hausdorff dimension results for the well approximable set. This notion was developed by 
Baker and Schmidt, refining some ideas of Besicovitch \cite{Be}, when    they   introduced   in \cite{BS} the concept of {\it regular system of intervals}. In that work   they  proved a generalization of the Jarnik-Besicovitch theorem  considering  approximation by algebraic numbers. 
In \cite{MP}  we extended the idea of regular system  to  {\it well-distributed system of balls} to obtain Hausdorff dimension results of the same kind.   Similar diophantine approximation results were obtained using a different  approach in \cite{V}. 
We would like to  remark  that our Cantor-like constructions  in this paper  (also  in \cite{MV}) are inspired, in a broad sense,
by the idea of regular systems. See section 1.3 of this introduction for a brief description.

\medskip

 The  Hausdorff dimension  of the target-ball set for different kind of  transformations has been studied for several authors. 
In \cite{HV1} and \cite{HV2}  Hill and Velani studied the case  of $T$ being an expanding rational map of the Riemann sphere acting on its Julia set. Urbanski  in   \cite{U} considered the case of conformal {\it countable} iterated function systems. In both cases the value of the Hausdorff dimension of the target-ball set was expressed as the zero of a Bowen formula. In section 1.1 we  explain  the symbolic counterpart of these results.

 We considered  in \cite{FMPa} the  general framework of expanding systems in metric spaces getting lower and upper  bound estimates on the dimension of target-ball and target-block sets. In this setting  we have a sequence of partitions $\{{\cal P}_n\}$  of the space with ${\cal P}_{n+1}$ finner that ${\cal P}_{n}$, and so 
the  target-block set is defined  by changing    the role of the  balls centered at $y$  by  elements of the  partitions containing the point $y$.
In particular we proved  Hausdorff dimension estimates for the target-ball/block sets for the Gauss map.  Afterwards,  Wang and Zhang got in \cite{WZ} the Hausdorff dimension for the  block-target set for the Gauss map. 
 In \cite{LWWX} Li, Wang,  Wu and Xu also studied  the Gauss map and obtained the Hausdorff dimension for a more general version of the target-ball set. They consider in (\ref{Wradios}) a sequence $\{y_n\}$ instead of $y$ and $\phi(n,x)=e^{-(f(x)+\cdots+f(T^{n-1}(x))}$ with $f$ a positive  continuos function instead of $r_n$. Previosly, in \cite{WW} Wang and Wu   
 got the Hausdorff  dimension for the case $y=0$. 

More recently, the dimension size of the recurrent version of the set (\ref{Wradios}), i.e  with $y=x$,  has been sudied in \cite{TW} by Tan and Wang for $\beta$-expansions,  and by Seuret and Wang in \cite{SW} for conformal iterated function system.

\

Before we describe in detail our results,  we would like to summarize  the main novelties contained in the paper:

\begin{enumerate}
\item By considering the target-ball set in the setting of topological Markov chains endowed with a  local weak Gibbs measure,  we found a common  susbtrate implicit in the examples of hyperbolic systems mentioned above.

\item   Using  the symbolic results, we have got the Hausdorff dimension of the target-ball set  for Markov transformations having the BIP property.

\item  By considering {\it weak} Gibbs measures in the symbolic setting we computed the Hausdorff dimension for the  target-ball set of some intermittent systems.

\item A generalized Hungerford lemma for Cantor-like sets on topological Markov chains is stablished.

\end{enumerate}

 \subsection{Target problem  for  Markov chains}

Our  dynamic setting can be described as  follows:

\smallskip

\noindent {\it Space, distance and transformation}
\smallskip

Given   a countable alphabet ${\cal I}$
 we denote by $\Sigma^{\cal I}$  the space of all infinite words formed with this alphabet. In $\Sigma^{\cal I}$  we define the distance
$$
d((i_k),(j_k))= 2^{-m} \quad \mbox{ with } \qquad m=\min\{n\in \N:  \ i_n\ne j_n\}, 
$$
where $\inf \varnothing:=\infty$. 
 The left shift $\sigma: \Sigma^{\cal I} \longrightarrow \Sigma^{\cal I}$ is the
continuous map defined by
$
\sigma (i_0,i_1,i_2,\cdots) =(i_1,i_2,\cdots)\,.$
Moreover, given a ${\cal I}\times {\cal I}$ transition matrix  $A=(a_{i,j})$ with entries $0$ and $1$,  we consider  the $\sigma$-invariant subset $\Sigma_A^{\cal I}\subset \Sigma^{\cal I}$ defined by
$$
\Sigma_A^{\cal I}=\{(i_0\,,\,i_1\,,\,\dots)\in\Sigma^{\cal I}: \ a_{i_k,i_{k+1}}=1 \ \hbox{for all $k=0,1,\ldots$}\}\,.
$$
The triple $(\Sigma_A^{\cal I},d, \sigma)$ is called a (one-sided) {\it topological Markov chain}.  Along this paper we will assume  that our topological Markov chain is topologically mixing (see definition in section 2.1).

 The $n$-{\it cylinders} in $\Sigma_A^{\cal I}$ (which generate the topology)  are the sets
$$
C_{i_0i_1\dots i_n} = \{(k_0\,,k_1\,,\,\dots)\in \Sigma_A^{\cal I}: \ k_{s}=i_s \ \hbox{for all $s=0,1,\dots,n$}\}\,, 
$$
and  for $w=(w_0,w_1,\ldots )\in \Sigma_A^{\cal I}$,  we denote  the $n$-cylinder $C_{w_0w_1\dots w_n}$ by  $C(n,w)$ . 

\medskip

\noindent {\it Measure}

\smallskip

We  endow our topological Markov chain $\Sigma_A^{\cal I}$ with  a $\sigma$-invariant local weak Gibbs measure $\widehat\mu$ associated to some potential $\phi:\Sigma_A^{\cal I}\longrightarrow \R$,  (see definition 3.1).  Considering weak Gibbs measures (instead of  Gibbs) will allow us to deal with non H\" older potentials. The  ``local" term just means that we add a constant depending on the first symbol in the usual definition of the  weak Gibbs measure of a cylinder. This will be useful for working with  Ruelle-Perron-Frobenius (RPF) measures  in the infinite countable case with BI property. 

\medskip

\noindent {\it Target-ball set}
\smallskip

Given  a point $w\in \Sigma_A^{\cal I}$, a $N$-cylinder ${\widehat P}$, and  a sequence $\{\ell_n\}\subset \N$   our taget-ball set is 
$$
W_{\sigma}({\widehat P}, 2^{-(\ell_n+1)},w)=\{z\in{\widehat P}: \ \sigma^k(z) \in
C(\ell_k,w) \ \hbox{for infinitely many }k\}.
$$
If  $\sigma^k(z) \in C(\ell_k,w)$, then $z_{k+j}=w_j$ for $0\leq j\leq \ell_k$, 
and therefore $d(\sigma^k(z),w)\leq 2^{-(\ell_k+1)}$. 
For simplicity we write $\Wsimbolico$ instead of $W_{\sigma}({\widehat P}, 2^{-(\ell_n+1)},w)$

\

Theorem A for finite alphabet and theorem B  for infinite countable alphabet, below, give Bowen formulas for the $\widehat\mu$-dimension (defined by Carath{\'e}odory's construction, see section 2) of these target-ball sets.

\medskip

Our approach to study these sets is based 
on the thermodynamic
formalism 
for countable Markov shifts. Thermodynamic formalism goes back to
the classical work of Ruelle, Sinai and Bowen (\cite{Ru1}, \cite{Si}, \cite{Bo}) in which they  translate  the ideas of  Gibbsian statistical  mechanics to the field of  dynamical systems.
For infinite countable alphabet,  the thermodynamic formalism   has been studied by many authors, (see e.g   \cite{Gu}, \cite{MU2},  \cite{MU3}, \cite{Sa}, \cite{Y}). 
 In this paper we follow  Sarig's approach, see \cite{SaNotes}, \cite{Sa2015},  and   we  use the  Gurevich pressure and  Sarig's results on the existence of equilibrium and Gibbs measures based on his study of  Ruelle's operator.  
\smallskip

For the sake of  clarity of exposition next we split the discussion into  finite and infinite countable alphabet.
We will assume that 
$$
s:=\lim_{n\to\infty}  -\frac 1n \log { {\musymb(C(\ell_n,w))}}<\infty.
$$
If $\widehat\mu$ is  weak  Gibbs but not Gibbs, then we  also require  $\limsup_{n\to\infty} {\ell_n}/{n} <\infty$.
If $s=\infty$ then the target-ball set has zero $\widehat\mu$-dimension; if $\lim_{n\to\infty} {\ell_n}/{n}=\infty$ and $\widehat\mu$ is ergodic  with  $\phi\in L^1(\widehat\mu)$ and $\phi$ with  enough regularity  (see section \ref{special}),
then $s=\infty$  for $w$  $\widehat\mu$-a.e.
We recall that $\phi$ denotes the potential of our measure $\widehat\mu$.

\subsubsection{Finite alphabet}

We have the following  formula for the $\widehat\mu$-dimension of the target-ball set (see theorem \ref{introA}).

\medskip

{\noindent  \bf Theorem A. }{\it  If there exists a mixing weak $\sigma$-Gibbs measure with continuous potential $t\phi$ for all $0<t\leq 1$, then 

 $$
\Dim_{\widehat\mu}(W_{\sigma}(\widehat P,\ell_n,w)))=T,
$$
with $T$ the unique root of the equation
\begin{equation}\label{B}
P_{top}(t\phi)-P_{top}(\phi)t=st.
\end{equation}
  Here $P_{top}(.)$ denote  the classical topological pressure.
 }

 \
 
 As stated we only need mixing weak $\sigma$-Gibbs measures for the potentials $t\phi$.  But, if $\phi$ satisfies   Walter's condition (see section 3), then from  classical results of Bowen and Ruelle we known that there is an exact (whence ergodic and strongly mixing)  $\sigma$-Gibbs measure with potential $t\phi$, for all $0<t\leq 1$. Notice that for any measurable set $E\subset \Sigma_A^{\cal I}$ we have that $\Dim_{\widehat\mu}(E)\leq 1$,  and if $\widehat\mu(E)>0$, then $\Dim_{\widehat\mu}(E)=1$ .

 \

We can rewrite (\ref{B}) as a  Bowen equation, more precisely as  
$$P_{top}(t\psi)=0 \qquad \mbox{ with } \qquad \psi=\phi-P_{top}(\phi)-s. 
$$
 Bowen,  in his study of quasi-circles \cite{Bo2},   first  described  what we now call Bowen's formula, the Hausdorff dimension of a set  as the solution of a pressure equation.   Bowen's result was extended  by  Ruelle \cite{Ru3} to  the repeller $J$ of  a $C^{1+\ep}$ map $f$ which is conformal and topologically mixing  on $J$. The Hausdorff dimension of the conformal repeller $J$ is the unique root $t_0$ of the  equation $P_{top}(-t\phi)=0$  with $\phi:J\longrightarrow \R$ defined by $\phi(x)=\log\|D_xf\|$, and moreover the $t_0$-dimensional Hausdorff measure of $J$ is positive and finite and equivalent to the Gibbs measure with potential $-t_0\phi$.

\smallskip
 
In \cite{HV2}   Hill and Velani   studied the target-ball set for $f$ an expanding rational map of the Riemann sphere acting on its  Julia set $J$. They proved  that the Hausdorff dimension of  $W_f(J, e^{-n\tau}, y)$ was the unique root of  the Bowen equation 
$P_{top}(-t\log |f^{\prime}|)=t\tau$.

The symbolic version of Hill and Velani  result  corresponds   to considering, in theorem A,  the Markov chain associated (via Markov partition) to the Julia set $J$, and $\widehat\mu$ as 
 the $\sigma$-Gibbs measure for the potential $\phi=-t_0\log|f^\prime\circ\pi|$, with $t_0$ is the Hausdorff dimension of $J$. (Here $\pi$ denotes the standard projection from the symbolic model to the Julia set). From the thermodynamic formalism we know that the 
 H\"older continuity of the potential $\phi$ implies the existence of $\sigma$-Gibbs measures with potentials $t\phi$.
To derive   a Hausdorff dimension formula for $W_f(J, e^{-n\tau}, y)$, via our symbolic modeling and Carath{\'e}odory dimension, conformality is, of course, crucial, but we will not pursue this matter here.

 \medskip
 
 We would like to remark that some  recent  results  show the existence of weak $\sigma$-Gibbs measures  for  non-H\"older potentials, see e.g  \cite{FFY},  \cite{Hu},  \cite{Yu1}, \cite {Yu2}, \cite{Ke}.  Consideration of  weak  $\sigma$-Gibbs  measures  will  allow us  to deal with hyperbolic systems with less regularity.  As an example, in section  5.2.3,  we consider some non-H\"older potentials studied by Hu in \cite{Hu}.

\subsubsection{Infinite countable alphabet}

Sarig proved in \cite{Sa2}  that under  the {\it big images and preimages} (BIP)  property,  the RPF measure (coming from his generalization of  Ruelle's Perron Frobenius theorem)  is a $\sigma$-Gibbs measure for any potential with enough regularity. A closely related result was simultaneously obtained by Mauldin and Urbanski in \cite{MU2}.
We recall  that the BIP property  is the following  condition on the transition matrix of   $\Sigma_A^{\cal I}$: there exist a finite set of symbols ${\cal I}_0\subset{\cal I}$ such that  for each symbol $i\in{{\cal I}}$ there exist $k,\ell\in{\cal I}_0$ such that $C_{ki\ell}\neq \varnothing$.

The existence of local  weak $\sigma$-Gibbs measures for potentials $t\phi$ (with $0<t\leq 1$)  is an important tool  in our estimation from below of the $\widehat\mu$-dimension  of the target-ball set. We use  these measures to construct Cantor-like sets  inside  $\Wsimbolico$ with large $\widehat\mu$-dimension. Hence,  a more precise result is obtained if we assume BIP property.  Getting the upper bound of the $\widehat\mu$-dimension  of the target-ball set is easier  (see proposition \ref{cotasupformalismo}).

In theorem 5.4  we state our $\widehat\mu$-dimension result under the weaker condition {\it Big images} (BI) property. This condition is equivalent to the condition 
$\inf\{\widehat\mu(\sigma(C_i))\, :\,  i\in{\cal I}\}>0.
$
Our more general result for the lower bound of $\Dim_\medsymb \, \Wsimbolico $ is corollary 5.1.

\medskip

If the BIP property holds,  and $\widehat\mu$ is a $\sigma$-Gibbs measure with   potential $\phi$ such that
$
\sum_{n= 1}^\infty V_n(\phi)<\infty$  
with 
$$V_n(\phi):=\sup\{|\phi(w)-\phi(w')| :  w=(i_0,i_1,\ldots) ,w'=(j_0,j_1,\ldots)\in \Sigma_A^{\cal I }, \, i_k=j_k, \, 0\le k\le n-1\},$$
and $\sup \phi<\infty$, 
then we have  the following (see theorem 5.6) :

\medskip

{\noindent  \bf Theorem B.}
{\it 
 If there exists $0<t_1\leq 1$ such that
 $$
\infty> P_{G}(t_1\phi)-P_{G}(\phi)t_1>st_1 \quad \mbox{ and }\quad 
-\sum_{i\in{\cal I}}\widehat\mu(C_i)^{t_1}\log\widehat\mu(C_i)<\infty \;  ,
$$
then
\begin{equation}
 \begin{aligned}
 \Dim_{\widehat\mu}(W_{\sigma}(\widehat P,\ell_n,w)))&=\sup\{t\geq t_1: P_{G}(t\phi)-P_{G}(\phi)t>st\}  \notag \\
 &=\inf\{ t>0\, : P_{G}(t\phi)-P_{G}(\phi)t<st\} .
\end{aligned}
\end{equation}
And  if moreover $\phi$ is weakly H\"older continuous, then  
$$ \Dim_{\widehat\mu}(W_{\sigma}(\widehat P,\ell_n,w)))=T
$$
 with
$P_{G}(T\phi)-P_{G}(\phi)T=sT.
$
Here 
$P_G(.)$ denotes the Gurevich pressure
}

\

The function $t\longrightarrow P_G(t\phi)-P_G(\phi)t=P_G(t[\phi-P_G(\phi)])$ is not necessarily continuous, this was first pointed out by Mauldin and Urbanski in \cite{MU} (for their pressure); see also \cite{MU3}. Hence  the {`}sup' above is not necessarily a maximum.

\medskip

In  \cite{U} Urbanski   studied
the target-ball problem   for the limit set of a conformal {\it countable } iterated function system, and  got a result like Bowen's 
 for the Hausdorff dimension. We recall that an iterated function system is a collection of injective contractions (all with the same contractive constant). The symbolic version of his result corresponds to considering, in theorem B,  Markov chains with the {\it Bernoulli } property (i.e. all the entries in the transition matrix are $1$),   and a potential with strong regularity properties.  Notice that BIP property  is weaker than Bernoulli property.

  \medskip

In section 5.1.2 we give an alternative approach to get a lower estimate of the  $\widehat\mu$-dimension of  $W_{\sigma}(\widehat P,\ell_n,w))$,  in this case we  do not require the existence of new measures, but  we ask for the initial measure $\widehat\mu$ to be mixing. We get the  lower bound 
$$
\Dim_{\widehat\mu}(W_{\sigma}(\widehat P,\ell_n,w)))\geq \frac{P_G(\phi)-\int \phi \, d\widehat\mu}{P_G(\phi)-\int \phi \, d\widehat\mu +s}.
$$
We can think of this value as a lower bound for the root $T$ in theorems A and  B.
We would like to mention that  in 
\cite{FMPa} we obtained  a related bound for the target-ball set in the general setting of  expanding maps in metric spaces.

\

For example,  we have the following result that corresponds to the  symbolic version of  the Gauss map (see  remark  \ref{Gaussmu}).

\medskip

{\noindent  \bf Theorem C.}
{\it 
Let $\widehat\mu$ be the RPF probability  for the potential $\phi:\Sigma^{\N}\longrightarrow\R$ defined by
$$
\phi(w)=2\log \left |
\pi(w)
\right | \qquad \mbox{ with } \qquad  \pi(w)=\lim_{j\to \infty}\frac{1}{ w_0+\dfrac 1{w_1+\dfrac 1{\ddots +w_j}}}
$$
Then 
$$\Dim_{\widehat\mu}(W_{\sigma}(\widehat P,\ell_n,w)))=T \, \geq  \, 
\frac{h_{\widehat\mu}}{h_{\widehat\mu} +s}=
\frac{\pi^2}{\pi^2+(6\log2)s}
$$
where $1/2<T\leq 1$ is the unique solution of $P_{G}(t\phi)=st$. Here $h_{\widehat\mu}$ denotes the entropy of ${\widehat\mu}$.
}

\subsection {Markov transformations and Intermittent systems}

Next we turn to the target-ball problem
for $f$ a Markov transformation. In this case   our space is the interval $[0,1]$,  distance is  euclidean, and the $f$-invariant probability is the ACIP measure $\mu$. Our objective is to get estimates for the Hausdorff dimension of target sets for $f$.
Recall that any Markov transformation $f$ has a symbolic representation (via the Markov partition ${\cal P}_0$).
Our approach is to use this representation and the Carath{\'e}odory dimension results  described  in  the previous section.

\medskip

 Let 
$(\Sigma_{A}^{\cal I},\sigma)$  be the symbolic representation of $f$,  let $\pi$  denote  the standard projection from $\Sigma_{A}^{\cal I}$ to the interval $[0,1]$,  and   $\widehat\mu=\mu\circ\pi$ be the corresponding measure.  We will assume that the BI property holds,  then  $\widehat\mu$ is a local $\sigma$-Gibbs measure with potential $-\log|f^{\prime}\circ \pi|$ 
; see proposition 5.4.  

The collection $\{ {\cal P}_n:=\bigvee_{j=0}^n f^{-j}({\cal P}_0)\}$ 
of partitions of the interval $[0,1]$ give us a {\it grid } of $[0,1]$; we use  $P(n,z)$  to denote the $n$-block in ${\cal P}_n$ which contains the point $z$.
There is a  direct correspondence  between the target-ball set  in $(\Sigma_{A}^{\cal I},\sigma)$  and 
 the  target-{\it block} set  for $f$, i.e the set of points in $[0,1]$ whose orbit by $f$ hits at time $k$ (for infinitely many $k$) the $\ell_k$-block of the partition ${\cal P}_{\ell_k}$  which contains the point $x:=\pi(w)$. In particular for $f(x)=10x (\mbox{mod } 1)$ or $f$  the Gauss map,  the study of the size of the target-block set is,  in fact, a  very classical problem in diophantine approximation, the study of 
 exceptional sets  with  prescribed string of digits in his decimal  or continued fraction expansions. (See section 6.3 for  results on continued fraction and Luroth expansions).

 Our dimension results for the target-ball set for $f$ come from consider an appropriate  target-block set inside.
The  $\widehat\mu$-dimension results in $(\Sigma_{A}^{\cal I},\sigma)$  will translate into {\it grid}-dimension results in the interval $[0,1]$ for $f$. The  {grid}-dimension is defined as the Hausdorff dimension but  in  the coverings  only intervals of the grid intervene. 
Grid and Hausdorff dimensions  coincide for a set $A$  (see proposition 2.1)  if for all $\gamma>0$ and for all $z\in A$ 
\begin{equation} \label{gridvsH}
\frac{\diam(P(n,z))}{ \diam(P({n-1},z))^{1+\gamma}}\geq C>0 
\end{equation}
For Markov transformations with finite alphabet,  condition (\ref{gridvsH}) holds (with $\gamma=0$) for {\it all } points in $[0,1]$,  and  we have  equality between both dimensions, but  this is not true for countable infinite alphabet. 

For our Hausdorff dimension
results   we require that 
condition 
(\ref{gridvsH}) holds on the Cantor-like sets  constructed to get  the lower bounds of  the dimension.  
We get this  property by requising, in the symbolic framework,  for a similar condition  involving 
the RPF measure with potential  $-t \log|f^{\prime}\circ \pi|$ (in the case  this potential is positive recurrent) instead of the diameter. 
 Essentially, we  use that there is  a fixed proportion (in measure) of {\it good } points (see definitions  3.2 and \ref{unifgood}) in a collection   ${\cal D}=\{ C(0,\sigma^{p_i}(w))\}$ of $0$-cylinders  with $\{ p_i\}$ an increasing sequence in $\N$ and $w$ the target-center. Good points come from  uniform convergence  in the Birkhoff's ergodic theorem
for the potential of the measure.

If the collection $\cal D$ is finite, then  we obtain  this fixed proportion of { good } points in $\cal D$  by using that the RPF measure is mixing.  Therefore  the  ergodicity of the ACIP and the recurrence theorem of Poincar\'e (see (\ref{recurrence})) give us   the desire proportion of good points for  $w$ $\lan$-a.e.

However,
it is interesting to understand the  mixing properties   behind this proportional distribution of good points in any  (countable) collection of $0$-cylinders  and   for any 
  local {\it  weak} $\sigma$-Gibbs measure  (and not only for a local $\sigma$-Gibbs  measure  with potential  $-t \log|f^{\prime}\circ \pi|$).  
  Surely in forthcoming situations (more general than Markov transformations)  it will be  useful to go from grid to  Hausdorff dimension. 
With this purpose, in section 3.2 we define a  new mixing property 
which implies 
having a fixed proportion of good points in a collection  of  $0$-cylinder (see theorem 3.1). This property is a generalization of a mixing condition already used in \cite{FMP2}.   In particular,  if the potential  $-t \log|f^{\prime}\circ \pi|$  is positive recurrent and BIP property holds, then the RPF measure is a local $\sigma$-Gibbs measure which satisfies this mixing property in the collection of {\it all } $0$-cylinders.  In fact,  the RPF measure satisfies a stronger mixing property which is called  exponentially continued fraction mixing.

\medskip

Section 6 contains our results on the Hausdorff dimension of  targets sets for Markov transformations with BI property. A very precise result for Markov transformation with BIP property is stated in section 6.2. Recall that BIP property is weaker than Bernoulli property.
In particular, in section 6.3  we consider 
 some concrete examples such as: the Gauss transformation, a not Bernoulli modification of the Gauss transformation, the Luroth map and some inner (analytic) functions.

\medskip

\noindent For  example, we consider  
 the following  (not Bernoulli) modification of the Gauss map $\phi(x)=\frac 1x -\lfloor{\frac 1x}\rfloor$
$$
f(x)=
\begin{cases}
\phi(x)\,, \quad &\text{if } \frac 12<x\leq 1
\,, \vspace{3pt}
\\
\left(1-\frac{1}{\lfloor{\frac 1x}\rfloor}\right)\phi(x)+\frac{1}{\lfloor{\frac 1x}\rfloor}\,, \quad &\text{if } 0<x\leq \frac 12 \,.
\end{cases}
$$
The initial partition for $f$ is, as in the Gauss map,  ${\cal P}_0=\{P^0_i:=(1/(i+1),1/i) : i\in\N\setminus\{0\}\}$,  but
in this case 
$f(P^0_1)=(0,1)$  and  $f(P^0_i)=(1/i,1)$ for $i\geq 2$.

\medskip

Let $\{r_n\}$ be a sequence of radii such that 
$
u:=-\lim_{n\to\infty}\frac 1n\log{r_n}<\infty, 
$
then  (see theorem 6.5)

\medskip

{\noindent \bf Theorem D.} {\it 
Let $1/2<T\leq 1$ be  the unique solution of  $P_G(-t\log|{f^{\prime}\circ\pi}|)=tu$, then for $\lan$-a.e  $x\in (0,1)$
$$
\Dim \left\{y\in [0,1]: \ \liminf_{n\to\infty} 
\frac{|f^n(y)-x| }{r_n}=0 \right\} = T  \ge \frac{\int \log|f'|\, d\mu}{\int \log|f'|\, d\mu +u}
$$
}

Also  we get dimension results for  target problems in the case of  non-uniformly expanding transformations. 
In particular,  for the Manneville-Pomeau transformation $F(x)=x+x^{1+\alpha} \mod 1$ with $0<\alpha<1$. If we  
define
 $$
  \widetilde u=
 \begin{cases}
u,\qquad \qquad \quad \text{ if } x\neq 0 \\
(1-\alpha)u, \qquad \text{ if } x=0.
 \end{cases}
 $$ 
with $
u:=-\lim_{n\to\infty}\frac 1n\log{r_n}<\infty, 
$
then we get the following (see theorem 7.2):

\medskip

 {\noindent \bf Theorem E. }{\it  Let $\alpha<T\leq 1$  be 
 the unique solution of $P_{top}(-t\log|{F^{\prime}}|)=t\widetilde u,$ then 
 $$
\Dim \left\{y\in [0,1]: \ \liminf_{n\to\infty} 
\frac{|F(y)-x| }{r_n}=0 \right\} = T \geq h_{\mu}/(h_{\mu}+\widetilde u)
$$
with  $h_{\mu}$  the entropy of  the ACIP measure of $F$.
}

\medskip

It is interesting to notice that we have bigger  Hausdorff dimension  for the case $x=0$ (the point with $T^{\prime}(x)=1$).

\subsection{A Hungerford lemma for Cantor-like sets}

The kind of Cantor-like sets that we have managed to deal with in this paper  have the peculiarity that the ratio between the $\widehat\mu$-measure of a parent and his children increase wildly. This, in general,  would imply zero dimension. However, if we add a nice property which guarantees that 
the sets of the same generation with the same parent (i.e brothers) are separated enough, then we  get positive dimension.
We think that the study of the dimension of  Cantor-like sets with this characteristic and the relation with the thermodynamic formalisms is interesting by itself, and it would be useful in other contexts. For this reason, in   
 section 4  we present  a general description, as pattern subsets in the symbolic space, of sets with this structure  and we prove some  results (with  the flavor of the classical Hungerford lemma, see e.g \cite{Pom}) on lower estimates of their dimension. See corollaries \ref{G1} and \ref{G2} for the Gibbs case, the simplest one.

\subsection {Outline of the paper.} 

\medskip

\rm  In section 2 we introduce (following Carath{\'e}odory approach) the different dimensions  that we use along this paper and we compare them.   In section 3 we include several results on thermodynamic formalisms;  we also define a mixing property for local  weak $\sigma$-Gibbs measures which implies a fixed proportion of good points in $0$-cylinders. In section 4 we introduce a family of Cantor-like sets in the symbolic  space and state some Hungerford  dimension results.  The target problem for the shift transformation is the content of section 5. In sections 6 and 7 
we consider  the target problems (ball and block) for Markov transformations and intermittent  systems respectively.

\medskip

\noindent {\it  A few words about notation.} \rm 
For two sequences $\{a_n\}$ and $\{b_n\}$ we write  $a_n \asymp b_n$ 
 if for some fixed positive absolute constant $c$ we have that $c^{-1}b_n\leq a_n\leq c\, b_n$ for $n\ge 1$. 
 If $\lan$ and $\mu$ are two measures on $X$, we will say that $\lan$ and $\mu$ are comparable on a subset $Y\subset X$ if for some positive constant $C=C(Y)$ we have that $C^{-1}\mu(Z)\le \lan(Z)\le C\mu(Z)$ for any Borel subset $Z$ of $Y$. 
By convention, if $A$ denote the empty subset in the set $[0,1]$, then $\sup(A)=0$ and $\inf(A)=1$.

\section{Carath{\'e}odory dimensions}

Let $(X,d)$ be a separable metric space and let denote by $\diam(.)$  the diameter. Next we recall the Carath{\'e}odory's construction of Borel measures (see  e.g. \cite{M}).

Let   ${\cal F}$ be  a family of subsets of $X$, and $\varphi$   a non-negative function  on  ${\cal F}$ verifying:
\begin{itemize}
\item[(i)]  For every $\ep>0$ there are $F_1,F_2,\ldots \in {\cal F}$ such that $\diam(F_i)\leq \ep$ and $X=\cup_{i=1}^{\infty} F_i$.
\item [(ii)]  For every $\ep>0$ there is $F\in {\cal F}$ such that $\diam(F)\leq \ep$ and $\varphi(F)\leq \ep$.
\end{itemize}
We will say that $({\cal F}, \varphi)$ is a {\it Carath{\'e}odory's pair}.

For $0<\ep\leq \infty$ and $A\subset X$ we define the  regular Borel measure 
\begin{equation}\label{Caratheodory}
 {M}_{{\cal F},\varphi}(A)=\lim_{\ep\to 0}{
M}_{{\cal F},\varphi,\, \ep}(A)
\end{equation}
with
$$
{M}_{{\cal F},\varphi,\, \ep}(A)=\inf\{\sum_i\varphi(F_i)\, : \, A\subset \cup_i F_i \, , \, \diam{F_i}\leq{\ep}\, , \, F_i\in{\cal F}\}
$$
In  particular, we can consider a Carath{\'e}odory's pair $({\cal F}, \varphi)$ with $\varphi(.)= \psi(.)^{\alpha}$ with $\psi$ a non-negative function on ${\cal F}$ and $0\leq \alpha<\infty$. 
It is not difficult to check that  if the function $\psi$ verifies that
\begin{equation}\label{paradefdim}
\sup\{ \psi(F) \, : \, F\in {\cal F} \, ,\, \diam(F)\leq \ep\}\to 0 \quad \mbox{ when} \quad \ep\to 0
\end{equation}
then 
the regular Borel measures $M_{{\cal F},\psi(.)^{\alpha}}$  satisfies for all $A\subset X$
\begin{align}
{M}_{{\cal F},{\psi}^{\alpha}}(A)<\infty &\implies {M}_{{\cal F},{\psi}^{\beta}}(A)=0 \,\, \, \quad \text{ for }\beta>\alpha, \notag \\
{M}_{{\cal F},{\psi}^{\alpha}}(A)>0 \, &\implies {M}_{{\cal F},{\psi}^{\beta}}(A)=\infty \quad \text{ for }\beta<\alpha,\notag
\end{align}
Hence   we can define the {\it  $({\cal F}, \varphi)$-dimension }as  
$$
{\mbox{\rm Dim}}_{{\cal F},\psi}(A):=\inf\{\alpha \, : \,
{M}_{{\cal F},{\psi}^{\alpha}}(A)=0\}=\sup\{\alpha \, : \, {M}_{{\cal F},{\psi}^{\alpha}}(A)>0\}\,.
$$
If  $\cal{F}$  is    the collection of all open balls in $X$ we will refer to ${\mbox{\rm Dim}}_{{\cal F},\psi}(A)$ as the { \it $\varphi$-spherical dimension}.
If ${\cal F}=\{ F : F\subset X\}$ and  $\varphi(.)=(\diam(.))^{\alpha}$ for some $0\leq \alpha<\infty$, the measure $ {M}_{{\cal F},\varphi}$ is called the $\alpha$-dimensional Hausdorff measure and  we will denote  by ${H}_{({\diam})^{\alpha}}$; moreover, 
${\mbox{\rm Dim}}_{{\cal F},\diam}$ is the usual Hausdorff  dimension, for simplicity we will denote it by ${\mbox{\rm Dim}}$.

\subsection{Topological Markov chains and $\widehat\mu$ dimension. }\label{Cardim}

Let $(\Sigma_A^{\cal I},d,\sigma)$ be a  (one-sided) {topological Markov chain}, see introduction. 
Recall  that,  
 for $w=(w_0,w_1,\ldots )\in \Sigma_A^{\cal I} $, 
  we  denote the $n$-cylinder $C_{w_0w_1\dots w_n}$  by $C(n,w)$. Also we will use 
 $w|_n$ to indicate  the finite sequence $(w_0,w_1,\ldots,w_n)$, and 
$\Sigma_A^{\cal I }|_n:=\{ w|_n \, :\, w\in \Sigma_A^{\cal I }\}$.

A topological Markov chain $\Sigma_A^{\cal I}$ is {\it topologically mixing} if for every $a,b\in{\cal  I}$ there exists $N=N(a,b)$ such that: for all $n\geq N$ there exists $w|_n\in \Sigma_A^{\cal I }|_n$ with $w_0=a$ and $w_n=b$. Along this paper we will assume this property.

A topological Markov chain $\Sigma_A^{\cal I}$ satisfies the {\it big images} (BI) property if there 
exist a finite set of symbols ${\cal I}_0\subset{\cal I}$ such that:
$
\text{for all symbol}\;  i\in{{\cal I}}\; \text{there exists}\; k\in{\cal I}_0\;  \text{such that}\; C_{ik}\neq \varnothing.
$
This condition is called the big images property because 
if $m$ is some finite measure supported in $\Sigma_A^{\cal I}$, then is equivalent to the condition 
$\inf\{m(\sigma(C_i))\, :\,  i\in{\cal I}\}>0
$
 
A topological Markov chain  satisfies the {\it big images and preimages {\rm (BIP)} property} if there exist a finite set of symbols ${\cal I}_0\subset{\cal I}$ such that:  for all symbol $i\in{{\cal I}}$ there exist $k,\ell\in{\cal I}_0$ such that $C_{ki\ell}\neq \varnothing.$

\medskip

\begin{definition}
Given a finite  atomless   measure $\widehat\mu$   in the topological Markov chain  $(\Sigma_A^{\cal I},d)$, we define the $\widehat\mu$-dimension of any set $E\subset \Sigma_A^{\cal I}$ as 
the $({\cal F},\widehat\mu)$-(Carath{\'e}odory)-dimension with 
${\cal F}$ the set of all cylinders in $\Sigma_A^{\cal I}$.
For simplicity we will write ${\mbox{\rm Dim}}_{\widehat\mu}(E) $ instead of ${\mbox{\rm Dim}}_{{\cal F},\widehat\mu}(E)$.
\end{definition}

 Notice that condition 
(\ref{paradefdim}) holds due to the atomless. 
We will use ${\mbox{\rm Dim}}_{\widehat\mu}(E) $  to study the size of sets of zero $\widehat\mu$-measure.

\subsection{Grid dimension vs Hausdorff dimension  in $[0,1]$}

Besides of the usual Hausdorff dimension in the interval $[0,1]$, which we denote by  $\Dim$, we will also consider the {\it grid dimension } defined following Carath{\'e}odory's construction.

\begin{definition} A grid inside $[0,1]$ is a sequence 
 $\Pibf=\{{\cal P}_n\}$ of collections   $ {\cal P}_n$ of subsets of $[0,1]$ such that:
\begin{itemize}
\item[\rm (i)] For all $n$, if $P, P'\in  {\cal P}_n$ with $P\neq P'$, then $P\cap P«=\varnothing$.
\item[\rm (ii)] For all $P_n\in {\cal P}_n$ there exists a unique
$P_{n-1}\in{\cal P}_{n-1}$ such that
$P_n\subset P_{n-1}$
\item[\rm (iii)]
$\displaystyle\sup_{P\in{\cal P}_n} \diam(P) \to 0$ as $n\to\infty.$
\end{itemize}
\end{definition}
A grid define the sets
$$X_{\Pibf}:=\bigcap_n\bigcup_{P\in {\cal P}_n}\mbox{cl}(P)\qquad  \mbox{ and } \qquad  X_{\Pibf}^{1-1}:=\bigcap_n\bigcup_{P\in {\cal P}_n}P .
$$
Given a grid $\Pibf=\{{\cal P}_n\}$,
we consider the collection 
$$
{\cal F}:=\{ \mbox{cl}(P)\cap X_{\Pibf}: P\in {\cal P}_n \mbox{ for some } n\}.
$$
The grid dimension in $X_{\Pibf}$ is defined as the $({\cal F},\diam)$-dimension; we  will  use ${\mbox{\rm Dim}}_{\Pibf}$ to denote it.

\begin{remark}  Notice that  for all $E\subset X_{\Pibf}$, 
$ {\mbox{\rm Dim}}(E) \leq {\mbox{\rm Dim}}_{\Pibf}(E) \leq 1$

\end{remark}

\begin{proposition} \label{parareg1}
Let $E\subset X_{\Pibf}$
such that
\begin{equation}\label{ngrande}
\#\{P\in{\cal P}_0 : \mbox{cl}(P)\cap E\neq\varnothing\}< \infty.
\end{equation}
and let us  suppose that  
there exist $\gamma\geq 0$ and $C>0$
such that  for all $P_n\in {\cal P}_n$ with $ \mbox{cl}(P_n)\cap E\neq \varnothing$
\begin{equation}\label{tau}
\frac{\diam(P_{n-1})^{1+\gamma}}{\diam(P_n)}\leq C\, ,
\quad \mbox{where }P_n\subset P_{n-1}\in {\cal P}_{n-1}.
\end{equation}
Then,  for  $\gamma/(1+\gamma)<\alpha\leq 1$

\begin{equation} \label{Hausmess1}
{M}_{{\cal F},{\diam}^{\alpha}}(E)\leq C_0\, 
H_{{\diam}^{\alpha-\gamma(1-\alpha)}}(E)
\end{equation}
with $C_0$ a positive constant.  Here $H_{{\diam}^{\eta}}$ denotes the $\eta$-dimensional Hausdorff measure.

\noindent In particular,
$$
 \Dim_{\Pibf}(E)\leq  \Dim(E)+\gamma\, ( 1-\Dim_{\Pibf}(E))
$$
\end{proposition}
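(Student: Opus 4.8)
The plan is to turn an arbitrary near-optimal cover of $E$ in the Hausdorff sense into an admissible cover of $E$ by grid sets, keeping control of the cost. So fix a countable cover $\{U_i\}$ of $E$ with $\delta_i:=\diam(U_i)\le\ep$; we may discard the $U_i$ missing $E$, and since the Hausdorff measure is an infimum over covers we may also assume $\delta_i>0$. Invoking the finiteness hypothesis (\ref{ngrande}), shrink $\ep$ below the (positive) minimal diameter of the finitely many $P\in{\cal P}_0$ with $\mbox{cl}(P)\cap E\neq\varnothing$. For each surviving $i$, let ${\cal G}_i$ be the collection of grid elements $P$ that are \emph{maximal} subject to $\diam(P)\le\delta_i$ and $\mbox{cl}(P)\cap E\cap U_i\neq\varnothing$, where ``maximal'' means that the grid parent of $P$ already has diameter $>\delta_i$. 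By the choice of $\ep$ no $0$-level element enters any ${\cal G}_i$, so every $P\in{\cal G}_i$ genuinely has a grid parent, and every proper grid ancestor of a member of ${\cal G}_i$ has diameter $>\delta_i$.

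I would then record three facts. (i) \emph{Covering}: for $z\in E\cap U_i$, since $\sup_{P\in{\cal P}_n}\diam(P)\to 0$ and $z$ lies in the closure of some grid element at every level, there is a smallest level (necessarily positive, by the choice of $\ep$) at which $z$ lies in the closure of a grid element of diameter $\le\delta_i$; that element belongs to ${\cal G}_i$ and contains $z$ in its closure. Hence $\{\mbox{cl}(P)\cap X_{\Pibf}:P\in\bigcup_i{\cal G}_i\}$ is an admissible cover of $E$ by members of ${\cal F}$ of diameter $\le\ep$. (ii) \emph{Disjointness}: if two members of ${\cal G}_i$ met, one would be a proper grid ancestor of the other and hence of diameter $>\delta_i$, contradicting membership in ${\cal G}_i$; so the members of ${\cal G}_i$ are pairwise disjoint. (iii) \emph{Lower bound on size} — this is where (\ref{tau}) enters: if $P\in{\cal G}_i$ has grid parent $Q$, then $\diam(Q)>\delta_i$ and, since $\mbox{cl}(P)\cap E\neq\varnothing$, hypothesis (\ref{tau}) gives $\diam(P)\ge\diam(Q)^{1+\gamma}/C>\delta_i^{1+\gamma}/C$.

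For the cost estimate I would use that the grid elements are intervals: each $P\in{\cal G}_i$ then lies in a single interval of length $\le 3\delta_i$ around $U_i$, so by (ii), $\sum_{P\in{\cal G}_i}\diam(P)\le 3\delta_i$. Writing $\diam(P)^\alpha=\diam(P)^{\alpha-1}\,\diam(P)$ and using $\alpha-1\le 0$ together with the lower bound from (iii) to get $\diam(P)^{\alpha-1}\le C^{1-\alpha}\,\delta_i^{(1+\gamma)(\alpha-1)}$, summation over $P\in{\cal G}_i$ gives $\sum_{P\in{\cal G}_i}\diam(P)^\alpha\le 3C^{1-\alpha}\,\delta_i^{(1+\gamma)(\alpha-1)+1}=3C^{1-\alpha}\,\delta_i^{\,\alpha-\gamma(1-\alpha)}$. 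Summing over $i$ and taking the infimum over covers of mesh $\le\ep$ yields $M_{{\cal F},{\diam}^{\alpha},\,\ep}(E)\le 3C^{1-\alpha}\,H_{{\diam}^{\alpha-\gamma(1-\alpha)},\,\ep}(E)$, and letting $\ep\to 0$ gives (\ref{Hausmess1}) with $C_0=3C^{1-\alpha}$. The restriction $\gamma/(1+\gamma)<\alpha\le 1$ is used exactly here: $\alpha\le 1$ legitimizes the exponent step, and $\alpha>\gamma/(1+\gamma)$ is precisely $\alpha-\gamma(1-\alpha)>0$, so the right-hand side is a genuine Hausdorff measure.

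The dimension inequality is then routine bookkeeping: by (\ref{Hausmess1}), $M_{{\cal F},{\diam}^{\alpha}}(E)=0$ whenever $\alpha-\gamma(1-\alpha)>\Dim(E)$, i.e. whenever $\alpha(1+\gamma)-\gamma>\Dim(E)$; hence $\Dim_{\Pibf}(E)\le(\Dim(E)+\gamma)/(1+\gamma)$, which rearranges to the claimed $\Dim_{\Pibf}(E)\le\Dim(E)+\gamma(1-\Dim_{\Pibf}(E))$. I expect the one genuinely delicate point to be the design of ${\cal G}_i$: choosing \emph{maximal} grid elements of diameter $\le\delta_i$ (rather than elements at a fixed level) is what simultaneously secures the disjointness in (ii) and the parent-based lower bound in (iii), and the hypothesis (\ref{ngrande}) is exactly what prevents a swarm of uncontrolled tiny $0$-level elements from entering ${\cal G}_i$. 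Once this combinatorial frame is in place, the rest is elementary packing in $[0,1]$ and dimension bookkeeping.
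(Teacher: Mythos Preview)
Your proof is correct and follows essentially the same approach as the paper's: cover each $U_i$ (the paper takes $U_i$ to be an interval $I$, which is harmless in $[0,1]$) by the maximal grid elements of diameter $\le\delta_i$ meeting $E\cap U_i$, use hypothesis~(\ref{tau}) on their parents to get the lower bound $\diam(P)\gtrsim\delta_i^{1+\gamma}$, use disjointness and the interval structure for the packing bound $\sum\diam(P)\le 3\delta_i$, and combine via $\diam(P)^\alpha=\diam(P)^{\alpha-1}\diam(P)$. Your handling of~(\ref{ngrande})---shrinking $\ep$ below the minimal $0$-level diameter so that every member of ${\cal G}_i$ has a genuine parent---is a clean alternative to the paper's iterated estimate $\diam(P_n)\ge C^{1/\gamma}K^{(1+\gamma)^n}$, but both serve the same purpose of ensuring the maximal elements sit at positive level.
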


\begin{remark}\label{comparacioncongrid} Notice that  if $E$ satisfies (\ref{ngrande}), and for all $\gamma>0$ there exists $C$ such that (\ref{tau}) holds, then
 $\Dim_{\Pibf}(E)=  \Dim(E)$.
\end{remark}

\begin{proof}Let  $I$ be an interval in $[0,1]$ with small radius and such that $I\cap E\neq\varnothing$. Then  there exists  a collection   $\{P_i=P(n_i,x_i)\}$   such that $I\cap E\subset \cup_i \mbox{cl}(P_i)\cap X_{\Pibf}$ and 
$$
\diam(\mbox{cl}(P(n_i,x_i))\cap X_{\Pibf})\leq \diam(I) \quad \hbox{ but } \quad \diam(I)<\diam(\mbox{cl}(P(n_i-1,x_i))\cap X_{\Pibf})\, ,
$$
and
$$
\sum_i\diam(\mbox{cl}(P_i)\cap X_{\Pibf})\leq 3\, \diam(I)\, 
$$
Notice that from (\ref{ngrande}) and (\ref{tau}) we have that there exists $K>0$ such that for all for all $P_n\in {\cal P}_n$ with $ \mbox{cl}(P_n)\cap E\neq \varnothing$
$$
\diam(\mbox{cl}(P_n)\cap X_{\Pibf})\geq \left(\frac{1}{C^{1/\gamma}}\right)^{(1+\gamma)^n-1}\diam(\mbox{cl}(P_0)\cap X_{\Pibf})^{(1+\gamma)^n}\geq C^{1/\gamma} K^{(1+\gamma)^n}.
$$
Hence, if $I$ has small radius then by the above inequality we have that the generations $n_i$ are large. Also from (\ref{tau}) we have that
$$
\diam(\mbox{cl}(P_i)\cap X_{\Pibf})=\diam(\mbox{cl}(P(n_i,x_i)\cap X_{\Pibf} ))\geq C'\diam(\mbox{cl}(P(n_i-1,x_i))\cap X_{\Pibf}  )^{1+\gamma}>C''\diam(I)^{1+\gamma}.
$$
Hence, for $0\leq\alpha\leq 1$
$$
\sum_{i}(\diam(\mbox{cl}(P_i)\cap X_{\Pibf}))^{\alpha}=\sum_{i}\frac{\diam(\mbox{cl}(P_i)\cap X_{\Pibf})}{(\diam(\mbox{cl}(P_i)\cap X_{\Pibf}))^{1-\alpha}}\,\leq C\,
\frac{\diam(I) }{\diam(I)^{(1+\gamma)(1-\alpha)}} = C \diam(I)^{\alpha-\gamma(1-\alpha)} \,,
$$
and we get the inequality (\ref{Hausmess1}).
\end{proof}

\subsection{Shift modeled transformations in $[0,1]$, grid and $\widehat\mu$ dimensions }\label{SMT}

Let $\lan$ denote the Lebesgue measure in the interval $[0,1]$ and let $f:[0,1]\longrightarrow [0,1]$ be a map with the property that 
there exists a  finite or numerable family ${\cal P}_0=\{P^0_i\}_{i\in {\cal I}}$ of disjoint
open intervals in $[0,1]$ such that:

\begin{itemize}
\item[(a)] $\lan([0,1]\setminus \cup_j P^0_j)=0$.

\item[(b)] 
If $P^0_i, P^0_j \in {\cal P}_0$ and $f(P^0_i)\cap P^0_j\neq\varnothing$, then $P^0_j\subset f(P^0_i)$.

\item[(c)]  For each $j$, the map $f:P^0_j\longrightarrow f(P^0_j)$ is continuous and injective.

\item[(d)] $\sup_{P\in {\cal P}_{n}} \diam(P)\to 0$ as $n\to\infty$ with 
$$
{\cal P}_{n}=\bigcup_{P^0_i\in{\cal
P}_0}\{(f\big|_{P^0_i})^{-1}(P_j)\, :\, P_j\in{\cal P}_{n-1} \,, \
P_j \subset f(P^0_i)\}=\bigvee_{j=0}^n f^{-j}({\cal P}_0).
$$
\end{itemize}
It is clear  that $\bf\Pi=\{{\cal P}_{n}\}$ is a grid in $[0,1]$, and moreover
from (a) and (b) it follows that $\lan(X_{\bf\Pi})=\lan(X_{\Pibf}^{1-1})=1$. From (c) we have that the elements of ${\cal P}_{n}$ are open intervals.

The  transformation $f$ 
 can be modeled by the left  shift acting on a topological Markov chain: Let ${\cal I}$ be the finite or numerable set indexing the initial partition
${\cal P}_0=\{P^0_i\}_{i\in {\cal I}}$ and let $A=(a_{i,j})$ be the ${\cal I}\times {\cal I}$ matrix with entries $0$ and $1$ defined by
$$
a_{i,j}=
\begin{cases}
1,\, \text{ if } f(P^0_i)\cap P^0_j\neq \varnothing \\
0, \, \text{ otherwise}
\end{cases}
$$

\medskip
\noindent The map $\pi: \Sigma_A^{\cal I} \longrightarrow X_{\Pibf} $ defined by  $\pi((i_0,i_1,\ldots ))=x$, where $x$ is the unique point such that
$$
\{x\}:= \bigcap_{n=0}^{\infty} \text{cl}(f^{-n}(P^0_{i_n}))= \bigcap_{n=0}^{\infty}\text{cl}(P_{i_0}\cap f^{-1}(P^0_{i_1}) \cap \cdots \cap f^{-n}(P^0_{i_n})),
$$
is continuous and $f\circ\pi=\pi\circ\sigma.$
If  $P_{i_0i_1\ldots i_n}$ denote the interval  in ${\cal P}_n$ defined by
$$
P_{i_0i_1\ldots i_n}=\{x\in[0,1] : x\in P^0_{i_0}, f(x)\in P^0_{i_0}, \cdots, f^n(x)\in P^0_{i_n}\},
$$
then
$
\pi(C_{i_0i_1\ldots i_n})=\mbox{cl}(P_{i_0i_1\ldots i_n})\cap X_{\Pibf}$.   Notice also, that the map $\pi$ is injective in $\pi^{-1}(X_{\Pibf}^{1-1})$.
 For any $x\in X_{\Pibf}^{1-1}$ there is a uniquely determinated block sequence $\{P_n\}$ with 
$P_n\in{\cal P}_n$ and $P_{n+1}\subset P_n$ such that  $\cap_n\mbox{cl}(P_n)=\{x\}$, we will denote $P_n$ by $P(n,x)$. If $x\in X_{\Pibf}\setminus X_{\Pibf}^{1-1}$, the sequence $\{P_n\}$ is not uniquely determinated  by $x$,  from now on, for each $x$ we choose $\{P_n\}$ and we denote $P_n$ by $P(n,x)$, i.e if  there exits $w,w'$ such that $\pi(w)=\pi(w')=x$ we choose one of them, say $w=(w_0,w_1,\ldots)$, and define $P(n,x)=P_{w_0w_1\cdots w_{n}}$.

\medskip

 \noindent We will refer to $(\Sigma_A^{\cal I},d,\sigma)$
as the {\it  symbolic representation } of $f$.

\medskip

Let $\widehat\mu$  be a 
finite atomless measure in the topological Markov chain $(\Sigma_A^{\cal I},d)$ such that  $\widehat\mu\circ\pi\asymp \lan $ in  $\mbox{cl}(P^0_i )\cap X_{\Pibf}$ for each interval $P^0_i \in {\cal P}_0$.

 Notice that $X_{\Pibf} \setminus X_{\Pibf}^{1-1}\subset  \bigcup_{n\geq N} {\cal B}_n$, with ${\cal B}_n$ the set of points in $X_{\Pibf}$ belonging to the boundary of some interval of the family ${\cal P}_n$, and also $\#(\pi^{-1}(x))\leq 2$. The following lemma is an easy consequence of these facts and   we will not include the proof.

\begin{lemma} \label{zero0}
Let ${\cal F}_1=\{ \mbox{cl}(P)\cap X_{\Pibf}: P\in {\cal P}_n \mbox{ for some } n\}$, ${\cal F}_2=\{C\subset \Sigma_A^{\cal I} :\, C \mbox{ a cylinder }\}$, and $\alpha>0$. Then 
$$
M_{{\cal F}_1,\diam^{\alpha}}(X_{\Pibf} \setminus X_{\Pibf}^{1-1})=M_{{\cal F}_2,\widehat\mu^{\alpha}}(\Sigma_A^{\cal I} \setminus \pi^{-1}(X_{\Pibf}^{1-1}))=0
$$
\end{lemma}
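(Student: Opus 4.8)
The plan is to reduce both identities to one elementary fact: in any Carath\'eodory construction built from a pair $({\cal F},\varphi)$ with $\varphi=\psi^{\alpha}$ and $\alpha>0$, a countable subset of the ambient space carries zero $M_{{\cal F},\varphi}$-measure, provided every point can be covered by elements of ${\cal F}$ of arbitrarily small diameter and arbitrarily small $\psi$-value. The remaining work is then just to check that the two sets in the statement are at most countable, and that this covering property holds for $({\cal F}_1,\diam)$ and for $({\cal F}_2,\widehat\mu)$.

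First I would record that $X_{\Pibf}\setminus X_{\Pibf}^{1-1}$ is at most countable. A point $x$ of this set lies in $\mbox{cl}(P)$ for some $P\in{\cal P}_n$ at every level $n$ (because $x\in X_{\Pibf}$), but fails to lie in any open $P\in{\cal P}_n$ for at least one $n$ (because $x\notin X_{\Pibf}^{1-1}$); for that $n$ it therefore lies on the boundary of its grid interval, i.e. $x\in{\cal B}_n$, and by the nesting property (ii) of the grid one in fact gets $x\in{\cal B}_m$ for all $m\ge n$. Hence $X_{\Pibf}\setminus X_{\Pibf}^{1-1}\subset\bigcup_n{\cal B}_n$; since the alphabet ${\cal I}$ is finite or countable each ${\cal P}_n$ is at most countable, so is each ${\cal B}_n$, and so is the union. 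Finally, since $\pi(w)\in X_{\Pibf}$ for every $w$, we have $\Sigma_A^{\cal I}\setminus\pi^{-1}(X_{\Pibf}^{1-1})=\pi^{-1}(X_{\Pibf}\setminus X_{\Pibf}^{1-1})$, which is at most countable because the fibers of $\pi$ have at most two points.

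Next I would verify the covering property in each of the two settings. For $({\cal F}_1,\diam)$: given $x\in X_{\Pibf}$ and $\delta>0$, property (iii) of the grid furnishes $n$ with $\sup_{P\in{\cal P}_n}\diam(P)<\delta$, and any $P\in{\cal P}_n$ with $x\in\mbox{cl}(P)$ gives $F=\mbox{cl}(P)\cap X_{\Pibf}\in{\cal F}_1$ containing $x$ with $\diam(F)<\delta$. For $({\cal F}_2,\widehat\mu)$: given $w\in\Sigma_A^{\cal I}$ and $\delta>0$, the cylinder $C(n,w)$ has diameter $2^{-(n+1)}$, which is $<\delta$ for $n$ large, and since $\{w\}=\bigcap_n C(n,w)$ with $\widehat\mu$ finite and atomless we have $\widehat\mu(C(n,w))\downarrow 0$, so for $n$ large $\widehat\mu(C(n,w))<\delta$ as well. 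With this in hand the elementary fact is immediate: enumerate the countable set $S=\{p_j\}$, fix $\ep>0$, cover $p_j$ by $F_j\in{\cal F}$ with $\diam(F_j)\le\ep$ and $\psi(F_j)\le(\ep\,2^{-j})^{1/\alpha}$ (this is where $\alpha>0$ enters), so that $\varphi(F_j)\le\ep\,2^{-j}$ and $M_{{\cal F},\varphi,\,\ep}(S)\le\sum_j\ep\,2^{-j}=\ep$; letting $\ep\to0$ gives $M_{{\cal F},\varphi}(S)=0$. Applying this to $S=X_{\Pibf}\setminus X_{\Pibf}^{1-1}$ with $({\cal F}_1,\diam^\alpha)$ and to $S=\Sigma_A^{\cal I}\setminus\pi^{-1}(X_{\Pibf}^{1-1})$ with $({\cal F}_2,\widehat\mu^\alpha)$ yields the two claimed identities.

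There is no genuine obstacle here: the whole argument is bookkeeping around two features already built into the setup — the grid diameters shrink to $0$ (hypothesis (iii)) and $\widehat\mu$ is atomless — which is exactly why the lemma is stated without proof. The only subtlety worth noting is that the hypothesis $\alpha>0$ is essential, since for $\alpha=0$ the gauge $\varphi\equiv 1$ turns $M_{{\cal F},\varphi}$ into a counting-type measure and the conclusion is false.
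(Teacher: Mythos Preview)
Your argument is correct and follows exactly the line the paper indicates (the paper does not give a proof, only the hint that $X_{\Pibf}\setminus X_{\Pibf}^{1-1}\subset\bigcup_n{\cal B}_n$ and $\#\pi^{-1}(x)\le 2$). You have simply written out the details of that hint: both sets are countable, and countable sets carry zero $M_{{\cal F},\psi^\alpha}$-measure for $\alpha>0$ because each point can be covered by an element of ${\cal F}$ of arbitrarily small diameter and arbitrarily small $\psi$-value.
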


We have the following relation between  grid and $\widehat\mu$-dimensions.

\begin{lemma}{\label{gridmu}} $
\Dim_{\Pibf}(\pi(\Sigma))=\Dim_{\widehat\mu}(\Sigma) 
\quad $ for all $\quad \Sigma\subset \Sigma_A^{\cal I}$.
\end{lemma}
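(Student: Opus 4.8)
The plan is to prove the two inequalities $\Dim_{\Pibf}(\pi(\Sigma)) \le \Dim_{\widehat\mu}(\Sigma)$ and $\Dim_{\widehat\mu}(\Sigma) \le \Dim_{\Pibf}(\pi(\Sigma))$ by transporting covers back and forth across the projection $\pi$, using the two covering families ${\cal F}_1 = \{\mbox{cl}(P)\cap X_{\Pibf} : P \in {\cal P}_n\}$ and ${\cal F}_2 = \{C \subset \Sigma_A^{\cal I} : C \text{ a cylinder}\}$. The point is that $\pi$ sets up an essentially bijective correspondence between these two families: by construction $\pi(C_{i_0\cdots i_n}) = \mbox{cl}(P_{i_0\cdots i_n}) \cap X_{\Pibf}$, and since $\widehat\mu\circ\pi \asymp \lan$ on each $\mbox{cl}(P^0_i)\cap X_{\Pibf}$ and these diameters shrink to $0$, for each fixed $0$-cylinder $C_{i_0}$ one can compare $\widehat\mu(C_{i_0\cdots i_n})$ with $\lan$-type quantities; but more importantly, for the $\widehat\mu$-dimension only $\widehat\mu$ of cylinders matters and for the grid dimension only $\diam$ of grid intervals matters, so what one really needs is just that a cover by one family pulls/pushes to a cover by the other family with controlled weights. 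I would first reduce to the $1$--$1$ parts $X_{\Pibf}^{1-1}$ and $\pi^{-1}(X_{\Pibf}^{1-1})$ using Lemma \ref{zero0}: since $M_{{\cal F}_1,\diam^\alpha}(X_{\Pibf}\setminus X_{\Pibf}^{1-1}) = M_{{\cal F}_2,\widehat\mu^\alpha}(\Sigma_A^{\cal I}\setminus\pi^{-1}(X_{\Pibf}^{1-1})) = 0$ for all $\alpha > 0$, these exceptional sets contribute nothing to either dimension, so I may assume $\Sigma \subset \pi^{-1}(X_{\Pibf}^{1-1})$ and $\pi(\Sigma) \subset X_{\Pibf}^{1-1}$, where $\pi$ is injective.

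On this $1$--$1$ part the proof is a matching of Carath\'eodory premeasures. Fix $\alpha \in (0,1]$ (recall both dimensions lie in $[0,1]$, so it suffices to work with such $\alpha$). Given a cover of $\Sigma$ by cylinders $C_j$ with small diameter, the images $\pi(C_j) = \mbox{cl}(P_j)\cap X_{\Pibf}$ form a cover of $\pi(\Sigma)$ by ${\cal F}_1$-sets, and conversely any ${\cal F}_1$-cover of $\pi(\Sigma)$ pulls back along the (injective) $\pi$ to a cylinder cover of $\Sigma$. The only thing to check is the weight bookkeeping: I want, for any ${\cal F}_1$-set $\mbox{cl}(P)\cap X_{\Pibf}$ with $P \in {\cal P}_n$ contained in $P^0_i$, that $(\diam(\mbox{cl}(P)\cap X_{\Pibf}))^\alpha$ and $\widehat\mu(C)^\alpha$ (where $C$ is the corresponding cylinder) are comparable up to a constant depending only on $i$ — no, that is false in general (a cylinder deep inside $P^0_i$ can be small in measure but we have no matching diameter bound without conformality). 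Here is the resolution: this is exactly why the statement is about \emph{both} dimensions being computed by coverings where elements come from a nested family, and the definition of $\Dim_\mu$ via ${\cal F}_2$ and $\Dim_{\Pibf}$ via ${\cal F}_1$ is engineered so the weights do \emph{not} enter at the level of comparing a cylinder to its image — rather, one should observe that $\diam$ of a grid interval and $\widehat\mu$ of the corresponding cylinder both go to zero as the generation $n \to \infty$ (condition (\ref{paradefdim}) / the atomless hypothesis and condition (d)), and that in both Carath\'eodory constructions the ``$\ep$'' only serves to force generations to be large. Thus $M_{{\cal F}_1,\diam^\alpha}(\pi(\Sigma))$ and $M_{{\cal F}_2,\widehat\mu^\alpha}(\Sigma)$ are computed by infima over the \emph{same} index sets of cover-families (indexed by finite words), and I claim $\sum_j (\diam \pi(C_j))^\alpha = 0$ exactly when $\sum_j \widehat\mu(C_j)^\alpha = 0$ — but that is still not quite an equality of infima.

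Let me therefore take the cleaner route that I expect is the intended one: show directly that for every $\alpha > 0$, $M_{{\cal F}_1,\diam^\alpha}(\pi(\Sigma)) = 0 \iff M_{{\cal F}_2,\widehat\mu^\alpha}(\Sigma) = 0$, which immediately gives equality of the two dimensions by the $\inf\{\alpha : M = 0\}$ characterisation. For ``$\Leftarrow$'': given cylinders $C_j$ covering $\Sigma$ with $\sum \widehat\mu(C_j)^\alpha < \ep$ and $\diam C_j$ small, I replace each $C_j$ by the union of \emph{all} cylinders $C$ of some large generation $N(j)$ inside it for which $\diam(\pi(C)) \le \delta$ and $\widehat\mu(C)$ is tiny; since $\widehat\mu$ is atomless and $\widehat\mu\circ\pi$ is comparable to $\lan$ on $0$-cylinders (hence $\widehat\mu$ of a generation-$N$ cylinder has positive, but controllably small, relation to its image's diameter: more precisely one uses that for fixed $i$, as $N\to\infty$, $\sum_{C \subset C_i,\ \text{gen }N} \diam(\pi(C))^\alpha$ can be made $\le (\text{const}_i)\,\widehat\mu(C_i)^\alpha \cdot (\text{small})$ using $\widehat\mu\circ\pi \asymp \lan$ and $\sum \diam \le \diam P^0_i$), one refines the cover so that the $\diam$-sum is small. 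The direction ``$\Rightarrow$'' is symmetric, pulling ${\cal F}_1$-covers back to cylinder covers and using the same comparability to bound the $\widehat\mu$-sum. The main obstacle, and the step I would spend the most care on, is precisely this refinement lemma quantifying ``a cylinder of high generation has both small diameter-of-image and small $\widehat\mu$-measure, and their $\alpha$-sums over a generation are simultaneously controllable by the $\alpha$-measure of the parent'' — this is where the hypothesis $\widehat\mu\circ\pi \asymp \lan$ on $0$-cylinders together with $\alpha \le 1$ (so that $\sum x_k^\alpha$ is controlled via $(\sum x_k)^\alpha$ when the $x_k$ are small, by concavity/subadditivity) does the real work. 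Once that is in hand, Lemma \ref{zero0} disposes of the non-injective part and the equality $\Dim_{\Pibf}(\pi(\Sigma)) = \Dim_{\widehat\mu}(\Sigma)$ follows.
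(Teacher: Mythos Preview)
There is a genuine gap, and it stems from a misconception you state explicitly: ``a cylinder deep inside $P^0_i$ can be small in measure but we have no matching diameter bound without conformality''. This is false in the present setting, and missing this is what sends your argument off course. The blocks $P_{i_0\cdots i_n}$ are open \emph{intervals} in $[0,1]$ (this is noted in Section~\ref{SMT}, following from property (c)), so
\[
\diam(P_{i_0\cdots i_n}) \;=\; \lan(P_{i_0\cdots i_n}) \;=\; \lan\bigl(\mbox{cl}(P_{i_0\cdots i_n})\cap X_{\Pibf}\bigr).
\]
Combined with the hypothesis $\widehat\mu\circ\pi \asymp \lan$ on $\mbox{cl}(P^0_{i_0})\cap X_{\Pibf}$, this gives directly
\[
\widehat\mu(C_{i_0\cdots i_n}) \;\asymp\; \diam(P_{i_0\cdots i_n})
\]
with comparability constant depending only on the first symbol $i_0$. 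This is exactly the paper's equation~(\ref{misma0}), and it is the whole engine of the proof: once one localizes to a fixed $0$-cylinder $C_i$ (and its image $\mbox{cl}(P^0_i)\cap X_{\Pibf}$), cylinder covers and grid covers correspond bijectively with comparable weights, so $M_{{\cal F}_2,\widehat\mu^\alpha}(\Sigma\cap C_i)=0 \iff M_{{\cal F}_1,\diam^\alpha}(\pi(\Sigma)\cap \mbox{cl}(P^0_i))=0$ for every $\alpha>0$. Summing over $i$ and invoking Lemma~\ref{zero0} (which you correctly identified) finishes it.

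Your attempted workaround via refinement is not only unnecessary but also goes the wrong way: for $\alpha<1$, concavity gives $\sum x_k^\alpha \ge (\sum x_k)^\alpha$, so splitting a covering set into many subcylinders \emph{increases} the $\alpha$-sum rather than decreasing it. The claimed bound ``$\sum_{C\subset C_i,\ \text{gen }N}\diam(\pi(C))^\alpha \le (\text{const}_i)\,\widehat\mu(C_i)^\alpha\cdot(\text{small})$'' is therefore false in general. The proof does not need any such refinement step; the direct comparability of $\diam$ and $\widehat\mu$ on each $0$-block is all that is required.
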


\begin{proof}
The blocks $P_{i_0i_1\ldots i_n}$  in ${\cal P}_n$ are intervals contained in the initial intervals  of ${\cal P}_0$ and since 
$\widehat\mu\circ\pi\asymp \lan $ in  $\mbox{cl}(P^0_i )\cap X_{\Pibf}$ for each interval $P^0_i \in {\cal P}_0$  we have that
\begin{equation}\label{misma0}
 \diam(P_{i_0i_1\ldots i_n})=\lan(P_{i_0i_1\ldots i_n})=\lan(\mbox{cl}(P_{i_0i_1\ldots i_n})\cap X_{\Pibf})\asymp \widehat\mu\circ\pi(\mbox{cl}(P_{i_0i_1\ldots i_n})\cap X_{\Pibf})
\end{equation}
with constants depending on the initial block $P_{i_0}^0$. We recall that $\lan(X_{\Pibf})=1$.

Let ${\cal F}_1=\{ \mbox{cl}(P)\cap X_{\Pibf}: P\in {\cal P}_n \mbox{ for some } n\}$ and ${\cal F}_2=\{C\subset \Sigma_A^{\cal I} :\, C \mbox{ a cylinder }\}$.  First, notice that from lemma \ref{zero0}, 
since $(\pi(\Sigma)\cap\pi(C_i))\setminus \pi(\Sigma\cap C_i)  \subset X_{\Pibf} \setminus X_{\Pibf}^{1-1}$,
 we have that
\begin{equation}\label{nulo10}
M_{{\cal F}_1,\diam^{\alpha}}(\pi(\Sigma)\cap\pi(C_i))=M_{{\cal F}_1,\diam^{\alpha}}(\pi(\Sigma\cap C_i))
\end{equation}
and 
\begin{equation}\label{nulo20}
M_{{\cal F}_2,\widehat\mu^{\alpha}}(\Sigma\cap C_i)=M_{{\cal F}_2,\widehat\mu^{\alpha}}(\Sigma\cap C_i\cap\pi^{-1}(X_{\Pibf}^{1-1}))
\end{equation}
Next,  we will see that 

\begin{equation}\label{demo0}
M_{{\cal F}_2,\widehat\mu^{\alpha}}(\Sigma\cap C_i)=0 \iff
M_{{\cal F}_1,\diam^{\alpha}}(\pi(\Sigma)\cap \mbox{cl}(P^0_i))=0 
\end{equation}

\noindent ($\Longrightarrow$) 
If $\Sigma\cap C_i\subset \cup_{F_2\in{\cal D}_2}F_2\subset C_i$ with ${\cal D}_2\subset {\cal F}_2$, then 
$$\pi(\Sigma\cap C_i)\subset  \cup_{F_1\in{\cal D}_1}F_1\subset \mbox{cl}(P^0_i))\cap X_{\Pibf}$$ with ${\cal D}_1=\{\pi(F_2): F_2\in{\cal D}_2\}\subset {\cal F}_1$. From (\ref{misma0})
we have that $\diam (F_1)\asymp \widehat\mu(F_2)$ for $F_1=\pi(F_2)$, and therefore if  $M_{{\cal F}_2,\widehat\mu^{\alpha}}(\Sigma\cap C_i)=0$ then $M_{{\cal F}_1,\diam^{\alpha}}(\pi(\Sigma\cap C_i))=0$.
By (\ref{nulo10}) we get $M_{{\cal F}_1,\diam^{\alpha}}(\pi(\Sigma)\cap\mbox{cl}(P^0_i))=0.$

\medskip
\noindent ($\Longleftarrow$) 
If  $\pi(\Sigma)\cap \mbox{cl}(P^0_i) \subset \bigcup_{F_1\in {\cal D}_1} F_1\subset \mbox{cl}(P^0_i))$ with  ${\cal D}_1\subset {\cal F}_1$, then 
$$
\pi^{-1} \left[   \pi(\Sigma)\cap \mbox{cl}(P^0_i)\cap  X_{\Pibf}^{1-1}  \right]   \subset  \bigcup_{F_1\in {\cal D}_1} \pi^{-1}(F_1\cap X_{\Pibf}^{1-1})
$$
But $\pi^{-1} \left[  \mbox{cl}(P_{i_0i_1\cdots i_n})\cap X_{\Pibf}^{1-1}  \right] \subset C_{i_0i_1\cdots i_n}$ and so from (\ref{misma0}) we get that 
$$
\pi^{-1} \left[   \pi(\Sigma)\cap \mbox{cl}(P^0_i)\cap  X_{\Pibf}^{1-1}  \right]   \subset  \bigcup_{F_2\in {\cal D}_2} F_2
\quad \mbox{ with  } {\cal D}_2\subset {\cal F}_2 
$$
and
$$
\sum_{F_1\in {\cal D}_1}\diam^{\alpha}(F_1)=\sum_{F_2\in {\cal D}_2}\widehat\mu^{\alpha}(F_2).
$$
Notice also that 
$$
\Sigma\cap C_i\cap\pi^{-1}(X_{\Pibf}^{1-1})\subset \pi^{-1} \left[   \pi(\Sigma)\cap \mbox{cl}(P^0_i)\cap  X_{\Pibf}^{1-1}  \right] 
$$
Therefore if  $M_{{\cal F}_1,\diam^{\alpha}}(\pi(\Sigma)\cap \mbox{cl}(P^0_i))=0$ then $M_{{\cal F}_2,\widehat\mu^{\alpha}}(\Sigma\cap C_i\cap\pi^{-1}(X_{\Pibf}^{1-1}))=0$, and from (\ref{nulo20}) $M_{{\cal F}_2,\widehat\mu^{\alpha}}(\Sigma\cap C_i)=0$.

\medskip

\noindent Finally, notice that the statement follows from (\ref{demo0}) since 
$$M_{{\cal F}_1,\diam^{\alpha}}(\pi(\Sigma))=0 \iff  M_{{\cal F}_1,\diam^{\alpha}}(\pi(\Sigma)\cap\mbox{cl}(P^0_i))=0  \qquad  \mbox{for all }  P^0_i \in{\cal P}^0
$$ 
$$M_{{\cal F}_2,\widehat\mu^{\alpha}}(\Sigma)=0 \iff M_{{\cal F}_2,\widehat\mu^{\alpha}}(\Sigma\cap C_i)=0  \qquad  \mbox{for all cylinder  }  C_i
$$
\end{proof}

\section{Thermodynamic formalism for countable Markov shifts }\label{TForm}

 Let $\Sigma_A^{\cal I}$ be a is  topologically mixing Markov chain.
The basic notion of the thermodynamic formalism is the {\it topological pressure} which was introduced by Ruelle \cite{Ru1} and Walters \cite{Wa} for continuous transformations acting on a compact space.
 In the case of finite alphabet, for  any continuous function $\phi:\Sigma_A^{\cal I}\longrightarrow\R$
\begin{equation}\label{topsimb}
P_{top}(\phi)=\lim_{n\to\infty} \frac 1n
\log\sum_{w|_n\in \Sigma^{\cal I}_A|_n} \exp
 \left[
 \sup_{z\in C(n,w) }\sum_{j=0}^{n-1}\phi(\sigma^j(z)) 
 \right]
\end{equation}
For topologically mixing countable Markov chains,  O. Sarig   introduced  in \cite{Sa}  the Gurevich pressure for some appropriate potentials. First let us recall some  regularity conditions for the potentials.

A potential $\phi:\Sigma_A^{\cal I}\longrightarrow\R$ is said to have {\it summable variations}  if
$
\sum_{n\ge 2} V_n(\phi) < \infty
$
where the variation $V_n(\phi)$ is defined by
$$
V_n(\phi):=\sup\{|\phi(w)-\phi(w')| :  w=(i_0,i_1,\ldots) ,w'=(j_0,j_1,\ldots)\in \Sigma_A^{\cal I }, \, i_k=j_k, \, 0\le k\le n-1\}.
$$
The potential  $\phi$ satisfies the {\it Walter condition }
 if for every $k\geq 1$, 
$$
\sup_{n\geq 1}[\mbox{\rm V}_{n+k}(\sum_{j=0}^{n-1}\phi\circ \sigma^j)]<\infty \quad \mbox{ and } \quad \sup_{n\geq 1}[\mbox{\rm V}_{n+k}(\sum_{j=0}^{n-1}\phi\circ \sigma^j)]\to 0 \mbox{ as }  k\to\infty ;
$$ 
the  second condition implies the first one when the alphabet is finite.
Notice that  Walter condition  is weaker than summable variations.

 Moreover, we will say that $\phi$ is  {\it weakly   H\"older continuos} iff there exists $A>0$ and $\theta\in(0,1)$ such that for all $n\geq 2$, $V_n(\phi)\leq A\, \theta^n$. Weak H\"older continuity is stronger than summable variation.

\medskip

For  a potential $\phi$ with summable variations 
O. Sarig  defined  the Gurevich pressure, $P_G(\phi)$, as
$$
P_{\bf G}(\phi)=\lim_{n\to\infty}\frac 1n
\log\sum_{\substack{{w=(i_0,i_1,\dots)\in \Sigma_{A}^{\cal I}} \\ i_0=i\,, \sigma^n(w)=w} }\exp
 \left[
\sum_{j=0}^{n-1}\phi(\sigma^j(w))
 \right]
 \qquad \mbox{ with } i\in\cal I
$$
This pressure   is a generalization of the Gurevich entropy $h_G(\sigma)$ \cite{Gu}, since $P_{G}(0)=h_G(\sigma)$.
This limit does not depend on $i$, it is never $-\infty$ and has the following convexity property for potentials $\phi$,$\psi$ of summable variations: $P_G(t\phi+(1-t)\psi)\leq tP_G(\phi)+(1-t)P_G(\psi)$ for all $0\leq t\leq 1$.

\subsection{Weak Gibbs measures}\label{WG}
We  will use the following  definition of a local weak Gibbs measure. 

\begin{definition} \label{mGibbs} A probability  $\widehat\mu$ on $\Sigma_A^{\cal I}$ is called a local weak Gibbs measure for the potential  $\phi:\Sigma_A^{\cal I}\longrightarrow\R$ if there exists a constant $P$ and a sequence $\{K_n\}$ of positive numbers with $1\leq K_n\leq K_{n+1}$ such that $\lim_{n\to\infty}\dfrac{1}{n}\log K_n=0$
and for $\widehat\mu$-a.e. $z\in C_{i_0i_1\ldots i_{n-1}}$
\begin{equation}\label{Gibbs}
\frac{1}{c(i_0)K_n}\leq \frac{\widehat\mu(C_{i_0i_1\ldots i_{n-1}})}{\exp(-nP+\sum_{j=0}^{n-1}\phi\circ\sigma^j(z))}\leq c(i_0) K_n
\end{equation}
with $c(i_0)\geq 1$ a constant depending on $i_0$.  If  the measure $\widehat\mu$ is $\sigma$-invariant, we will say that $\widehat\mu$ is a  local weak $\sigma$-Gibbs measure.

If  $\sup_{i\in {\cal I}}c(i)<\infty$, then $\widehat\mu$ is  called a weak Gibbs measure. If $\sup K_n<\infty$  and {\rm (\ref{Gibbs})} holds for all $z\in C_{i_0i_1\ldots i_{n-1}}$, then  $\widehat\mu$ is called a local Gibbs measure. A local Gibbs measure is  a Gibbs measure if 
 $\sup_{i\in {\cal I} }c(i)<\infty$. 

 \end{definition}
 
 \begin{remark}\label{summablevariation}{ If the alphabet ${\cal I}$ is infinite numerable but 
$\phi$ has summable variation then the constant $P$ is the Gurevich pressure $P_G(\phi)$. Moreover, if $\sum_{n\geq 1} V_n(\phi)<\infty $, then (\ref{Gibbs}) holds for all $z\in C_{i_0i_1\ldots i_{n-1}}$ with an appropriate  new constant $c(i_0)$.} 
 \end{remark}

We will refers later to the following  property  of   a local weak  $\sigma$-Gibbs  measure

\begin{remark}\label{wghitting}
 If $\musymb$ is a  local weak $\sigma$-Gibbs measure, then for  $m>n$ 
$$
\frac{1}{s_{n,m}}\frac{\musymb(\sigma^n(C(m,z)))}{\musymb(\sigma^n(C(n,z)))}
\leq \frac{\musymb(C(m,z))}{\musymb(C(n,z))}\leq s_{n,m}\frac{\musymb(\sigma^n(C(m,z)))}{\musymb(\sigma^n(C(n,z)))}
$$
with 
$\quad s_{n,m}(z):={c(z_0)^2}{c(z_n)^2}K_1 K_{m-n+1} K_{n+1} K_{m+1}\leq {c(z_0)^2}{c(z_n)^2}K_{m+1}^4.
$

\noindent For $m=n-1$ we have
$$
\frac{1}{s_{n,n-1}}\frac{1}{\musymb(\sigma^n(C(n,z)))}
\leq \frac{\musymb(C(n-1,z))}{\musymb(C(n,z))}\leq s_{n,n-1}\frac{1}{\musymb(\sigma^n(C(n,z)))}
$$
with 
$\quad s_{n,n-1}(z):={c(z_0)^2}{c(z_n)}K_1 K_{n} K_{n+1} \leq {c(z_0)^2}{c(z_n)}K_{n+1}^3
$
\end{remark}
\begin{proof} From Definition \ref{mGibbs} we have that for  $\widehat\mu$-a.e. $\widetilde z\in C(m,z)$
\begin{equation}\label{primeraG}
\frac{1}{c(z_0)^2 K_{m+1} K_{n+1}}\leq \frac{\musymb(C(m,z))}{\musymb(C(n,z))}\frac{1}{e^{-(m-n)P}\exp(\sum_{j=n+1}^{m}\phi\circ\sigma^j(\widetilde z))}\leq c(z_0)^2 K_{m+1} K_{n+1}
\end{equation}
and for $\widehat\mu$-a.e. $w\in \sigma^n(C(m,z))$
\begin{equation}\label{segundaG}
\frac{1}{c(z_n)^2 K_{m-n+1} K_{1}}\leq \frac{\musymb(\sigma^n(C(m,z)))}{\musymb(\sigma^n(C(n,z)))}\frac{1}{e^{-(m-n)P}\exp(\sum_{j=1}^{m-n}\phi\circ\sigma^j(w))}\leq c(z_n)^2 K_{m-n+1} K_{1}
\end{equation}
If $W$ denotes the set of points in $\sigma^n(C(m,z))$ verifying (\ref{segundaG}), then $\sigma^{-n}(W)$ is a subset of  $C(m,z)$ with  measure $\widehat\mu(\sigma^{-n}(W))=\widehat\mu(W)>0$. Therefore, there exist  $\widetilde z, w$ with  $\sigma^n(\widetilde z)=w$ verifying the inequalities (\ref{primeraG}) and (\ref{segundaG}). The first result follows directly from these inequalities. The proof for the case $m=n-1$ is similar.
\end{proof}

{
The following equality is called the {\it variational  principle of topological pressure}  
$$
P_{top}(\phi)=\sup_{\mu\in{\cal M}}\left\{ h_{\mu}(\sigma)+\int \phi\, d\mu\right\}
$$
where ${\cal M}$ is the set of $\sigma$-invariant probability measures in $\Sigma_A^{\cal I}$ and $h_{\mu}(\sigma)$ denotes the entropy of $\sigma$ with respect to $\mu$. A measure $\mu\in{\cal M}$ is called an {\it equilibrium measure for} $\phi$  iff 
$$
 h_{\mu}(\sigma)+\int \phi\, d\mu=\sup_{\mu\in{\cal M}}\left\{ h_{\mu}(\sigma)+\int \phi\, d\mu\right\}.
$$
For finite alphabet, classical  results  of Bowen and Ruelle (see \cite{Bo}, \cite{Ru1}) shows that for any $\phi:\Sigma_A^{\cal I}\longrightarrow\R$ with the Walters property,  there is a unique equilibrium measure and this measure is a $\sigma$-Gibbs measure. 
The existence of   equilibrium and  weak Gibbs measures 
has been also studied for non H\"older potentials. See \cite{FFY},  \cite{Hu},  \cite{Yu1}, \cite {Yu2}, \cite{Ke}.

For infinite countable alphabet 
 Sarig obtained a variational  principle for the Gurevich  pressure (for  $\phi$ with summable variations and $\sup \phi<\infty$) and studied the existence   of equilibrium and Gibbs measures. See also Mauldin and Urbanski approach \cite{MU3}. {We remark that in the  non-compact case one can have equilibrium measures which are not Gibbs and also Gibbs measures which are not equilibrium measures. }

Sarig 
proved a  generalized version of Ruelle-Perron-Frobenius theorem (see \cite{Sa},  \cite{Sa2}, \cite{SaNotes}),  involving the modes of recurrence of the potential. In particular,  for 
 $(\Sigma_A^{\cal I},\sigma)$   a topologically mixing Markov chain   and
$\phi:\Sigma_A^{\cal I}\longrightarrow\R$ a potential with summable variations and  finite Gurevich  pressure, he proved that   $\phi$ is positive recurrent iff there are $\lan>0$, a positive continuous function $h$, and a conservative measure $\nu$ which is finite on cylinders, such that $L_{\phi}h=\lan h, L^{*}{\nu}=\lan \nu$, and $\int h\, d\nu=1$. In this case $\lan=\exp{P_G(\phi)}$, and for every cylinder $C$, 
$$
\lan^{-n}L_{\phi}^n (\1_{C})(x)\to h(x)\frac{\nu(C)}{\int h\, d\nu} \quad \mbox{as } n\to\infty.
$$
uniformly in $x$ on compact sets. Here $L_{\phi}(f):=\sum_{\sigma(y)=x}e^{\phi(y)}f(y)$ is the Ruelle operator. We refers to the measure $dm=hd\nu$ as the Ruelle-Perron-Frobenius (RPF) measure.
Moreover, he proved that if $\bf m$ is the RPF measure of a potential $\phi$, which is   positive recurrent with summable variations and such that $P_G(\phi)<\infty$ and $\sup\phi<\infty$, and the entropy of $\bf m$ is finite, then $\bf m$ is an equilibrium measure which is exact (whence ergodic and strong mixing).

We recall that 
 a potential  $\phi:\Sigma_A^{\cal I}\longrightarrow\R$ with summable variations and  finite Gurevich  pressure $P:=P_G(\phi)$ is   {\it positive recurrent} iff  for some (hence all)  $i\in {\cal I}$
$$
\sum_n e^{-nP}Z_n(\phi,i)=\infty\quad  \mbox{ and } \quad \sum_n ne^{-nP}Z^*_n(\phi,i)<\infty \quad \mbox{with}
$$
$$
Z_n(\phi,i)=\sum_{\substack{{w=(i_0,i_1,\dots)\in \Sigma_{A}^{\cal I}} \\ i_0=i,\, \sigma^n(w)=w} } \exp
\left[{\sum_{j=0}^{n-1} \phi\circ\sigma^j(w)} \right],  \quad
Z^*_n(\phi,i)=\sum_{\substack{{w=(i_0,i_1,\dots)\in \Sigma_{A}^{\cal I}} \\ i_0=i\,, i_1,\ldots i_{n-1}\neq i, \, \sigma^n(w)=w} } \exp
\left[{\sum_{j=0}^{n-1} \phi\circ\sigma^j(w)} \right]
$$
Notice that
$$
P_G(\phi)=\lim_{n\to\infty}\frac 1n \log Z_n(\phi,i)
$$
In \cite{Sa3}  Sarig   proved that  if $(\Sigma_A^{\cal I},\sigma)$ is a topologically mixing Markov chain  and
$\phi:\Sigma_A^{\cal I}\longrightarrow\R$ satisfies the Walters condition
 then $\phi$ has a $\sigma$-Gibbs measure $($with $P=P_G(\phi))$ iff $\Sigma_A^{\cal I}$ has the BIP property and $\phi$ has finite Gurevich pressure and V$_1 (\phi)<\infty$. He proved that if $\Sigma_A^{\cal I}$ has the BIP property, then any potential $\phi$ with the Walters property and such that $V_1(\phi)<\infty$ and $P_G(\phi)<\infty$, is positive recurrent. A related result on the existence of Gibbs measures was obtained in \cite{MU2} by Mauldin and Urbanski.

\subsection{Some mixing properties} \label{SMB}

Along this section we will assume that  $\widehat\mu$ is a  local weak $\sigma$-Gibbs measure  with potential $\phi$ such that    $\phi\in L^1(\widehat\mu)$.  In the case of infinite  alphabet  we  will assume that $\sum_{n\geq 1}V_n(\phi)<\infty$ so that remark  \ref{summablevariation} holds.  In next two section to avoid  misunderstanding  we will use $\bf P$ (the bold style) to denote  the  constant (pressure) in the definition of local weak $\sigma$-Gibbs measure.

\begin{definition}Given $\ep>0$ and $M\in\N$ we denote by $Good(M,\ep)$ the set of  points $z\in \Sigma_A^{\cal I}$ such that 
\begin{equation}\label{Ergodic}
e^{-(j+1)[{\bf P}- \int\phi d\,\widehat\mu+\ep]}<\widehat \mu(C(j,z))<  
e^{-(j+1)[{\bf P}- \int\phi d\,\widehat\mu-\ep]}\,, \qquad \mbox{
for all } j\geq M\,.
\end{equation}
\end{definition}
If $\widehat\mu$ is ergodic, as
a consequence of Birkhoff's ergodic theorem  and remark \ref{summablevariation}, we have that \begin{equation}\label{Birk}
\lim_{n\to\infty}\frac1n \log{\widehat\mu(C(n,z))}=-{\bf P}+\int\phi \, d\widehat\mu \quad \mbox{for} \quad  z \quad  \widehat\mu- a.e.
\end{equation}
Hence by Egoroff's theorem we get that

\begin{lemma} \label{egorof}
 If $\widehat\mu$ is ergodic, then
given $\ep>0$ and any $0$-cylinder $P_1$
$$
\widehat\mu(P_1\cap Good(M,\ep))\to \widehat\mu(P_1) \quad  \mbox{  as   }   \quad M\to\infty
$$
\end{lemma}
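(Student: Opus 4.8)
The plan is to deduce Lemma \ref{egorof} from the almost-everywhere convergence statement \eqref{Birk} together with Egoroff's theorem, using the $\sigma$-invariance (hence ergodicity-compatible measure class) structure already in place. First I would fix $\ep>0$ and a $0$-cylinder $P_1$ and introduce the measurable functions $g_j(z) := \tfrac{1}{j+1}\log\widehat\mu(C(j,z))$ on the full-measure set where $\widehat\mu(C(j,z))>0$ for all $j$; by \eqref{Birk}, valid thanks to Remark \ref{summablevariation} guaranteeing the pressure constant equals $P_G(\phi)$ and Birkhoff applied to $\phi\in L^1(\widehat\mu)$, we have $g_j(z)\to -{\bf P}+\int\phi\,d\widehat\mu$ for $\widehat\mu$-a.e.\ $z$. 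The point $z$ belongs to $Good(M,\ep)$ precisely when $\bigl|g_j(z)-(-{\bf P}+\int\phi\,d\widehat\mu)\bigr|<\ep$ for all $j\ge M$, so $Good(M,\ep)$ is exactly the ``$M$-tail'' set in the standard Egoroff decomposition for the sequence $\{g_j\}$ at tolerance $\ep$.

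Next I would observe that the sets $Good(M,\ep)$ increase with $M$ (a larger threshold imposes fewer constraints), and that $\bigcup_M Good(M,\ep)$ contains the full-measure set on which $g_j\to -{\bf P}+\int\phi\,d\widehat\mu$: indeed if $z$ is such a convergence point, then for some $M=M(z)$ all the inequalities in \eqref{Ergodic} hold. Hence $\widehat\mu\bigl(\bigcup_M Good(M,\ep)\bigr)=\widehat\mu(\Sigma_A^{\cal I})$, and intersecting with $P_1$, $\widehat\mu\bigl(P_1\cap\bigcup_M Good(M,\ep)\bigr)=\widehat\mu(P_1)$. By continuity of the measure $\widehat\mu$ from below along the increasing family $\{P_1\cap Good(M,\ep)\}_M$, we conclude $\widehat\mu(P_1\cap Good(M,\ep))\to\widehat\mu(P_1)$ as $M\to\infty$, which is the assertion. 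Strictly speaking one does not even need the full strength of Egoroff here, only countable additivity; Egoroff is the reason the convergence is ``uniform on $Good(M,\ep)$ for large $M$'', a fact that will be used later in constructions but is not required for the statement as phrased.

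The only genuinely delicate point is the passage from \eqref{Birk}, which is stated merely as an $\widehat\mu$-a.e.\ limit, to the requirement that $\widehat\mu(C(j,z))>0$ for $\widehat\mu$-a.e.\ $z$ and all $j$, so that the logarithm and the two-sided bound in \eqref{Ergodic} make sense. This follows because $\widehat\mu$ is a local weak Gibbs measure: the defining inequality \eqref{Gibbs} (valid for $\widehat\mu$-a.e.\ $z\in C_{i_0\cdots i_{n-1}}$) forces $\widehat\mu(C(j,z))$ to be comparable to $\exp(-j{\bf P}+\sum_{i<j}\phi\circ\sigma^i(z))$, which is strictly positive. Thus, discarding the $\widehat\mu$-null set where either \eqref{Gibbs} fails for some $n$ or \eqref{Birk} fails, we are left with a full-measure set on which the characterization of $Good(M,\ep)$ via \eqref{Ergodic} is literally meaningful, and the monotone-union argument above goes through. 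I expect this bookkeeping about null sets to be the main (mild) obstacle; the rest is a routine application of monotone convergence of measures.
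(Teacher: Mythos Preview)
Your argument is correct and follows the same route as the paper, which simply invokes Egoroff's theorem on the a.e.\ convergence \eqref{Birk}; you have merely unpacked the relevant step (that the increasing sets $Good(M,\ep)$ exhaust a full-measure set, so continuity from below gives the conclusion). Your remark that only monotone convergence of measures is needed, not the full uniform-convergence statement of Egoroff, is a fair observation but does not constitute a genuinely different approach.
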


\begin{proposition} \label{SMsinmalos} Let $P_1,P_2$   be  two $0$-cylinders, and  given $\ep>0$
let ${\cal S}_{N}(M,\ep)$  denote the collection
of cylinders  $C(N,z)$  with $z\in Good(M,\ep)$ verifying
$$
C(N,z) \subset P_1\,, \qquad
\sigma^N(C(N,z))=P_2 \,.
$$
If $\widehat\mu$ is mixing then, for all $M$  large enough (depending on $P_1$ and $\ep$) and $N$ large enough (depending on $P_1$ and $P_2$),
\begin{equation} \label{Sn}
\widehat\mu({\cal S}_{N}(M,\ep)) := \mu \big(\bigcup_{S\in{\cal S}_{N}(M,\ep)} S
\big) \ge \frac 12 \, \widehat \mu (P_1) \, \widehat\mu  (P_2) \,.
\end{equation}
\end{proposition}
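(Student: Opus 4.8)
The plan is to estimate $\widehat\mu({\cal S}_N(M,\ep))$ from below by combining the mixing property of $\widehat\mu$ with Lemma~\ref{egorof}, which controls the measure of the ``bad'' cylinders we must discard. First I would partition the collection of all $N$-cylinders $C(N,z)$ satisfying $C(N,z)\subset P_1$ and $\sigma^N(C(N,z))=P_2$ into those with $z\in Good(M,\ep)$ (which form ${\cal S}_N(M,\ep)$) and the remaining ``bad'' ones. The union of all such $N$-cylinders (good and bad together) is exactly $P_1\cap\sigma^{-N}(P_2)$, whose measure, by mixing, tends to $\widehat\mu(P_1)\widehat\mu(P_2)$ as $N\to\infty$; so for $N$ large (depending on $P_1,P_2,\ep$) we have $\widehat\mu(P_1\cap\sigma^{-N}(P_2))\ge (1-\delta)\widehat\mu(P_1)\widehat\mu(P_2)$ for any preassigned small $\delta$. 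It then suffices to show the bad cylinders contribute at most, say, a $\tfrac14\widehat\mu(P_1)\widehat\mu(P_2)$ share.

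The key step is bounding the measure of the bad cylinders. A cylinder $C(N,z)$ is bad precisely when $z\notin Good(M,\ep)$, i.e.\ $C(N,z)$ is not contained in $Good(M,\ep)$ --- more precisely, I would say $C(N,z)$ is bad if $C(N,z)\cap Good(M,\ep)=\varnothing$ (note $Good(M,\ep)$ is a union of $M$-cylinders, so for $N\ge M$ each $N$-cylinder either lies inside $Good(M,\ep)$ or is disjoint from it). Hence the union of bad $N$-cylinders contained in $P_1$ is contained in $P_1\setminus Good(M,\ep)$, whose measure is at most $\widehat\mu(P_1)-\widehat\mu(P_1\cap Good(M,\ep))$, and by Lemma~\ref{egorof} this can be made $\le\eta\,\widehat\mu(P_1)$ for any $\eta>0$ by taking $M$ large (depending on $P_1,\ep$). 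Now the union of the \emph{bad} $N$-cylinders with $\sigma^N$-image $P_2$ is contained in $(P_1\setminus Good(M,\ep))\cap\sigma^{-N}(P_2)$. To turn the bound $\eta\,\widehat\mu(P_1)$ on the full set $P_1\setminus Good(M,\ep)$ into a bound involving the factor $\widehat\mu(P_2)$, I would again invoke mixing: applying the mixing property to the set $P_1\setminus Good(M,\ep)$ (or, to stay within finite/measurable sets, to a slightly enlarged but still small $M$-cylinder union covering it), we get $\widehat\mu\big((P_1\setminus Good(M,\ep))\cap\sigma^{-N}(P_2)\big)\le 2\,\widehat\mu(P_1\setminus Good(M,\ep))\,\widehat\mu(P_2)\le 2\eta\,\widehat\mu(P_1)\widehat\mu(P_2)$ for $N$ large. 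Choosing $\eta$ (hence $M$) so that $2\eta\le\tfrac14$, and $\delta\le\tfrac14$, yields
\[
\widehat\mu({\cal S}_N(M,\ep))\ \ge\ \widehat\mu(P_1\cap\sigma^{-N}(P_2))-\widehat\mu\big((P_1\setminus Good(M,\ep))\cap\sigma^{-N}(P_2)\big)\ \ge\ \tfrac12\,\widehat\mu(P_1)\widehat\mu(P_2),
\]
which is the claim.

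The main obstacle I anticipate is a bookkeeping subtlety rather than a deep one: making sure the ``bad'' set is genuinely a union of cylinders so that mixing applies cleanly, and that the order of quantifiers works out --- $M$ is chosen first (large, depending on $P_1$ and $\ep$, via Lemma~\ref{egorof}) and only then $N$ is chosen large depending on $P_1,P_2$ (and on the already-fixed $M$, through the mixing rate for the specific $M$-cylinder decomposition). One must also be careful that $Good(M,\ep)$ is $\widehat\mu$-measurable and that $N\ge M$, so that every $N$-cylinder is comparable with $Good(M,\ep)$ in the all-or-nothing way used above; with $\sum_{n\ge1}V_n(\phi)<\infty$ (Remark~\ref{summablevariation}) the Gibbs bounds in \eqref{Gibbs} hold for every point of a cylinder, which makes $Good(M,\ep)$ a union of $M$-cylinders and removes any ambiguity. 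The remaining estimates are routine applications of mixing and additivity of $\widehat\mu$.
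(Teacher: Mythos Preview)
Your argument is correct, but it takes a slightly more elaborate route than the paper's. The paper decomposes $\widehat\mu(P_1)$ (rather than $\widehat\mu(P_1\cap\sigma^{-N}(P_2))$) and arrives directly at
\[
\widehat\mu({\cal S}_N(M,\ep))\ \ge\ \widehat\mu(P_1\cap\sigma^{-N}(P_2))\ -\ \widehat\mu(P_1\setminus Good(M,\ep)),
\]
i.e.\ it simply drops the intersection with $\sigma^{-N}(P_2)$ on the bad term and uses mixing only \emph{once}. Your version keeps the intersection and invokes mixing a second time to produce the extra factor $\widehat\mu(P_2)$ on the bad term. Both work; the paper's is shorter, while yours has the pleasant feature that the threshold for $M$ genuinely depends only on $P_1$ and $\ep$ (you need $\widehat\mu(P_1\setminus Good(M,\ep))\le\eta\,\widehat\mu(P_1)$ with $\eta$ absolute), at the price that your threshold for $N$ now also depends on $M$ through the second mixing step.

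One inaccuracy to flag, though it does not damage your proof: $Good(M,\ep)$ is \emph{not} a union of $M$-cylinders. Membership in $Good(M,\ep)$ requires the size estimate \eqref{Ergodic} to hold for \emph{every} $j\ge M$, so it is a tail condition depending on all coordinates of $z$, not just the first $M+1$. (Even with $\sum_{n\ge1}V_n(\phi)<\infty$ this remains the case; Remark~\ref{summablevariation} only says the Gibbs inequality \eqref{Gibbs} holds for every point of a cylinder, which does not make $Good(M,\ep)$ cylindrical.) Fortunately your argument only needs that a ``bad'' $N$-cylinder---one with $C\cap Good(M,\ep)=\varnothing$---is contained in $P_1\setminus Good(M,\ep)$, and this is immediate from the definition of ``bad'' without any structural claim about $Good(M,\ep)$. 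So you can simply delete the parenthetical about $M$-cylinders and the proof stands.
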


\begin{proof} Let ${\cal P}$ denote the set of all $0$-cylinders. We have that  
\begin{align}
\widehat\mu (P_1) & = \widehat\mu ({\cal S}_{N}(M,\ep)) + \sum_{P\in{\cal P}\setminus
\{P_2\}} \sum_{\substack{ C(N,z)\subset P_1 \ s.t. \ z\in\, Good(M,\ep) \\ \sigma^N(C(N,z)) = P}} \widehat\mu(C(N,z)) + \widehat\mu (P_1 \setminus Good(M,\ep)) \notag \\
& \le\widehat\mu ({\cal S}_{N}(M,\ep)) + \sum_{P\in{\cal P}\setminus \{P_2\}}\widehat \mu (P_1 \cap \sigma^{-N} (P)) + \widehat \mu (P_1 \setminus Good(M,\ep)) \notag \\
& = \widehat\mu ({\cal S}_{N}(M,\ep)) +\widehat \mu (P_1) - \widehat\mu (P_1 \cap \sigma^{-N} (P_2))
+ \widehat \mu (P_1 \setminus Good(M,\ep)) \notag \,.
\end{align}
But $\lim_{M\to\infty}\widehat \mu(Good(M,\ep)) = 1$  by lemma
\ref{egorof}  and because $\widehat\mu$ is mixing
$$
\lim_{N\to\infty} \widehat\mu (P_1 \cap \sigma^{-N} (P_2)) = \widehat\mu(P_1) \,
\widehat\mu(P_2) .
$$
 Hence, we get (\ref{Sn}).
\end{proof}

\begin{remark} \label{goodlocal}
\rm 
If $\widehat\mu$ is mixing and there exist $m_0\in \N$ and  $0<\eta< 1$ such that
$$
 \frac{\widehat\mu(P_1\cap Good(m_0,\ep))}{\widehat\mu(P_1)}\geq 1-\eta,
$$
 then for all $0$-cylinder $P_2$  with $\widehat\mu(P_2)>2\eta$ 
 $$
\widehat\mu({\cal S}_{N}(m_0,\ep)) \geq \frac{1}{2} \,  \widehat\mu (P_1) \, \widehat\mu  (P_2) \,.
$$
for all $N$ large enough (depending on $P_1$ and $P_2$).
\end{remark}
\begin{proof} Take in the above proof $N$ large so that
$$
\frac{ \widehat\mu (P_1 \cap \sigma^{-N} (P_2))}{ \widehat\mu(P_1) \,\widehat\mu(P_2)} \geq \frac 12 +\frac{\eta}{\widehat\mu(P_2)}.
$$
\end{proof}

\begin{definition}\label{unifgood}
A collection ${\cal P}$ of  \, $0$-cylinders  is  $\ep$-uniformly $\widehat\mu$-good iff  there exist $m_0$  and $0<\eta<1$ such that 
$$
 \frac{\widehat\mu(P\cap Good(m_0,\ep))}{\widehat\mu(P)}\geq 1-\eta,  \quad \text{ for all } \quad P\in {\cal P}.
$$
\end{definition}
\begin{remark}\label{aproxfinito}\rm
Of course if $\widehat\mu$ is ergodic  any finite collection   is   $\ep$-uniformly $\widehat\mu$-good, since
$\widehat\mu(P\cap \text{Good\,}(M,\ep))\to \widehat\mu(P)$ when $M\to\infty$.
\end{remark}

From remark \ref{goodlocal} follows that:
\begin{corollary} \label{muybuenos}
If $\widehat\mu$ is mixing and ${\cal P}$ is  $\ep$-uniformly $\mu$-good, then  for all $P_1\in{\cal P}$ and all $0$-cylinder $P_2$ with $\mu(P_2)>2\eta$ 
$$
\widehat\mu({\cal S}_{N}(m_0,\ep)) \geq \frac{1}{2} \, \widehat \mu (P_1) \, \widehat\mu  (P_2)
$$
for all  $N$ large enough (depending on $P_1$ and $P_2$).
\end{corollary}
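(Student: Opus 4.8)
The plan is to read this off directly from Remark \ref{goodlocal}, which contains all the substance. First I would unwind Definition \ref{unifgood}: saying that $\mathcal P$ is $\ep$-uniformly $\widehat\mu$-good means there are a \emph{single} integer $m_0$ and a \emph{single} constant $0<\eta<1$, both independent of the particular cylinder, with $\widehat\mu(P\cap Good(m_0,\ep))\ge(1-\eta)\,\widehat\mu(P)$ for every $P\in\mathcal P$. Now fix $P_1\in\mathcal P$. Then the hypothesis of Remark \ref{goodlocal} holds verbatim, with this $m_0$ and this $\eta$, applied to the cylinder $P_1$. Consequently, for every $0$-cylinder $P_2$ with $\widehat\mu(P_2)>2\eta$, Remark \ref{goodlocal} delivers $\widehat\mu({\cal S}_{N}(m_0,\ep))\ge\frac12\,\widehat\mu(P_1)\,\widehat\mu(P_2)$ for all $N$ large enough (depending only on $P_1$ and $P_2$), which is exactly the assertion. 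So the corollary is pure bookkeeping: the content is that uniform-goodness hands us an $m_0$ and $\eta$ that work simultaneously for all $P_1\in\mathcal P$, while the $N$ is still allowed to depend on $(P_1,P_2)$, so the quantifier order in the statement is respected.

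For completeness I would recall, in one line, where the work actually sits — in Proposition \ref{SMsinmalos} and its localized form Remark \ref{goodlocal}. Rearranging the displayed chain of inequalities in Proposition \ref{SMsinmalos} gives $\widehat\mu({\cal S}_{N}(m_0,\ep))\ge\widehat\mu(P_1\cap\sigma^{-N}(P_2))-\widehat\mu(P_1\setminus Good(m_0,\ep))$. The defect term is controlled by uniform goodness, $\widehat\mu(P_1\setminus Good(m_0,\ep))\le\eta\,\widehat\mu(P_1)$, and mixing lets us push $\widehat\mu(P_1\cap\sigma^{-N}(P_2))$ as close as we like to $\widehat\mu(P_1)\,\widehat\mu(P_2)$ by enlarging $N$; choosing $N$ with $\widehat\mu(P_1\cap\sigma^{-N}(P_2))\ge\bigl(\tfrac12+\eta/\widehat\mu(P_2)\bigr)\widehat\mu(P_1)\widehat\mu(P_2)$ then yields the bound, and this choice is feasible precisely because $\widehat\mu(P_2)>2\eta$ keeps the required constant strictly below $1$.

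\textbf{Main obstacle.} At the level of the corollary there is genuinely no obstacle — it is a one-step consequence of having uniform $m_0,\eta$. The only points deserving a word of care are: (i) the $N$ furnished by Remark \ref{goodlocal} must be seen to depend on nothing beyond $P_1$ and $P_2$ (it does, since $m_0$ and $\eta$ are fixed once $\mathcal P$ is, and $\ep$ is fixed throughout); and (ii) the hypothesis $\widehat\mu(P_2)>2\eta$ is not cosmetic — it is exactly what makes $\tfrac12+\eta/\widehat\mu(P_2)<1$, hence what allows mixing to supply a legitimate $N$. If one wanted to avoid invoking Remark \ref{goodlocal} as a black box, the alternative is to rerun its two-line proof inside the corollary, but that is the only real ``calculation'' involved.
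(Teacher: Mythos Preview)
Your proposal is correct and matches the paper's approach exactly: the paper simply writes ``From remark \ref{goodlocal} follows that:'' and states the corollary, with no further argument. Your unpacking of Definition \ref{unifgood} and the one-line recap of where the work sits in Proposition \ref{SMsinmalos}/Remark \ref{goodlocal} is more than the paper itself provides, but entirely consistent with it.
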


\subsection{Summable uniform rate of  mixing}\label{SUM}

In this section we look for setting up a better mixing property which allow us to get that an infinite countable collection of $0$-cylinders is $\ep$-uniformly $\widehat\mu$-good.

\begin{definition}\label{propgauss}
We will say that $\widehat\mu$ has a summable uniform rate of mixing $\Psi$ (of order $3$) in a collection ${\cal P}$ of $0$-cylinders  iff
there exist a decreasing function $\Psi:\N\longrightarrow \R$  and a natural number $\ell_0$ such that
\begin{itemize}
\item[\rm (1)]  For all $P_1\in  {\cal P}$ and for all pair of $0$-cylinders $P_2, P_3$, we have for $k\geq 0$ 
$$
|\widehat\mu(P_1\cap \sigma^{-k}(P_2)\cap \sigma^{-(k+\ell)}(P_3))-\widehat\mu(P_1\cap \sigma^{-k}(P_2))\,\widehat\mu(P_3)|\leq\Psi(\ell)\,\widehat\mu(P_1\cap \sigma^{-k}(P_2))\, \widehat\mu(P_3)\, \, \mbox{ for   } \ell\geq \ell_0,
$$
and  for $0<\ell<\ell_0$ there exists a positive constant $C$ such that
$$
\widehat\mu(P_1\cap \sigma^{-k}(P_2)\cap \sigma^{-(k+\ell)}(P_3))\leq C\,\widehat \mu(P_1)\, \widehat\mu(P_2\cap \sigma^{-\ell}(P_3))
$$
\item[\rm (2)]$ \sum_{\ell}\Psi(\ell)<\infty$.
\end{itemize}
\end{definition}

\begin{remark}\label{qi}
Condition $(1)$ implies (taking $k=0$ and $P_1=P_2$) that 
$$
|\widehat\mu(P_1\cap \sigma^{-\ell}(P_3))-\widehat\mu(P_1)\widehat\mu(P_3)|\leq\Psi(\ell)\widehat\mu(P_1)\widehat\mu(P_3) \quad \mbox{ for } \ell\geq \ell_0.
$$
Also, by addition over all the of  $0$-cylinders $P_3$ we have that  
$$\widehat\mu(P_1\cap \sigma^{-k}(P_2))\leq C\,\widehat \mu(P_1)\, \widehat\mu(P_2).
$$
\end{remark}

\begin{remark} \label{fracmixing} We recall (see \cite{MU})  that ${\cal P}$ is called continued fraction mixing (with respect to $\widehat\mu$) if  for all  $k$-cylinder $Q$ with  $\sigma^k(Q)\in {\cal P}$  and for all  Borel set  $A$ we have  that 
$$
\widehat\mu(Q\cap \sigma^{-(k+1)}(A))\leq c\, \widehat\mu(Q)\widehat\mu(A)\quad  \mbox { for some constant } c, 
$$
and for $\ell $ large enough, say $\ell\geq \ell_1$,
$$
|\widehat\mu(Q\cap \sigma^{-(k+\ell)}(A))-\widehat\mu(Q)\widehat\mu(A)|\leq\Psi(\ell)\widehat\mu(Q)\widehat\mu(A), \quad \mbox{ with } \Psi(\ell)\to 0 \mbox{  as }\ell\to\infty.
$$
Continued fraction mixing  implies
 condition $(1)$.  If 
 $\Psi(\ell)=\theta^{\ell}$  for some $0<\theta<1$, this property is usually called exponentially continued fraction mixing, and we also have  condition $(2)$. 
\end{remark}

\begin{theorem} \label{mejoregorof}
Let $\widehat\mu$
be a   local weak $\sigma$-Gibbs measure with potential $\phi$ such  that $\phi\in L^2(\widehat\mu)$, and $V_1(\phi)<\infty$.
If  $\widehat\mu$ has summable uniform rate of mixing of order $3$ in a  collection ${\cal P}$ of $0$-cylinders and the constants of localness of the measure $\widehat\mu$ in ${\cal P}$ are upper bounded  then, given $\ep>0$, for all $M$ large enough,
\begin{equation} \label{e_nlocal}
 \frac{\widehat\mu(P\cap Good(M,\ep))}{ \widehat\mu(P)}
\geq  1-\ \frac{c}{\ep^2}\sum_{n\geq M}\frac{1}{n^2}\,,
\quad \hbox{ for all $0$-cylinder $P$ in $\cal P$ }
\end{equation}
with $c$ a positive constant.
\end{theorem}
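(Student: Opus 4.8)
Write $\bar\phi:=\int\phi\,d\widehat\mu$ and $\psi:=\phi-\bar\phi$ (so $\int\psi\,d\widehat\mu=0$), put $S_ng:=\sum_{i=0}^{n-1}g\circ\sigma^i$, and set $\kappa:=\sup\{c(i):C_i\in{\cal P}\}<\infty$. The first move is to reduce the complement of $Good(M,\ep)$ inside $P$ to large--deviation events for $S_n\psi$. By Definition \ref{mGibbs}, for $\widehat\mu$-a.e.\ $z$ with $C(j,z)\subset P\in{\cal P}$ one has $\log\widehat\mu(C(j,z))=-(j+1){\bf P}+S_{j+1}\phi(z)+\rho_j(z)$ with $|\rho_j(z)|\le\log\kappa+\log K_{j+1}$; since $\tfrac1n\log K_n\to0$ there is $M_0=M_0(\ep,\kappa,\{K_n\})$ with $|\rho_j(z)|\le\tfrac{\ep}{2}(j+1)$ for all $j\ge M_0$. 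As the two inequalities defining $Good(M,\ep)$ amount to $|S_{j+1}\psi(z)+\rho_j(z)|<(j+1)\ep$ for every $j\ge M$, it follows that for every $M\ge M_0$, up to a $\widehat\mu$-null set,
\[
P\setminus Good(M,\ep)\ \subseteq\ \bigcup_{n\ge M}\big\{z\in P:\ |S_n\psi(z)|\ge \tfrac{\ep}{2}\,n\big\}.
\]

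The core step is a uniform maximal second moment estimate: there is $C>0$, independent of $n$ and of $P\in{\cal P}$, with $\int_{P}\big(\max_{1\le m\le n}|S_m\psi|\big)^{2}\,d\widehat\mu\le C\,n\,\widehat\mu(P)$ for all $n\ge1$. To obtain it, approximate $\psi$ in $L^2(\widehat\mu)$ by its conditional expectations $\psi_N$ on the $\sigma$-algebra generated by the first $N$ coordinates ($\psi_N\to\psi$ in $L^2$ because the cylinders generate the Borel $\sigma$-algebra); each $\psi_N$ is a countable superposition of indicators of cylinders, so --- after upgrading condition (1) of Definition \ref{propgauss} from $0$-cylinders to cylinder observables of bounded depth (applying it iteratively; cf.\ Remark \ref{fracmixing}) --- the order--$3$ form of the mixing controls the localized products $\int_P(\psi_N\circ\sigma^k)(\psi_N\circ\sigma^{k'})\,d\widehat\mu$, the three slots being $P$ and the two copies of $\psi_N$, which is exactly why ``order $3$'' is the relevant order for a second moment. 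Expanding $\int_P(S_n\psi_N)^2\,d\widehat\mu=\sum_{k,k'<n}\int_P(\psi_N\circ\sigma^k)(\psi_N\circ\sigma^{k'})\,d\widehat\mu$, splitting the double sum according to whether $\ell:=|k-k'|\ge\ell_0$ or $\ell<\ell_0$, and using the $\Psi(\ell)$--bound for the long range (summable by condition (2) of Definition \ref{propgauss}, cf.\ Remark \ref{qi}) and the $C$--domination clause for the finitely many short ranges, one gets $\int_P(S_n\psi_N)^2\,d\widehat\mu\le Cn\,\widehat\mu(P)$ with $C$ independent of $N$, whence the same for $\psi$ by letting $N\to\infty$. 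The same computation for increments yields block bounds $\int_P(S_{i+m}\psi-S_i\psi)^2\,d\widehat\mu\le Cm\,\widehat\mu(P)$, and the passage from $S_n\psi$ to $\max_{m\le n}|S_m\psi|$ is then a Kolmogorov--Doob type maximal inequality for weakly dependent sequences, proved by the usual bisection argument; this is exactly where $\phi\in L^2(\widehat\mu)$, rather than merely $L^1$, enters.

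It remains to assemble the two steps by dyadic blocking. Cover $\{n\ge M\}$ by the intervals $[2^k,2^{k+1})$ with $k\ge k_0:=\lfloor\log_2 M\rfloor$. If $z\in P$ satisfies $|S_n\psi(z)|\ge\tfrac{\ep}{2}n$ for some $n\in[2^k,2^{k+1})$, then $\max_{m\le 2^{k+1}}|S_m\psi(z)|\ge 2^{k-1}\ep$, so by Chebyshev and the core estimate
\[
\widehat\mu\Big(\bigcup_{2^k\le n<2^{k+1}}\big\{z\in P:|S_n\psi(z)|\ge\tfrac{\ep}{2}n\big\}\Big)\ \le\ \frac{C\,2^{k+1}\widehat\mu(P)}{(2^{k-1}\ep)^{2}}=\frac{8C}{\ep^{2}}\,\frac{\widehat\mu(P)}{2^{k}}.
\]
Summing over $k\ge k_0$ gives $\widehat\mu(P\setminus Good(M,\ep))\le c\,\ep^{-2}M^{-1}\widehat\mu(P)$, and since $\sum_{n\ge M}n^{-2}\ge M^{-1}$ this is at most $\tfrac{c}{\ep^{2}}\big(\sum_{n\ge M}n^{-2}\big)\widehat\mu(P)$; dividing by $\widehat\mu(P)$ yields (\ref{e_nlocal}).

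The main obstacle is the core second moment/maximal estimate: one has to run a Kolmogorov--Doob argument for a merely $L^2$, not locally constant potential, with constants uniform over the possibly infinite family ${\cal P}$, which forces one both to strengthen Definition \ref{propgauss} to cylinder observables of bounded depth and to dispose of the short--range ($\ell<\ell_0$) correlations, controlled there only through the $C$--domination clause, besides justifying the $L^2$ approximation of $\psi$ over an infinite alphabet. The reduction and the dyadic assembly are then routine.
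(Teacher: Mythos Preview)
Your reduction of $P\setminus Good(M,\ep)$ to large-deviation events for the centred Birkhoff sums is exactly the paper's first step, and the overall architecture --- a localised second-moment bound followed by blocking along a sparse subsequence --- is also the same. The disagreement is in how the second-moment bound is obtained and in the choice of blocking.

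\textbf{The variance estimate: a genuine gap.} You approximate $\psi$ by $\psi_N$ depending on $N$ coordinates and then need Definition~\ref{propgauss} for $N$-cylinder observables in the second and third slots. Your justification (``applying it iteratively; cf.\ Remark~\ref{fracmixing}'') is not a proof: the order-$3$ condition controls $\widehat\mu(P_1\cap\sigma^{-k}(P_2)\cap\sigma^{-(k+\ell)}(P_3))$ only when the gap $\ell$ between the \emph{last two} $0$-cylinders is large, and an $N$-cylinder is an intersection of $N$ shifted $0$-cylinders at consecutive positions, where the short-range clause gives only domination, not the $(1\pm\Psi)$ form. You yourself flag this as the ``main obstacle'', and you do not overcome it. The point you are missing is that the hypothesis $V_1(\phi)<\infty$ --- which you never use --- makes the approximation unnecessary. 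Since $\phi$ oscillates by at most $V_1(\phi)$ on each $0$-cylinder, the paper expands $E\big[(\sum_{k=m}^n(Z_k-E[Z_k]))^2\chi_P\big]$ and, in each resulting integral such as $E[Z_kZ_j\chi_P]$, decomposes over $0$-cylinders $C,C'$ at positions $k,j$; on $P\cap\sigma^{-k}(C)\cap\sigma^{-j}(C')$ the integrand is constant up to a bounded error, and Definition~\ref{propgauss} applies \emph{verbatim}. Condition~(2) makes the off-diagonal sum $\sum_{k<j}\Psi(j-k)$ converge and one obtains the block estimate $E\big[(\sum_{k=m}^n(Z_k-E[Z_k]))^2\chi_P\big]\le c\,(n-m+1)\,\widehat\mu(P)$ directly. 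The hypothesis $\phi\in L^2(\widehat\mu)$ enters only to make $\sum_C a_C^2\,\widehat\mu(C)<\infty$ in the diagonal terms.

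\textbf{The blocking.} You use dyadic blocks together with a Kolmogorov--Doob maximal inequality for dependent sequences; the paper instead runs along the subsequence $\{n^2\}$ and controls the fluctuations on each window $(n^2,(n+1)^2)$ by a plain union bound over the $O(n)$ intermediate times, using only Chebyshev and the block variance estimate above. Both schemes work, but the paper's is more elementary: your maximal inequality for a weakly dependent sequence is asserted (``proved by the usual bisection argument'') rather than proved, and such arguments generically incur a logarithmic loss that you have not accounted for, whereas the squares method needs nothing beyond the block second moment.
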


\begin{corollary}\label{comogauss}
${\cal P}$ is $\ep$-uniformly $\widehat\mu$-good.
\end{corollary}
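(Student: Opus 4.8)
The plan is to reduce (\ref{e_nlocal}) to a quantitative strong law of large numbers for the Birkhoff sums of $\phi$, and to prove the latter by a H\'ajek--R\'enyi maximal inequality powered by the summable rate of mixing. Write $a:=\int\phi\,d\widehat\mu$ and $S_n g:=\sum_{i=0}^{n-1}g\circ\sigma^i$. Since $V_1(\phi)<\infty$ and $\sum_{n\ge1}V_n(\phi)<\infty$, Remark \ref{summablevariation} lets us use (\ref{Gibbs}) pointwise on each cylinder, with pressure ${\bf P}$ and localness constants $c(\cdot)$. Taking logarithms in (\ref{Gibbs}) for the $(j+1)$-cylinder $C(j,z)$, the failure at a single scale $j$ of either inequality defining $Good(j,\ep)$ forces $|S_{j+1}\phi(z)-(j+1)a|\ge(j+1)\ep-\log(c(z_0)K_{j+1})$. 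Because $\tfrac1n\log K_n\to0$ and the localness constants are uniformly bounded on $\mathcal P$, there is $M_0=M_0(\ep)$ such that for $j\ge M_0$ the right-hand side is at least $(j+1)\ep/2$; hence for every $M\ge M_0$ and every $0$-cylinder $P\in\mathcal P$,
$$
P\setminus Good(M,\ep)\ \subset\ P\cap\bigcup_{n> M}\Bigl\{\,z:\ |S_{n}\widetilde\phi(z)|\ge\tfrac\ep2\,n\,\Bigr\},\qquad \widetilde\phi:=\phi-a .
$$
It therefore suffices to bound the conditional measure $\widehat\mu_P:=\widehat\mu(\,\cdot\cap P)/\widehat\mu(P)$ of $\{\sup_{n> M}|S_n\widetilde\phi|/n\ge\ep/2\}$, uniformly in $P\in\mathcal P$.

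\smallskip

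\noindent I would get this from the H\'ajek--R\'enyi inequality: once one has a second-moment bound $\int(S_m\widetilde\phi-S_n\widetilde\phi)^2\,d\widehat\mu_P\le C\,(m-n)$ for $0\le n<m$, with $C$ independent of $P\in\mathcal P$, the classical dyadic-blocking / Chebyshev / Abel-summation argument yields $\widehat\mu_P(\sup_{n>M}|S_n\widetilde\phi|/n\ge\ep/2)\le\tfrac{c}{\ep^2}\big(\tfrac1M+\sum_{n>M}n^{-2}\big)\le\tfrac{c'}{\ep^2}\sum_{n\ge M}n^{-2}$, which is (\ref{e_nlocal}). The second-moment bound is where the hypotheses are spent: expand into covariances $\int(\widetilde\phi\circ\sigma^i)(\widetilde\phi\circ\sigma^j)\,d\widehat\mu_P$, approximate $\widetilde\phi$ uniformly (error $V_r(\phi)\to0$, using $\phi\in L^2$ to control the tail of the expansion) by a function $\widetilde\phi_r$ of the first $r$ coordinates with $r\ll j-i$, write $\widetilde\phi_r\circ\sigma^i$ as a combination of indicators $\1_{\sigma^{-i}(Q)}$ of $r$-cylinders, and estimate the resulting triple correlations $\widehat\mu(P\cap\sigma^{-i}(Q)\cap\sigma^{-j}(Q'))$ by Definition \ref{propgauss}(1): split off the far factor up to the multiplicative error $\Psi(j-i)$ when $j-i$ is large, use the crude bound in (1) when $j-i$ is bounded, and sum the errors via $\sum_\ell\Psi(\ell)<\infty$. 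This is precisely where order-$3$ mixing enters: the conditioning on the $0$-cylinder $P$ is the third factor alongside the two coordinate blocks carrying $\widetilde\phi\circ\sigma^i$ and $\widetilde\phi\circ\sigma^j$; the uniform bound on the localness constants over $\mathcal P$ keeps $C$ independent of $P$.

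\smallskip

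\noindent The main obstacle is this uniform second-moment estimate, and inside it the fact that $\phi$ need not be locally constant: the reduction to $r$-cylinders must be organised so that $r$ grows slowly with the gap $j-i$ (keeping the two coordinate blocks disjoint so that (1) applies) while the $L^2$-approximation error stays summable against $\Psi$ and does not disturb the $O(m-n)$ growth; when (1) is only stated for $0$-cylinders one iterates/refines the far factor using Remark \ref{qi}. A secondary technical point is to run the H\'ajek--R\'enyi step so that the bound appears as the tail series $\sum_{n\ge M}n^{-2}$ (equivalently $O(1/M)$) with no logarithmic loss --- the dyadic decomposition of $\{n> M\}$ together with the additivity of the second-moment bound is what removes it. Finally, Corollary \ref{comogauss} is immediate from (\ref{e_nlocal}): choose $m_0$ large enough that $\eta:=\tfrac{c}{\ep^2}\sum_{n\ge m_0}n^{-2}<1$; then $\widehat\mu(P\cap Good(m_0,\ep))\ge(1-\eta)\,\widehat\mu(P)$ for every $P\in\mathcal P$, which is Definition \ref{unifgood}.
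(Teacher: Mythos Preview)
Your proposal is essentially correct and follows the same overall strategy as the paper: reduce $P\setminus Good(M,\ep)$ to Birkhoff-sum deviations via the Gibbs inequality, establish a uniform second-moment bound $E_P[(S_m\widetilde\phi-S_n\widetilde\phi)^2]\le C(m-n)$ using the order-$3$ mixing of Definition~\ref{propgauss}, and convert this into the tail estimate $c\ep^{-2}\sum_{n\ge M}n^{-2}$. Your final paragraph deriving Corollary~\ref{comogauss} from (\ref{e_nlocal}) is exactly how it follows.

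Two points where you diverge from the paper are worth noting. First, you propose approximating $\phi$ by a function $\widetilde\phi_r$ of the first $r$ coordinates with $r$ growing slowly in the gap $j-i$, and then iterating the mixing hypothesis (stated only for $0$-cylinders) to handle $r$-cylinders. The paper avoids this entirely: since $V_1(\phi)<\infty$, one decomposes $Z_j=\phi\circ\sigma^j$ over $\{\sigma^{-j}(C):C\ \text{a }0\text{-cylinder}\}$, on each piece $Z_j$ oscillates by at most $V_1(\phi)$, and Definition~\ref{propgauss}(1) applies directly with $P_2,P_3$ genuine $0$-cylinders; $\phi\in L^2(\widehat\mu)$ plus Remark~\ref{qi} then controls the sums over $C,C'$. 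Your $r$-cylinder route should work, but the iteration you allude to is the least transparent step in your sketch, and it is simply not needed here. Second, where you invoke H\'ajek--R\'enyi/dyadic blocking, the paper uses the classical $n^2$-subsequence trick (Chebyshev on $S_{n^2}$ together with the fluctuations over $[n^2,(n+1)^2)$); both give the $\sum_{n\ge M}n^{-2}$ bound, so this is cosmetic.
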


\begin{proof}
Notice  that 
$$
C_{j}\cap (Good(M,\ep))^c\subset \bigcup_{n\geq M} \left\{z\in C_{j} : \left|-\log\widehat\mu(C(n,z))-(n+1)\left [{\bf P}-\int\phi \,d\widehat\mu\right]\right|\geq (n+1)\ep\right\}
$$
And, 
by the definition   of a local weak $\sigma$-Gibbs measure, we  have for $M$ large  (depending on $\ep$ and $\sup\{c(j) : j\in{\cal J}\}$)  that 
$$
C_{j}\cap (Good(M,\ep))^c\subset \bigcup_{n\geq M} \left\{z\in C_{j} : \left|\sum_{k=0}^{n}\phi\circ\sigma^k(z)-(n+1)\int\phi\, d\widehat\mu\right|\geq (n+1)\ep/2\right\}
$$
Let $Z_k(z)=\phi\circ\sigma^k(z)$.
In order to get (\ref{e_nlocal}) we  prove that for $\ep$ small and all $0$-cylinder $P\in{\cal P}$
\begin{equation}\label{conZ}
\widehat\mu\left(\bigcup_{n\geq M}\{z\in P: |\sum_{k=0}^nZ_k(z)-\sum_{k=0}^nE[Z_k]|>(n+1)\ep\}\right)\leq c\, \frac{\widehat\mu(P)}{\ep^2}\sum_{n\geq M}\frac{1}{n^2}
\end{equation}
The argument of the proof follows the same line that the proof of the strong law of large numbers.
We will prove that for $m\leq n$
\begin{equation}\label{variance}
E\left[\left(\sum_{k=m}^n Z_k-\sum_{k=m}^n E[Z_k]\right)^2\chi_P\right]\leq  c\, \widehat\mu(P)(n-m+1)
\end{equation}
where $\chi_P$ denote the characteristic function on $P$.
Then, by Chebyshev's inequality we have that for $t>0$
$$
\mu\{z\in P : |\sum_{k=m}^n Z_k(z)-\sum_{k=m}^n E[Z_k]|>t\ep\}<
\frac{ E[\left(\sum_{k=m}^n Z_k-\sum_{k=m}^n E[Z_k]\right)^2\chi_P]
}
{t^2\ep^2} \leq c\,  \widehat\mu(P) \frac{(n-m+1)}{t^2\ep^2} 
$$
The above inequality implies
(\ref{conZ}).  Just notice that the set  whose  measure we are  considering in (\ref{conZ})
is  a subset of the union on $n\geq M$ of the sets
$$
A_{n^2}=\{z\in P : |\sum_{k=0}^{n^2}Z_k(z)-\sum_{k=0}^{n^2}E[Z_k]|>(n^2+1)\frac{\ep}{2}\}
$$
and 
$$
\bigcup_{n^2<\ell<(n+1)^2}\{z\in P : |\sum_{k=n^2+1}^{\ell}Z_k(z)-\sum_{k=n^2+1}^{\ell}E[Z_k]|>(n^2+1)\frac{\ep}{2}\}.
$$
By computation we have that
\begin{align}
&E\left[\left(\sum_{k=m}^n Z_k-\sum_{k=m}^n E[Z_k]\right)^2\chi_P\right]=\sum_{k=m}^n\left( E[Z_k^2\chi_P]-2E[Z_k]E[Z_k\chi_P] +(E[Z_k])^2\widehat\mu(P)\right) \notag \\
&+2 \sum_{m\leq k<j\leq n}\left( E[Z_kZ_j\chi_P]-E[Z_k]E[Z_j\chi_P]-E[Z_j]E[Z_k\chi_P]+E[Z_k]E[Z_j]\widehat\mu(P)\right) \notag
\end{align}
Since   
 $\phi\in L^2(\widehat\mu)$ and $V_1(\phi)<\infty$,
 by using Remark \ref{qi} we get that $|E[Z_k\chi_P]|\leq c\, \widehat\mu(P)$ and $E[Z_k^2\chi_P]\leq c\,\widehat\mu(P)$.
Hence,
\begin{equation}\label{unaestimacion}
\sum_{k=m}^n\left( E[Z_k^2\chi_P]-2E[Z_k]E[Z_k\chi_P] + (E[Z_k])^2\widehat\mu(P)\right)\leq  c\,\widehat\mu(P)(n-m+1)
\end{equation}
By using again  that: $\phi\in L^1(\widehat\mu)$, $V_1(\phi)<\infty$, Remark \ref{qi} and the $\sigma$-invariance of $\widehat\mu$,   we obtain for $k<j$ and  $j\geq \ell_0$  that 
$$
E[Z_k]E[Z_j]\widehat\mu(P)-E[Z_k]E[Z_j\chi_P]=E[Z_k]\sum_{C}\left(\widehat\mu(P)\int_{\sigma^{-j}(C)}Z_j d\widehat\mu-\int_{P\cap \sigma^{-j}(C)}Z_j d\widehat\mu \right)\leq c\, \Psi(j)\widehat\mu(P)
$$
where the sum is over all $0$-cylinders $C$. Notice that from  $k<j<\ell_0$ we have 
$$
E[Z_k]E[Z_j]\widehat\mu(P)-E[Z_k]E[Z_j\chi_P]\leq |E[Z_k]|[|E[Z_j]|+|E[Z_j\chi_P]|]\leq c \, \widehat\mu(P)
$$
Therefore by property (2) in  Definition \ref{propgauss} 
\begin{equation}\label{otraestimacion}
\sum_{m\leq k<j\leq n}(E[Z_k]E[Z_j]\mu(P)-E[Z_k]E[Z_j\chi_P])\leq c\, \widehat\mu(P)\sum_{k=m}^n\sum_{j=k+1}^n \Psi(j)\le c\,\widehat\mu(P)(n-m+1)
\end{equation}
In a similar way, but  by using now property (1) in Definition \ref{propgauss}, we get that for $j-k\geq\ell_0$
\begin{align}
&E[Z_kZ_j\chi_P]-E[Z_k\chi_P]E[Z_j]=\notag \\
&=\sum_{C,C^\prime}
\int_{P\cap \sigma^{-k}(C)\cap \sigma^{-j}(C^\prime)}Z_j Z_k \, d\widehat\mu-\left(\sum_{C}
\int_{P\cap \sigma^{-k}(C)}Z_k d\widehat\mu\right)\left( 
\sum_{C^\prime}
\int_{\sigma^{-j}(C^\prime)} Z_j d\widehat\mu\right) \leq c\, \Psi(j-k)\widehat\mu(P) \notag 
\end{align}
where the sum is over  all $0$-cylinders $C, C^\prime$. For $k<j<k+\ell_0$, by using property (1) of Definition \ref{propgauss}, we have that
$$
E[Z_kZ_j\chi_P]-E[Z_k\chi_P]E[Z_j]\leq c\, \widehat\mu(P) E[|Z_0Z_{j-k}|]+|E[Z_k\chi_P]E[Z_j]|\leq c' \, \widehat\mu(P) 
$$
Therefore by property (2) of Definition \ref{propgauss}
\begin{equation}\label{ultima}
\sum_{m\leq k<j\leq n}(E[Z_kZ_j\chi_P]-E[Z_k\chi_P]E[Z_j])\leq c\, \widehat\mu(P)\sum_{k=m}^n\sum_{j=k+1}^n \Psi(j-k)\le c\,\widehat\mu(P)(n-m+1)
\end{equation}
The inequality (\ref{variance}) follows from (\ref{unaestimacion}), (\ref{otraestimacion}) and (\ref{ultima}).
\end{proof}

\section{Pattern subsets in the symbolic space}

We want to define a subset of $\Sigma_A^{\cal I}$ with certain  regular structure. 

\begin{definition} Let $A$ be a set of $m$-cylinders and $B$ be a set of $n$-cylinders with $m>n$. We will say that $A$ is finer than $B$ iff  each $m$-cylinder in $A$ is contained in a $n$-cylinder in $B$. We will write $A<B$.
\end{definition}

\begin{definition}Let us consider two sequences   $\{\widetilde{\cal J}_j\}$ and  $\{{\cal J}_j\}$ with $ \widetilde{\cal J}_j$ a set of $\widetilde{d}_j $-cylinders and  $\{{\cal J}_j\}$ a set of ${d}_j $-cylinders such that  $\widetilde{\cal J}_0={\cal J}_0=\{J_0\}$ and  moreover:
\begin{itemize}
\item[$(i)$] For each $j$,  $d_{j-1}<\widetilde{d}_j  \leq d_j $ and  ${\cal J}_{j}<\widetilde{\cal J}_j<{\cal J}_{j-1}$.
\item[$(ii)$] For each  $\widetilde{J}_j \in \widetilde{\cal J}_j$ there exists a unique ${J}_j \in {\cal J}_j$ such that  $J_j \subseteq \widetilde{J}_j$.
\end{itemize}

We define the set  ${\cal Z}$ by
$$
{\cal Z}= \bigcap_{j=0}^\infty \bigcup_{J_j\in{\cal J}_j} J_j=  \bigcap_{j=0}^\infty \bigcup_{\widetilde{J}_j\in\widetilde{\cal J}_j}\widetilde{ J}_j \;.
$$
We will refer to ${\cal Z}$ as the  set with pattern $(\widetilde{\cal J}_j,{\cal J}_j)$.
\end{definition}

\subsection{The $\widehat\mu$- dimension of pattern sets. A Hungerford lemma.}

Let $\widehat\mu$ be an atomless local weak Gibbs measure for the potential $\phi:\Sigma_A^{\cal I}\longrightarrow\R$. In this section we collect two results  on the $\widehat\mu$-dimension of  a set   ${\cal Z}$  with pattern $({\cal J}_j, \widetilde{\cal J}_j)$.  

{
 We will assume  that  for all $z\in{\cal Z}$ the constants  $c(z_n)$ in Remark \ref{wghitting} have an absolute upper bound in the times $d_j$, more precisely
\begin{equation} \label{condloc2}
\sup\{ c(z_{d_j}): z\in{\cal Z}\, , \, j\in\N\}\leq c<\infty
\end{equation}
Notice also that since ${\cal Z}\subset J_0$, the set ${\cal Z}$ is contained in a unique $0$-cylinder, say $C(0,z)$.
The importance of the above condition is that 
implies the following: consider the sequence  $\{\widehat{s}_{m}=(c(z_0)c)^2 K^4_{m+1}\}$ then
\begin{equation}\label{lims}
 1\leq \widehat{s}_{m}\leq   \widehat{s}_{m+1} \qquad \mbox{and} \qquad 
\lim_{m\to\infty}\frac 1m \log  \widehat{s}_{m}=0
\end{equation}
and 
$$
s_{n,m}(z)\leq  \widehat{s}_{m} \quad \mbox{ for all } z\in{\cal Z} \quad \mbox{ and for all } n=d_j<m
$$
where $s_{n,m}(z)$ is defined in Remark \ref{wghitting}.

\noindent Notice that if $\widehat\mu$ is a weak Gibbs measure (not local) then the condition (\ref{condloc2}) always holds. Also, for  $\widehat\mu$ being local if  for example 
$$
\#\{ \sigma^{d_{j}}(J_{j}) :  J_{j}\in  {\cal J}_j \}<\infty,
$$
then we also have (\ref{condloc2}).
}

\medskip

Hence, from (\ref{condloc2}) and Remark \ref{wghitting}  we have that for all $m>d_j$
\begin{equation}\label{estrella}
\frac{1}{ \widehat{s}_{m} }\, \frac{\musymb(\sigma^{d_j}(C(m,z)))}{\musymb(\sigma^{d_j}(C(d_j,z)))}
\leq \frac{\musymb(C(m,z))}{\musymb(C(d_j,z))}\leq  \widehat{s}_{m}\,  \frac{\musymb(\sigma^{d_j}(C(m,z)))}{\musymb(\sigma^{d_j}(C(d_j,z)))}
\end{equation}

\begin{theorem}\label{azulesyrojosmu}
Let ${\cal Z}$  a set with pattern $({\cal J}_j, \widetilde{\cal J}_j)$ verifying (\ref{condloc2}). 
  Let $\{\alpha_j\}$,
$\{\beta_j\}$,
 $\{\gamma_j\}$,  and $\{\delta_j\}$ be sequences of positive numbers  such that:
\begin{equation} \label{prop1}
e^{-\alpha_j}\leq \frac{{\widehat\mu}(\sigma^{d_{j-1}}(\widetilde J_j))}{{\widehat\mu}(\sigma^{d_{j-1}}({J}_{j-1}))} \leq e^{-\beta_j}\ , 
 \qquad e^{-\gamma_j}\leq \frac{{\widehat\mu}(\sigma^{d_{j-1}}(J_j))}{{\widehat\mu}(\sigma^{d_{j-1}}(\widetilde{J}_{j}))}
\end{equation}
\begin{equation} \label{prop3}
{\widehat\mu}(\sigma^{d_{j-1}}(\widetilde{\cal J}_j \cap J_{j-1})):=\displaystyle\sum_{\widetilde{J}_j\in\widetilde{\cal J}_j, \,
\widetilde {J}_j\subset J_{j-1}} {\widehat\mu}(\sigma^{d_{j-1}}(\widetilde{ J}_j))\ge  \delta_j\, {\widehat\mu} (\sigma^{d_{j-1}}(J_{j-1}))\, ,
\end{equation}
and let assume that for all $\ep>0$ there exists $c>0$ such that
\begin{equation}\label{arreglo}
\widehat s_m \leq  \frac{c} {\widehat\mu(C(m,z))^{\ep}} \qquad \mbox{ for all  } \quad  d_{j-1}< m <\widetilde d_j \quad \mbox{ and }\quad  z\in{\cal Z}.
\end{equation}
Then
$$
\Dim_{\widehat\mu} ({\cal Z}) \ge D^-:=\liminf_{j\to\infty}\frac{(\beta_1+\cdots +\beta_j)-\log[1/(\delta_1\ldots\delta_{j+1})]}{(\alpha_1+\ldots +\alpha_j)+(\gamma_1+\ldots+\gamma_j)+ \log[ \widehat s_{d_1}  \cdots \,  \widehat s_{d_{j-1}}]} 
$$
\end{theorem}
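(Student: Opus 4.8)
The plan is to establish the lower bound for $\Dim_{\widehat\mu}({\cal Z})$ by the standard mass-distribution (Frostman-type) strategy adapted to Carath\'eodory $\widehat\mu$-dimension: construct a probability measure $m$ supported on ${\cal Z}$, and show that for every cylinder $C$ meeting ${\cal Z}$ one has $m(C) \le c\,\widehat\mu(C)^{\alpha}$ for every $\alpha < D^-$; then by the usual argument (covering ${\cal Z}$ by cylinders $\{F_i\}$ from the defining family ${\cal F}$, $1 = m({\cal Z}) \le \sum_i m(F_i) \le c\sum_i \widehat\mu(F_i)^\alpha$, so $M_{{\cal F},\widehat\mu^\alpha}({\cal Z}) \ge 1/c > 0$) we get $\Dim_{\widehat\mu}({\cal Z}) \ge \alpha$.

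First I would define $m$ on the cylinders of the pattern: starting from $m(J_0) = 1$, distribute mass from a parent $J_{j-1} \in {\cal J}_{j-1}$ to its descendants $\widetilde J_j \subset J_{j-1}$ proportionally to $\widehat\mu(\sigma^{d_{j-1}}(\widetilde J_j))$, i.e.
$$
m(\widetilde J_j) = m(J_{j-1})\,\frac{\widehat\mu(\sigma^{d_{j-1}}(\widetilde J_j))}{\widehat\mu(\sigma^{d_{j-1}}(\widetilde{\cal J}_j \cap J_{j-1}))},
\qquad m(J_j) = m(\widetilde J_j),
$$
using the bijection in $(ii)$ between $\widetilde{\cal J}_j$ and ${\cal J}_j$. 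By Kolmogorov consistency this extends to a Borel probability on ${\cal Z}$. Iterating, $m(J_j) \le \prod_{i=1}^j \big(e^{-\beta_i}/\delta_i\big)$ by $(\ref{prop1})$ and $(\ref{prop3})$, while the product telescopes using $(\ref{prop1})$ again; on the other hand, the key input $(\ref{estrella})$ (which follows from $(\ref{condloc2})$ and Remark \ref{wghitting}) lets me control $\widehat\mu(C(m,z))$ for intermediate times $d_{j-1} < m \le d_j$ in terms of $\widehat\mu(\sigma^{d_{j-1}}(C(m,z)))$ and the factors $\widehat s_m$, so that
$$
\widehat\mu(J_j) \;\asymp\; \widehat\mu(J_{j-1})\,\frac{\widehat\mu(\sigma^{d_{j-1}}(J_j))}{\widehat\mu(\sigma^{d_{j-1}}(J_{j-1}))}\,,
$$
up to the cumulative $\widehat s_{d_i}$ errors; combined with the bounds $e^{-\gamma_j} \le \widehat\mu(\sigma^{d_{j-1}}(J_j))/\widehat\mu(\sigma^{d_{j-1}}(\widetilde J_j))$ and $e^{-\alpha_j} \le \widehat\mu(\sigma^{d_{j-1}}(\widetilde J_j))/\widehat\mu(\sigma^{d_{j-1}}(J_{j-1}))$ this yields a lower bound $-\log\widehat\mu(J_j) \le (\alpha_1 + \cdots + \alpha_j) + (\gamma_1 + \cdots + \gamma_j) + \log[\widehat s_{d_1}\cdots \widehat s_{d_{j-1}}] + O(1)$. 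Taking the ratio $\log m(J_j) / \log \widehat\mu(J_j)$ and passing to $\liminf$ gives exactly $D^-$, so $m(J_j) \le c\,\widehat\mu(J_j)^\alpha$ for all $\alpha < D^-$ along the distinguished cylinders.

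The main obstacle — and the step requiring the most care — is upgrading the estimate $m(C) \le c\,\widehat\mu(C)^\alpha$ from the distinguished cylinders $J_j$ (and $\widetilde J_j$) to an \emph{arbitrary} cylinder $C(m,z)$ with $C(m,z) \cap {\cal Z} \ne \varnothing$, where $m$ is not one of the special times $d_j$ or $\widetilde d_j$. Here one splits into cases according to where $m$ falls: if $d_{j-1} < m \le \widetilde d_j$ one must compare $m(C(m,z))$, which equals $m(\widetilde J_j)$ summed over the $\widetilde J_j \supset$ children contained in $C(m,z)$ (or $m(J_{j-1})$ if $C(m,z) \supset J_{j-1}$ appropriately), with $\widehat\mu(C(m,z))$; the hypothesis $(\ref{arreglo})$, $\widehat s_m \le c/\widehat\mu(C(m,z))^\epsilon$, is precisely what is needed to absorb the intermediate distortion factors into an arbitrarily small power of $\widehat\mu(C(m,z))$, so that the exponent $\alpha$ is only perturbed by $O(\epsilon)$. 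For $\widetilde d_j < m \le d_j$ the cylinder $C(m,z)$ sits between a $\widetilde J_j$ and its unique child $J_j$, and one uses the second inequality in $(\ref{prop1})$ (the $\gamma_j$ bound) together with $(\ref{estrella})$ again. Letting $\epsilon \to 0$ at the end and invoking the Carath\'eodory mass-distribution principle sketched above completes the proof; the bookkeeping of which product of $\alpha$'s, $\beta$'s, $\gamma$'s, $\delta$'s and $\widehat s$'s appears for each range of $m$ is routine but must be done consistently to land exactly on the stated expression for $D^-$.
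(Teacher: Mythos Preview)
Your proposal is correct and follows essentially the same approach as the paper: build the Frostman measure by distributing mass from $J_{j-1}$ to the $\widetilde J_j$'s proportionally to $\widehat\mu(\sigma^{d_{j-1}}(\widetilde J_j))$, bound $\nu(J_j)$ above via $(\ref{prop1})$--$(\ref{prop3})$ and $\widehat\mu(J_j)$ below via $(\ref{estrella})$ iterated, then handle intermediate cylinders using $(\ref{arreglo})$ to absorb the distortion $\widehat s_m$ into an $\ep$-power of $\widehat\mu$.

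One point you gloss over as ``routine bookkeeping'' is worth flagging, since it explains the precise form of $D^-$: in the intermediate case $d_j<m<\widetilde d_{j+1}$ the estimate for $\nu(C(m,z))$ (obtained by summing $\nu(\widetilde J_{j+1})$ over children inside $C(m,z)$ and using $(\ref{prop3})$) picks up a factor $1/\delta_{j+1}$. To absorb it one actually proves the \emph{strengthened} bound $\nu(J_j)\le c\,\delta_{j+1}\,\widehat\mu(J_j)^{\Lambda_1+\ep}$, not merely $\nu(J_j)\le c\,\widehat\mu(J_j)^{\Lambda_1}$; this is why the numerator of $D^-$ carries $\log[1/(\delta_1\cdots\delta_{j+1})]$ with $j{+}1$ factors rather than $j$. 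Also, in your case $\widetilde d_j<m\le d_j$ no $\gamma_j$ bound is needed: since $J_j\subset C(m,z)\subset\widetilde J_j$ and $\nu(C(m,z))=\nu(J_j)$, the inequality follows immediately from $\widehat\mu(C(m,z))\ge\widehat\mu(J_j)$ and the bound already established at $J_j$.
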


\begin{remark}
Notice that since $\delta_j\leq 1$ and $\beta_j \leq \alpha_j$,  then $D^-\leq 1$.
\end{remark}
\begin{corollary} \label{positivo} If 
\begin{equation}
\liminf_{j\to\infty}\frac{(\alpha_1+\ldots +\alpha_j)+(\gamma_1+\ldots+\gamma_j)}{{\widetilde d}_{j}}>0 \notag
\end{equation}
then 
$$
D^-:=\liminf_{j\to\infty}\frac{(\beta_1+\cdots +\beta_j)-\log[1/(\delta_1\ldots\delta_{j+1})]}{(\alpha_1+\ldots +\alpha_j)+(\gamma_1+\ldots+\gamma_j)} 
$$
\end{corollary}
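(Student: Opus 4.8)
\emph{Plan.} The Corollary says that under the stated growth condition the term $\log[\widehat s_{d_1}\cdots\widehat s_{d_{j-1}}]$ occurring in the denominator of the $\liminf$ that defines $D^-$ in Theorem \ref{azulesyrojosmu} is of smaller order than $(\alpha_1+\cdots+\alpha_j)+(\gamma_1+\cdots+\gamma_j)$, and hence may be deleted without changing the value of the $\liminf$. Write
$$
S_j:=(\alpha_1+\cdots+\alpha_j)+(\gamma_1+\cdots+\gamma_j),\qquad N_j:=(\beta_1+\cdots+\beta_j)-\log\frac1{\delta_1\cdots\delta_{j+1}},\qquad T_j:=\sum_{i=1}^{j-1}\log\widehat s_{d_i},
$$
so that $D^-=\liminf_{j}N_j/(S_j+T_j)$ and the claim is $D^-=\liminf_j N_j/S_j$. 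Since $\widehat s_m\ge 1$ we have $T_j\ge 0$, and since $\beta_i\le\alpha_i$, $\gamma_i\ge 0$ and $\delta_i\le 1$ we have $N_j\le S_j$, so $N_j/S_j\le 1$; the hypothesis also forces $S_j\to\infty$, so $S_j>0$ for $j$ large. Writing $N_j/(S_j+T_j)=(N_j/S_j)\,(1+T_j/S_j)^{-1}$ and using that a sequence bounded above has the same $\liminf$ after multiplication by a positive sequence converging to $1$, the whole claim reduces to the single assertion $T_j/S_j\to 0$.

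\emph{Reduction to a statement about $\{\widehat s_m\}$.} By hypothesis there are $c_0>0$ and $j_0$ with $S_j\ge c_0\,\widetilde d_j$ for $j\ge j_0$, and since $d_{j-1}<\widetilde d_j$ this gives $S_j\ge c_0\,d_{j-1}$; hence it suffices to prove $\log[\widehat s_{d_1}\cdots\widehat s_{d_{j-1}}]=o(d_{j-1})$. This is where the two properties of $\{\widehat s_m\}$ recorded in (\ref{lims}) come in: the sequence is nondecreasing and $\frac1m\log\widehat s_m\to 0$. Monotonicity makes $i\mapsto\log\widehat s_{d_i}$ nondecreasing (the $d_i$ being strictly increasing), so the partial sum $T_j$ is governed by its last terms, while $\frac1m\log\widehat s_m\to 0$ says each such term is of smaller order than its index. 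Using the fast — in the pattern-set constructions to which Theorem \ref{azulesyrojosmu} is applied, essentially geometric — growth of the generation lengths $d_j$ (each new generation forcing a block of comparable length to be appended, so that $d_j/d_{j-1}$ stays bounded away from $1$), the geometric-type sum $\sum_{i<j}\log\widehat s_{d_i}$ is dominated by a fixed multiple of its largest term $\log\widehat s_{d_{j-1}}=o(d_{j-1})$, which yields $T_j=o(d_{j-1})=o(\widetilde d_j)=o(S_j)$ and finishes the argument.

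\emph{Main obstacle.} The delicate point is exactly this last estimate $T_j=o(\widetilde d_j)$. The quantity $T_j$ is an \emph{accumulated} distortion: although, by (\ref{lims}), each individual factor $\widehat s_{d_i}$ is subexponential in $d_i$, a priori the product $\widehat s_{d_1}\cdots\widehat s_{d_{j-1}}$ could be comparable to $e^{\widetilde d_j}$ if the generation lengths grew only linearly. So the proof genuinely has to combine the subexponential bound from (\ref{lims}) with the lacunary growth of $\{d_j\}$ that is forced by the structure of the sets ${\cal Z}$ under consideration; once both ingredients are in place the sum becomes essentially geometric and the estimate is immediate. With $T_j=o(S_j)$ established, the algebraic reduction of the first paragraph gives the Corollary.
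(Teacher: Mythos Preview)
Your reduction is correct: writing $S_j$, $N_j$, $T_j$ as you do, the claim amounts to $T_j/S_j\to 0$, and the hypothesis gives $S_j\ge c_0\widetilde d_j>c_0 d_{j-1}$, so it would be enough to show $T_j=o(\widetilde d_j)$. The gap is in this last step. You explicitly invoke ``lacunary growth of $\{d_j\}$'' (that $d_j/d_{j-1}$ stays bounded away from $1$), but this is \emph{not} a hypothesis of the Corollary --- it is only a feature of the particular pattern sets built later in Theorems~\ref{cantorsimbolicoformalismo} and~\ref{cantorsimbolico}. Without some such condition the conclusion can fail: with $d_j=2j$, $\widetilde d_j=2j-1$, constant $\alpha_j,\gamma_j$ (so $S_j\asymp j$ and $S_j/\widetilde d_j\to\mathrm{const}>0$), and $\widehat s_m=e^{\sqrt m}$ (nondecreasing, $\frac1m\log\widehat s_m\to 0$), one gets $T_j=\sum_{i<j}\sqrt{2i}\asymp j^{3/2}$, hence $T_j/S_j\to\infty$ and the two $\liminf$'s differ. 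The paper offers no proof of the Corollary, but in every application it actually uses the much stronger property $\lim_j(d_1+\cdots+d_{j-1})/\widetilde d_j=0$ (condition (v) of Theorem~\ref{cantorsimbolico}); combined with $\log\widehat s_m\le\varepsilon m$ for $m$ large this gives $T_j\le C_\varepsilon+\varepsilon\sum_{i<j}d_i=o(\widetilde d_j)$ at once. So either the Corollary is meant to be read with this extra input from the constructions, or an assumption is missing --- in any event, your argument imports a hypothesis that is not stated.

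A secondary imprecision: even granting geometric growth of $d_j$, the assertion that $\sum_{i<j}\log\widehat s_{d_i}$ is ``dominated by a fixed multiple of its largest term $\log\widehat s_{d_{j-1}}$'' is wrong as written --- the summands are only nondecreasing, not geometrically increasing (if $\widehat s_m\equiv 2$ the sum is $(j-1)\log 2$, not $O(1)$). The clean argument under $d_i\ge\rho d_{i-1}$, $\rho>1$, is: for every $\varepsilon>0$ eventually $\log\widehat s_m\le\varepsilon m$, so $T_j\le C_\varepsilon+\varepsilon\sum_{i<j}d_i\le C_\varepsilon+\tfrac{\varepsilon\rho}{\rho-1}\,d_{j-1}$, whence $T_j/d_{j-1}\to 0$.
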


If $\widehat\mu$ is  a weak Gibbs measure we can eliminate  $\sigma^{d_{j-1}}$ from the conditions  (\ref{prop1}) and   (\ref{prop3}) and the conclusion of theorem  (\ref{azulesyrojosmu}) holds.  Moreover, for  $\widehat\mu$   a Gibbs measure the  we can state the following  corollary. The proofs  are similar  with the obvious changes. 

\begin{corollary}\label{G1} If  $\widehat\mu$  is  a Gibbs measure
$$
 e^{-\alpha_j}\leq \frac{{\widehat\mu}(\widetilde J_j)}{{\widehat\mu}({J}_{j-1})} \leq e^{-\beta_j}\ , 
 \qquad e^{-\gamma_j}\leq \frac{{\widehat\mu}(J_j)}{{\widehat\mu}(\widetilde{J}_{j})},
\qquad
\displaystyle\sum_{\widetilde{J}_j\in\widetilde{\cal J}_j, \,
\widetilde {J}_j\subset J_{j-1}} {\widehat\mu}(\widetilde{ J}_j)\ge  \delta_j\, {\widehat\mu} (J_{j-1})\, ,
$$
and 
\begin{equation}\label{unaG}
\liminf_{j\to\infty}[(\alpha_1+\ldots +\alpha_j)+(\gamma_1+\ldots+\gamma_j)]/{j^{\eta}}>0 \quad  \mbox{ for some } \eta>1.
\end{equation}
Then
$$
\Dim_{\widehat\mu} ({\cal Z}) \ge \liminf_{j\to\infty}\frac{(\beta_1+\cdots +\beta_j)-\log[1/(\delta_1\ldots\delta_{j+1})]}{(\alpha_1+\ldots +\alpha_j)+(\gamma_1+\ldots+\gamma_j)} 
$$
\end{corollary}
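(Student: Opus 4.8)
\emph{Proof strategy.} The plan is to derive Corollary \ref{G1} directly from Theorem \ref{azulesyrojosmu}; the only work is to check that the hypotheses of that theorem hold in the Gibbs setting and that the auxiliary term $\log[\widehat s_{d_1}\cdots\widehat s_{d_{j-1}}]$ and the maps $\sigma^{d_{j-1}}(\cdot)$ appearing there become harmless. First I would record what ``$\widehat\mu$ Gibbs'' buys us: by Definition \ref{mGibbs} the constants $K_0:=\sup_n K_n$ and $c_\ast:=\sup_{i\in{\cal I}}c(i)$ are finite and the bound (\ref{Gibbs}) holds for \emph{every} point of each cylinder. Hence the quantities $s_{n,m}(z)$ of Remark \ref{wghitting}, and with them the sequence $\widehat s_m$, are dominated by a single constant $s_\ast\ge 1$ depending only on $c_\ast,K_0$. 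In particular condition (\ref{condloc2}) is automatic (Gibbs measures are weak Gibbs), and condition (\ref{arreglo}) is trivially satisfied: since $\widehat\mu$ is a probability measure, $\widehat\mu(C(m,z))\le 1$, so $\widehat s_m\le s_\ast\le s_\ast/\widehat\mu(C(m,z))^{\ep}$ for every $\ep>0$.

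Next I would pass from the conditions in Corollary \ref{G1}, stated without shifts, to conditions (\ref{prop1})--(\ref{prop3}). Using (\ref{Gibbs}) to evaluate $\widehat\mu(C)$ and $\widehat\mu(\sigma^{d_{j-1}}(C))$ for $C\in\{\widetilde J_j,J_j,J_{j-1}\}$, the Birkhoff sums telescope and one checks that $\widehat\mu(C)/\widehat\mu(\sigma^{d_{j-1}}(C))$ is, up to a multiplicative constant depending only on $c_\ast,K_0$, equal to the $\widehat\mu$-measure of the cylinder determined by the first $d_{j-1}$ coordinates of $C$ — hence the \emph{same} for all three of them. Consequently
$$
\frac{\widehat\mu(\sigma^{d_{j-1}}(\widetilde J_j))}{\widehat\mu(\sigma^{d_{j-1}}(J_{j-1}))}\asymp\frac{\widehat\mu(\widetilde J_j)}{\widehat\mu(J_{j-1})},\qquad \frac{\widehat\mu(\sigma^{d_{j-1}}(J_j))}{\widehat\mu(\sigma^{d_{j-1}}(\widetilde J_j))}\asymp\frac{\widehat\mu(J_j)}{\widehat\mu(\widetilde J_j)},\qquad \frac{\widehat\mu(\sigma^{d_{j-1}}(\widetilde{\cal J}_j\cap J_{j-1}))}{\widehat\mu(\sigma^{d_{j-1}}(J_{j-1}))}\asymp\frac{\widehat\mu(\widetilde{\cal J}_j\cap J_{j-1})}{\widehat\mu(J_{j-1})},
$$
with comparison constants independent of $j$. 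Thus (\ref{prop1}) and (\ref{prop3}) hold for the modified sequences $\alpha_j+C$, $\beta_j-C$, $\gamma_j+C$, $\delta_j/C$ for a fixed $C\ge 1$, and (\ref{condloc2}), (\ref{arreglo}) hold, so Theorem \ref{azulesyrojosmu} applies to these modified data.

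Theorem \ref{azulesyrojosmu} then gives
$$
\Dim_{\widehat\mu}({\cal Z})\ \ge\ \liminf_{j\to\infty}\frac{A_j-O(j)}{B_j+O(j)},\qquad A_j:=\sum_{i=1}^{j}\beta_i-\log\frac{1}{\delta_1\cdots\delta_{j+1}},\quad B_j:=\sum_{i=1}^{j}\alpha_i+\sum_{i=1}^{j}\gamma_i,
$$
where the $O(j)$ in the numerator absorbs the $jC$ from $\beta_i\mapsto\beta_i-C$ and the $(j+1)\log C$ from $\delta_i\mapsto\delta_i/C$, and the $O(j)$ in the denominator absorbs the $2jC$ from $\alpha_i,\gamma_i$ together with $\log[\widehat s_{d_1}\cdots\widehat s_{d_{j-1}}]\le(j-1)\log s_\ast$. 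Now hypothesis (\ref{unaG}) supplies $\eta>1$ and $c'>0$ with $B_j\ge c'j^{\eta}$ for all large $j$, so every $O(j)$ term is at most $\epsilon_jB_j$ with $\epsilon_j=O(j^{1-\eta})\to 0$. Writing $B_j+O(j)\le B_j(1+\epsilon_j)$ and $A_j-O(j)\ge A_j-\epsilon_jB_j$, one gets, for large $j$,
$$
\frac{A_j-O(j)}{B_j+O(j)}\ \ge\ \frac{A_j/B_j-\epsilon_j}{1+\epsilon_j},
$$
and letting $j\to\infty$ yields $\Dim_{\widehat\mu}({\cal Z})\ge\liminf_j A_j/B_j$, which is exactly the claimed bound. (If $\liminf_j A_j/B_j\le 0$ there is nothing to prove, since $\Dim_{\widehat\mu}\ge 0$; otherwise $A_j/B_j$ lies eventually in a bounded interval and the limit passage is immediate.)

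The step I expect to be the main obstacle is this last balancing: the super-linear lower bound $B_j\gtrsim j^{\eta}$, $\eta>1$, from (\ref{unaG}) must beat the term $\log[\widehat s_{d_1}\cdots\widehat s_{d_{j-1}}]$, which still accumulates to order $j$ even though each individual factor $\widehat s_{d_i}$ is bounded in the Gibbs case. This is precisely why Corollary \ref{G1} requires the exponent $\eta>1$ rather than the weaker $\eta=1$; everything else is bookkeeping inside Theorem \ref{azulesyrojosmu}.
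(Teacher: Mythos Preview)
Your proposal is correct and matches the route the paper indicates. The paper's own ``proof'' of Corollary~\ref{G1} is just the sentence ``The proofs are similar with the obvious changes'', preceded by the remark that for a (weak) Gibbs measure one may drop the shifts $\sigma^{d_{j-1}}$ from conditions (\ref{prop1})--(\ref{prop3}). You have written out exactly those obvious changes: the Gibbs property gives uniform bounds $\sup_i c(i)<\infty$ and $\sup_n K_n<\infty$, so via Remark~\ref{wghitting} the ratios with and without $\sigma^{d_{j-1}}$ are comparable with absolute constants, (\ref{condloc2}) and (\ref{arreglo}) hold trivially, and the term $\log[\widehat s_{d_1}\cdots\widehat s_{d_{j-1}}]$ is $O(j)$; then hypothesis (\ref{unaG}) is precisely what is needed to make all the accumulated $O(j)$ corrections negligible against $B_j\gtrsim j^{\eta}$.

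One minor remark: your reduction introduces modified parameters $\beta_j-C$, which may be negative, while Theorem~\ref{azulesyrojosmu} formally asks for positive sequences. This is harmless, since the proof of Theorem~\ref{azulesyrojosmu} never uses $\beta_j>0$ --- only the upper bound in (\ref{prop1}). Alternatively one can bypass this by redoing the mass--distribution argument of Theorem~\ref{azulesyrojosmu} with $\nu$ defined directly through the unshifted $\widehat\mu$-ratios; in that variant the $\widehat s$-factors never appear at all, which shows that the role of (\ref{unaG}) is exactly to absorb the $O(j)$ bookkeeping you identified.
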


\medskip

\begin{remark} Notice that if $\widetilde{\cal J}_j={\cal J}_j$ i.e. $\gamma_j=0$  and $\alpha_j=\alpha$, $\beta_j=\beta$ and $\delta_j=\delta$  for all $j$, then (\ref{unaG}) holds and  therefore  $\Dim_{\widehat\mu} ({\cal Z}) \ge (\beta+\log\delta)/\alpha$.  This case corresponds to the  classical Hungerford lemma.
\end{remark}

\begin{proof} [Proof of theorem \ref{azulesyrojosmu}]

We construct a probability measure $\nu$ supported on  ${\cal Z}$
in the following way: We define $\nu(J_0)=1$ and for each set
$J_j\in {\cal J}_j$ we write
$$
\nu(J_j)=\nu(\widetilde{J}_j) = \frac {{\widehat\mu}(\sigma^{d_{j-1}}(\widetilde{J}_j))}
{
\sum_{\widetilde{J}\in\widetilde{\cal J}_j, \, {J}\subset J_{j-1}}
 {\widehat\mu}(\sigma^{d_{j-1}}(\widetilde{J}))
 }
\, \nu(J_{j-1})
$$
where $J_{j-1}$ and $\widetilde{J}_j$ denote the unique sets in
${\cal J}_{j-1}$ and $\widetilde{\cal J}_{j}$ respectively, such
that $J_j \subseteq \widetilde{J}_j \subset J_{j-1}$. 
As usual, for
any cylinder  $B$, the $\nu$-measure of $B$ is defined by
$$
\nu(B) = \nu(B\cap {\cal Z}) = \inf  \sum_{U\in {\cal U}}  \nu(U) \,
$$
where the infimum is taken over all the coverings $\cal U$ of $B\cap {\cal Z}$
with sets in $\bigcup {\cal J}_j$.

We will show that for all $\Lambda_1 < D^-$
 there exists a positive constant $c$ such that for all $z\in{\cal Z}$ and $m$ large enough,
\begin{equation} \label{frostmangrid}
\nu(C(m,z))\leq c\,(\widehat\mu(C(m,z))^{\Lambda_1}
\end{equation}
and therefore, for all covering ${\cal U}$ we have that $\sum_{U\in{\cal U}}(\widehat\mu(U))^{\Lambda_1} >0$ and we get  the lower bound for the $\widehat\mu$-dimension.

To prove (\ref{frostmangrid}) let us suppose first that $C(m,z)=J_j$ for some $J_j\in{\cal
J}_j$. From property (\ref{prop3}) we have that

\begin{equation} \label{la1}
{\nu(J_j)}\leq    \frac {1}{\delta_j \, \delta_{j-1} \cdots \delta_1} \, \frac {{{\widehat\mu}(\sigma^{d_{j-1}}(\widetilde{J}_j))}\, \cdots \, {\widehat\mu}(\sigma^{d_0}(\widetilde{J}_1))}{{\widehat\mu}(\sigma^{d_{j-1}}(J_{j-1}))\cdots{\widehat\mu}(\sigma^{d_0}(J_0))} 
\end{equation}
Then by  (\ref{prop1})
\begin{equation} \label{la2}
\nu(J_j)\leq \frac{e^{-(\beta_j +\beta_{j-1}+ \cdots +\beta_1)}}{\delta_j \,\delta_{j-1} \cdots \delta_1   }\, .
\end{equation}
Also we have from (\ref{estrella})  and  (\ref{prop1}) that
$$
\frac{\widehat\mu(J_j)}{\widehat\mu(J_{j-1})}\geq \frac{1}{\widehat s_{d_j}}\frac{\widehat\mu(\sigma^{d_{j-1}}(J_j))}{\widehat\mu(\sigma^{d_{j-1}}(J_{j-1}))} \frac{\widehat\mu(\sigma^{d_{j-1}}(\widetilde{J}_j))}{\widehat\mu(\sigma^{d_{j-1}}(\widetilde{J}_{j}))}\geq \frac{1}{\widehat s_{d_j}} \, e^{-(\alpha_j+\gamma_j)}
$$
Wi fix $\ep>0$ so that $\frac{\Lambda_1+\ep}{1-\ep}<D^-$
and  by using (\ref{arreglo})
\begin{equation}\label{cotainf}
\widehat\mu(J_j)^{1-\ep}\geq \frac {e^{-[(\gamma_j +\alpha_j )+(\gamma_{j-1}+\alpha_{j-1})+ \cdots+ (\gamma_1+\alpha_1)]}
}
{ \widehat s_{d_{j-1}}\cdots \,  \widehat s_{d_1}} \, \widehat\mu(J_0)
\end{equation}
In  order to get  (\ref{frostmangrid})  for any $C(m,z)$ with $z\in{\cal Z}$, we will get an stronger condition  for the cylinders $\widetilde{J}_j$ and $J_j$, more precisely,  for $j$ large 
\begin{equation} \label{nuj}
\nu(\widetilde{J}_j)=\nu({J}_j) \le  c \,  {\delta_{j+1}} \widehat\mu(J_j)^{\Lambda_1+\ep} \,
\end{equation}
for some  positive constant $c$.
Since
$0<({\Lambda_1+\ep})/({1-\ep})<D^-$
the inequality (\ref{nuj}) follows from (\ref{la2}) and  (\ref{cotainf}).

Now,  let us suppose that $C(m,z)\not= J_j$ for all $j$ and  for
all $J_j\in{\cal J}_j$, i.e $d_j<m<d_{j+1}$. Since $z\in{\cal Z}$ there exist
$J_j\in{\cal J}_j$ and $J_{j+1}\in{\cal J}_{j+1}$ such that
$J_{j+1}\subset C(m,z)\subset J_j $.

If $C(m,z)\subset \widetilde {J}_{j+1}$, then from the
definition of $\nu$ and (\ref{nuj}) for $\widetilde{J}_{j+1}$ we get
$$
\nu(C(m,z))=\nu(\widetilde{J}_{j+1})=\nu(J_{j+1})\leq c
\,(\widehat\mu(J_{j+1}))^{\Lambda_1}\leq c\, (\widehat\mu(C(m,z)))^{\Lambda_1}\,.
$$
 Otherwise $C(m,z)$ contains sets of the family ${\widetilde{\cal
J}}_{j+1}$, then $d_j<m<\widetilde d_{j+1}$ and we have that
\begin{align} \label{nocantor}
\nu(C(m,z)) & =
\sum_{\substack{{\tilde{J}}_{j+1} \in {\widetilde{\cal J}}_{j+1} \\
{\widetilde{J}}_{j+1} \subseteq C(m,z)}}  \nu (J_{j+1}) \,=
\sum_{\substack{{\tilde{J}}_{j+1} \in {\widetilde{\cal J}}_{j+1} \\
{\widetilde{J}}_{j+1} \subseteq C(m,z)}} \frac{{\widehat\mu}(\sigma^{d_{j}}(\widetilde{J}_{j+1}))}
{{\widehat\mu}(\sigma^{d_{j}}(\widetilde{\cal J}_{j+1} \cap J_{j}))}\,
\nu (J_{j}) \notag \\
& = \frac{\nu (J_{j}) }{{{\widehat\mu}(\sigma^{d_{j}}(\widetilde{\cal J}_{j+1} \cap J_{j}))}}\,\sum_{\substack{{\tilde{J}}_{j+1}
\in {\widetilde{\cal J}}_{j+1} \\
{\widetilde{J}}_{j+1} \subseteq
C(m,z)}}{\widehat\mu}(\sigma^{d_{j}}(\widetilde{J}_{j+1})) \leq \frac{\nu (J_{j})
}{{{\widehat\mu}(\sigma^{d_{j}}(\widetilde{\cal J}_{j+1} \cap J_{j}))}}\ 
{\widehat\mu}(\sigma^{d_{j}}(C(m,z))
 \,.
\end{align}
Using  property (\ref{prop3}) we obtain that
\begin{equation} \label{cota2}
\nu(C(m,z)) \le \frac {\nu(J_j)}{\delta_{j+1}\,{\widehat\mu}(\sigma^{d_{j}}(J_j)} \, {\widehat\mu}(\sigma^{d_{j}}(C(m,z)).
\end{equation}
And by using (\ref{estrella})  and (\ref{arreglo}) 
$$
\nu(C(m,z))  \le \, \frac{ \widehat s_{m} {\nu(J_j)}}{\delta_{j+1}\,{\widehat\mu}(J_j)} \, {\widehat\mu}(C(m,z)) \le \, \frac{ c\, {\nu(J_j)}}{\delta_{j+1}\,{\widehat\mu}(J_j)} \,  {\widehat\mu}(C(m,z))^{1-\ep}
$$
Hence, by (\ref{nuj}) we have that for some $c'>0$
$$
\nu(C(m,z)) \le  \, c' \,  \frac{1}{{\widehat\mu}(J_j)^{1-\Lambda_1-\ep}}\,  {\widehat\mu}(C(m,z))^{1-\ep}
$$
and since 
${\widehat\mu}(J_j)\geq {\widehat\mu}(C(m,z))$  we get (\ref{frostmangrid}).

\end{proof}

The last result in this section is also a kind of Hungerford lemma.  The motivation  is the following:  for some $\cal Z$ we will be able to  construct  (by using thermodynamic formalism)  an alternative measure ${\mhat}$ such that ${\mhat}(\widetilde J_j)\leq c \, \widehat\mu(J_j)^t$, and we can use $\mhat$ (instead of $\widehat\mu$) to equidistribute the mass in the standard measure with support
 in $\cal Z$. The definition of $t$ in the next theorem  is the way of expressing  the above relation between  $\mhat$ and $\widehat\mu$ in the case of $\widehat\mu$ being a local weak Gibbs measure.

\begin{theorem}\label{azulesyrojos3n}
 Let   $\{\gamma_j\}$
 be sequence of positive numbers  such that
\begin{equation}\label{cotas}
\frac{{\widehat\mu}(\sigma^{d_{j-1}}(J_j))}{{\widehat\mu}(\sigma^{d_{j-1}}(\widetilde{J}_{j}))}\geq e^{-\gamma_j}.
\end{equation}
Let us suppose that  there exists a finite  measure $\bf m$ on $\Sigma_A^{\cal I}$  such that  
\begin{equation}\label{muchos}
{\mhat}(\sigma^{d_{j-1}}(\widetilde{\cal J}_j \cap J_{j-1})):=\displaystyle\sum_{\widetilde{J}_j\in\widetilde{\cal J}_j, \,
\widetilde {J}_j\subset J_{j-1}} {\mhat}(\sigma^{d_{j-1}}(\widetilde{ J}_j)) \ge\overline \delta_j\, {\mhat} (\sigma^{d_{j-1}}(J_{j-1}))\, ,
\end{equation}
for some sequence $\{\overline \delta_j\}$ with $0<\overline \delta_j\leq 1$. And moreover,   for some $0< t\leq 1$, if   $C(m,z)\subset J_{j-1}$ with $z\in{\cal Z}$   then 
\begin{equation} \label{prop0f}
\frac{{\mhat}(\sigma^{d_{j-1}}(C(m,z)))}{{\mhat}(\sigma^{d_{j-1}}(J_{j-1}))}\leq c\, \overline \delta_j  \left({\eta_m}\frac{\musymb(\sigma^{d_{j-1}}(C(m,z)))}{\musymb(\sigma^{d_{j-1}}(J_{j-1}))}\right)^t \quad 
 \text {for  }
 d_{j-1}\leq  m\leq\widetilde d_{j}
\end{equation}
 with  $c$ a positive constant and
 \begin{equation} \label{etam}
 \eta_m=
 \begin{cases}
   {e^{-{\overline\gamma}_j}}/{\widehat s_{d_{j}}} \quad \mbox{for } m= \widetilde d_{j} \\
   \medskip
  {1}/{\widehat s_m} \quad \qquad \mbox{for } d_{j-1}\leq m<\widetilde d_{j} 
  \end{cases}
 \end{equation}
Then
$$\Dim_{\widehat\mu} ({\cal Z}) \ge t .
$$
\end{theorem}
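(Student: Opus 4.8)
The plan is to follow, almost verbatim, the proof of Theorem \ref{azulesyrojosmu}, the only change being that the mass‑distributing measure is now built from the auxiliary measure $\mhat$ instead of from $\widehat\mu$ itself, and that hypothesis (\ref{prop0f}) is what converts $\mhat$‑ratios into $\widehat\mu$‑ratios. Concretely, working under the standing assumption (\ref{condloc2}) (so that (\ref{estrella}) is available), I would first define a probability measure $\nu$ concentrated on ${\cal Z}$ by $\nu(J_0)=1$ and, for $J_j\in{\cal J}_j$ with $J_j\subseteq\widetilde J_j\subset J_{j-1}$,
$$
\nu(J_j)=\nu(\widetilde J_j)=\frac{\mhat(\sigma^{d_{j-1}}(\widetilde J_j))}{\sum_{\widetilde J\in\widetilde{\cal J}_j,\ \widetilde J\subset J_{j-1}}\mhat(\sigma^{d_{j-1}}(\widetilde J))}\,\nu(J_{j-1}),
$$
extended to arbitrary cylinders by taking the infimum of $\sum_{U\in{\cal U}}\nu(U)$ over coverings ${\cal U}$ of $B\cap{\cal Z}$ by elements of $\bigcup_j{\cal J}_j$, exactly as in that proof. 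By the mass‑distribution argument leading to (\ref{frostmangrid}) it then suffices to establish a Frostman‑type bound
$$
\nu(C(m,z))\le c\,\bigl(\widehat\mu(C(m,z))\bigr)^{t}\cdot(\text{sub-exponential in }m)\qquad\text{for all }z\in{\cal Z}\text{ and }m\text{ large},
$$
since this yields $M_{{\cal F},\widehat\mu^{\alpha}}({\cal Z})>0$ for every $\alpha<t$ and hence $\Dim_{\widehat\mu}({\cal Z})\ge t$.

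For the case $C(m,z)=J_j$ (a ``Cantor time''), I would telescope the definition of $\nu$ and use (\ref{muchos}) to bound $\nu(J_j)$ by $\prod_{i=1}^{j}\bigl[\overline\delta_i^{-1}\,\mhat(\sigma^{d_{i-1}}(\widetilde J_i))/\mhat(\sigma^{d_{i-1}}(J_{i-1}))\bigr]$, and then apply (\ref{prop0f}) at each level $i$ with $m=\widetilde d_i$, which turns the $i$‑th factor into $c\,\overline\delta_i\,\bigl(\eta_{\widetilde d_i}\,\widehat\mu(\sigma^{d_{i-1}}(\widetilde J_i))/\widehat\mu(\sigma^{d_{i-1}}(J_{i-1}))\bigr)^{t}$. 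The design of $\eta_m$ is exactly what makes this work: since $\eta_{\widetilde d_i}=e^{-\overline\gamma_i}/\widehat s_{d_i}$, combining (\ref{estrella}) with the lower bound (\ref{cotas}) gives
$$
\eta_{\widetilde d_i}\,\frac{\widehat\mu(\sigma^{d_{i-1}}(\widetilde J_i))}{\widehat\mu(\sigma^{d_{i-1}}(J_{i-1}))}\le \frac{\widehat\mu(J_i)}{\widehat\mu(J_{i-1})},
$$
so the $\overline\delta_i$'s cancel against the denominators and the product telescopes to $c^{\,j}\,(\widehat\mu(J_j)/\widehat\mu(J_0))^{t}$. For $C(m,z)$ strictly between two generations one repeats the two sub‑cases of the earlier proof: if $C(m,z)\subset\widetilde J_{j+1}$ then $\nu(C(m,z))=\nu(J_{j+1})$ and one is done by the previous estimate together with $\widehat\mu(J_{j+1})\le\widehat\mu(C(m,z))$; otherwise $d_j\le m<\widetilde d_{j+1}$, and summing $\nu(J_{j+1})$ over the disjoint $\widetilde J_{j+1}\subseteq C(m,z)$ and using (\ref{muchos}) gives $\nu(C(m,z))\le \overline\delta_{j+1}^{-1}\,\nu(J_j)\,\mhat(\sigma^{d_j}(C(m,z)))/\mhat(\sigma^{d_j}(J_j))$; now (\ref{prop0f}) with this $m$ (so $\eta_m=1/\widehat s_m$) followed by (\ref{estrella}) yields $\nu(C(m,z))\le c\,\nu(J_j)\,(\widehat\mu(C(m,z))/\widehat\mu(J_j))^{t}$, which by the first case is again of the required form.

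The step I expect to be the main obstacle is controlling the constants that pile up over the $j$ generations — the geometric factor $c^{\,j}$ produced by repeated use of (\ref{prop0f}), together with the products of the $\widehat s_{d_i}$'s — so that the Frostman exponent is genuinely $t$ (up to an arbitrarily small loss) rather than something strictly smaller. Here I would use that by (\ref{lims}) the $\widehat s$‑factors are sub‑exponential, that $j\le d_j\le m$, and the local weak Gibbs bounds (together with the standing regularity hypotheses on $\phi$) to absorb all of these accumulated factors into $\widehat\mu(C(m,z))^{\ep}$ for an arbitrarily small $\ep>0$, concluding $\nu(C(m,z))\le c\,\widehat\mu(C(m,z))^{t-\ep}$ for $m$ large and hence $\Dim_{\widehat\mu}({\cal Z})\ge t-\ep$ for all $\ep>0$. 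The other delicate point is purely bookkeeping: keeping track of which form of $\eta_m$, and hence which instance of (\ref{estrella}), is used at the intermediate times $d_{j-1}\le m\le\widetilde d_j$ versus at the Cantor times $m=d_j$; this is carried out exactly as in the corresponding passage of the proof of Theorem \ref{azulesyrojosmu}.
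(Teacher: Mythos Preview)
Your proof is essentially the paper's own: the same $\nu$ built from $\mhat$, the same telescoping via (\ref{muchos}) and (\ref{prop0f}) at $m=\widetilde d_i$, the same use of (\ref{cotas}) together with (\ref{estrella}) to pass from $\widetilde J_i$ to $J_i$ and collapse the product to $\widehat\mu(J_j)^t$, and the same two sub-cases for intermediate cylinders. The one place you are more careful than the paper is the accumulated factor $c^{j}$ from repeated use of (\ref{prop0f}): the paper's proof simply writes the telescoped bound as $\nu(J_j)\le c'\,\widehat\mu(J_j)^t$ with a single constant and concludes $\Dim_{\widehat\mu}(\mathcal Z)\ge t$ directly, without any $\ep$-loss step, so your proposed absorption argument is extra scruple rather than something the paper actually carries out.
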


If $\widehat\mu$ is weak Gibbs  we can  delete $\sigma^{d_{j-1}}$ from (\ref{cotas}), (\ref{muchos})  and (\ref{prop0f}) and   the conclusion of theorem \ref{azulesyrojos3n} holds. For $\widehat\mu$ Gibbs we have the following corollary. Again the proofs are similar  with the obvious changes.

\begin{corollary} \label{G2}Let $\widehat\mu$ be a Gibbs measure such that
$$
\frac{{\widehat\mu}(J_j)}{{\widehat\mu}(\widetilde{J}_{j})}\geq e^{-\gamma_j}.
$$
Let us suppose that,  there is  a $\sigma$-Gibbs measure
$\mhat$ for the potential $t\phi$, for some $0\leq t\leq 1$,  
$$
\displaystyle\sum_{\widetilde{J}_j\in\widetilde{\cal J}_j, \,
\widetilde {J}_j\subset J_{j-1}} {\mhat}(\widetilde{ J}_j)\ge\overline \delta\, {\mhat} (J_{j-1})\, \quad \mbox{ for some constant } \quad \overline \delta>0,
$$
and
\begin{equation} \label{compintro}
\limsup_{j\to\infty}\frac{\gamma_j}{\widetilde{d_j}-d_{j-1}} <\frac 1t \left[ P_G(t\phi)-tP_G(\phi)\right]
\end{equation}
Then
$$\Dim_{\widehat\mu} ({\cal Z}) \ge t .
$$
\end{corollary}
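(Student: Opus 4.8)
The plan is to deduce the corollary from Theorem~\ref{azulesyrojos3n} by checking its hypotheses in the Gibbs setting. If $t=0$ there is nothing to prove, so assume $0<t\le1$; note that, since $\gamma_j>0$ for every $j$, hypothesis (\ref{compintro}) forces
$$
P_G(t\phi)-tP_G(\phi)>0,
$$
a positivity I shall use throughout. Because $\widehat\mu$ is a genuine (neither merely weak nor merely local) Gibbs measure, its localness constants $c(i)$ and the sequence $\{K_n\}$ are uniformly bounded, so every quantity $\widehat s_m$ appearing in Theorem~\ref{azulesyrojos3n} satisfies $\widehat s_m\asymp1$ and, exactly as in the remark preceding the corollary, the maps $\sigma^{d_{j-1}}$ may be deleted from (\ref{cotas}), (\ref{muchos}) and (\ref{prop0f}). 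After these simplifications (\ref{cotas}) becomes the assumed inequality $\widehat\mu(J_j)/\widehat\mu(\widetilde J_j)\ge e^{-\gamma_j}$, and (\ref{muchos}) with $\overline\delta_j\equiv\overline\delta$ becomes the assumed $\sum_{\widetilde J_j\subset J_{j-1}}\mhat(\widetilde J_j)\ge\overline\delta\,\mhat(J_{j-1})$. Hence only (\ref{prop0f}) remains to be verified.

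The one step with content is the comparison of the two Gibbs measures. Writing the pointwise Gibbs estimate (\ref{Gibbs}) for $\mhat$ (potential $t\phi$) and for $\widehat\mu$ (potential $\phi$), with normalising constants $P_G(t\phi)$ and $P_G(\phi)$ respectively (Remark~\ref{summablevariation}), both at a cylinder $C(m,z)\subset J_{j-1}$ and at $J_{j-1}$ itself, and dividing, the Birkhoff sum over the extra coordinates distinguishing $C(m,z)$ from $J_{j-1}$ enters with coefficient $t$ in the $\mhat$-ratio and with coefficient $1$ in the $\widehat\mu$-ratio; raising the latter to the power $t$ makes the two residual sums cancel, leaving only the pressure terms. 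Thus, uniformly in $j$, in $m\ge d_{j-1}$ and in $z\in{\cal Z}$,
$$
\frac{\mhat(C(m,z))}{\mhat(J_{j-1})}\ \asymp\ \left(\frac{\widehat\mu(C(m,z))}{\widehat\mu(J_{j-1})}\right)^{t}\exp\!\Bigl(-(m-d_{j-1})\bigl[P_G(t\phi)-tP_G(\phi)\bigr]\Bigr).
$$

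With this in hand I would check (\ref{prop0f}) --- read with $\widehat s_m\asymp1$, $\overline\delta_j\equiv\overline\delta$, and the $\gamma_j$ of (\ref{cotas}) playing the role of the $\overline\gamma_j$ of (\ref{etam}) --- in the two ranges of $m$. For $d_{j-1}\le m<\widetilde d_j$ the exponential factor in the displayed comparison is $\le1$ (here the positivity of $P_G(t\phi)-tP_G(\phi)$ is used), so the comparison already yields $\mhat(C(m,z))/\mhat(J_{j-1})\lesssim(\widehat\mu(C(m,z))/\widehat\mu(J_{j-1}))^{t}$, i.e.\ (\ref{prop0f}) after absorbing the fixed constant $\overline\delta$ into $c$. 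For $m=\widetilde d_j$ one needs in addition that the exponential factor be $\lesssim e^{-t\gamma_j}$, that is $t\gamma_j\le(\widetilde d_j-d_{j-1})\bigl[P_G(t\phi)-tP_G(\phi)\bigr]$ for all large $j$ --- which is precisely hypothesis (\ref{compintro}), and it even leaves a surplus factor $e^{-t\delta_0(\widetilde d_j-d_{j-1})}\le1$ (for some $\delta_0>0$). The finitely many initial generations, on which (\ref{compintro}) need not yet hold with that slack, can be absorbed into $c$; alternatively one restarts the pattern at a generation $j_0$ beyond which it does hold, which alters ${\cal Z}$ only by a finite descending intersection and so leaves $\Dim_{\widehat\mu}({\cal Z})$ unchanged. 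Then all hypotheses of Theorem~\ref{azulesyrojos3n} are in force and that theorem delivers $\Dim_{\widehat\mu}({\cal Z})\ge t$.

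There is essentially no obstacle here: the argument is a verification. The only place demanding care is keeping the index conventions straight so that the exponent in the two--measure comparison is exactly $m-d_{j-1}$; once that is correct, hypothesis (\ref{compintro}) is tailored precisely to make the $m=\widetilde d_j$ instance of (\ref{prop0f}) succeed, and both the $m<\widetilde d_j$ case and the reduction to Theorem~\ref{azulesyrojos3n} are routine.
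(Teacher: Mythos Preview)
Your proposal is correct and follows essentially the same route the paper indicates: the paper states only that ``the proofs are similar with the obvious changes,'' meaning one either re-runs the proof of Theorem~\ref{azulesyrojos3n} in the Gibbs setting or, equivalently, checks that its hypotheses (with the $\sigma^{d_{j-1}}$ deleted and $\widehat s_m\asymp1$) are satisfied under the stated assumptions. Your two-measure comparison is precisely the Gibbs specialisation of Lemma~\ref{cocientem}, and your observation that hypothesis (\ref{compintro}) is exactly what is needed for the $m=\widetilde d_j$ case of (\ref{prop0f}) matches the remark following the corollary that (\ref{compintro}) yields $\mhat(\widetilde J_j)\le C\,\widehat\mu(J_j)^t$.
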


\begin{remark} Recall that if $\Sigma_A^{\cal I}$ has the BIP property  (in particular, if the alphabet is finite) and the potential $\phi$  satisfies the Walter condition (this condition is weaker than H\" older), has finite Gurevich pressure and $V_1(\phi)<\infty$, then for all $0<t\leq 1$, the RPF measure for $t\phi$  is a $\sigma$-Gibbs measure
\end{remark}

\begin{remark} Notice that from (\ref{compintro}) follows that
$
\mhat(\widetilde{J}_{j}) \leq C\, {\widehat\mu}(J_j)^t .
$
\end{remark}

\begin{proof} [Proof of theorem \ref{azulesyrojos3n}]
We proceed as in the proof of  theorem \ref{azulesyrojosmu} but we define the measure $\nu$ by:
$\nu(J_0)=1$ and for each set
$J_j\in {\cal J}_j$ we write
$$
\nu(J_j)=\nu(\widetilde{J}_j) = \frac {{\mhat}(\sigma^{d_{j-1}}(\widetilde{J}_j))}
{
{\mhat}(\sigma^{d_{j-1}}(\widetilde{\cal J}_j \cap J_{j-1}))
 }
\, \nu(J_{j-1})
$$
where $J_{j-1}$ and $\widetilde{J}_j$ denote the unique sets in
${\cal J}_{j-1}$ and $\widetilde{\cal J}_{j}$ respectively, such
that $J_j \subseteq \widetilde{J}_j \subset J_{j-1}$. 
From (\ref{muchos}) and (\ref{etam}) we get  the following estimate  instead of (\ref{la1}) 
$$
{\nu(J_j)}\leq  \left ( \frac{e^{-{\overline\gamma}_j}}{\widehat s_{ d_{j}}}\frac {{ {\widehat\mu}(\sigma^{d_{j-1}}(\widetilde{J}_j))}}
{{\widehat\mu}(\sigma^{d_{j-1}}(J_{j-1}))}\,   \cdots \,  \frac{e^{-{\overline\gamma}_1}}{\widehat s_{d_{1}}}\frac {{ {\widehat\mu}(\sigma^{d_{0}}(\widetilde{J}_1))}}
{{\widehat\mu}(\sigma^{d_{0}}(J_{0}))} \right )^{t}
$$
and  therefore for some $c'>0$
$$
{\nu(J_j)}\leq   \left (\frac{1} {\widehat s_{ d_{j}}}\frac {{ {\widehat\mu}(\sigma^{d_{j-1}}({J}_j))}}
{{\widehat\mu}(\sigma^{d_{j-1}}(J_{j-1}))}\,   \cdots \,  \frac{1} {\widehat s_{ d_{1}}}\frac {{ {\widehat\mu}(\sigma^{d_{0}}({J}_1))}}
{{\widehat\mu}(\sigma^{d_{0}}(J_{0}))} \right )^{t}\leq 
\left (\frac {{ {\widehat\mu}({J}_j)}}
{{\widehat\mu}(J_{j-1})}\,   \cdots \,  \frac {{ {\widehat\mu}({J}_1)}}
{{\widehat\mu}(J_{0})} \right )^{t}
\leq c' {{\widehat\mu}(({J}_j))}^{t}
$$
Also, instead of (\ref{cota2}) for $d_j<m<{\widetilde{d}_{j+1}}$ we have 
$$
\nu(C(m,z)) \le  \frac{\nu(J_j)}{\overline\delta_{j+1}{\bf m}(\sigma^{d_j}(J_j))} {\bf m}(\sigma^{d_j}(C(m,z)))
$$
Hence by using (\ref{etam})
$$
\nu(C(m,z))\leq c\,
\left ( \frac {1}{\widehat s_m}  \frac {{{\widehat\mu}(\sigma^{d_{j}}(C(m,z)))}}{{\widehat\mu}(\sigma^{d_{j}}(J_{j})))} \right )^{t} \nu(J_j) \leq  \, c^{\prime\prime} \left ( \frac {{{\widehat\mu}(C(m,z))}}{
{\widehat\mu}(J_{j}))} \right )^{t} \widehat\mu(J_j)^{t}
= c^{\prime\prime}\,  {\widehat\mu}(C(m,z))^{t}
$$
The rest of the proof is similar.
\end{proof}

\section{Target-ball set for the shift transformation}\label{balltarget}

Let  $(\Sigma_A^{\cal I},d,\sigma, \musymb)$ be a topologically mixing topological Markov chain endowed with an atomless local weak $\sigma$-Gibbs measure $\musymb$. In the case of  infinite countable alphabet  we will assume that the potential $\phi$ has  summable variations and so the  constant $P<\infty$ is the Gurevich pressure $P_G(\phi)$.
Given  a point $w\in \Sigma_A^{\cal I}$, a $N$-cylinder ${\widehat P}$, and  a sequence $\{\ell_n\}\subset \N$   we want to estimate the size of the set
$$
\Wsimbolico=\{z\in{\widehat P}: \ \sigma^k(z) \in
C(\ell_k,w) \ \hbox{for infinitely many }k\}.
$$
In some subsections some  extra properties on $\phi$ and $\{\ell_n\}$ are required, we will indicate them at the beginning of each subsection.

Recall that  if $\widehat\mu$ is ergodic, $\phi\in L^1(\widehat\mu)$ and $\sum_{n\geq 1}V_n(\phi)<\infty$, then  (see (\ref{Birk})) for $w$  $\widehat\mu$-a.e  
\begin{equation}\label{geq0}
0\geq \lim_{n\to\infty}\frac 1n\log \widehat\mu(C(n,w) )=-P+\int\phi \, d\widehat\mu
\end{equation}
Therefore if  $P-\int\phi \, d\widehat\mu>0$ and  $\underbar v:=\liminf_{n\to\infty} \ell_n/n>0$, given $\ep>0$ small enough  we have that
$$
\sum_n\widehat\mu(C(\ell_n,w) )\leq c \sum_ne^{-n(\underbar v-\ep)[P-\int\phi \, d\widehat\mu-\ep]}<\infty
$$
and so, under these hypothesis,  the  Borel-Cantelli lemma implies that $\Wsimbolico$ has zero $\widehat\mu$-measure  for $w$ $\widehat\mu$ a.e.

\medskip

Later, in order to get a lower bound for the $\widehat\mu$- dimension of $\Wsimbolico$ we  will require the following   property on the center  $w$ of the target.

\begin{definition}
A point $w=(w_0,w_1,\ldots)\in \Sigma_A^{\cal I} $ is a $\musymb$-hitting point  if there exists an increasing sequence ${\cal I}(w)=\{p_j\}\subset \N$  such that
the constants  $c(w_n)$ in Remark \ref{wghitting} have an absolute upper bound in the times $p_j$, more precisely
$$
\sup\{ c(w_{p_j}):  \, j\in\N\}\leq c<\infty
$$
\end{definition}

\begin{remark}\label{R}
 If the alphabet is finite or $\musymb$ is a (not local) weak Gibbs measure, then  any point $w$ is a $\musymb$-hitting point. In any case,   by the recurrence theorem of Poincar\'e  we known that for $w=(w_0,w_1,\ldots)$ $\musymb$-a.e  there exists an increasing sequence $\{p_j\}\subset \N$  such that $w_{p_j}=w_0$ an so 
 the set  of  $\musymb$-hitting point has full $\musymb$-measure. In fact, if that recurrence sequence  exists, then $w$ is a hitting point for any local weak Gibbs measure.
\end{remark}

The importance of the above definition is indicated in the following lemma.
\begin{lemma}\label{chachisymb} Let $C$ be a $0$-cylinder and $w=(w_0,w_1,\ldots)\in \Sigma_A^{\cal I} $ be a $\musymb$-hitting point  with sequence ${\cal I}(w)=\{p_j\}$.  There  exists  a sequence 
$\{\widehat{s}_{m}\}$  with
$$
 1\leq \widehat{s}_{m}\leq   \widehat{s}_{m+1} \qquad \mbox{and} \qquad 
\lim_{m\to\infty}\frac 1m \log  \widehat{s}_{m}=0
$$
such that if  $C(n,z)\subset C$ with $\sigma^n( C(n,z))=C(0,\sigma^{p_{j}}(w))$  for some $j$
then 
$$
\frac{1}{\widehat{s}_{m}}\frac {\musymb (\sigma^{n}(C(m,z))}{\musymb(\sigma^n( C(n,z)))}\le 
 \frac {\musymb(C(m,z))}{\musymb(C(n,z))}\leq \widehat{s}_{m} \frac {\musymb (\sigma^{n}(C(m,z))}{\musymb(\sigma^n( C(n,z)))} \quad \mbox{ for all } m>n
$$
and for $m=n-1$
$$
\frac{1}{\widehat{s}_{n-1}}\frac {1}{\musymb(\sigma^n( C(n,z)))}\le 
 \frac {\musymb(C(n-1,z))}{\musymb(C(n,z))}\leq \widehat{s}_{n-1} \frac {1}{\musymb(\sigma^n( C(n,z)))}
 $$
 Moreover, if $\widehat\mu$ is a local $\sigma$-Gibbs measure (not weak), then the sequence $\{\widehat{s}_{m}\}$  is constant.
\end{lemma}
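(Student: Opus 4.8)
The plan is to read off Lemma \ref{chachisymb} directly from Remark \ref{wghitting}. That remark already establishes, for any local weak $\sigma$-Gibbs measure and any $z$, the two-sided comparison of $\musymb(C(m,z))/\musymb(C(n,z))$ with the shifted ratio $\musymb(\sigma^n(C(m,z)))/\musymb(\sigma^n(C(n,z)))$, up to a factor $s_{n,m}(z)$ (and the analogous statement for $m=n-1$). So the entire content of the lemma is to show that, under its hypotheses on $w$ and $C$, the constants $s_{n,m}(z)$ are bounded above by a single sequence $\{\widehat s_m\}$, independent of $n$, $z$ and $j$, which moreover satisfies $1\le\widehat s_m\le\widehat s_{m+1}$ and $\frac1m\log\widehat s_m\to0$.

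First I would pin down which localness constants actually occur. From Remark \ref{wghitting}, $s_{n,m}(z)=c(z_0)^2c(z_n)^2K_1K_{m-n+1}K_{n+1}K_{m+1}$ for $m>n$, and $s_{n,n-1}(z)=c(z_0)^2c(z_n)K_1K_nK_{n+1}$. Since $C(n,z)\subset C$, the symbol $z_0$ is the symbol defining the $0$-cylinder $C$, so $c(z_0)=:c_C$ depends only on $C$. Since $\sigma^n(C(n,z))=C(0,\sigma^n(z))$, the hypothesis $\sigma^n(C(n,z))=C(0,\sigma^{p_j}(w))$ forces $z_n=w_{p_j}$, and then the defining property of a $\musymb$-hitting point with sequence $\{p_j\}$ gives $c(z_n)=c(w_{p_j})\le c$, uniformly over the admissible $j$. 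This is the only non-automatic step: it is exactly where the hitting-point hypothesis on $w$ together with the constraint $\sigma^n(C(n,z))=C(0,\sigma^{p_j}(w))$ enters.

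After that it is pure bookkeeping. Recalling that $1\le K_n\le K_{n+1}$ with $\frac1n\log K_n\to0$ (Definition \ref{mGibbs}), I would set $\widehat s_m:=c_C^2c^2K_{m+2}^4$; then $\widehat s_m\ge1$, $\widehat s_m\le\widehat s_{m+1}$, and $\frac1m\log\widehat s_m=\frac{2\log c_C+2\log c}{m}+\frac4m\log K_{m+2}\to0$. Using monotonicity of $\{K_n\}$: for $m>n$, $K_1K_{m-n+1}K_{n+1}K_{m+1}\le K_{m+1}^4\le K_{m+2}^4$, hence $1\le s_{n,m}(z)\le\widehat s_m$; for $m=n-1$, $K_1K_nK_{n+1}\le K_{n+1}^3$, hence $s_{n,n-1}(z)\le c_C^2cK_{n+1}^3\le c_C^2c^2K_{n+1}^4=\widehat s_{n-1}$. (The mild index shift $K_{m+2}$ in place of $K_{m+1}$ is just so that one sequence dominates both cases comfortably.) Substituting $1\le s_{n,m}(z)\le\widehat s_m$ into the inequalities of Remark \ref{wghitting}, and likewise in the $m=n-1$ case, gives exactly the two displays in the statement. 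Finally, if $\musymb$ is a local $\sigma$-Gibbs measure rather than merely weak, then $\sup_nK_n=:K_\infty<\infty$, so $\widehat s_m\le c_C^2c^2K_\infty^4$ for every $m$ and one may take $\{\widehat s_m\}$ to be this constant, which yields the last assertion. I do not expect any substantial obstacle here; the only thing requiring care is the index choices above.
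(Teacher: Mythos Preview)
Your proof is correct and follows essentially the same route as the paper: both reduce the lemma to Remark \ref{wghitting} by observing that $z_0$ is fixed by the $0$-cylinder $C$ and that $z_n=w_{p_j}$ forces $c(z_n)\le c$ via the hitting-point hypothesis, then set $\widehat s_m$ to be a constant times $K_{m+1}^4$ (or $K_{m+2}^4$). Your version is in fact slightly more careful about the index bookkeeping in the $m=n-1$ case, where your shift to $K_{m+2}$ ensures the single sequence cleanly dominates both displays.
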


\begin{proof} Since $C(n,z)\subset C$ we have that $z_0$ is constant (is the symbol of $C$) and  since $\sigma^n( C(n,z))=C(0,\sigma^{p_{j}}(w))$  we have that $z_n=w_{p_j}$. Therefore  the constant $c(z_n)$ in Remark \ref{wghitting}  is equal to $c(w_{p_j})$, and  $c(w_{p_j})\leq c<\infty$ by  definition  of $\musymb$-hitting point.  We can take  $\widehat{s}_{m}=(c(z_0)c)^2 K^4_{m+1}$, then the result follows from Remark \ref{wghitting}.
\end{proof}

\subsection{Estimation of the $\musymb$-dimension}\label{mudim}

First, we will give an upper bound for the $\musymb$-dimension of $W_{\sigma}(\widehat{P},\ell_n,w)$.

\begin{proposition}\label{cotasupformalismo}
Let 
\begin{equation}\label{hipop}
\unders:=\liminf_{n\to\infty} \frac 1n \log \frac 1{\medsymb(C(\ell_n,w))}>0
\end{equation}
Then, 
$$
\Dim_\medsymb \, \Wsimbolico  \le 
\inf\{t>0: \, P_G(t\phi)-P_G(\phi)t<\unders t   \},
$$
\end{proposition}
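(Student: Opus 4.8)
The plan is a covering argument for the Carath\'eodory measure $M_{{\cal F},\medsymb^t}$, where ${\cal F}$ is the family of all cylinders. I would fix $t>0$ with
$$\delta:=\unders\,t-\big(P_G(t\phi)-P_G(\phi)\,t\big)>0$$
and show $M_{{\cal F},\medsymb^t}(\Wsimbolico)=0$; letting $t$ range over all exponents with $P_G(t\phi)-P_G(\phi)t<\unders\,t$, the definition of the $\medsymb$-dimension then gives the asserted bound. Write $\Wsimbolico=\bigcap_{N}\bigcup_{k\ge N}E_k$ with $E_k:=\widehat P\cap\sigma^{-k}(C(\ell_k,w))$; note $\unders>0$ forces $\ell_k\to\infty$. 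For $k$ larger than the length $N_0$ of $\widehat P$, the set $E_k$ is a pairwise disjoint, at most countable, union of $(k+\ell_k)$-cylinders, namely the cylinders $C(k+\ell_k,z)\subset\widehat P$ with $z_{k+j}=w_j$ for $0\le j\le\ell_k$; call this family ${\cal E}_k$. Each $C\in{\cal E}_k$ has diameter $2^{-(k+\ell_k+1)}\le 2^{-(k+1)}$, so once $2^{-(N+1)}\le\ep$ the family $\bigcup_{k\ge N}{\cal E}_k$ is an admissible $\ep$-cover of $\Wsimbolico$. Hence $M_{{\cal F},\medsymb^t,\,2^{-(N+1)}}(\Wsimbolico)\le\sum_{k\ge N}a_k$ with $a_k:=\sum_{C\in{\cal E}_k}\medsymb(C)^t$, and it is enough to show that $a_k$ decays exponentially in $k$.

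To bound $a_k$ I would split each cylinder of ${\cal E}_k$ at time $k$. If $C(k+\ell_k,z)\in{\cal E}_k$ then $\sigma^k(C(k+\ell_k,z))=C(\ell_k,w)$ and $\sigma^k(C(k,z))=C_{w_0}$ (because $z_k=w_0$), so Remark~\ref{wghitting} with $m=k+\ell_k$, $n=k$ gives
$$\medsymb\big(C(k+\ell_k,z)\big)\ \le\ s_{k,k+\ell_k}(z)\,\frac{\medsymb(C(\ell_k,w))}{\medsymb(C_{w_0})}\,\medsymb\big(C(k,z)\big),$$
where $s_{k,k+\ell_k}(z)\le c(z_0)^2c(w_0)^2K_{k+\ell_k+1}^4$ and $z_0$ is the fixed first symbol of $\widehat P$; this prefactor is subexponential in $k+\ell_k$, hence in $k$ once one invokes the standing hypothesis $\limsup_n\ell_n/n<\infty$ in the weak-Gibbs-but-not-Gibbs case (when $\medsymb$ is Gibbs the $K_n$ are bounded). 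Since $C(k+\ell_k,z)\mapsto C(k,z)$ is injective on ${\cal E}_k$ with image consisting of $k$-cylinders contained in $\widehat P$, raising the last inequality to the power $t$ and summing over ${\cal E}_k$ yields
$$a_k\ \le\ B_k\,\medsymb(C(\ell_k,w))^t\!\!\sum_{C(k,v)\subset\widehat P}\!\!\medsymb\big(C(k,v)\big)^t,\qquad \tfrac1k\log B_k\longrightarrow 0.$$

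The remaining cylinder sum is the technical core. By the weak Gibbs property, $\medsymb(C(k,v))$ is comparable, up to the subexponential factor $K_{k+1}$ and a constant depending only on $\widehat P$, to $e^{-(k+1)P_G(\phi)}\exp\!\big(\sup_{C(k,v)}\sum_{j=0}^{k}\phi\circ\sigma^j\big)$, while the exponential growth rate in $k$ of $\sum_{C(k,v)\subset\widehat P}\exp\!\big(t\sup_{C(k,v)}\sum_{j=0}^{k}\phi\circ\sigma^j\big)$ is $e^{kP_G(t\phi)}$; this last fact is \eqref{topsimb} for a finite alphabet, and for a countable alphabet it is exactly where the standing regularity of $\phi$ and the structure of $\Sigma_A^{\cal I}$ enter (Section~\ref{TForm}), one needing in particular that $t\phi$ have finite Gurevich pressure. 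Thus $\sum_{C(k,v)\subset\widehat P}\medsymb(C(k,v))^t\le\widetilde B_k\,e^{k(P_G(t\phi)-P_G(\phi)t)}$ with $\tfrac1k\log\widetilde B_k\to0$. Then I would choose $\ep\in(0,\unders)$ with $\ep(1+t)<\delta$; for all large $k$, $B_k\widetilde B_k\le e^{k\ep}$ and, by \eqref{hipop}, $\medsymb(C(\ell_k,w))^t\le e^{-kt(\unders-\ep)}$, hence
$$a_k\ \le\ \exp\!\Big(k\big[\ep(1+t)+P_G(t\phi)-P_G(\phi)t-\unders\,t\big]\Big)=\exp\big(k[\ep(1+t)-\delta]\big),$$
which is summable in $k$. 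Therefore $\sum_{k\ge N}a_k\to0$ as $N\to\infty$, so $M_{{\cal F},\medsymb^t}(\Wsimbolico)=0$ and $\Dim_\medsymb\,\Wsimbolico\le t$; taking the infimum over admissible $t$ would complete the argument.

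The step I expect to be the main obstacle is the cylinder-sum estimate, i.e.\ bounding $\sum_{C(k,v)\subset\widehat P}\medsymb(C(k,v))^t$ above by $e^{k(P_G(t\phi)-P_G(\phi)t)}$ up to subexponential factors: for a countable alphabet the growth rate of the \emph{full} cylinder partition function need not equal the Gurevich pressure without further structural hypotheses, and this is also the point at which the weak-Gibbs constants $K_n$ must be controlled at scale $k$, which is the reason the hypothesis $\limsup_n\ell_n/n<\infty$ is imposed whenever $\medsymb$ is only weak Gibbs.
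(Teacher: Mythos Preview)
Your covering argument and the weak-Gibbs splitting via Remark~\ref{wghitting} are exactly what the paper does. The place where your outline diverges is the one you flag yourself: after mapping each $C(k+\ell_k,z)\in{\cal E}_k$ to $C(k,z)$, you enlarge the image to \emph{all} $k$-cylinders contained in $\widehat P$ and then need the growth rate of $\sum_{C(k,v)\subset\widehat P}e^{t\sup S_{k+1}\phi}$ to be $P_G(t\phi)$. For a countable alphabet this can fail badly: the Gurevich pressure is by definition the growth rate of $Z_n(t\phi,i)$, a sum over \emph{periodic} orbits through a fixed symbol, whereas the unrestricted cylinder sum counts admissible words with free terminal symbol, and there are non-BIP topologically mixing chains for which the latter is $+\infty$ for every $k$ while $P_G(t\phi)<\infty$. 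So the overcounting step is a genuine gap in the countable case, not merely a technical nuisance.

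The paper avoids this by not overcounting. Every cylinder in the image of ${\cal E}_k$ already satisfies $z_k=w_0$ (since $\sigma^k(z)\in C(\ell_k,w)$ forces it), so one only needs to bound $\sum e^{tS_{k+1}\phi(z)}$ over admissible words $(i,z_1,\dots,z_{k-1},w_0)$ with \emph{both} endpoints fixed. Topological mixing now supplies a single connecting word $u\in\Sigma_A^{\cal I}|_{k_0}$ with $u_0=w_0$ and $u_{k_0}=i$; concatenating $(z_0,\dots,z_k)$ with $u$ produces a periodic point of period $k+k_0$ through $i$, and summable variations of $\phi$ lets one compare the Birkhoff sums, giving
$$\sum_{\sigma^k(z)=w,\ z_0=i}e^{\sum_{j=0}^{k}(t\phi)\circ\sigma^j(z)}\ \le\ c\,Z_{k+k_0}(t\phi,i)\ \le\ c\,e^{(k+k_0)(P_G(t\phi)+\ep)}.$$
With this in place your estimate $a_k\le e^{-k(\delta-o(1))}$ goes through unchanged. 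In short: retain the constraint $v_k=w_0$ that you already have for free, and close up to periodic orbits rather than appealing to the full cylinder partition function.
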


\begin{proof}
For each $N\in\N$  we have the following covering of $\Wsimbolico$:
$$
\bigcup_{n=N}^\infty \{C(n+\ell_n,z): \ \sigma^n(z)=w, \,z_0=i \}\,.
$$
with $i$  such that $\widehat P\subset C_i$.
Moreover,  given $\ep>0$   and $t>0$, from (\ref{hipop}), remark  \ref{wghitting} and since $\sigma^n(z)=w$ we have for  $n$ large enough
\begin{equation}\label{hit0}
  \frac {\musymb(C(n+\ell_n,z)}{\musymb(C(n,z))}\leq s_{n,n+\ell_n} \frac {\musymb (C(\ell_n,w))}{\musymb(C(0,w))} 
  \leq   \frac{c(i)^2}{c(w_0)^2\musymb(C(0,w))} \, e^{-n({\underline s}-\ep/(2t))}
  \end{equation}
  
Therefore for $t>0$ and $N$ large 
$$
\sum_{n\ge N} \sum_{\sigma^n(z)=w, z_0=i} \medsymb(C(n+\ell_n,z))^t \le c\, 
  \sum_{n\ge N}e^{-n({\underline s}t-\ep/2)}
 \sum_{\sigma^n(z)=w, z_0=i}  \, {\musymb (C(n,z))}^t
  $$
with $c$ a positive constant.
But for $n$ large
$$
{\musymb (C(n,z))}^t\leq  c(i)^{2t}  \,  \exp[-(n+1)(tP_G(\phi)-\ep/4)+t\sum_{j=0}^n \phi\circ\sigma^j(z)]
$$
and so
$$
 \sum_{\sigma^n(z)=w, z_0=i}  \, {\musymb (C(n,z))}^t\leq  c(i)^{2t} e^{-(n+1)(tP_G(\phi)-\ep/4)}  \sum_{\sigma^n(z)=w, z_0=i} e^{\sum_{j=0}^n (t\phi)\circ\sigma^j(z)} 
 $$
 Since  $\Sigma_A^{\cal I}$ is topologically mixing   there exists $u=(u_0,u_1,\cdots u_{k_0})\in\Sigma_A^{\cal I}|_{k_0}$ (for some $k_0$) with $u_0=w_0$ and $u_{k_0}=i$, and therefore by using that $\phi$ has summable variations we have that
 $$
 \sum_{\sigma^n(z)=w, , z_0=i} e^{\sum_{j=0}^n (t\phi)\circ\sigma^j(z)} 
\leq c\,  \sum_{\sigma^{n+k_0}(v)=v, v_0=i} e^{\sum_{j=0}^{n+k_0-1} (t\phi)\circ\sigma^j(v)}=c\,Z_{n+k_0}(t\phi,i)
$$
for some  constant $c>0$. As $P_G(t\phi)<\infty$, for $n$ large, 
$$
 \sum_{\sigma^n(z)=w, z_0=i}  \, {\musymb (C(n,z))}^t\leq c\, c(i)^{2t} e^{-(n+1)(tP_G(\phi)-\ep/4)} e^{(n+k_0)(P_G(t\phi)+\ep/4)}=c'\, e^{n(P_G(t\phi)-tP_G(\phi)+\ep/2)}
$$
Therefore for $N$ large 
$$
\sum_{n\ge N} \sum_{\sigma^n(z)=w, z_0=i} \medsymb(C(n+\ell_n,z))^t \leq c''\, 
  \sum_{n\ge N}e^{-n({\underline s}t-P_G(t\phi)+tP_G(\phi)-\ep)}
$$
for some constant $c''>0$.
Therefore,   for $\ep>0$ and $t>0$ such that $\unders t-P_G(t\phi)+tP_G(\phi)-\ep>0$ we have that
$$
\sum_{n\ge N} \sum_{\sigma^n(z)=w,  z_0=i} \medsymb(C(n+\ell_n,z))^t  \to 0 \, \qquad \mbox{as } \quad N\to\infty
$$ 
and so  $M_{{\cal F},\medsymb^t} (\Wsimbolico)=0$. 
\end{proof}

\begin{remark}\label{s=infty}
In the case $\underline s=\infty$,  the same argument  (with the obvious minor changes)
gives that $ \Dim_\medsymb \, \Wsimbolico=0$, as it is implicitly stated in the proposition. 
\end{remark}

\subsubsection{Lower bound by using some related weak Gibbs measures}

In order to get the lower bound for the $\musymb$-dimension,  we will need  the existence of weak Gibbs measures for some related  new potentials, more precisely  for $t\phi$. 
In this section we will also require that $V_1(\phi)<\infty$ for
 the potential $\phi$ of the measure $\widehat\mu$,  and if $\widehat\mu$ is weak (i.e $\sup_n K_n=\infty$) then   we need the sequence $\{\ell_n\}$ verifies  $\limsup_{n\to\infty}{\ell_n}/{n}<\infty$.

The  following lemma  is a straightforward consequence of the definition of local weak Gibbs measures.

\begin{lemma} \label{cocientem}Let $t>0$ and suppose ${\mhat}$ is  a weak Gibbs measure  for the potential $t\phi$. Then there exist a sequence  $\{ \widehat{k}_{l}\}$ with
$$
 1\leq \widehat{k}_{l}\leq   \widehat{k}_{l+1} \qquad \mbox{and} \qquad 
\lim_{l\to\infty}\frac 1l \log  \widehat{k}_{l}=0
$$
such that 
for any $l$-cylinder $C(l,z)$
$$
\frac{\mhat(C(l,z))}{\mhat(C(0,z))}\leq \widehat c(z_0)^2 \, \widehat{k}_{l} \, e^{-l(P_G(t\phi)-tP_G(\phi))}
\left(\frac{\widehat\mu(C(l,z))}{\widehat\mu(C(0,z))} \right)^t
$$
Here $\widehat c(z_0)=c_{\widehat\mu}(z_0)^{t}c_{\mhat}(z_0)$ where $c_{\widehat\mu}(z_0)$ and $c_{\mhat}(z_0)$ denote   the localness constants of  the measures $\widehat\mu$ and $\mhat$.
\end{lemma}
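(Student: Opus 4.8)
The plan is to unwind the defining inequality \eqref{Gibbs} of a (local) weak Gibbs measure twice, once for $\mhat$ with potential $t\phi$ and once for $\widehat\mu$ with potential $\phi$, evaluate both on the $l$-cylinder $C(l,z)$ and on the $0$-cylinder $C(0,z)=C_{z_0}$, take ratios so that the pressure terms and the Birkhoff sum $\sum_{j}\phi\circ\sigma^j$ are isolated, and then eliminate the Birkhoff sum by solving the $\widehat\mu$-inequality for it and inserting the result into the $\mhat$-inequality.

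First I would record the bookkeeping. Since $\phi$ has summable variations, so does $t\phi$ for every $t>0$, with $\sum_{n\ge 1}V_n(t\phi)=t\sum_{n\ge 1}V_n(\phi)<\infty$ under the standing hypothesis $V_1(\phi)<\infty$ of this section; hence by Remark~\ref{summablevariation} the pressure constant in the weak Gibbs property of $\mhat$ is $P_G(t\phi)$, that of $\widehat\mu$ is $P_G(\phi)$, and — this is the only point requiring a little care, see below — both inequalities \eqref{Gibbs} then hold at \emph{every} point of the relevant cylinder (after adjusting the localness constants). Write $\{K_n\},c_{\widehat\mu}(\cdot)$ for the data of $\widehat\mu$ and $\{K'_n\},c_{\mhat}(\cdot)$ for those of $\mhat$; recall $1\le K_n\le K_{n+1}$, $\frac1n\log K_n\to 0$, and likewise for $K'_n$.

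Now fix an $l$-cylinder $C(l,z)=C_{z_0\cdots z_l}$ (the claim is trivial if $\mhat(C(l,z))=0$) and pick any $\tilde z\in C(l,z)$. Applying \eqref{Gibbs} for $\mhat$ with $n=l+1$ (upper bound on $C(l,z)$) and with $n=1$ (lower bound on $C_{z_0}$) and dividing gives
$$
\frac{\mhat(C(l,z))}{\mhat(C(0,z))}\ \le\ c_{\mhat}(z_0)^2\,K'_1K'_{l+1}\,\exp\Big(-lP_G(t\phi)+t\sum_{j=1}^{l}\phi(\sigma^j\tilde z)\Big).
$$
The same manoeuvre for $\widehat\mu$ (lower bound on $C(l,z)$, upper bound on $C_{z_0}$) yields
$$
\exp\Big(\sum_{j=1}^{l}\phi(\sigma^j\tilde z)\Big)\ \le\ c_{\widehat\mu}(z_0)^2\,K_1K_{l+1}\,e^{lP_G(\phi)}\,\frac{\widehat\mu(C(l,z))}{\widehat\mu(C(0,z))}.
$$
Raising the second display to the power $t$ and substituting it into the first produces precisely the asserted bound, with $\widehat c(z_0)=c_{\widehat\mu}(z_0)^{t}c_{\mhat}(z_0)$ (so that $c_{\mhat}(z_0)^2(c_{\widehat\mu}(z_0)^2)^t=\widehat c(z_0)^2$) and $\widehat k_l:=K'_1K'_{l+1}(K_1K_{l+1})^{t}$; note the pressure exponents combine as $-lP_G(t\phi)+tlP_G(\phi)=-l\big(P_G(t\phi)-tP_G(\phi)\big)$. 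It then remains only to verify that $\{\widehat k_l\}$ has the stated properties: $\widehat k_l\ge 1$ is clear, $\widehat k_l\le\widehat k_{l+1}$ follows from monotonicity of $\{K_n\}$ and $\{K'_n\}$, and $\frac1l\log\widehat k_l=\frac1l(\log K'_1+\log K'_{l+1})+\frac{t}{l}(\log K_1+\log K_{l+1})\to 0$.

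I do not anticipate any genuine obstacle: the only delicate point is that Definition~\ref{mGibbs} guarantees \eqref{Gibbs} only $\widehat\mu$-a.e., resp. $\mhat$-a.e., so that strictly speaking one wants a single $\tilde z\in C(l,z)$ that is simultaneously ``good'' for both measures; this is exactly what the summable-variation hypothesis removes through Remark~\ref{summablevariation}, since then \eqref{Gibbs} holds at every point of each cylinder. (Should one prefer not to invoke that remark, one could instead observe that $\sum_{j=0}^{l}\phi(\sigma^j\cdot)$ has oscillation bounded by $\sum_{n\ge 1}V_n(\phi)<\infty$ on $C(l,z)$, uniformly in $l$, and reduce everything to a single arbitrary reference point that way.)
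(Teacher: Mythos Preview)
Your proof is correct and essentially identical to the paper's: both apply the defining weak-Gibbs inequalities for $\mhat$ and $\widehat\mu$ on $C(l,z)$ and on $C(0,z)$, take ratios, and combine after raising the $\widehat\mu$-estimate to the $t$-th power, with $\widehat k_l=K'_1K'_{l+1}(K_1K_{l+1})^t$. The only cosmetic difference is that you invoke Remark~\ref{summablevariation} up front to work with a single point $\tilde z$, whereas the paper writes the two bounds at possibly different points and then appeals to $\sum_{n\ge1}V_n(\phi)<\infty$ at the end to reconcile them --- the same hypothesis doing the same work.
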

\begin{proof} Since $\phi$ has summable variation,  the constant $P$ for the local weak Gibbs measure  $\widehat\mu$ is  the Gurevich pressure $P_G(\phi)$.
The potential of ${\mhat}$ has  also  summable variation  since $\phi$ has summable variation. So the constant $P$ for  ${\mhat}$  is   the Gurevich pressure $ P_G(t\phi)$.
Let us denote by $\{K_{n, {\mhat}}\}$ and  $\{K_{n, {\widehat\mu}}\}$ the sequences  in Definition \ref{WG} for  ${\mhat}$ and $\widehat\mu$ respectively. Then for some $x$
$$
\frac{\mhat(C(l,x))}{\mhat(C(0,x))} \leq c_{\mhat}(z_0)^2K_{1, {\mhat}}K_{l+1, {\mhat}} \, e^{-lP_G(t\phi)}  \, e^{t\sum_{j=1}^l \phi\circ\sigma^j(x)}
$$
and for some $z\in C(l,x)$
$$
\left(\frac{\widehat\mu(C(l,z))}{\widehat\mu(C(0,z))} \right)^t\geq \left(\frac{1}{c_{\widehat\mu}(z_0)^2K_{1, {\widehat\mu}}K_{l+1, {\widehat\mu}} }\right)^t\, e^{-ltP_G(\phi)}  \, e^{t\sum_{j=1}^l \phi\circ\sigma^j(z)}
$$
We get the result  by  using that $\sum_{n=1}^{\infty}V_n(\phi)<\infty$ .
\end{proof}

Later we will need the following result, the proof is similar to the previous one.

\begin{lemma} \label{cocientem2}Let $\gamma\geq 0$, $t>0$ and suppose ${\mhat}$ is  a weak Gibbs measure  for the potential $t\phi$. Then there exist a sequence  $\{ \widehat{k}_{l}\}$ with
$$
 1\leq \widehat{k}_{l}\leq   \widehat{k}_{l+1} \qquad \mbox{and} \qquad 
\lim_{l\to\infty}\frac 1l \log  \widehat{k}_{l}=0
$$
such that  
for any $l$-cylinder $C(l,z)$
$$
\frac{\mhat(C(l+1,z))}{\mhat(C(l,z))^{1+\gamma}}\leq \widehat c(z_0)^{2+\gamma}\widehat{k}_{l+1} \, e^{((l+1)\gamma-1)(P_G(t\phi)-tP_G(\phi))}
\left(\frac{\widehat\mu(C(l+1,z))}{\widehat\mu(C(l,z))^{1+\gamma}} \right)^t
$$
\end{lemma}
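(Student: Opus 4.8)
The plan is to mirror, almost verbatim, the proof of Lemma \ref{cocientem}; the only change is that we now compare an $(l+1)$-cylinder with the $(1+\gamma)$-th power of the enclosing $l$-cylinder instead of comparing a cylinder with the $0$-cylinder $C(0,z)$. First I would record the standing reductions: since $\phi$ has summable variations so does $t\phi$, hence by Remark \ref{summablevariation} the pressure constant in the local weak Gibbs property is $P_G(\phi)$ for $\widehat\mu$ and $P_G(t\phi)$ for $\mhat$; I would write $\{K_{n,\widehat\mu}\}$, $\{K_{n,\mhat}\}$ for the two sequences of Definition \ref{mGibbs} and $c_{\widehat\mu}(\cdot)$, $c_{\mhat}(\cdot)$ for their localness constants, so that $\widehat c(z_0)=c_{\widehat\mu}(z_0)^{t}c_{\mhat}(z_0)$, and I would set $V:=\sum_{n\ge1}V_n(\phi)<\infty$ (finite because $\phi$ has summable variations and $V_1(\phi)<\infty$ is a standing hypothesis of this subsection).

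Next I would apply the local weak Gibbs inequalities four times: an upper bound for $\mhat(C(l+1,z))$, a lower bound for $\mhat(C(l,z))$, a lower bound for $\widehat\mu(C(l+1,z))$ and an upper bound for $\widehat\mu(C(l,z))$, each evaluated at a point of the relevant cylinder; as in Lemma \ref{cocientem} I would use that the Birkhoff sums $\sum_{j=0}^{m}\phi\circ\sigma^j$ of two points of a common $m$-cylinder differ by at most $V$, so it suffices to evaluate all four bounds at representatives of $C(l+1,z)$ and of $C(l,z)$ at the cost of a bounded multiplicative error $e^{(2+\gamma)tV}$. Forming the ratio $\mhat(C(l+1,z))/\mhat(C(l,z))^{1+\gamma}$, dividing by $\big(\widehat\mu(C(l+1,z))/\widehat\mu(C(l,z))^{1+\gamma}\big)^{t}$ and simplifying, the Birkhoff-sum exponents cancel up to that bounded error; the powers of the localness constants assemble into $c_{\mhat}(z_0)^{2+\gamma}c_{\widehat\mu}(z_0)^{(2+\gamma)t}=\widehat c(z_0)^{2+\gamma}$; and the exponents carrying the pressures combine as $\big(-(l+2)+(l+1)(1+\gamma)\big)\big(P_G(t\phi)-tP_G(\phi)\big)=\big((l+1)\gamma-1\big)\big(P_G(t\phi)-tP_G(\phi)\big)$, which is exactly the exponential factor in the statement.

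Finally I would absorb the remaining growth factors $K_{l+2,\mhat}K_{l+1,\mhat}^{1+\gamma}K_{l+2,\widehat\mu}^{t}K_{l+1,\widehat\mu}^{(1+\gamma)t}$ together with the constant $e^{(2+\gamma)tV}$ into a single sequence $\widehat k_{l+1}$, and check that it is nondecreasing, at least $1$, and satisfies $\tfrac1l\log\widehat k_l\to0$ — which holds since each $K$-sequence has these properties and a finite product of such sequences does too. I do not expect a genuine obstacle here: the content is the same as in Lemma \ref{cocientem}. The only points that need care are the index bookkeeping that yields the exponent $(l+1)\gamma-1$ (worth double-checking on both the $\mhat$ side and the $\widehat\mu$ side, since the arithmetic $-(l+2)+(l+1)(1+\gamma)$ enters with opposite sign but equal magnitude in the two contributions) and, in the degenerate case where $C(l+1,z)$ or $C(l,z)$ is null for one of the two measures, noting that the claimed inequality is then trivial.
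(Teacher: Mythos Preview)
Your proposal is correct and is exactly what the paper intends: it states only that ``the proof is similar to the previous one'' (i.e.\ to Lemma~\ref{cocientem}), and what you wrote is a careful unfolding of that sentence. The index bookkeeping $-(l+2)+(l+1)(1+\gamma)=(l+1)\gamma-1$, the assembly of the localness constants into $\widehat c(z_0)^{2+\gamma}$, and the packaging of the $K$-factors and the variation constant into $\widehat k_{l+1}$ are all correct.
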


\

\begin{theorem} \label{cantorsimbolicoformalismo} 

Let $w$ be a $\musymb$-hitting point with sequence ${\cal I}(w)=\{p_j\}$ and let suppose
\begin{equation}\label{lineal}
 \overs:=\limsup_{n\to\infty} \frac 1n \, \log \frac 1{\musymb(C(\ell_n,w))}<\infty,
\end{equation}
there is a positive number $t$ 
such that
\begin{equation}\label{Presionys}
\infty >P_G(t\phi)-tP_G(\phi) \geq t\overs +\ep \qquad  \mbox{ for some } \ep>0
\end{equation}
and moreover, there exists a { mixing } local weak $\sigma$-Gibbs measure ${\mhat}$ for the potential $t\phi$
with local constants such that 
\begin{equation}\label{localm}
\sup\{ c_{\mhat}(w_{p_j}  ) : j\in\N\}<\infty, 
\end{equation}
and  $\phi\in L^1(\mhat)$.

\noindent Then, 
 there exists a  $(\widetilde{\cal J}_j,{\cal J}_j)$ pattern set ${\cal Z}$, with  $\widetilde{\cal J}_j$ a set of $\widetilde{d}_j$-cylinders and ${\cal J}_j$ a set of $d_j$-cylinders, 
 such that
 $$
 {\cal Z}\subset \{ z\in J_0: \sigma^{\widetilde{d}_j}(z)\in C(\ell_{\widetilde{d}_j},w) \mbox{ for } j\in\N\setminus\{0\}\}\subset \Wsimbolico, 
 $$
 and  verifying:
 
\noindent {\rm (i)} 
\begin{equation} \label{prop12f}
e^{-\gamma_j}\leq \frac{{\widehat\mu}(\sigma^{d_{j-1}}(J_j))}{{\widehat\mu}(\sigma^{d_{j-1}}(\widetilde{J}_{j}))}\qquad 
\mbox{ with } \qquad \gamma_j=\widetilde{d}_j (\overs+\ep/(2t)
\end{equation}
{\rm (ii)} There exists $\overline\delta>0 $ such that 
\begin{equation}\label{propp3f}
{\mhat} (\sigma^{d_{j-1}}(\widetilde{\cal J}_j \cap J_{j-1})):=\displaystyle\sum_{\widetilde{J}_j\in\widetilde{\cal J}_j, \,
\widetilde {J}_j\subset J_{j-1}} {\mhat} (\sigma^{d_{j-1}}(\widetilde{ J}_j))\ge \overline \delta \,  {\mhat}(\sigma^{d_{j-1}}(J_{j-1}))\,,
\end{equation}
\noindent {\rm (iii)} For $C(m,z)\subset J_{j-1}$ 
\begin{equation} \label{prop0f}
\frac{{\mhat}(\sigma^{d_{j-1}}(C(m,z)))}{{\mhat}(\sigma^{d_{j-1}}(J_{j-1}))}\leq c\, \overline \delta\left({\eta_m}\frac{\musymb(\sigma^{d_{j-1}}(C(m,z)))}{\musymb(\sigma^{d_{j-1}}(J_{j-1}))}\right)^t \quad 
 \text {for  }
 d_{j-1}\leq  m\leq\widetilde d_{j}
\end{equation}
 with $c>0$ a constant and
 \begin{equation}
 \eta_m=
 \begin{cases}
   {e^{-{\overline\gamma}_j}}/{\widehat s_{d_{j}}} \quad \mbox{for } m= \widetilde d_{j} \\
   \medskip
  {1}/{\widehat s_m} \quad \qquad \mbox{for } d_{j-1}\leq m<\widetilde d_{j} 
  \end{cases}
 \end{equation}
Here $\overline \delta$ is the constant in (ii) and $\widehat s_m$ is given by lemma \ref{chachisymb}.
\end{theorem}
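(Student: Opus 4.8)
The plan is to build the pattern set $\cal Z$ explicitly by a Cantor-type inductive construction, using the hitting times $\{p_j\}$ of $w$ to place the "target-catching" cylinders and using the mixing property of $\mhat$ (via Corollary \ref{muybuenos}, i.e.\ the machinery of Section 3.2) to guarantee that at each stage enough children survive. The intermediate cylinders $\widetilde{\cal J}_j$ of length $\widetilde d_j$ will be chosen so that $\sigma^{\widetilde d_{j-1}}$ maps each $J_{j-1}\in{\cal J}_{j-1}$ onto a single $0$-cylinder, then one appends the block $w_0w_1\cdots w_{\ell_{\widetilde d_j}}$ to force $\sigma^{\widetilde d_j}(z)\in C(\ell_{\widetilde d_j},w)$; this is exactly what makes $\cal Z\subset\Wsimbolico$. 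The refinement $\widetilde d_j>d_{j-1}$ is a free parameter whose growth we tune at the end so that $\overline\gamma_j$, $\gamma_j$ and the slowly-varying factors $\widehat s_{d_j}$, $\widehat k_l$ play nicely; this is where the hypothesis $\overs<\infty$ (linear control on $-\log\musymb(C(\ell_n,w))$) and, when $\musymb$ is only weak Gibbs, $\limsup \ell_n/n<\infty$, get used.

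First I would set up the inductive step: given $J_{j-1}\in{\cal J}_{j-1}$, apply Lemma \ref{chachisymb} to control ratios of $\musymb$ on subcylinders whose image under $\sigma^{d_{j-1}}$ is a $0$-cylinder $C(0,\sigma^{p_{i}}(w))$; then among the $\widetilde d_j$-cylinders $\widetilde J_j\subset J_{j-1}$ with $\sigma^{\widetilde d_j-\ell_{\widetilde d_j}}(\widetilde J_j)$ a prescribed $0$-cylinder and $\sigma^{\widetilde d_j}(\widetilde J_j)\subset C(\ell_{\widetilde d_j},w)$, invoke Corollary \ref{muybuenos} applied to the measure $\mhat$ (which is mixing, local weak $\sigma$-Gibbs for $t\phi$, with the relevant localness constants bounded by \eqref{localm}) to get that the $\mhat$-mass of the surviving family is $\ge\frac12\mhat(\sigma^{d_{j-1}}(J_{j-1}))\cdot(\text{fixed positive }0\text{-cylinder mass})$. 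That yields \eqref{propp3f} with a $j$-independent $\overline\delta>0$. Then $J_j$ is taken to be the length-$d_j$ subcylinder of $\widetilde J_j$ (possibly $d_j=\widetilde d_j$, or one extends slightly to reach a good point for the next step, à la Remark \ref{R}); the lower bound $e^{-\gamma_j}\le \musymb(\sigma^{d_{j-1}}(J_j))/\musymb(\sigma^{d_{j-1}}(\widetilde J_j))$ with $\gamma_j=\widetilde d_j(\overs+\ep/(2t))$ in \eqref{prop12f} follows from the weak-Gibbs estimate \eqref{Gibbs} together with $\sup\phi<\infty$ and the definition of $\overs$, absorbing the slowly-varying $K_n$'s into the $\ep/(2t)$ slack.

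The comparison inequality \eqref{prop0f} is the analytic heart: one wants, for every subcylinder $C(m,z)\subset J_{j-1}$ with $z\in\cal Z$, a bound of $\mhat$-ratio by a power $t$ of the $\musymb$-ratio times the slowly-varying weight $\eta_m$. I would derive this from Lemma \ref{cocientem} (and Lemma \ref{cocientem2} for the $m$ one past $\widetilde d_j$ where the extra $\gamma$-type term $e^{-\overline\gamma_j}/\widehat s_{d_j}$ appears): passing $\sigma^{d_{j-1}}$ through and using that $\mhat$ has potential $t\phi$ while $\musymb$ has potential $\phi$, the exponential factor $e^{-m(P_G(t\phi)-tP_G(\phi))}$ that appears is, by \eqref{Presionys} ($P_G(t\phi)-tP_G(\phi)\ge t\overs+\ep$), at least $e^{-m t(\overs+\ep/t)}$ — exactly the budget needed to pay for the target block of length $\ell_{\widetilde d_j}\lesssim \widetilde d_j$ and to define $\eta_m$; the leftover is the slowly-varying $\widehat k_m$, $\widehat s_m$, which is harmless. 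Finally I would check consistency of the pattern axioms (i.e.\ Definition of pattern: $d_{j-1}<\widetilde d_j\le d_j$, ${\cal J}_j<\widetilde{\cal J}_j<{\cal J}_{j-1}$, and the uniqueness in $(ii)$), verify that $\cal Z$ satisfies the localness bound \eqref{condloc2} (immediate from $w$ being a $\musymb$-hitting point and $\mhat$ satisfying \eqref{localm}), and conclude.

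The main obstacle I expect is the second bullet, \eqref{propp3f}: getting a \emph{uniform in $j$} positive proportion $\overline\delta$ of surviving children. For finite alphabet or a genuine Gibbs $\mhat$ this is routine bounded distortion, but here $\mhat$ is only \emph{weak} local Gibbs on a \emph{countable} alphabet, so one genuinely needs the mixing hypothesis on $\mhat$ together with the $\ep$-uniformly-$\mhat$-good machinery of Section 3.2 — and one must be careful that the $0$-cylinder $C(\ell_{\widetilde d_j},w)$'s first symbol $w_{p_\cdot}$ has $\mhat$-localness constant bounded (that is precisely \eqref{localm}), and that the "target" $0$-cylinder into which we want positive mass has $\mhat$-measure bounded below along the relevant subsequence; the latter is where one uses the hitting-point structure to restrict attention to times $p_j$ where $w_{p_j}$ ranges over a controlled set, reducing to finitely many $0$-cylinders or to a collection that is $\ep$-uniformly $\mhat$-good. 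Tuning the growth of $\widetilde d_j$ and $d_j$ so that all of $\gamma_j$, $\overline\gamma_j$, $\widehat s_{d_j}$, $\widehat k$ fit the hypotheses of Theorem \ref{azulesyrojos3n} (which is the theorem this construction feeds into) is the remaining bookkeeping.
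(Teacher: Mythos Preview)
Your overall strategy is the same as the paper's, but you have the geometry of the two families $\widetilde{\cal J}_j$, ${\cal J}_j$ reversed, and this misplacement is exactly what makes you think the uniform $\overline\delta$ is hard when it is in fact automatic. In the paper's construction the $\widetilde d_j$-cylinder $\widetilde J_j$ is obtained first, by applying Proposition~\ref{SMsinmalos} (not Corollary~\ref{muybuenos}) to $\mhat$ with $P_1=\sigma^{d_{j-1}}(J_{j-1})=C(0,\sigma^{p(\widetilde d_{j-1})}(w))$ and the \emph{fixed} target $P_2=C(0,w)$; thus $\sigma^{\widetilde d_j}(\widetilde J_j)=C(0,w)$ carries no target block yet. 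Only then does one append $w_0\cdots w_{p(\widetilde d_j)}$, choosing $p(\widetilde d_j)\in{\cal I}(w)$ with $\ell_{\widetilde d_j}\le p(\widetilde d_j)<\ell_{\widetilde d_j+1}$, to produce $J_j$ with $d_j=\widetilde d_j+p(\widetilde d_j)$ and $\sigma^{\widetilde d_j}(J_j)=C(p(\widetilde d_j),w)\subset C(\ell_{\widetilde d_j},w)$. Because $P_2=C(0,w)$ never changes, $\overline\delta=\tfrac12\,\mhat(C(0,w))$ is $j$-independent for free; the only thing that varies with $j$ is $P_1$, and that is absorbed by choosing $N_j=\widetilde d_j-d_{j-1}$ large enough at each step. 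No $\ep$-uniformly-$\mhat$-good hypothesis is needed here; that machinery enters only in Remark~\ref{paradim}, which upgrades the construction for the later Hausdorff-dimension arguments.

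Two smaller corrections. For property~(i) the paper does not use $\sup\phi<\infty$ (that hypothesis belongs to Theorem~\ref{cantorsimbolico}); the bound $\musymb(\sigma^{d_{j-1}}(J_j))/\musymb(\sigma^{d_{j-1}}(\widetilde J_j))\ge e^{-\gamma_j}$ comes directly from Lemma~\ref{chachisymb} applied to the pair $(\widetilde J_j,J_j)$ together with the definition of $\overs$ and, when $\musymb$ is genuinely weak, the standing assumption $\limsup\ell_n/n<\infty$ to control $\widehat s_{N_j+\ell_{\widetilde d_j+1}}$. For property~(iii) only Lemma~\ref{cocientem} is used (not Lemma~\ref{cocientem2}); the case $m=\widetilde d_j$ with the extra $e^{-\overline\gamma_j}/\widehat s_{d_j}$ factor falls out of the same computation once one feeds in the pressure gap $P_G(t\phi)-tP_G(\phi)\ge t\overs+\ep$ and the value $\gamma_j=\widetilde d_j(\overs+\ep/(2t))$.
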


The next corollary follows from above result and  theorem \ref{azulesyrojos3n}

\begin{corollary}\label{cantorsimbf} 
$$
 \Dim_\musymb (\Wsimbolico)  \geq  T^-,
$$ 
with $T^- $ the supremum of the set of $0<t\leq 1$ such that 
$$
 \infty>P_G(t\phi)-P_G(\phi)t>\overline st, 
 $$ and there exists a  mixing  local weak $\sigma$-Gibbs measure ${\mhat}$ for the potential $t\phi$
with local constants verifying 
(\ref{localm}), and  $\phi\in L^1(\mhat)$.
\end{corollary}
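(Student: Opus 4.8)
The plan is to deduce the corollary by feeding Theorem~\ref{cantorsimbolicoformalismo} into Theorem~\ref{azulesyrojos3n}; the substantive analysis has already been done in those two statements, so here it is a matter of matching hypotheses and then taking a supremum. First I would fix any $t$ with $0<t\le 1$ belonging to the set whose supremum defines $T^-$, i.e.\ $\infty>P_G(t\phi)-P_G(\phi)t>\overline s\,t$ and there exists a mixing local weak $\sigma$-Gibbs measure $\mhat$ for the potential $t\phi$ with $\phi\in L^1(\mhat)$ and with local constants satisfying~(\ref{localm}). Since the inequality $P_G(t\phi)-tP_G(\phi)>t\overline s$ is strict, I can choose $\ep>0$ small enough that $P_G(t\phi)-tP_G(\phi)\ge t\overline s+\ep$, which is exactly~(\ref{Presionys}). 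With this $\ep$, all hypotheses of Theorem~\ref{cantorsimbolicoformalismo} are in force (note $\overline s$ finite is part of the defining set via $\overline s<P_G(t\phi)/t-P_G(\phi)<\infty$, which is~(\ref{lineal})), so that theorem produces a $(\widetilde{\cal J}_j,{\cal J}_j)$-pattern set ${\cal Z}\subset\Wsimbolico$, with $\widetilde{\cal J}_j$ a set of $\widetilde d_j$-cylinders and ${\cal J}_j$ a set of $d_j$-cylinders, satisfying conclusions (i)--(iii) and the localness condition~(\ref{condloc2}) (the latter being guaranteed because $w$ is a $\mhat$-hitting point along $\{p_j\}$ and the times $d_j$ are taken among the $p_j$, via Lemma~\ref{chachisymb}).

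Next I would observe that conclusions (i)--(iii) of Theorem~\ref{cantorsimbolicoformalismo} are precisely the hypotheses of Theorem~\ref{azulesyrojos3n} for this same $t$ and with $\mathbf m:=\mhat$: (i) is~(\ref{cotas}) with $\gamma_j=\widetilde d_j(\overline s+\ep/(2t))$; (ii) is~(\ref{muchos}) with the constant sequence $\overline\delta_j\equiv\overline\delta$; and (iii), including the formula for $\eta_m$, is verbatim~(\ref{prop0f}), the relevant $\widehat s_m$, $\widehat s_{d_j}$ being exactly those furnished by Lemma~\ref{chachisymb} and used in Theorem~\ref{cantorsimbolicoformalismo}. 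Applying Theorem~\ref{azulesyrojos3n} therefore yields $\Dim_{\widehat\mu}({\cal Z})\ge t$. Since ${\cal Z}\subset\Wsimbolico$ and the $\widehat\mu$-dimension is monotone under inclusion, this gives $\Dim_{\widehat\mu}(\Wsimbolico)\ge t$.

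Finally I would let $t$ range over the entire set $S$ defining $T^-$. If $S=\varnothing$ the claim is trivial since $\sup\varnothing=0\le\Dim_{\widehat\mu}(\Wsimbolico)$ by the convention of Section~1.2. Otherwise pick $t_k\in S$ with $t_k\uparrow T^-$; by the previous paragraph $\Dim_{\widehat\mu}(\Wsimbolico)\ge t_k$ for every $k$, hence $\Dim_{\widehat\mu}(\Wsimbolico)\ge\sup_k t_k=T^-$. (If $T^-\in S$ one just takes $t=T^-$ directly.)

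I expect the only genuinely delicate point at this level to be the bookkeeping: making sure the $\ep$ chosen in the application of Theorem~\ref{cantorsimbolicoformalismo} is consistent with the $\gamma_j=\widetilde d_j(\overline s+\ep/(2t))$ appearing in (i) and with the $\overline\gamma_j$ occurring inside $\eta_m$, that the hitting sequence $\{p_j\}$ is the same throughout, and that it is the hypothesis $\phi\in L^1(\mhat)$ (not merely $\phi\in L^1(\widehat\mu)$) that is invoked. The real work — building ${\cal Z}$ and proving the measure comparisons (i)--(iii), which rely on mixing of $\mhat$ together with Corollary~\ref{muybuenos} and Lemma~\ref{chachisymb} — lives inside Theorem~\ref{cantorsimbolicoformalismo}, and the passage from (i)--(iii) to $\Dim_{\widehat\mu}({\cal Z})\ge t$ is the content of Theorem~\ref{azulesyrojos3n}; the corollary itself is simply their composition followed by the supremum.
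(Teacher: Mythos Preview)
Your proposal is correct and follows exactly the route indicated by the paper, which simply states that the corollary follows from Theorem~\ref{cantorsimbolicoformalismo} and Theorem~\ref{azulesyrojos3n}. Your expansion of the bookkeeping (choosing $\ep>0$ from the strict inequality, matching (i)--(iii) of Theorem~\ref{cantorsimbolicoformalismo} with the hypotheses of Theorem~\ref{azulesyrojos3n}, and taking the supremum) is accurate and is precisely the composition the paper has in mind.
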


\begin{remark}\label{cotasbien} Notice that the convexity of the Gurevich pressure (see section \ref{TForm}) imply the convexity of the function $G:[0,1]\longrightarrow [0,+\infty]$ defined as $G(t):=P_G(t\phi)-P_G(\phi)t$, and since $G(1)=0$ we have that $G(t)$ is decreasing.
Hence, we have that for $\underline s>0$ 
$$
\sup\{t>0: \infty>G(t)>\overline s t\}\leq \inf\{t>0: G(t)<\underline st\}
$$
We recall that $ \Dim_\musymb (\Wsimbolico)  \leq  \inf\{t>0: G(t)<\underline st\}$.
\end{remark}

Later in section \ref{MarkovT} we will use the following remark to get our Hausdorff dimension results for Markov transformations with infinite countable alphabet. This result is not necessary in the study of the $\widehat\mu$-dimension of the target-ball set for the shift transformation. The  proof is given in section \ref{remarks}.

\begin{remark} \label{paradim} 
Under the hypothesis of theorem  \ref{cantorsimbolicoformalismo} let us also 
 suppose  that   
  the center of the target  $w$ verifies
$$
 \unders=\liminf_{n\to\infty}\frac 1n\log\frac{1}{ \widehat\mu(C(\ell_n,w))}>0 \quad \mbox{ and } \quad \liminf_{n\to\infty} \frac{\widehat\mu(C(n+1,w))}{\widehat\mu(C(n,w))}>0.
 $$
If $P_G(\phi)-\int\phi \, d\widehat{\bf m}>0$ and  the collection ${\cal P}=\{C(0,\sigma^{p_j}(w)): p_j\in {\cal I}(w)\}$  is $\ep$-uniformly $\widehat{\bf m}$-good  for all $\ep$ small enough, then 
there  exists  a $(\widetilde{\cal J}_j,{\cal J}_j)$ pattern set ${\cal Z}_{\ep}\subset \Wsimbolico$ verifying the conditions in theorem \ref{cantorsimbolicoformalismo} and with the following extra property: 

\noindent For all
$$
\gamma\geq \frac{3\ep}{tP_G(\phi)-t\int\phi \, d\widehat{\bf m}-\ep}>0
$$
there is a constant $c>0$ such that for all $z\in {\cal Z}$ 
$$
 \frac{\widehat\mu(\sigma^{\widetilde d_j}(C(n+1,z))}{\widehat\mu(\sigma^{\widetilde d_j}(C(n,z))^{1+\gamma}}\geq c \qquad \mbox{ for } \quad d_j\leq n<\widetilde d_{j+1} 
$$
By construction (also in  theorem \ref{cantorsimbolicoformalismo}) we have that 
$$ \sigma^{\widetilde d_{j+1}}(C(n,z))=C(n-\widetilde d_{j+1},w) \quad \mbox{for } \quad \widetilde d_{j+1}\leq n\leq d_{j+1}$$

Recall that if  the collection $\#{\cal P}$ is finite  then we always have that ${\cal P}$ is $\ep$-uniformly $\widehat{\bf m}$-good  (since $\widehat{\bf m}$ is mixing). For $\#{\cal P}$ infinite we know  by corollary  \ref{mejoregorof} that if $\widehat{\bf m}$  has summable uniform rate of mixing of order $3$ in ${\cal P}$ and $\phi\in L^2(\widehat{\bf m})$ then  ${\cal P}$ is $\ep$-uniformly $\widehat{\bf m}$-good.
\end{remark}

\

\begin{proof}[Proof of theorem \ref{cantorsimbolicoformalismo}]

Since $\Sigma_A^{\cal I}$  is topologically mixing,  for  $\widetilde{d}_0$ large enough $\sigma^{-\widetilde{d}_0}(C(0,w))\cap \widehat{P}\neq\varnothing$. Therefore,
there exists a $\widetilde{d}_0$-cylinder $\widetilde{J}_0\subset \widehat{P}$ such that  $\sigma^{\widetilde{d}_0}(\widetilde{J}_0 )=C(0,w)$. We  define $J_0:=\widetilde J_0$ and $\widetilde{\cal J}_0=\{\widetilde J_0\}$, ${\cal J}_0=\{J_0\}$.

To construct the family  $\widetilde{\cal J}_1$ we will apply proposition \ref{SMsinmalos} for the measure $\mhat$ with  $P_1=P_2=C(0,w)$.  We take $N_1$ be a  natural number large enough so that ${\cal S}_{N}(M,\ep)$ with $N=M=N_1$ satisfies  (\ref{Sn}). Hereafter, for all $n$-cylinder $C$, we will denote by 
 $\inv_{C}$   the  composition of the  $n$ branches of $\sigma^{-1}$ such that $C=\inv_{C}(\sigma^n(C))$.
We define $\widetilde{\cal J}_1$ as the family of cylinders  $\inv_{J_0}(S)$
with $S\in{\cal S}_{N_1}(N_1,\ep)$. 
Notice that  by proposition \ref{SMsinmalos}
$$
\mhat(\sigma^{d_0}(\widetilde{\cal {J}}_1\cap J_0)):=
\sum_{\widetilde{J}_1\in\widetilde{\cal J}_1, \,
\widetilde {J}_1\subset J_{0}} \mhat (\sigma^{d_{0}}(\widetilde{ J}_1))\ge \overline \delta  \, \mhat(\sigma^{d_{0}}(J_{0}) \quad \mbox{ with } \quad  \overline \delta =\frac 12\, \mhat(C(0,w))\,.
$$
Let  ${\cal I}(w)$ be  the sequence given by  the definition of $\widehat\mu$-hitting point. We choose $N_1$   so that there exists $p_i\in {\cal I}(w)$  for  $\widetilde{d}_1:=d_0+N_1$ such that
\begin{equation}\label{ell}
\ell_{\widetilde{d}_1} \le p_i <\ell_{\widetilde{d}_1+1}.
\end{equation}
We denote this natural number $p_i$  by $p(\widetilde{d}_1)$.

For  each set  $S$ in ${\cal S}_{N_1}(N_1,\ep)$ we have that $\sigma^{N_1}(S)=C(0,w)$, then we    take in each $S$ the subset $L:=\inv_{S}(C(p(\widetilde{d}_1),w))$ and we denote by ${\cal L}_1$ this family of sets. We define
the family ${\cal J}_1$   as  the collection  $\inv_{J_0}(L)$ with $L\in{\cal L}_1$.

Notice that by construction 
 for all $ J_1\in{ \cal J}_1$ there exists an unique $\widetilde J_1\in\widetilde{ \cal J}_1$ such that $J_1\subset \widetilde J_1$, 
 $$
 \sigma^{\widetilde{d}_1}(\widetilde J_1)=C(0,w) ,\quad  \sigma^{\widetilde{d}_1}(J_1)=C(p(\widetilde{d}_1),w) \subset C(\ell_{\widetilde{d}_1},w) , \quad\sigma^{d_1}(J_1)=C(0,\sigma^{p(\widetilde{d}_1)}(w)),
$$
with  $\widetilde{d}_1:=d_0+N_1$ and $d_1:=\widetilde{d}_1+p(\widetilde{d}_1).$

Since $\sigma^{d_0}(\widetilde J_1)=S$ for some $S\subset C(0,w)$ with  $\sigma^{N_1}(S)=C(0,w)$, and  $\sigma^{d_0}(J_1)=L\subset S$ for some $L$ with $\sigma^{N_1}(L)=C(p(\widetilde{d}_1),w)$, then we have from  lemma \ref{chachisymb} (we may assume that $0\in {\cal I}(w)$) and (\ref{ell}) that 
\begin{equation}\label{hitprop0}
\frac{\musymb(\sigma^{d_0}(J_1))}{\musymb(\sigma^{d_0}(\widetilde J_1))}= \frac{\musymb(L)}{\musymb(S)}\geq  \frac{1}{\widehat{s}_{N_1+p(\widetilde{d}_1)}} \frac{\musymb(C(p(\widetilde{d}_1),w))}{\musymb(C(0,w))} \geq \frac{1}{\widehat{s}_{N_1+\ell_{\widetilde{d}_1+1}}} \, {\musymb(C(\ell_{\widetilde{d}_1+1},w))}
\end{equation}
From (\ref{lineal}) we have for $N_1$ large enough that 
$$
\musymb(C(\ell_{\widetilde{d}_1+1},w))\ge \, e^{-\widetilde{d}_1 (\overs+\ep/t)}
$$
Recall that $\lim_{m\to\infty}(\log \widehat s_m)/m=0$ with  $\{\widehat s_m\}$ a constant sequence  if $\widehat\mu$ is not weak (i.e $\sup_nK_n<\infty$), and if $\widehat\mu$ is weak  (i.e $\sup_nK_n=\infty$) we are assuming that
$\limsup_{n\to\infty}{\ell_n}/{n}<\infty$.  So, 
 for $N_1$ large enough we also have 
$$
\frac{1}{\widehat{s}_{N_1+\ell_{\widetilde{d}_1+1}}} \geq e^{-\ep\widetilde{d}_1/(2t)}
$$
Hence
$$
\frac{\musymb(\sigma^{d_0}(J_1))}{\musymb(\sigma^{d_0}(\widetilde J_1))} \geq e^{-\gamma_1}
\qquad
\mbox{with } \qquad \gamma_1= \widetilde{d}_1 (\overs+\ep/(2t))
$$
From lemma \ref{cocientem}
 for $d_0\leq m\leq \widetilde{d}_1$ 
$$
\frac{{\mhat}(\sigma^{d_{0}}(C(m,z)))}{{\mhat}(\sigma^{d_{0}}(J_{0}))}\leq 
 \widehat c(w_0)^2 \, \widehat{k}_{m-d_0} \, e^{-(m-d_0)[P_G(t\phi)-tP_G(\phi)]}\left(\frac{\widehat\mu(\sigma^{d_{0}}(C(m,z)))}{\widehat\mu(\sigma^{d_{0}}(J_0))} \right)^t \notag
$$
By taking $d_0$ large enough  we have for $ m\geq d_0$ 
that
$$
 \widehat c(w_0)^2 \, \widehat{k}_{m}\, {\widehat s_m^t}\leq {\overline \delta}e^{m\ep/2}
$$
 and therefore (we recall that $\widehat{k}_{m-d_0}\leq k_m$) by using (\ref{Presionys}) we get for some $c>0$
 $$
 \begin{aligned}
\frac{{\mhat}(\sigma^{d_{0}}(C(m,z)))}{{\mhat}(\sigma^{d_{0}}(J_{0}))}&\leq  e^{d_0[P_G(t\phi)-tP_G(\phi)]}\
e^{-m[P_G(t\phi)-tP_G(\phi)-\ep/2]}\, {\overline \delta}
\left(\frac{1}{\widehat s_m} \frac{\widehat\mu(\sigma^{d_{0}}(C(m,z)))}{\widehat\mu(\sigma^{d_{0}}(J_0))} \right)^t \notag \\
&\leq c\, {\overline \delta}e^{-m[t\overline s+\ep/2]}\left(\frac{1}{\widehat s_m} \frac{\widehat\mu(\sigma^{d_{0}}(C(m,z)))}{\widehat\mu(\sigma^{d_{0}}(J_0))} \right)^t 
\end{aligned}
$$
Hence, since $ \gamma_1= \widetilde{d}_1 (\overs+\ep/(2t))$ we obtain (\ref{prop0f}) for $j=1$.
 
 \
 
Now, let us assume that we have already constructed the families
$\widetilde{\cal J}_j, \, {\cal J}_j$ 
and the numbers   $N_j$,  $\widetilde{d}_j$,
$p(\widetilde{d}_j)$  and  $d_j$ (for $j=1,\ldots,m$) , with $p(\widetilde{d}_j)\in {\cal I}(w)$, related by
$$
d_0=\widetilde{d}_0, \qquad \widetilde{d}_{j}=d_{j-1}+N_{j}
 \qquad   d_j=\widetilde{d}_j+p(\widetilde{d}_j)
$$
in such way that 
\begin{equation}\label{enW}
 \sigma^{\widetilde{d}_j}(\widetilde J_j)=C(0,w) ,\quad  \sigma^{\widetilde{d}_j}(J_j)=C(p(\widetilde{d}_j),w) \subset C(\ell_{\widetilde{d}_j},w) , \quad\sigma^{d_j}(J_j)=C(0,\sigma^{p(\widetilde{d}_j)}(w)),
\end{equation}
and  the properties (i),(ii) and (iii)  of  the theorem holds. 

To construct $\widetilde{\cal J}_{m+1}$ we choose  $N_{m+1}$ such that:  
\begin{itemize}
\item[(i)] proposition \ref{SMsinmalos} holds for the measure $\mhat$ with $N=M=N_{m+1}$, $P_1=C(0,\sigma^{{p(\widetilde{d}_m)}}(w))$ and $P_2=C(0,w)$,
 \item[(ii)] There exists
$p(\widetilde{d}_{m+1})\in {\cal I}(w)$  with   $\widetilde{d}_{m+1}:=d_m+N_{m+1}$ such that
$
\ell_{\widetilde{d}_{m+1}} \le p(\widetilde{d}_{m+1}) <\ell_{\widetilde{d}_{m+1}+1},
$
 \end{itemize}
 We define  $\widetilde{\cal J}_{m+1}$ as
$$
\widetilde{\cal J}_{m+1} = \bigcup_{J\in {\cal J}_m} \inv_J ({\cal S}_{N_{m+1}}(N_{m+1},\ep)) \,.
$$
and the family ${\cal J}_{m+1}$ as
$$
{\cal J}_{m+1} = \bigcup_{J\in {\cal J}_m} G_J^{-1} ({\cal L}_{m+1}) \, \quad  \text{where} \qquad {\cal L}_{m+1}= \bigcup_{S\in{\cal S}_{N_{m+1}}(N_{m+1},\ep)} G^{-1}_{S}(C(p(\widetilde{d}_{m+1}),w))
$$
In a similar way that in the initial step of the induction, one can  check that by taking $N_{m+1}$ large enough the properties (i),(ii) and (iii)  of  the theorem hold for $j=m+1$. Notice that we need $p(\widetilde{d}_j)\in {\cal I}(w)$ in order to get (\ref{hitprop0}) for $j+1$, and (\ref{localm}) to get (iii). The  pattern set ${\cal Z}$ is contained in $\Wsimbolico $ for (\ref{enW}).
\end{proof}

\subsubsection{Lower bound without using other weak Gibbs measures}

In this section we assume that   $\widehat\mu$ is a {\it  mixing } local  weak $\sigma$-Gibbs measure  with potential $\phi$ such that $\sum_{n\geq 1}V_n(\phi)<\infty$ and  $\phi\in L^1(\widehat\mu)$.  In particular, we have $0\leq P_G(\phi)-\int \phi \, d
{\widehat{\mu} }<\infty$ (see (\ref{geq0})).
If $\widehat\mu$ is really  weak (i.e $\sup_n K_n=\infty$) then   we ask  the sequence $\{\ell_n\}$ verifies  $\limsup_{n\to\infty}{\ell_n}/{n}<\infty$, this condition is not necessary if $\sup_n K_n<\infty$.
Moreover,     in the case of   infinite countable  alphabet and $\musymb$ weak  we will add the extra hypothesis $\sup\phi<\infty$ in order to have  for all $z$ with $z_0=i_0$ 
$$
{{\widehat\mu(C(n,z)}}\leq c(i_0) K_n  \exp[-nP_G(\phi)+n\sup\phi] ,
$$
and therefore for all $\ep>0$ there exists $n_0$ such that for all  $n\geq n_0$
\begin{equation}\label{controlsm}
\frac{1}
{{\widehat\mu(C(n,z))}}\geq \frac{1}{c(i_0)} 
\exp[n(P_G(\phi)-\sup\phi-\ep)]   \quad \mbox{ for all $z$ with } z_0=i_0
\end{equation}
By using the mixing properties of $\widehat\mu$ we get the following:

\begin{theorem} \label{cantorsimbolico} 
Let $w$ be a $\musymb$-hitting point with sequence ${\cal I}(w)=\{p_i\}$ and let 
\begin{equation}\label{lineal2}
 \overs:=\limsup_{n\to\infty} \frac 1n \, \log \frac 1{\musymb(C(\ell_n,w))}<\infty.
\end{equation}

\noindent Then,  for all $\ep>0$ with 
$$
P_G(\phi)-\int \phi \, d
{\widehat{\mu} }>\ep
$$  
there exists a  $(\widetilde{\cal J}_j,{\cal J}_j)$ pattern set ${\cal Z}_{\ep}$, with  $\widetilde{\cal J}_j$ a set of $\widetilde{d}_j$-cylinders and ${\cal J}_j$ a set of $d_j$-cylinders, 
 such that
 $$
 {\cal Z}_{\ep}\subset \{ z\in J_0: \sigma^{\widetilde{d}_j}(z)\in C(\ell_{\widetilde{d}_j},w) \mbox{ for } j\in\N\setminus\{0\}\}\subset \Wsimbolico, 
 $$
 and  verifying:

\medskip
\noindent {\rm (i)} There exists $\overline\delta>0 $ such that 
$$
{\widehat\mu} (\sigma^{d_{j-1}}(\widetilde{\cal J}_j \cap J_{j-1})):=\displaystyle\sum_{\widetilde{J}_j\in\widetilde{\cal J}_j, \,
\widetilde {J}_j\subset J_{j-1}} {\widehat\mu} (\sigma^{d_{j-1}}(\widetilde{ J}_j))\ge \overline \delta \,  {\widehat\mu}(\sigma^{d_{j-1}}(J_{j-1}))\,,
$$
\noindent {\rm (ii)} 
$$
e^{-(\widetilde{d}_j-d_{j-1})(P_G(\phi)-\int \phi \, d
{\widehat{\mu} }+2\ep)}\leq \frac{{\widehat\mu}(\sigma^{d_{j-1}}({\widetilde J}_j))}{{\widehat\mu}(\sigma^{d_{j-1}}({J}_{j-1}))}\leq e^{-(\widetilde{d}_j-d_{j-1})(P_G(\phi)-\int \phi \, d
{\widehat{\mu} }-2\ep)}
$$
\medskip
\noindent {\rm (iii)} 
$$
e^{-\ep \widetilde  d_j}\, \musymb(C(d_j-\widetilde{d}_j,w))\leq
\frac{\musymb(\sigma^{d_{j-1}}(J_j))} 
{\musymb(\sigma^{d_{j-1}}(\widetilde J_j))}
\leq e^{\ep \widetilde d_j}\, \musymb(C(d_j-\widetilde{d}_j,w))
$$
and
$$
{\musymb(C(d_j-\widetilde{d}_j,w))} \geq \, e^{-\widetilde{d}_j (\overs+\ep)}
$$
\medskip
\noindent {\rm (iv)} For some $c>0$
$$
{\widehat s}_m \leq  \frac{c} {\widehat\mu(C(m,z))^{\ep}} \qquad \mbox{ for all  } \quad  m\geq d_{0}\quad \mbox{ and }\quad  z\in{\cal Z}
$$
with $\{{\widehat s}_m\}$ the sequence in lemma \ref{chachisymb}

\medskip
\noindent {\rm (v)} 
$$
\lim_{j\to\infty}\frac{d_1+\cdots+d_{j-1}}{\widetilde { d}_{j}}=0
$$
\end{theorem}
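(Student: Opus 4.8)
\noindent\emph{Plan of the proof.} The plan is to mirror, almost verbatim, the inductive construction of the proof of Theorem~\ref{cantorsimbolicoformalismo}, but with the auxiliary measure $\mhat$ everywhere replaced by $\widehat\mu$ itself, using now that $\widehat\mu$ is mixing: at each stage one inserts, via Proposition~\ref{SMsinmalos} applied to $\widehat\mu$, a large family of intermediate cylinders whose base lies in $Good(\cdot,\ep)$, and then prolongs each of them by the finite word copying the prefix $C(p(\widetilde{d}_j),w)$ of the target. First I would fix $\ep>0$ with $A:=P_G(\phi)-\int\phi\,d\widehat\mu>\ep$, assume (as in the proof of Theorem~\ref{cantorsimbolicoformalismo}) that $0\in{\cal I}(w)$ with the convention $p(\widetilde{d}_0):=0$, and use topological mixing to choose $\widetilde{d}_0=d_0$ large and a $d_0$-cylinder $J_0=\widetilde J_0\subset\widehat P$ with $\sigma^{d_0}(\widetilde J_0)=C(0,w)$, setting $\widetilde{\cal J}_0={\cal J}_0=\{J_0\}$.

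For the inductive step, given ${\cal J}_m$ consisting of $d_m$-cylinders $J$ with $\sigma^{d_m}(J)=C(0,\sigma^{p(\widetilde{d}_m)}(w))=:P_1^{(m)}$, I would pick $N_{m+1}$ large enough — depending only on $P_1^{(m)}$, on $\ep$ and on the data fixed earlier — so that at once: Proposition~\ref{SMsinmalos} gives $\widehat\mu({\cal S}_{N_{m+1}}(N_{m+1},\ep))\ge\tfrac12\widehat\mu(P_1^{(m)})\widehat\mu(C(0,w))$ for the pair $P_1=P_1^{(m)}$, $P_2=C(0,w)$; there is $p(\widetilde{d}_{m+1})\in{\cal I}(w)$ with $\ell_{\widetilde{d}_{m+1}}\le p(\widetilde{d}_{m+1})<\ell_{\widetilde{d}_{m+1}+1}$ for $\widetilde{d}_{m+1}:=d_m+N_{m+1}$ (possible exactly as for (\ref{ell})); and $N_{m+1}\ge(m+1)(d_1+\cdots+d_m)$. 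Then I set
$$\widetilde{\cal J}_{m+1}=\bigcup_{J\in{\cal J}_m}\inv_J\big({\cal S}_{N_{m+1}}(N_{m+1},\ep)\big),\qquad {\cal J}_{m+1}=\bigcup_{J\in{\cal J}_m}\inv_J\big({\cal L}_{m+1}\big),$$
with ${\cal L}_{m+1}=\bigcup_{S\in{\cal S}_{N_{m+1}}(N_{m+1},\ep)}\inv_S\big(C(p(\widetilde{d}_{m+1}),w)\big)$ and $d_{m+1}:=\widetilde{d}_{m+1}+p(\widetilde{d}_{m+1})$. By construction $d_{j-1}<\widetilde{d}_j\le d_j$, the nesting ${\cal J}_j<\widetilde{\cal J}_j<{\cal J}_{j-1}$ holds, and each $\widetilde J_j$ contains exactly one $J_j$, so ${\cal Z}_\ep$ is a $(\widetilde{\cal J}_j,{\cal J}_j)$ pattern set; and since $\sigma^{\widetilde{d}_j}(J_j)=C(p(\widetilde{d}_j),w)\subseteq C(\ell_{\widetilde{d}_j},w)$ we obtain ${\cal Z}_\ep\subset\{z\in J_0:\sigma^{\widetilde{d}_j}(z)\in C(\ell_{\widetilde{d}_j},w)\ \text{for all }j\ge1\}\subset\Wsimbolico$.

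It then remains to check (i)--(v). Property (i) with $\overline\delta=\tfrac12\widehat\mu(C(0,w))$ is immediate, because $\sigma^{d_{j-1}}(\widetilde{\cal J}_j\cap J_{j-1})$ is the union of the cylinders of ${\cal S}_{N_j}(N_j,\ep)$ and $\sigma^{d_{j-1}}(J_{j-1})=P_1^{(j-1)}$; the point is that $\overline\delta$ is uniform in $j$ precisely because $P_2$ is frozen equal to $C(0,w)$. For (ii): $\sigma^{d_{j-1}}(\widetilde J_j)=C(N_j,z)$ with $z\in Good(N_j,\ep)$, so the defining inequalities of $Good$ give $e^{-(N_j+1)(A+\ep)}<\widehat\mu(\sigma^{d_{j-1}}(\widetilde J_j))<e^{-(N_j+1)(A-\ep)}$, and dividing by the fixed quantity $\widehat\mu(\sigma^{d_{j-1}}(J_{j-1}))=\widehat\mu(P_1^{(j-1)})\in(0,1]$ yields, for $N_j$ large, the bounds with exponent $(\widetilde{d}_j-d_{j-1})(A\pm2\ep)$, i.e.\ (ii) after harmlessly replacing $\ep$ by $\ep/2$ beforehand. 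For (iii): writing $S_j=\sigma^{d_{j-1}}(\widetilde J_j)$ and $\sigma^{d_{j-1}}(J_j)=\inv_{S_j}(C(p(\widetilde{d}_j),w))\subset S_j$, Lemma~\ref{chachisymb} at time $n=N_j$ (legitimate since $\sigma^{N_j}(S_j)=C(0,w)=C(0,\sigma^0(w))$ and $0\in{\cal I}(w)$) gives
$$\frac{1}{\widehat s_{N_j+p(\widetilde{d}_j)}}\,\frac{\widehat\mu(C(p(\widetilde{d}_j),w))}{\widehat\mu(C(0,w))}\le\frac{\widehat\mu(\sigma^{d_{j-1}}(J_j))}{\widehat\mu(\sigma^{d_{j-1}}(\widetilde J_j))}\le\widehat s_{N_j+p(\widetilde{d}_j)}\,\frac{\widehat\mu(C(p(\widetilde{d}_j),w))}{\widehat\mu(C(0,w))};$$
since $p(\widetilde{d}_j)=d_j-\widetilde{d}_j$ and $\limsup_n\ell_n/n<\infty$ force $N_j+p(\widetilde{d}_j)\le C\widetilde{d}_j$, the factor $\widehat s_{N_j+p(\widetilde{d}_j)}$ is at most $e^{\ep\widetilde{d}_j}$ for $\widetilde{d}_j$ large (because $\tfrac1m\log\widehat s_m\to0$), while $\widehat\mu(C(p(\widetilde{d}_j),w))\ge\widehat\mu(C(\ell_{\widetilde{d}_j+1},w))\ge e^{-\widetilde{d}_j(\overs+\ep)}$ by (\ref{lineal2}); this is (iii). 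Property (iv) is vacuous when $\widehat s_m$ is bounded (finite alphabet, or $\widehat\mu$ non-weak, by the last line of Lemma~\ref{chachisymb}), and in the remaining weak countable case it follows from $\widehat s_m$ being subexponential together with the exponential decay of $\widehat\mu(C(m,z))$ coming from (\ref{controlsm}) (where $\sup\phi<\infty$ is used), after taking $d_0$ large. Finally (v) holds by the choice $N_j\ge j(d_1+\cdots+d_{j-1})$, since then $(d_1+\cdots+d_{j-1})/\widetilde{d}_j\le 1/j\to0$.

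I expect the only real difficulty to be organisational: one must pick the single integer $N_j$ at step $j$ so as to satisfy simultaneously all the competing constraints — the mixing estimate of Proposition~\ref{SMsinmalos}, the $Good$-requirement behind (ii), the compatibility with $\{\ell_n\}$ behind (iii), and the growth condition behind (v) — and one must be sure that the constant $\overline\delta$ extracted from the mixing does not deteriorate with $j$, which is exactly why $P_2=C(0,w)$ is kept fixed throughout. Once the construction is in place the verification of (i)--(v) is routine, with the sole exception of (iv) in the weak infinite-alphabet case, which is the one spot where the two extra standing hypotheses of this subsection ($\sup\phi<\infty$ and $\limsup_n\ell_n/n<\infty$) genuinely intervene.
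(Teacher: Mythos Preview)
Your proposal is correct and follows essentially the same approach as the paper: mirror the construction of Theorem~\ref{cantorsimbolicoformalismo} replacing $\mhat$ by $\widehat\mu$ throughout, apply Proposition~\ref{SMsinmalos} to $\widehat\mu$ with $P_2=C(0,w)$ fixed (so $\overline\delta=\tfrac12\widehat\mu(C(0,w))$ is uniform in $j$), and choose each $N_j$ large enough to satisfy the mixing estimate, the $Good$-bounds, the compatibility $\ell_{\widetilde d_j}\le p(\widetilde d_j)<\ell_{\widetilde d_j+1}$, and the growth condition for (v). One small correction: the hypothesis $\limsup_n\ell_n/n<\infty$ (in the weak case) intervenes not only in (iv) but also in your derivation of (iii), to ensure $N_j+p(\widetilde d_j)\le C\widetilde d_j$ and hence $\widehat s_{N_j+p(\widetilde d_j)}\le e^{\ep\widetilde d_j}$; in the non-weak case $\widehat s_m$ is bounded so no such control is needed.
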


\begin{corollary}\label{dimpicantorsimbolico} 
$$
 \Dim_{\musymb} ({\cal Z}_\ep) \ge \frac
{P_G(\phi)-\int \phi \, d
{\widehat{\mu} } -2\ep}{P_G(\phi)-\int \phi \, d
{\widehat{\mu} }+\overs+4\ep}
$$
\end{corollary}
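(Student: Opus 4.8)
The plan is to apply Theorem \ref{azulesyrojosmu} to the pattern set ${\cal Z}_\ep$ produced by Theorem \ref{cantorsimbolico}, reading off the four sequences $\{\alpha_j\}$, $\{\beta_j\}$, $\{\gamma_j\}$, $\{\delta_j\}$ from properties (i)--(v) of that theorem. First I would use property (ii) of Theorem \ref{cantorsimbolico} to set
$$
\alpha_j=(\widetilde d_j-d_{j-1})(P_G(\phi)-\textstyle\int\phi\,d\widehat\mu+2\ep),\qquad \beta_j=(\widetilde d_j-d_{j-1})(P_G(\phi)-\textstyle\int\phi\,d\widehat\mu-2\ep),
$$
which is exactly the form required by (\ref{prop1}); from property (iii), combined with the lower bound ${\musymb(C(d_j-\widetilde d_j,w))}\ge e^{-\widetilde d_j(\overs+\ep)}$, I would take
$$
\gamma_j=\widetilde d_j(\overs+\ep)+\ep\widetilde d_j=\widetilde d_j(\overs+2\ep),
$$
so that $e^{-\gamma_j}\le \musymb(\sigma^{d_{j-1}}(J_j))/\musymb(\sigma^{d_{j-1}}(\widetilde J_j))$ holds; and from property (i) I would take $\delta_j=\overline\delta$, a fixed positive constant. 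Property (iv) gives exactly hypothesis (\ref{arreglo}) of Theorem \ref{azulesyrojosmu} (with the times $d_{j-1}<m<\widetilde d_j$ covered, since $m\ge d_0$). Hence Theorem \ref{azulesyrojosmu} applies and yields $\Dim_{\musymb}({\cal Z}_\ep)\ge D^-$.

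Next I would estimate $D^-$. Since $\delta_j=\overline\delta$ is constant, $\log[1/(\delta_1\cdots\delta_{j+1})]=(j+1)\log(1/\overline\delta)=O(j)$, and $\log[\widehat s_{d_1}\cdots\widehat s_{d_{j-1}}]=\sum_{i=1}^{j-1}\log\widehat s_{d_i}=o(d_1+\cdots+d_{j-1})$ because $\frac1m\log\widehat s_m\to0$. Using the telescoping identities $\sum_{i=1}^j(\widetilde d_i-d_{i-1})=\widetilde d_j-(d_1+\cdots+d_{j-1})$ and the fact that $\gamma_1+\cdots+\gamma_j=(\overs+2\ep)\sum_{i=1}^j\widetilde d_i$, together with property (v) which says $(d_1+\cdots+d_{j-1})/\widetilde d_j\to0$ (and also, since $\widetilde d_i\le \widetilde d_j$ for $i\le j$ while $\widetilde d_j\to\infty$ forces $\sum_{i<j}\widetilde d_i$ to be controlled — more precisely one checks $\widetilde d_j$ dominates the growth so that $\sum_{i\le j}\widetilde d_i\asymp \widetilde d_j$ up to the relevant error, which follows from the rapid growth of the $\widetilde d_j$ built into the construction), the numerator of $D^-$ is asymptotically $\beta_1+\cdots+\beta_j\sim \widetilde d_j(P_G(\phi)-\int\phi\,d\widehat\mu-2\ep)$ and the denominator is asymptotically $(\alpha_1+\cdots+\alpha_j)+(\gamma_1+\cdots+\gamma_j)\sim \widetilde d_j\big[(P_G(\phi)-\int\phi\,d\widehat\mu+2\ep)+(\overs+2\ep)\big]$. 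Dividing gives
$$
D^-\ge \frac{P_G(\phi)-\int\phi\,d\widehat\mu-2\ep}{P_G(\phi)-\int\phi\,d\widehat\mu+\overs+4\ep},
$$
which is the claimed bound.

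I expect the main obstacle to be the bookkeeping in the liminf defining $D^-$: one must make sure that the $O(j)$ term from the $\delta$'s and the $o(\sum d_i)$ term from the $\widehat s$'s are genuinely negligible against $\widetilde d_j$, and that the sums $\sum_{i\le j}\widetilde d_i$ and $\sum_{i\le j}(\widetilde d_i-d_{i-1})$ both behave like $\widetilde d_j$ (up to lower-order corrections) — this is where property (v) and the fast growth of $\{\widetilde d_j,d_j\}$ dictated by the construction in Theorem \ref{cantorsimbolico} are essential; in particular $\widetilde d_{j+1}-d_j=N_{j+1}$ can be taken as large as we wish at each inductive step, so $\widetilde d_{j+1}/\widetilde d_j\to\infty$ can be arranged, which makes $\sum_{i\le j}\widetilde d_i\sim\widetilde d_j$. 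A secondary point to verify is that $\gamma_j/\widetilde d_j$ and $(\alpha_j)/\widetilde d_j$ stay bounded below so that Corollary \ref{positivo} (or directly the $\widetilde d_j$ in the denominator) is available and the division of asymptotics is legitimate. Once these are in place, the conclusion follows.
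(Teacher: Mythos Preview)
Your approach is essentially the same as the paper's: apply Theorem \ref{azulesyrojosmu} (via Corollary \ref{positivo}) to the pattern set ${\cal Z}_\ep$ from Theorem \ref{cantorsimbolico}, with exactly the choices $\alpha_j,\beta_j,\gamma_j,\delta_j$ you list, then compute the limit of numerator and denominator after dividing by $\widetilde d_j$.

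One correction: the ``telescoping identity'' $\sum_{i=1}^j(\widetilde d_i-d_{i-1})=\widetilde d_j-(d_1+\cdots+d_{j-1})$ is false as stated, since $d_{i-1}\ne\widetilde d_{i-1}$ for $i\ge 2$; the left side equals $\sum_{i=1}^j\widetilde d_i-\sum_{i=0}^{j-1}d_i$. What you actually need (and what you in effect argue later) is the asymptotic $\sum_{i=1}^j(\widetilde d_i-d_{i-1})\sim\widetilde d_j$, which follows because property (v) gives $\sum_{i\le j-1}d_i=o(\widetilde d_j)$ and, since $\widetilde d_i\le d_i$, also $\sum_{i\le j-1}\widetilde d_i=o(\widetilde d_j)$. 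With this correction the computation of $D^-$ is exactly as in the paper.
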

\begin{proof} Follows from last theorem  and corollary \ref{azulesyrojosmu} with $\delta_j=\overline\delta$, $\alpha_j=({\widetilde d}_j-d_{j-1})(P_G(\phi)-\int \phi \, d
{\widehat{\mu} }+2\ep)$,
 $\beta_j=({\widetilde d}_j-d_{j-1})(P_G(\phi)-\int \phi \, d
{\widehat{\mu} }-2\ep)$ and $\gamma_j=\widetilde d_j(\overline s+\ep)+\ep\widetilde d_j$. 
Notice that, if $\widehat P\subset C_i$, then 
$$
\lim_{j\to\infty}\frac{(\alpha_1+\cdots+\alpha_j)+(\gamma_1+\cdots+\gamma_j)}{\widetilde d_j}= P_G(\phi)-\int \phi \, d
{\widehat{\mu} }+\overs +4\ep
$$
and 
$$
\lim_{j\to\infty}\frac{\beta_1+\cdots+\beta_j-(j+1)\log 1/\delta}{\widetilde d_j}= P_G(\phi)-\int \phi \, d
{\widehat{\mu} }-2\ep.
$$
\end{proof}

By letting $\ep\to 0$ in the above corollary  we obtain

\begin{corollary}\label{DimPisimbolicosinformalismo}
$$
\Dim_{\musymb} (\Wsimbolico)\ge \frac {P_G(\phi)-\int \phi \, d
{\widehat{\mu} }}{P_G(\phi)-\int \phi \, d
{\widehat{\mu} }+\overs} \,.
$$
\end{corollary}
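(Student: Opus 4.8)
The plan is to obtain this statement as an immediate limiting consequence of Corollary \ref{dimpicantorsimbolico}. First I would fix an arbitrary $\ep>0$ small enough that $P_G(\phi)-\int\phi\,d\widehat\mu>2\ep$; this is possible precisely when $P_G(\phi)-\int\phi\,d\widehat\mu>0$, which is the only nondegenerate situation (recall that, under the standing hypotheses of this subsection, $\widehat\mu$ is mixing, hence ergodic, so by (\ref{geq0}) one has $0\le P_G(\phi)-\int\phi\,d\widehat\mu<\infty$). For such $\ep$, Theorem \ref{cantorsimbolico} furnishes a pattern set ${\cal Z}_\ep\subset\Wsimbolico$, and Corollary \ref{dimpicantorsimbolico} gives the bound $\Dim_{\musymb}({\cal Z}_\ep)\ge (P_G(\phi)-\int\phi\,d\widehat\mu-2\ep)/(P_G(\phi)-\int\phi\,d\widehat\mu+\overs+4\ep)$.

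Next I would invoke monotonicity of the Carath\'eodory $\musymb$-dimension under set inclusion, which is immediate from the fact that each $M_{{\cal F},\musymb^\alpha}$ is a (regular Borel) outer measure: since ${\cal Z}_\ep\subset\Wsimbolico$, we get $\Dim_{\musymb}(\Wsimbolico)\ge\Dim_{\musymb}({\cal Z}_\ep)$. Combining the two estimates,
$$
\Dim_{\musymb}(\Wsimbolico)\ \ge\ \frac{P_G(\phi)-\int\phi\,d\widehat\mu-2\ep}{P_G(\phi)-\int\phi\,d\widehat\mu+\overs+4\ep}\qquad\text{for every small }\ep>0 ,
$$
and letting $\ep\to 0^+$ (the right-hand side being a continuous function of $\ep$ there) yields the claimed inequality. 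The degenerate cases need only a word: if $P_G(\phi)-\int\phi\,d\widehat\mu=0$, or if $\overs=\infty$, the asserted lower bound equals $0$ and there is nothing to prove because $\Dim_{\musymb}(\cdot)\ge 0$.

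I do not expect any genuine obstacle at this stage: all the difficulty has already been absorbed into the construction of the Cantor-like sets ${\cal Z}_\ep$ in Theorem \ref{cantorsimbolico} (where the mixing of $\widehat\mu$ enters through Proposition \ref{SMsinmalos}, Lemma \ref{egorof} and the uniformly-good structure of the relevant $0$-cylinders) and into the Hungerford-type estimate of Theorem \ref{azulesyrojosmu} underlying Corollary \ref{dimpicantorsimbolico}. The present corollary is purely the bookkeeping step of passing to the supremum over $\ep$, together with the elementary monotonicity of the $\musymb$-dimension.
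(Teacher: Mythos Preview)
Your proposal is correct and matches the paper's approach exactly: the paper simply states that the corollary follows ``by letting $\ep\to 0$ in the above corollary'' (namely Corollary~\ref{dimpicantorsimbolico}), which is precisely what you do, together with the monotonicity of $\Dim_{\musymb}$ under inclusion. Your additional remarks on the degenerate cases $P_G(\phi)-\int\phi\,d\widehat\mu=0$ and $\overs=\infty$ are a harmless bit of extra care that the paper leaves implicit.
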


\begin{remark} If the variational principle holds  then we have that $ P_G(\phi)-\int \phi \, d
{\widehat{\mu} }\geq h_\musymb$ and therefore
$$
\Dim_{\musymb} (\Wsimbolico)\ge \frac {P_G(\phi)-\int \phi \, d
{\widehat{\mu} }}{P_G(\phi)-\int \phi \, d
{\widehat{\mu} }+\overs} \ge  \frac {h_\musymb}{h_\musymb+\overs}\,.
$$
and if $\widehat{\mu}$ is an equilibrium measure $ P_G(\phi)-\int \phi \, d
{\widehat{\mu} }= h_\musymb$.  
\end{remark}

We will use  in section \ref{MarkovT}  the following remark to get Hausdorff dimension  results  for Markov transformations with infinite countable alphabet. His  proof is given in section \ref{remarks}.

\begin{remark} \label{paradimsin}
Under the hypothesis of theorem  \ref{cantorsimbolico}, 
if $P_G(\phi)-\int\phi \, d\widehat\mu>0$ and the collection 
 ${\cal P}=\{C(0,\sigma^{p_j}(w)): p_j\in {\cal I}(w)\}$ is $\ep$-uniformly $\widehat{\mu}$-good for all $\ep>0$ small enough, then  there  exists  a $(\widetilde{\cal J}_j,{\cal J}_j)$ pattern set ${\cal Z}_{\ep}\subset \Wsimbolico$ verifying the conditions in theorem \ref{cantorsimbolico} and with the following extra property: 

\noindent For all 
$$
\gamma\geq \frac{2\ep}{P_G(\phi)-\int\phi \, d\widehat{\mu}-\ep}>0
$$
there is a constant $c>0$ such that for all $z\in {\cal Z}$ 
$$
 \frac{\widehat\mu(\sigma^{d_j}(C(n+1,z))}{\widehat\mu\sigma^{d_j}(C(n,z))^{1+\gamma}}\geq c \qquad \mbox{ for } \quad d_j\leq n<\widetilde d_{j+1}
$$
By construction (also in  theorem \ref{cantorsimbolico}) we have that 
$$ \sigma^{\widetilde d_{j+1}}(C(n,z))=C(n-\widetilde d_{j+1},w) \quad \mbox{for } \quad \widetilde d_{j+1}\leq n\leq d_{j+1}$$

Recall that if $\#{\cal P}<\infty$ then  we  have that ${\cal P}$ is $\ep$-uniformly $\widehat{\mu}$-good (since $\widehat{\mu}$ is mixing). For $\#{\cal P}$ infinite,  if $\widehat{\mu}$  has summable uniform rate of mixing of order $3$ in ${\cal P}$ and $\phi\in L^2(\widehat{\mu})$, then  ${\cal P}$ is $\ep$-uniformly $\widehat{\mu}$-good (see corollary  \ref{mejoregorof}). 

\end{remark}

\medskip

\begin{proof}[Proof of theorem \ref{cantorsimbolico}]
We construct the $(\widetilde{\cal J},{\cal J})$-pattern set ${\cal Z}_\ep $ in a similar way as in theorem \ref{cantorsimbolicoformalismo}, but now we use the measure $\widehat\mu$ instead of  the measure $\bf \widehat m$ when we apply proposition \ref{SMsinmalos}. Since the Markov chain  is topologically mixing we have (as in the proof of theorem \ref{cantorsimbolicoformalismo}) that 
there exists a cylinder $\widetilde{J}_0\subset \widehat{P}$ such that $\widetilde{J}_0 \in \widehat{\cal P}_{\widetilde{d}_0}$ and $\sigma^{\widetilde{d}_0}(\widetilde{J}_0 )=C(0,w)$. We  define $J_0:=\widetilde J_0$ and ${\cal J}_0=\widetilde{\cal J}_0=\{\widetilde J_0\}$.
Moreover, let $i_0$ be  the first symbol of the cylinder $\widehat P$. In the case $\sup K_n=\infty$ we are assuming that $\sup\phi<\infty$, and so   we can choose $d_0$ large enough so that for all $m\geq d_0$
\begin{equation} \label{p(iv)}
\widehat s_m\leq e^{m\ep[P_G(\phi)-\sup\phi -\ep)]/2}\leq\left( \frac{1}{c(i_0)}e^{m[P_G(\phi)-\sup\phi -\ep)]}\right)^{\ep}
\end{equation}
and so by (\ref{controlsm}) we know that the condition (iv) holds  for all $m\geq d_0$. Of course, in the  case $\sup K_n<\infty$,  the condition (iv) holds for all $z$ (with $\ep=0$) and  the hypothesis $\sup\phi<\infty$ is not required.

To construct the family  $\widetilde{\cal J}_1$ we  apply proposition \ref{SMsinmalos} for the measure $\widehat\mu$ with  $P_1=P_2=C(0,w)$.  Let $N_1$ be a  natural number large enough so that ${\cal S}_{N}(M,\ep)$ with $N=M=N_1$ satisfies  (\ref{Sn}).
We define $\widetilde{\cal J}_1$ as the family of cylinders  $\inv_{J_0}(S)$
with $S\in{\cal S}_{N_1}(N_1,\ep)$.
Notice that by proposition \ref{SMsinmalos}
$$
\musymb(\sigma^{d_0}(\widetilde{\cal J}_1 \cap J_0)):= \sum_{\widetilde{J}_1\in\widetilde {\cal J}_1} \musymb(\sigma^{d_0}(\widetilde{J}_1))\geq 
 \overline\delta\, \musymb(\sigma^{d_0}(J_0)), \quad 
\text{with} \quad \overline\delta =\frac 12 \musymb(C(0,w)).
$$
and,  by  taking $N_1$ large  enough,   from property (\ref{Ergodic})  we have for all 
$\widetilde J_1\in\widetilde{ \cal J}_1$ that 
$$
 e^{-(\widetilde{d}_1-d_0)(P_G(\phi)-\int \phi \, d
{\widehat{\mu} }+2\ep)}\le 
\frac{\musymb(\sigma^{d_0}
(\widetilde J_1))}{\musymb(\sigma^{d_0}(J_0))} \le
e^{-(\widetilde{d}_1-d_0)(P_G(\phi)-\int \phi \, d
{\widehat{\mu} }-2\ep)} \quad \text{ with }\quad \widetilde{d}_1=d_0+N_1\,.
$$
In order to get the condition (v) we will also ask $N_1$ be large enough  so that $\widetilde d_1\geq 2d_0$.

Let ${\cal I}(w)$ denote the sequence given by the definition of $\musymb$-hitting point. 
We can also choose $N_1$   so that there exists $p_i\in {\cal I}(w)$ such that $ \ell_{\widetilde{d}_1} \le p_i <\ell_{\widetilde{d}_1+1} .$ We  denote this natural number $p_i$ by $p(\widetilde{d}_1)$.

In each set  $S$ in ${\cal S}_{N_1}(N_1,\ep)$ we  take the subset $L:=\inv_{S}(C(p(\widetilde{d}_1),w))$ and we denote by ${\cal L}_1$ this family of sets. We define
the family ${\cal J}_1$   as  the collection  $\inv_{J_0}(L)$ with $L\in{\cal L}_1$.
By construction for all 
$ J_1\in{ \cal J}_1$ there exists an unique, then $ {\widetilde J}_1\in{\widetilde  {\cal J}}_1$ such that $J_1\subset \widetilde J_1$,
$$
\sigma^{\widetilde{d}_1}(\widetilde J_1)=C(0,w),\quad 
\sigma^{\widetilde{d}_1}(J_1)=C(p(\widetilde{d}_1),w) \subset C(t_{\widetilde{d}_1},w),  \quad \text{ and }\quad\sigma^{d_1}(J_1)=C(0,\sigma^{p(\widetilde{d}_1)}(w)),
$$
with $\widetilde{d}_1=d_0+N_1$ and $d_1:=\widetilde{d}_1+p(\widetilde{d}_1).$
Also, we have from lemma \ref{chachisymb}, (we may assume that  $0\in{\cal I}(w)$), that 
\begin{equation}\label{hitprop}
\frac{1}{\widehat s_{N_1+p(\widetilde{d}_1)}}\frac{\musymb(C(p(\widetilde{d}_1),w))}{\musymb(C(0,w))}
\leq
\frac{\musymb(\sigma^{d_0}(J_1))} 
{\musymb(\sigma^{d_0}(\widetilde J_1))}\leq \widehat s_{N_1+p(\widetilde{d}_1)}\frac{\musymb(C(p(\widetilde{d}_1),w))}{\musymb(C(0,w))}
\end{equation}
Recall that  $p(\widetilde{d}_1)<\ell_{\widetilde{d}_1+1} $. So,
from  (\ref{lineal2})  and  by using that $\lim_{m\to\infty}\frac1m\log \widehat s_m=0,$ and  $\limsup_{n\to\infty} \ell_n/n<\infty$ if $\sup_nK_n=\infty$, we have  for $N_1$ large enough that
$$
e^{-\ep \widetilde d_1}\, \musymb(C(p(\widetilde{d}_1),w))\leq
\frac{\musymb(\sigma^{d_0}(J_1))} 
{\musymb(\sigma^{d_0}(\widetilde J_1))}
\leq e^{\ep \widetilde d_1}\, \musymb(C(p(\widetilde{d}_1),w))
$$
and 
$$
{\musymb(C(p(\widetilde{d}_1),w))} \geq \, \musymb(C(\ell_{\widetilde{d}_1+1},w))\ge \, e^{-\widetilde{d}_1 (\overs+\ep)}.
$$

The inductive step follows as  in the proof of theorem \ref{cantorsimbolicoformalismo}; just remember that   we use proposition \ref{SMsinmalos} for the  measure $\widehat\mu$ (instead of   $\bf \widehat m$). Notice that  property  (iv) holds due to (\ref {p(iv)})  and  property (v) follows because we choose $N_{m+1} $ large enough so that 
$\widetilde d_{m+1}=d_m+N_{m+1}\geq 2(m+1)(d_0+\cdots+d_m)$.

\end{proof}

\subsubsection{Proofs of Remarks    \ref{paradim} and   \ref{paradimsin}} \label{remarks}

\begin{proof}[Proof of Remark \ref{paradim}]
The construction of the pattern set ${\cal Z}_{\ep}$ is similar to the one  realized in the proof of theorem \ref{cantorsimbolicoformalismo}. The  difference is  that,  by corollary \ref{muybuenos},   we can substitute the families $S_{N_j}(N_j,\ep)$ by the families $S_{N_j}(m_0,\ep)$ with $m_0$ is a fixed natural number.

For 
 $d_j\leq n<\widetilde d_{j+1}$  with $z\in {\cal Z}_{\ep}$, we 
have that 
$$
S\subset \sigma^{d_j}(C(n,z))\subset P_1:= C(0,\sigma^{p(\widetilde d_j)}(w)) \mbox { for some } \,  S\in S_{N_{j+1}}(m_0,\ep)
$$
and so  $\sigma^{d_j}(C(n,z))=C(n-d_j,u)\subset P_1$ for some $ u \in  Good (m_0,\ep)$.

\noindent Therefore,  for $n-d_j\geq m_0$
\begin{equation}\label{eme1}
\frac{\mhat(\sigma^{d_j}(C(n+1,z)) }{\mhat(\sigma^{d_j}(C(n,z)) ^{1+\gamma}}\geq \frac{e^{-(n-d_j+1)(P_G(t\phi)-t\int\phi\, d\mhat+\ep)}}{e^{-(n-d_j)(P_G(t\phi)-t\int\phi\, d\mhat-\ep)(1+\gamma)}}  , \, 
\end{equation}
and  for $0\leq n-d_j< m_0$
\begin{equation}\label{eme2}
\frac{\mhat(\sigma^{d_j}(C(n+1,z)) }{\mhat(\sigma^{d_j}(C(n,z)) ^{1+\gamma}}\geq 
\frac{\mhat(C(m_0,u))}{\widehat\mhat(P_1) ^{1+\gamma}}\geq \mhat(C(m_0,u))\geq 
e^{-m_0(P_G(t\phi)-t\int\phi\, d\mhat+\ep)} .
\end{equation}
From lemma \ref{cocientem2}  
and by using that $w$ is a $\widehat\mu$-hitting point (recall that $\sigma^{d_j}(C(n,z))\subset P_1= C(0,\sigma^{p(\widetilde d_j)}(w))\, $), the condition (\ref{localm}) holds and $\lim_{n\to\infty}\frac 1n \log \widehat k_n=0$,   we get  the following estimation with $c$ a positive constant 
\begin{equation}\label{eme3}
\left(\frac{\widehat\mu(\sigma^{d_j}(C(n+1,z)) }{\widehat\mu(\sigma^{d_j}(C(n,z)) ^{1+\gamma}}\right)^t\geq 
c\,{ e^{-(n-d_j)[\gamma(P_G(t\phi)-tP_G(\phi))+\ep]} } \, \frac{\mhat(\sigma^{d_j}(C(n+1,z)) }{\mhat(\sigma^{d_j}(C(n,z)) ^{1+\gamma}}
\end{equation}
Let $v:=\sigma^{\widetilde d_j}(z)$. 
Since $\sigma^{p(\widetilde d_j)}(v)=\sigma^{d_j}(z) \in C(0,\sigma^{p(\widetilde d_j)}(w))$,  from lemma \ref{chachisymb}  we have that  
\begin{align}
\widehat\mu(\sigma^{\widetilde d_j}(C(n+1,z))=\widehat\mu(C(n+1-\widetilde d_j,v))&\geq \frac{1}{\widehat s_{n+1-\widetilde d_j}}
 \frac{\widehat\mu(\sigma^{p(\widetilde d_j)}(C(n+1-\widetilde d_j,v)))}{\widehat\mu(\sigma^{p(\widetilde d_j)}(C(p(\widetilde d_j),v)))} \, \widehat\mu(C(p(\widetilde d_j),v))  \notag \\
&=\frac{1}{\widehat s_{n+1-\widetilde d_j}} \,  \frac{\widehat\mu(C(p(\widetilde d_j),v)}{\widehat\mu(\sigma^{p(\widetilde d_j)}(C(p(\widetilde d_j),v)))} \, \widehat\mu(\sigma^{d_j}(C(n+1,z)) \notag
\end{align}
and  also 
$$
\widehat\mu(\sigma^{\widetilde d_j}(C(n,z))=\widehat\mu(C(n-\widetilde d_j,v))\leq {\widehat s_{n-\widetilde d_j}} \,  \frac{\widehat\mu(C(p(\widetilde d_j),v)}{\widehat\mu(\sigma^{p(\widetilde d_j)}(C(p(\widetilde d_j),v)))} \, \widehat\mu(\sigma^{d_j}(C(n,z)) 
$$
Hence,
$$
\left(\frac{\widehat\mu(\sigma^{\widetilde d_j}(C(n+1,z)) }{\widehat\mu(\sigma^{\widetilde d_j}(C(n,z)) ^{1+\gamma}}\right)^t\geq 
     \frac{1}{(\widehat s_{n+1-\widetilde d_j} \widehat s_{n-\widetilde d_j}^{\, 1+\gamma} )^t} 
         \left(  \frac{\widehat\mu(\sigma^{p(\widetilde d_j)}(C(p(\widetilde d_j),v)))}{\widehat\mu(C(p(\widetilde d_j),v)}\right)^{\gamma t}
\left(\frac{\widehat\mu(\sigma^{d_j}(C(n+1,z)) }{\widehat\mu(\sigma^{d_j}(C(n,z)) ^{1+\gamma}}\right)^t
$$
We use again lemma \ref{chachisymb},   
and also that   $C(p(\widetilde d_j)-1,v)=C(p(\widetilde d_j)-1,w)$, since $v\in C(p(\widetilde d_j), w)$, and
we obtain 
$$
\left(\frac{\widehat\mu(\sigma^{\widetilde d_j}(C(n+1,z)) }{\widehat\mu(\sigma^{\widetilde d_j}(C(n,z)) ^{1+\gamma}}\right)^t \geq 
 \frac{1}{[\widehat s_{n+1-\widetilde d_j} \widehat s_{n-\widetilde d_j}^{\, 1+\gamma} \,
  \widehat s_{d_j-\widetilde d_j}^{\gamma} ]^t}\, \frac{1}
 {  (\widehat\mu(C(p(\widetilde d_j)-1,w)))^{\gamma t}}
\left(\frac{\widehat\mu(\sigma^{d_j}(C(n+1,z)) }{\widehat\mu(\sigma^{d_j}(C(n,z)) ^{1+\gamma}}\right)^t \notag \\
$$
Since $p(\widetilde d_j)\geq \ell_{\widetilde d_j}$, $\liminf_{n\to\infty} \widehat\mu(C(n+1,w))/ \widehat\mu(C(n,w))>0$ and $\unders=\liminf_{n\to\infty}-(\log \widehat\mu(C(\ell_n,w))/n$ we have
$$
\widehat\mu(C(p(\widetilde d_j)-1,w))\leq \widehat\mu(C(\ell_{\widetilde d_j}-1,w))\leq c_1e^{-\widetilde d_j(\unders -\ep)}
$$
for some positive constant $c_1$. And as 
 $\lim_{n\to\infty}\frac 1n \log \widehat s_n=0$ we get that  
$$
\left(\frac{\widehat\mu(\sigma^{\widetilde d_j}(C(n+1,z)) }{\widehat\mu(\sigma^{\widetilde d_j}(C(n,z)) ^{1+\gamma}}\right)^t\geq
c_2 e^{-(n-\widetilde d_j)\ep(1+\gamma)t} e^{\widetilde d_j(\unders -\ep)\gamma t}
\left(\frac{\widehat\mu(\sigma^{d_j}(C(n+1,z)) }{\widehat\mu(\sigma^{d_j}(C(n,z)) ^{1+\gamma}}\right)^t 
$$
for some $c_2>0$.
We recall that $p(\widetilde d_j)\leq \ell_{\widetilde d_j+1}$ and $\limsup_{n\to\infty}\ell_n/n<\infty$, and so  
$$n-\widetilde d_j=n-d_j+ p(\widetilde d_j)< n-d_j+\ell_{\widetilde d_j+1}\leq  n-d_j+c\, \widetilde d_j
$$
for some positive constant $c$. Therefore since $\unders>0$ for $\ep>0$ small enough we have that
$$
\left(\frac{\widehat\mu(\sigma^{\widetilde d_j}(C(n+1,z)) }{\widehat\mu(\sigma^{\widetilde d_j}(C(n,z)) ^{1+\gamma}}\right)^t\geq
c_2 e^{-(n- d_j)\ep(1+\gamma)t} 
\left(\frac{\widehat\mu(\sigma^{d_j}(C(n+1,z)) }{\widehat\mu(\sigma^{d_j}(C(n,z)) ^{1+\gamma}}\right)^t 
$$
By using (\ref{eme1}), (\ref{eme2})  and (\ref{eme3}) in the above inequality we get the following:

\noindent For $n-d_j\geq m_0$
$$
\left(\frac{\widehat\mu(\sigma^{\widetilde d_j}(C(n+1,z)) }{\widehat\mu(\sigma^{\widetilde d_j}(C(n,z)) ^{1+\gamma}}\right)^t\geq c\, e^{(n-d_j)[\gamma t(P_G(\phi)-\int\phi\, d\mhat)-(3+\gamma-(1+\gamma)t)\ep]}\geq c \, , 
$$
and for $0\leq n-d_j< m_0$
$$
\left(\frac{\widehat\mu(\sigma^{\widetilde d_j}(C(n+1,z)) }{\widehat\mu(\sigma^{\widetilde d_j}(C(n,z)) ^{1+\gamma}}\right)^t\geq c\, 
e^{-m_0[(\gamma+1)P_G(t\phi)-\gamma tP_G(\phi)-t\int\phi\, d\mhat+2\ep+\ep(1+\gamma)t]}=\mbox{ctte} 
$$

Finally, notice  that for 
 $\widetilde d_{j+1}\leq n <d_{j+1}$  with $z\in {\cal Z}_{\ep}$, we
 have that  
$$
C(p(\widetilde d_{j+1}),w) \subset \sigma^{\widetilde d_{j+1}}(C(n,z))\subset  C(0,w)
$$
and so  $ \sigma^{\widetilde d_{j+1}}(C(n,z))=C(n-\widetilde d_{j+1},w)$ and  $ \sigma^{\widetilde d_{j+1}}(C(n+1,z))=C(n+1-\widetilde d_{j+1},w)$ .

\end{proof}

\begin{proof}[Proof of Remark \ref{paradimsin}]
The construction of the pattern set ${\cal Z}_{\ep}$ is similar to the one  realized in the proof of theorem \ref{cantorsimbolico}. 
The  difference is  that,  by   corollary \ref{muybuenos} ,   we can substitute the families $S_{N_j}(N_j,\ep)$ by the families $S_{N_j}(m_0,\ep)$ with $m_0$ is a fixed natural number.  

For $d_j\leq n<\widetilde d_{j+1}$  with $z\in {\cal Z}_{\ep}$   
we have that 
$$
S\subset \sigma^{d_j}(C(n,z))\subset P_1:= C(0,\sigma^{p(\widetilde d_j)}(w)) \mbox { for some } \,  S\in S_{N_{j+1}}(m_0,\ep)
$$
and so  $\sigma^{d_j}(C(n,z))=C(n-d_j,u)\subset P_1$ for some $ u \in  Good (m_0,\ep)$.

Therefore, for $n-d_j\geq m_0$
$$
\frac{\widehat\mu(\sigma^{d_j}(C(n+1,z)) }{\widehat\mu(\sigma^{d_j}(C(n,z)) ^{1+\gamma}}\geq \frac{1}{{c(w_{p(\widetilde d_j)})}^{2+\gamma}}\frac{e^{-(n+1-d_j)(P_G(\phi)-\int \phi \, d
{\widehat{\mu} }+\ep)}}{e^{-(n-d_j)(P_G(\phi)-\int \phi \, d
{\widehat{\mu} }-\ep)(1+\gamma)}} \geq c\, e^{(n-d_j)[\gamma(P_G(\phi)-\int \phi \, d
{\widehat{\mu} })-\ep(2+\gamma)]}\geq c 
$$
and for $0\leq n-d_j< m_0$
$$
\frac{\widehat\mu(\sigma^{d_j}(C(n+1,z)) }{\widehat\mu(\sigma^{d_j}(C(n,z)) ^{1+\gamma}}\geq 
\frac{\widehat\mu(C(m_0,u))}{\widehat\mu(P_1) ^{1+\gamma}}\geq 
e^{-m_0(P_G(\phi)-\int \phi \, d
{\widehat{\mu} }+\ep)} 
$$
Finally, notice that  for
$\widetilde d_{j+1}\leq n <d_{j+1}$  with $z\in {\cal Z}_{\ep}$   
we have that 
$$
C(p(\widetilde d_{j+1}),w) \subset \sigma^{\widetilde d_{j+1}}(C(n,z))\subset  C(0,w)
$$
and so  $ \sigma^{\widetilde d_{j+1}}(C(n,z))=C(n-\widetilde d_{j+1},w)$ and  $ \sigma^{\widetilde d_{j+1}}(C(n+1,z))=C(n+1-\widetilde d_{j+1},w)$ .

\end{proof}

\subsection{Special cases } \label{special}
Along this section  we assume that
$$
s:=\lim_{n\to\infty}  \frac 1n \log \frac{1}{ {\musymb(C(\ell_n,w))}}<\infty.
$$
If $\widehat\mu$ is  weak (i.e $\sup_n K_n=\infty$), then we  also require  $\limsup_{n\to\infty} {\ell_n}/{n}<\infty$.

\medskip
\noindent We recall  (see remark \ref{s=infty}) that if $s=\infty$, then the target-ball set has zero $\widehat\mu$-dimension. Also if $\lim_{n\to\infty}\ell_n/n=\infty$ and $\widehat\mu$ is ergodic, $\phi\in L^1(\widehat\mu)$ and $\sum_{n\geq 1}V_n(\phi)<\infty$, then by  (\ref{geq0}) we have   that $s=\infty$  for $w$  $\widehat\mu$-a.e.

\subsubsection{Markov chain with finite alphabet}

For $\phi$ continuous (with $P_{top}(\phi)<\infty$)
 the function $G(t):=P_{top}(t\phi)-P_{top}(\phi)t$ is continuous, convex, decreasing and $G(1)=0$. Therefore  for the results in the previous section we have for all $w\in \Sigma_A^{\cal I}$ the following:

\begin{theorem}\label{introA} If the alphabet is finite and for all  $0<t\leq 1$ there is a mixing weak $\sigma$-Gibbs measure with continuous potential $t\phi$,
then 
$$
\Dim_{\widehat\mu}(W_{\sigma}(\widehat P,\ell_n,w)))=T
$$
with $T$ the root of the equation
$$
P_{top}(t\phi)-P_{top}(\phi)t=st
$$
If $\widehat\mu$ is  mixing, then
$$T\geq \frac {P_{top}(\phi)-\int \phi \, d
{\widehat{\mu} }}{P_{top}(\phi)-\int \phi \, d
{\widehat{\mu} }+s} \geq  \frac {h_\musymb}{h_\musymb+s}\,.
$$
and moreover, if $\widehat\mu$ is the (unique) equilibrium measure of the potential $\phi$ (which is exact and Gibbs), then
$P_{top}(\phi)-\int \phi \, d{\widehat{\mu}}=h_{\widehat\mu}$. By Shannon-MacMillan-Breiman theorem if $v=\lim_{n\to\infty}\ell_n/n$ then 
$s=vh_{\widehat\mu}$ for   $w$ $\widehat\mu-$a.e.

\end{theorem}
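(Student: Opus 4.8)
The plan is to deduce everything from the symbolic machinery developed in Sections \ref{balltarget} and the pressure identities valid for finite alphabet. Recall that for a finite alphabet and continuous $\phi$ with $P_{top}(\phi)<\infty$ the Gurevich pressure coincides with the topological pressure, and the function $G(t):=P_{top}(t\phi)-P_{top}(\phi)t=P_{top}(t[\phi-P_{top}(\phi)])$ is finite everywhere on $[0,1]$, continuous, convex by the convexity property of pressure, with $G(1)=0$. Since $G$ is convex with $G(1)=0$ and $G(t)\ge 0$ (because $G(0)=-P_{top}(\phi)+P_{top}(\phi)\cdot 0$... more precisely by convexity $G$ lies below its chord and above its tangent, and $G(1)=0$ forces $G$ decreasing on $[0,1]$ provided $G\ge 0$ there, which holds since $P_{top}(t\phi)$ is convex in $t$ and $G(1)=0$). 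Hence $t\mapsto G(t)-st$ is continuous and strictly decreasing on $[0,1]$ for $s>0$, so the equation $G(t)=st$ has a unique root $T\in(0,1]$; if $s=0$ then $T=1$ and the statement is the trivial bound $\Dim_{\widehat\mu}\le 1$ together with positivity of $\widehat\mu$ (handled below). If $s=\infty$ then by Remark \ref{s=infty} the dimension is $0$, consistent with $T=0$.

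First I would establish the upper bound. Since the alphabet is finite, every point $w$ is a $\musymb$-hitting point (Remark \ref{R}), and $\unders=s$ by hypothesis. Proposition \ref{cotasupformalismo} gives
$$
\Dim_\musymb\,\Wsimbolico\le\inf\{t>0:\ P_G(t\phi)-P_G(\phi)t<st\}=\inf\{t>0:\ G(t)<st\}=T,
$$
using that $G$ is continuous and strictly decreasing relative to the line $st$, so the infimum of the set where $G(t)<st$ is exactly the crossing point $T$.

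Next the lower bound. For each $0<t<T$ we have $G(t)>st\ge \overline s\,t$ (here $\overline s=s$ because the limit exists), so $\infty>P_G(t\phi)-P_G(\phi)t>\overline s\,t$. By hypothesis there is a mixing weak $\sigma$-Gibbs measure $\mhat$ with continuous potential $t\phi$; for finite alphabet $\phi\in L^1(\mhat)$ automatically (continuity on a compact space), the localness constants are trivially bounded (finite alphabet, so $c(\cdot)$ ranges over finitely many values), and condition (\ref{localm}) is vacuous. Thus the hypotheses of Theorem \ref{cantorsimbolicoformalismo} are met, and Corollary \ref{cantorsimbf} yields $\Dim_\musymb(\Wsimbolico)\ge T^-\ge t$. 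Letting $t\uparrow T$ gives $\Dim_\musymb(\Wsimbolico)\ge T$, hence equality. When $s=0$, $T=1$; in that regime one uses instead that the recurrence relation (\ref{recurrence}) together with ergodicity of the equilibrium measure (if $\widehat\mu$ is that measure) or directly Corollary \ref{DimPisimbolicosinformalismo} to see the dimension is $1$; in any case for a measurable set $E$ with $\widehat\mu(E)>0$ we have $\Dim_{\widehat\mu}(E)=1$, and one checks $\Wsimbolico$ has positive measure exactly when $\sum_n\widehat\mu(C(\ell_n,w))$ diverges, which is forced by $s=0$ after passing to subsequences.

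For the ``moreover'' part: if $\widehat\mu$ is itself mixing, apply Corollary \ref{DimPisimbolicosinformalismo} with $\overline s=s$ to get $T\ge \dfrac{P_{top}(\phi)-\int\phi\,d\widehat\mu}{P_{top}(\phi)-\int\phi\,d\widehat\mu+s}$; the second inequality $\ge h_\musymb/(h_\musymb+s)$ follows from the variational principle $P_{top}(\phi)-\int\phi\,d\widehat\mu\ge h_\musymb$ together with the observation that $x\mapsto x/(x+s)$ is increasing for $x>0$ (valid since $P_{top}(\phi)-\int\phi\,d\widehat\mu\ge 0$ by (\ref{geq0})). If $\widehat\mu$ is the equilibrium measure of $\phi$ then by the variational principle the inequality is an equality: $P_{top}(\phi)-\int\phi\,d\widehat\mu=h_{\widehat\mu}$. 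Finally, when $v=\lim_n\ell_n/n$ exists, the Shannon–McMillan–Breiman theorem applied to the ergodic measure $\widehat\mu$ gives $-\frac1n\log\widehat\mu(C(n,w))\to h_{\widehat\mu}$ for $\widehat\mu$-a.e.\ $w$, whence $-\frac1{\ell_n}\log\widehat\mu(C(\ell_n,w))\to h_{\widehat\mu}$ along the subsequence, and multiplying by $\ell_n/n\to v$ yields $s=v\,h_{\widehat\mu}$ for $\widehat\mu$-a.e.\ $w$.

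The main obstacle is verifying that the black-box hypotheses of Theorem \ref{cantorsimbolicoformalismo}/Corollary \ref{cantorsimbf} genuinely hold in the finite-alphabet continuous-potential setting — in particular that ``mixing weak $\sigma$-Gibbs'' for each $t\phi$ is enough and that no summable-variation regularity beyond continuity is needed for the $t\phi$ measures. This is where one must be careful: the construction in Theorem \ref{cantorsimbolicoformalismo} uses Proposition \ref{SMsinmalos} (only mixing + ergodicity + the weak Gibbs inequality), Lemma \ref{cocientem} and Lemma \ref{cocientem2} (which for the measure $\widehat\mu$ do use $\sum V_n(\phi)<\infty$, but on a finite alphabet with continuous $\phi$ one has $V_n(\phi)\to 0$ and can still run the argument since $\widehat s_m$ grows subexponentially). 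One must confirm that the subexponential-growth conclusion $\frac1m\log\widehat s_m\to 0$ survives under mere continuity — it does, because uniform continuity on the compact space $\Sigma_A^{\cal I}$ gives $V_n(\phi)\to0$, hence the Birkhoff sums' oscillations over $n$-cylinders are $o(n)$, which is all that is used. Everything else is bookkeeping with the pressure function $G$.
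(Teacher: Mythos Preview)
Your proposal is correct and follows essentially the same route as the paper: the paper does not give a separate proof of this theorem but simply states, just before the statement, that it follows ``for the results in the previous section'' together with the fact that $G(t)=P_{top}(t\phi)-P_{top}(\phi)t$ is continuous, convex, decreasing, with $G(1)=0$. Concretely this means exactly what you do: Proposition~\ref{cotasupformalismo} for the upper bound, Corollary~\ref{cantorsimbf} (via Theorem~\ref{cantorsimbolicoformalismo}) for the lower bound, Corollary~\ref{DimPisimbolicosinformalismo} for the mixing estimate, the variational principle for $P_{top}(\phi)-\int\phi\,d\widehat\mu\ge h_{\widehat\mu}$, and Shannon--McMillan--Breiman for the final identity $s=v\,h_{\widehat\mu}$.

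Two minor clean-ups. First, your handling of $s=0$ is an unnecessary detour: the main argument already covers it, since for every $t<1$ one has $G(t)>0=st$ and Corollary~\ref{cantorsimbf} gives $\Dim_{\widehat\mu}\ge t$; the claim that $s=0$ forces $\sum_n\widehat\mu(C(\ell_n,w))=\infty$ is not true in general and should be dropped. Second, your justification that $G$ is decreasing is garbled; the clean argument is that for any invariant probability $\nu$ the variational principle gives $\int\phi\,d\nu\le P_{top}(\phi)$, so whenever $G$ is differentiable $G'(t)=\int\phi\,d\mu_t-P_{top}(\phi)\le 0$, and convexity does the rest. Your closing remark that mere continuity (hence $V_n(\phi)\to 0$ and $\sum_{j\le l}V_j(\phi)=o(l)$) suffices to run Lemma~\ref{cocientem} and the distortion estimates with subexponential corrections is exactly the point the paper is tacitly using when it drops the summable-variations hypothesis in the finite-alphabet case.
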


\begin{remark}
From classical  results  of Bowen and Ruelle (see \cite{Bo}, \cite{Ru1}) one knows that  for finite alphabet  there is a unique equilibrium measure  for any potential 
with the Walters property. Moreover,  this measure is a $\sigma$-Gibbs measure and it is exact (whence ergodic and strong mixing). So the hypothesis of above theorem holds for these potentials.
\end{remark}

We refers to section \ref{NoH}  where we consider non-H\"older potentials  $\phi$ such that there exists a mixing weak $\sigma$-Gibbs measure with  potential $t\phi$.

\subsubsection{Markov chain with infinite alphabet but BI property}

\begin{proposition} \label{localGibb}Let $\Sigma_A^{\cal I}$ be  a topologically mixing Markov chain with infinite countable alphabet  satisfying the BI property and
$\phi:\Sigma_A^{\cal I}\longrightarrow\R$  be a positive recurrent potential with $P_G(\phi)<\infty$ and such that  $V_1(\phi)<\infty$ and  $\phi$ is weak H\"older continuous. Then    the Ruelle-Perron-Frobenius measure $\mhat$ 
verifies
$$
0<\frac{1}{c}\inf\{h(x) : x\in C(0,z)\}\leq \frac{\mhat(C(n-1,z))}{\exp(-nP_G(\phi)+\sum_{j=0}^{n-1}\phi\circ\sigma^j(z))}\leq c
$$
with $c>1$ depending on the finite collection of symbols given by the BI property and $h$  a positive continuous function such that $\log h$ and $\log h\circ \sigma$  are  weak H\"older continuous and $V_1(\log h)<\infty$.
In particular $\mhat$ is local $\sigma$-Gibbs.
\end{proposition}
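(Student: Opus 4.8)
The plan is to realise $\mhat$ via Sarig's generalised Ruelle--Perron--Frobenius theorem and then read off the two-sided estimate from the conformality of the eigenmeasure together with bounded distortion, invoking the big images hypothesis precisely at the two points where uniform constants are required.

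Since $\phi$ is weakly H\"older continuous and $V_1(\phi)<\infty$, it has summable variations, so together with positive recurrence and $P_G(\phi)<\infty$ we are in the setting of the RPF theorem recalled in Section~\ref{TForm}: there are $\lambda=e^{P_G(\phi)}$, a positive continuous $h$, and a conservative measure $\nu$ which is finite on cylinders, with $L_\phi h=\lambda h$, $L_\phi^{\,*}\nu=\lambda\nu$, $\int h\,d\nu=1$, and $d\mhat = h\,d\nu$. The regularity of $h$ asserted in the statement is the standard consequence of the construction: iterating $\lambda h = L_\phi h$ and using $\sum_{k\ge 1}V_k(\phi)<\infty$ gives $V_n(\log h)\le A\,\theta^{\,n}$ for all $n\ge 1$ and suitable $A>0$, $\theta\in(0,1)$ (in particular $V_1(\log h)<\infty$), and since $V_n(\log h\circ\sigma)\le V_{n-1}(\log h)$ the same geometric decay holds for $\log h\circ\sigma$.

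From $L_\phi^{\,*}\nu=\lambda\nu$ one has $\nu(B)=\lambda^{-n}\int L_\phi^{\,n}\mathbf{1}_B\,d\nu$ for every Borel $B$. Taking $B=C(n-1,z)=C_{z_0\cdots z_{n-1}}$ and using that the only $\sigma^{-n}$-preimage of $x$ lying in $C_{z_0\cdots z_{n-1}}$ is the point $z_0\cdots z_{n-1}x$ (which belongs to the chain exactly when $x\in\sigma(C_{z_{n-1}})$), one gets, writing $S_n\phi=\sum_{j=0}^{n-1}\phi\circ\sigma^j$,
$$
\mhat(C(n-1,z))=\lambda^{-n}\int_{\sigma(C_{z_{n-1}})}e^{\,S_n\phi(z_0\cdots z_{n-1}x)}\,h(z_0\cdots z_{n-1}x)\,d\nu(x).
$$
Bounded distortion (the Walters-type bound coming from summable variations) gives $|S_n\phi(z_0\cdots z_{n-1}x)-S_n\phi(z)|\le\sum_{k\ge 1}V_k(\phi)$, since both arguments lie in $C(n-1,z)$, and weak H\"olderness of $\log h$ gives $h(z_0\cdots z_{n-1}x)\asymp h(z)$ with a universal constant, so that
$$
\mhat(C(n-1,z))\ \asymp\ h(z)\,e^{-nP_G(\phi)+S_n\phi(z)}\,\nu\bigl(\sigma(C_{z_{n-1}})\bigr),
$$
the comparability constants depending neither on $z$ nor on $n$.

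The big images property is what turns $\nu(\sigma(C_{z_{n-1}}))$ into a genuine constant: by BI there is $k\in{\cal I}_0$ with $C_{z_{n-1}k}\ne\varnothing$, hence $\sigma(C_{z_{n-1}})\supseteq C_k$ and $\nu(\sigma(C_{z_{n-1}}))\ge\min_{k\in{\cal I}_0}\nu(C_k)=:\delta_0>0$; together with the trivial $h(z)\ge\inf\{h(x):x\in C(0,z)\}$ this already yields the lower bound. The upper bound is the delicate half --- the main obstacle --- since one now also needs $h$ bounded above and $\sup_i\nu(\sigma(C_i))<\infty$ so that $h(z)\,\nu(\sigma(C_{z_{n-1}}))\le c$ with $c$ depending only on ${\cal I}_0$; I would obtain these from positive recurrence together with BI, using on the one hand that $\mhat$ being a probability controls $\nu(\sigma(C_i))=\sum_{k:A_{ik}=1}\nu(C_k)$ uniformly in $i$, and on the other the eigenfunction inequality $e^{S_n\phi(y)}h(y)\le\lambda^{\,n}h(\sigma^n y)$, which caps the integrand by $\lambda^n h(x)$ and is then compared against the distortion identity above. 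Finally, since $\log h$ has finite variation on each $0$-cylinder, $0<\inf_{C_{i_0}}h<\infty$, so the displayed two-sided estimate is precisely the defining inequality of a local Gibbs measure (Definition~\ref{mGibbs}) with $P=P_G(\phi)$, $K_n\equiv 1$ and $c(i_0)=\max\{c,\ c/\inf_{C_{i_0}}h\}<\infty$; hence $\mhat$ is local $\sigma$-Gibbs.
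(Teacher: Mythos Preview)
Your setup, the conformality computation, the bounded-distortion estimate, and the lower bound are all correct and match the paper's argument essentially verbatim: one gets
\[
\mhat(C(n-1,z))\ \asymp\ h(z)\,e^{-nP_G(\phi)+S_n\phi(z)}\,\nu\!\bigl(\sigma(C_{z_{n-1}})\bigr),
\]
and BI gives $\nu(\sigma(C_{z_{n-1}}))\ge\min_{k\in{\cal I}_0}\nu(C_k)>0$. The paper also uses (as you do) Sarig's fact that BI implies $\sup h<\infty$.

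The genuine gap is the upper bound on $\nu(\sigma(C_{z_{n-1}}))$. Your proposed reasons do not work. First, ``$\mhat$ being a probability controls $\nu(\sigma(C_i))=\sum_{k:A_{ik}=1}\nu(C_k)$ uniformly in $i$'' is false in general: $\mhat=h\,d\nu$ is a probability, but under BI alone one only knows $\sup h<\infty$, not $\inf h>0$, so $\nu$ need not be finite and $\sum_k\nu(C_k)$ can diverge; there is no uniform control on $\nu(\sigma(C_i))$ coming from $\mhat(\Sigma_A^{\cal I})=1$. Second, the eigenfunction inequality $e^{S_n\phi(y)}h(y)\le\lambda^n h(\sigma^n y)$ only yields the trivial $\mhat(C(n-1,z))\le\mhat(\sigma(C_{z_{n-1}}))\le 1$, which says nothing about the Gibbs ratio. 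This is precisely the ``delicate half'' you flag, and it is not a bookkeeping issue: without BIP, bounding $\nu(\sigma(C_i))$ above (by a constant independent of $i$, though possibly depending on $z_0$) requires real work.

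The paper handles this by an induced-system argument. For $k\in{\cal I}_0$ one connects $k$ to $z_0$ by a finite admissible word $u$ and $k$ to itself by a finite word $v$ (topological mixing), expresses $\nu(\sigma(C_{z_{n-1}}))$ through ratios of $\mhat$-measures of cylinders built from $u$, $(z_0,\dots,z_{n-1})$, and $v$, and then rewrites these ratios in the induced full shift on $C_k$ (with alphabet the first-return words). The key input is Sarig's result (building on Aaronson--Denker--Urba\'nski) that the induced system is exponentially continued-fraction mixing; this gives
\[
\overline{\bf m}\bigl[\overline C(r-1,x)\cap\overline\sigma^{-(r+p+1)}(\overline C(0,\overline v))\bigr]\ \ge\ (1-c\,\theta^{p+1})\,\overline{\bf m}(\overline C(0,\overline v))\,\overline{\bf m}(\overline C(r-1,x)),
\]
which, plugged back, yields $\nu(\sigma(C_{z_{n-1}}))\le c(z_0,{\cal I}_0)$, independent of $z_{n-1}$ and $n$. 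That is the missing idea in your argument.
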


\begin{proof} By Sarig results (see \cite{Sa4} or \cite{Sa}) we have that $d\mhat=hd\nu$  with $h$ a positive continuous function and $\nu$ a consevative measure such that  $L_{\phi}h=\lan h$ and $L^{*}_{\phi}\nu=\lan \nu$
with $\lan=\exp P_G(\phi)$ and $L_{\phi}$ de Ruelle operator. The measure $\nu$ is  finite and positive  on cylinders and $\log h$ and $\log h\circ \sigma$  are  weak H\"older continuous and $V_1(\log h)<\infty$.

Proceeding as in \cite{Sa} (or in \cite{SaNotes} p. 98)  by using that $h$ is bounded away from  zero and infinity on $0$-cylinders and $\sum_{n=1}^{\infty}V_n(\phi)<\infty$ we have that 
$$
\mhat(C(n-1,z)\asymp \frac{1}{\lan^n}\int L^n_{\phi}\1_{C(n-1,z)} d\,\nu\asymp  \frac{1}{\lan^n} \exp(\sum_{j=0}^{n-1}\phi\circ\sigma^j(z))\, \nu(\sigma^n(C(n-1,z)))
$$
with comparability constants depending on the bounds  of $h$ in $C(0,z)$. And since we have the BI property, then
$$
\nu(\sigma^n(C(n-1,z)))=\nu(\sigma(C(0,\sigma^{n-1}(z))))\geq \inf\{\nu(C_k)\, :\, k\in{\cal I}_0\}>0
$$
where ${\cal I}_0$ is the finite collection of symbols given by the BI property.
(Sarig also  proved that the BI property implies that $\sup h<\infty$)

\medskip

To conclude that $\mhat$ is local Gibbs we will see that $\nu(\sigma(C(0,\sigma^{n-1}(z))))$ is upper bounded by a  positive constant (depending on ${\cal I}_0$ and $z_0$).  By the BI property we know that there exists $k\in {\cal I}_0$ such that $C_{z_{n-1}k}\neq\varnothing$, also since the Markov chain is topologically mixing  given $k\in {\cal I}_0$ and $z_0$ there exists two  finite sequences $u\in \Sigma_A^{\cal I}|_m$, $v\in \Sigma_A^{\cal I}|_s$ such that  $u_0=v_0=v_s=k$, $u_m=z_0$, and $u_i, v_j\neq k$ for $1\leq i\leq m-1$, $1\leq j\leq s-1$.

\medskip
For all  $w\in \Sigma_A^{\cal I}$  and $\ell>m+n$ we have that 
$$
\lan^{n+m}\nu(C(\ell+s-1,w))=\int L^{n+m}_{\phi}\1_{C(\ell+s-1,w)} d\,\nu\asymp  \exp(\sum_{j=0}^{m+n-1}\phi\circ\sigma^j(w))\nu(\sigma^{m+n}(C(\ell +s-1,w)))
$$
and  also
$$
\lan^{n+m}\nu(C(m+n-1,w))=\int L^{n+m}_{\phi}\1_{C(m+n-1,w)} d\,\nu\asymp  \exp(\sum_{j=0}^{m+n-1}\phi\circ\sigma^j(w))\nu(\sigma^{m+n}(C(m+n-1,w)))
$$
with absolute constants of comparability. 
Hence,  if $w\in \Sigma_A^{\cal I}$ has the property
\begin{equation}\label{pa}
w|_{m+n}=(k,u_1,\, \cdots\, , u_{m-1}, z_0,\, \cdots\,, z_{n-1}, k)
\end{equation}
then  $\nu(\sigma(C(0,\sigma^{n-1}(z)))=\nu(\sigma^{m+n}(C(m+n-1,w)))$  and we have 
$$
\nu(\sigma(C(0,\sigma^{n-1}(z)))\asymp\frac{ \nu(C( \ell +s-n-m-1,\sigma^{m+n}(w)))\, \nu(C(m+n-1,w))}{\nu(C(\ell+s-1,w))}.
$$
Therefore 
\begin{equation}\label{conm}
\nu(\sigma(C(0,\sigma^{n-1}(z)))\asymp\frac{ \mhat(C( \ell +s-n-m-1,\sigma^{m+n}(w)))\, \mhat(C(m+n-1,w))}{\mhat(C(\ell+s-1,w))}
\end{equation}
with  comparability constants depending on the bounds of $h$ in  the finite collection of the $0$-cylinders $C_k$ with $k\in{\cal I}_0$. 

Now fix $p, \ell \in\N$ with $\ell>m+n+p$ and consider the points $w\in \Sigma_A^{\cal I}$ verifying (\ref{pa}) and with the properties:
\begin{itemize}
\item[(a)] $\sigma^{\ell}(w)|_{s-1}=v|_{s-1}=(k, v_1,\, \cdots, \, v_{s-1})$
\item[(b)] $\#\{w_i : w_i=k \, \mbox{ with }\,  m+n<i<\ell\}=p$
\end{itemize}
Note that we can write  (\ref{conm}) as 
$$
\nu(\sigma(C(0,\sigma^{n-1}(z)))\, {\mhat(C(\ell+s-1,w))}\asymp{ \mhat(\sigma^{n+m}(C( \ell +s-1,w)))\, \mhat(C(m+n-1,w))}
$$
and for all $w$ verifying (\ref{pa})  $C(m+n-1,w))$ is the cylinder $A:=C_{ku_1\ldots u_{m-1}z_0\ldots z_{n-1}}$. 
By adding all over $w\in \Sigma_A^{\cal I}$  with $\sigma^{\ell+s}(w)=w$ and  verifying (\ref{pa}) and properties (a) and (b)
 we get that  
$$
\nu(\sigma(C(0,\sigma^{n-1}(z)))\asymp\frac{ \mhat [B_{\ell}\cap\sigma^{-(\ell-(m+n))}(C(s-1,v)) ]\, \mhat(A)}{\mhat[A\cap \sigma^{-(m+n)}(B_{\ell})\cap \sigma^{-\ell}(C(s-1,v))]}
$$
with 
$B_{\ell}:=\{(y_0,y_1, \cdots )\in \Sigma_A^{\cal I}: y_0=k  \mbox{ and }\#\{y_i :y_i=k \, \mbox{ with } \,  0<i<\ell-m-n \}=p \}$.
Hence for $p$ fixed
\begin{equation}\label{cf0}
\nu(\sigma(C(0,\sigma^{n-1}(z)))\asymp\frac{ \mhat [\bigcup_{\ell>m+n}B_{\ell}\cap\sigma^{-(\ell-(m+n))}(C(s-1,v)) ]\, \mhat(A)}{\mhat[A\cap\bigcup_{\ell>m+n} \sigma^{-(m+n)}(B_{\ell})\cap \sigma^{-\ell}(C(s-1,v))]}
\end{equation}

Let $\overline \sigma:C_k\longrightarrow C_k$  denote the induced first return map on the   $0$-cylinder $C_k$, i.e.
$$\overline\sigma(x)=\sigma^{\varphi(x)}(x) \quad \mbox{ with } \quad \varphi(x)=\1_{C_k}\inf\{ n>0: \sigma^n(x)\in C_k\}
$$
and let   ${\cal S}:=\{w|_n : n\geq 0, w_0=k, w_i\neq k \mbox{ for } 0<i\leq n, (w_0,w_1,\ldots,w_n,k)\in \Sigma_A^{\cal I}|_{n+1}\}$ the set  of  induced symbols.  Then $\overline v:=v|_{s-1}\in {\cal S}$, and  $A$ is a cylinder in the  topological Markov chain $\Sigma^{\cal S}$. If we denote the cylinders in $\Sigma^{\cal S}$ by $\overline C(n,.)$, and $A$ is the cylinder $\overline C(r-1,x)$ (for some $r\in\N$ and $x\in \Sigma^{\cal S}$),  then  we can re-write (\ref{cf0}) as
$$
\nu(\sigma(C(0,\sigma^{n-1}(z)))\asymp\frac{ \overline {\bf m} (\overline\sigma^{-(p+1)}(\overline C(0,\overline v)))\, \overline {\bf m}(\overline C(r-1,x))}{\overline {\bf m}[\overline C(r-1,x)\cap \overline\sigma^{-(r+p+1)}(\overline C(0,\overline v))]}
=\frac{ \overline {\bf m}(\overline C(0,\overline v))\, \overline {\bf m}(\overline C(r-1,x))}{\overline {\bf m}[\overline C(r-1,x)\cap \overline\sigma^{-(r+p+1)}(\overline C(0,\overline v))]}
$$
where $\overline {\bf m}$ is the $\overline\sigma$-invariant measure $\mhat_{k}\circ\overline\pi$ with $\mhat_{k}$ the normalized restriction of $\mhat$ to $C_k$ and $\overline\pi: \Sigma^{\cal S}\longrightarrow C_k\subset \Sigma_A^{\cal I}$ the natural injection.
But in  \cite{Sa4} Sarig obtained  (by using  results of Aaronson, Denker and Urbanski \cite{ADU} on the Schweiger property) that $\overline\sigma$ is exponentially continued fraction mixing (see remark \ref{fracmixing}), and therefore for $p$ large 
$$
\overline {\bf m}[\overline C(r-1,x)\cap \overline\sigma^{-(r+p+1)}(\overline C(0,\overline v))]\geq (1-c\,  \theta^{p+1}) \overline {\bf m} (\overline C(0,\overline v))\, \overline {\bf m}(\overline C(r-1,x))
$$
for some constant $c>1$ and $\theta\in(0,1)$. So, we obtain that 
$\nu(\sigma(C(0,\sigma^{n-1}(z)))$ is upper bounded by a constant depending  on  $z_0$ (and on the finite family ${\cal I}_0$).

\end{proof}

\begin{theorem} If $\Sigma_A^{\cal I}$  satisfies the BI property,  
$\phi:\Sigma_A^{\cal I}\longrightarrow\R$   is weak H\"older continuous such that  $V_1(\phi)<\infty$    and  $\phi$ is  positive recurrent then,  for $w$  $\widehat\mu$-a.e$$
 T^-\leq \Dim_{\widehat\mu}(W_{\sigma}(\widehat P,\ell_n,w)))\leq T^+
$$
with 
$$
\begin{aligned}
T^-=&\sup\{t\in(0,1]: \infty>P_{G}(t\phi)-P_{G}(\phi)t>st\,,  t\phi \mbox{ positive recurrent} \mbox{ and  } \phi\in L^1({\mhat}_t)\} \\
T^+=&\inf\{t\in(0,1]: P_{G}(t\phi)-P_{G}(\phi)t<st\} \\
\end{aligned}
$$
Here $\mhat_t$ is the RPF measure for the potential $t\phi$. 
\end{theorem}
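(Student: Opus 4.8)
The plan is to read off both inequalities from the abstract machinery of Section~\ref{balltarget}: the upper bound from Proposition~\ref{cotasupformalismo} and the lower bound from Corollary~\ref{cantorsimbf}, once the base measure $\widehat\mu$ and the auxiliary measures $\mhat_t$ have been put in the required form. Since $\phi$ is positive recurrent it has summable variations and $P_G(\phi)<\infty$, and weak H\"older continuity together with $V_1(\phi)<\infty$ gives $\sum_{n\geq1}V_n(\phi)<\infty$, so Remark~\ref{summablevariation} applies. Under the BI property Proposition~\ref{localGibb} then shows that the RPF measure $\widehat\mu$ is a \emph{local $\sigma$-Gibbs} measure for $\phi$ with constant $P_G(\phi)$; being an ergodic invariant probability on an infinite topologically mixing chain it is atomless, and by Sarig's RPF theorem it is mixing (exact). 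Because $\widehat\mu$ is local Gibbs rather than merely local weak, the sequences $K_n$ and $\widehat s_m$ are constant and no growth restriction on $\{\ell_n\}$ is needed; the standing assumption of Section~\ref{special} gives $\overs=\unders=s<\infty$, and $G(t):=P_G(t\phi)-P_G(\phi)t$ is, by Remark~\ref{cotasbien}, convex and decreasing with $G(1)=0$, hence $G\geq0$ on $[0,1]$.

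First I would fix the set of admissible centres. Let ${\cal W}$ be the set of $w$ admitting a return sequence $\{p_j\}$ with $w_{p_j}=w_0$ for all $j$; by Poincar\'e recurrence applied to $C_{w_0}$ (of positive $\widehat\mu$-measure) this has full $\widehat\mu$-measure, and by (\ref{geq0})--(\ref{Birk}) we may also impose the usual Birkhoff behaviour on the centres in ${\cal W}$. For $w\in{\cal W}$, Remark~\ref{R} makes $w$ a hitting point with ${\cal I}(w)=\{p_j\}$ for \emph{any} local weak $\sigma$-Gibbs measure, and since $w_{p_j}\equiv w_0$ the localness condition (\ref{localm}) reduces to the finiteness of a single constant $c(w_0)$, which always holds.

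For the upper bound: if $s=0$ then $\{t\in(0,1]:G(t)<st\}=\varnothing$, whence $T^+=1$ by the paper's convention and there is nothing to prove; so assume $s>0$, i.e.\ (\ref{hipop}) holds with $\unders=s$. Proposition~\ref{cotasupformalismo} gives $\Dim_{\widehat\mu}(\Wsimbolico)\leq\inf\{t>0:G(t)<st\}$, and since $G(1)=0<s$ the value $t=1$ witnesses membership in this set, so a short comparison of the two infima yields $\inf\{t>0:G(t)<st\}=\inf\{t\in(0,1]:G(t)<st\}=T^+$, whence $\Dim_{\widehat\mu}(\Wsimbolico)\leq T^+$. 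For the lower bound I would check that the index set defining the theorem's $T^-$ is contained in the one appearing in Corollary~\ref{cantorsimbf}: given $t\in(0,1]$ with $\infty>G(t)>st$, $t\phi$ positive recurrent and $\phi\in L^1(\mhat_t)$, the potential $t\phi$ is weak H\"older, $V_1(t\phi)=tV_1(\phi)<\infty$ and $P_G(t\phi)=G(t)+tP_G(\phi)<\infty$, so Proposition~\ref{localGibb} makes $\mhat_t$ a local $\sigma$-Gibbs measure for $t\phi$, mixing by Sarig's exactness statement, with localness constant at the times $p_j$ equal to the single value $c_{\mhat_t}(w_0)<\infty$, i.e.\ (\ref{localm}); together with $\phi\in L^1(\mhat_t)$ this places $t$ in the index set of Corollary~\ref{cantorsimbf}. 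Therefore that corollary's lower bound, which is the supremum of its (larger) index set, dominates $T^-$, and we obtain $\Dim_{\widehat\mu}(\Wsimbolico)\geq T^-$ for every $w\in{\cal W}$, which is the asserted almost-everywhere statement.

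The step I expect to be the main obstacle is the claim that the RPF measure $\mhat_t$ of the positive recurrent potential $t\phi$ is mixing: the exactness result quoted in Section~\ref{TForm} is stated under the extra hypotheses $\sup(t\phi)<\infty$ and finite entropy of $\mhat_t$. I would handle this either by adding $\sup\phi<\infty$ to the statement (harmless, and already assumed in Theorem~B), or by extracting the mixing of $\mhat_t$ on cylinders directly from the uniform convergence $\lambda_t^{-n}L_{t\phi}^n(\1_C)\to h_t\,\nu_t(C)/\!\int h_t\,d\nu_t$ of the RPF theorem, combined with the BI property, which controls the relevant tails uniformly; verifying that $\widehat\mu$ and each $\mhat_t$ are atomless and that the comparability constants of Proposition~\ref{localGibb} really furnish the single-symbol localness bound is routine but should be written out.
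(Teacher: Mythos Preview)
Your proposal is correct and follows essentially the same route as the paper: use Proposition~\ref{localGibb} (under BI) to make the RPF measures $\mhat_t$ local $\sigma$-Gibbs, verify the localness condition~(\ref{localm}) on a full $\widehat\mu$-measure set via the Poincar\'e recurrence argument of Remark~\ref{R}, and then read off the two inequalities from Proposition~\ref{cotasupformalismo} and Corollary~\ref{cantorsimbf}. The one obstacle you flag---mixing of $\mhat_t$---is dispatched in the paper by a direct citation of Sarig~\cite{Sa4}, where exactness of the RPF measure is obtained for any positive recurrent potential with summable variations and finite Gurevich pressure; the extra hypotheses $\sup\phi<\infty$ and finite entropy quoted in Section~\ref{TForm} are needed only for the \emph{equilibrium measure} identification, not for exactness, so no workaround is required.
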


\begin{remark} \label{abajoo}
By results in section 5.1.2 we also know that if $\widehat\mu$ is mixing with potential $\phi$ such that $\sum_{n=1}^{\infty}V_n(\phi)<\infty$,  $\phi\in L^1(\mu)$, and $\sup\phi<\infty$ in the case $\widehat\mu$ is not  Gibbs (i.e $\sup_n K_n=\infty$),  then 
\begin{equation}\label{abajo}
\Dim_{\widehat\mu}(W_{\sigma}(\widehat P,\ell_n,w)))\geq  \frac {P_{G}(\phi)-\int \phi \, d
{\widehat{\mu} }}{P_{G}(\phi)-\int \phi \, d
{\widehat{\mu} }+s}
\end{equation}
In particular, 
if $\widehat\mu$ is the RPF measure of a positive recurrent potential $\phi$ with summable variations such that $\sup\phi<\infty$ and $P_G(\phi)<\infty$, and $h_{\widehat\mu}<\infty$, then (see Sarig's results in section 3.1) $\widehat\mu$ is an equilibrium measure which is exact (whence ergodic and strong mixing). Hence ${P_{G}(\phi)-\int \phi \, d
{\widehat{\mu} }} =h_{\widehat\mu}$.
\end{remark}

\begin{proof} By Sarig's results  (see \cite{Sa4}) we know that  there is a finite RPF measure ($\mhat_t$)  for the (positive recurrent) potential $t\phi$  which is  $\sigma$-invariant and exact and by lemma \ref{localGibb} it is  a local $\sigma$-Gibbs measure. Also,  we know that condition (\ref{localm}) holds in a set of full $\widehat\mu$-measure  (see remark \ref{R}). This allow us to get the result from the theorems on  section 5.1.
\end{proof}

If $\Sigma_A^{\cal I}$ satisfies the stronger property  BIP, then any  potential  $\phi$ with  the Walters condition  and $V_1(\phi)<\infty$ and $P_G(\phi) <\infty$ is positive recurrent, and the RPF measure is a $\sigma$-Gibbs measure exact (see \cite{Sa2}, \cite{SaNotes}).  Notice that since  measures are not local, then (\ref{localm}) always holds.
Hence in this case we get the following:

\begin{theorem}\label{BIP1}
If $\Sigma_A^{\cal I}$  satisfies the BIP property,  
  the Walters condition  holds for $\phi$, $V_1(\phi)<\infty$ and $P_G(\phi) <\infty$,  then  for all $w$ (with $s<\infty$)
$$
 T^-\leq \Dim_{\widehat\mu}(W_{\sigma}(\widehat P,\ell_n,w)))\leq T^+
$$
with 
$$
\begin{aligned}
T^-=&\sup\{t\in(0,1]: \infty>P_{G}(t\phi)-P_{G}(\phi)t>st\, \mbox{ and  } \phi\in L^1({\mhat}_t)\} \\
T^+=&\inf\{t\in(0,1]: P_{G}(t\phi)-P_{G}(\phi)t<st\} \\
\end{aligned}
$$
Here $\mhat_t$ is the RPF measure for the potential $t\phi$. 
\end{theorem}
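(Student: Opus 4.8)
The plan is to obtain Theorem~\ref{BIP1} by specializing the general bounds of Section~\ref{mudim} --- Corollary~\ref{cantorsimbf} for the lower bound and Proposition~\ref{cotasupformalismo} for the upper bound --- to the BIP setting, using Sarig's results on BIP recalled above to supply the auxiliary Gibbs measures that those statements require. The point is that under BIP one avoids the weak H\"older hypothesis of the preceding (BI) theorem: Sarig's theorem produces a genuine (non-local) $\sigma$-Gibbs measure directly from the Walters condition, so Lemma~\ref{localGibb} is not needed. First I would record the consequences of BIP for the base potential: since $\phi$ satisfies the Walters condition, $V_1(\phi)<\infty$ and $P_G(\phi)<\infty$, $\phi$ is positive recurrent and $\widehat\mu=\mhat_1$ is a $\sigma$-Gibbs measure with constant $P=P_G(\phi)$ which is exact, hence ergodic and strongly mixing. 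Because a genuine Gibbs measure has $\sup_n K_n<\infty$ and $\sup_i c(i)<\infty$, $\widehat\mu$ is in particular a (non-weak) local weak $\sigma$-Gibbs measure with $V_1(\phi)<\infty$, so the running hypotheses of Section~\ref{mudim} hold without the restriction $\limsup_n \ell_n/n<\infty$, and by Remark~\ref{R} every $w\in\Sigma_A^{\cal I}$ is a $\widehat\mu$-hitting point --- which is exactly what lets the statement read ``for all $w$''.

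For the lower bound I would fix $t\in(0,1]$ with $P_G(t\phi)<\infty$ and observe that $t\phi$ again satisfies the Walters condition, because $V_{n+k}\big(\sum_{j=0}^{n-1}(t\phi)\circ\sigma^j\big)=t\,V_{n+k}\big(\sum_{j=0}^{n-1}\phi\circ\sigma^j\big)$, together with $V_1(t\phi)=tV_1(\phi)<\infty$ and finite Gurevich pressure. Hence Sarig's BIP theorem applies to $t\phi$: it is positive recurrent and its RPF measure $\mhat_t$ is an exact (whence mixing) $\sigma$-Gibbs measure for $t\phi$, with $\sup_i c_{\mhat_t}(i)<\infty$, so condition (\ref{localm}) holds trivially for any choice of times. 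Consequently, for every $t\in(0,1]$ with $\infty>P_G(t\phi)-P_G(\phi)t>st$ and $\phi\in L^1(\mhat_t)$, the measure $\mhat=\mhat_t$ witnesses all the requirements in Corollary~\ref{cantorsimbf} (with $\overs=s$ here, since $s$ is a genuine limit). Therefore the $T^-$ of Corollary~\ref{cantorsimbf} coincides with the $T^-$ in the statement, and $\Dim_{\widehat\mu}(\Wsimbolico)\ge T^-$.

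For the upper bound, when $s>0$ I would apply Proposition~\ref{cotasupformalismo} with $\unders=s$ to get $\Dim_{\widehat\mu}(\Wsimbolico)\le\inf\{t>0:\ P_G(t\phi)-P_G(\phi)t<st\}$; by Remark~\ref{cotasbien} the function $G(t)=P_G(t\phi)-P_G(\phi)t$ is convex and decreasing on $[0,1]$ with $G(1)=0<s$, so $1$ lies in $\{t\in(0,1]:G(t)<st\}$, this set is non-empty with infimum $\le 1$, and it differs from $\{t>0:G(t)<st\}$ only by elements $>1$; hence the two infima agree and equal $T^+$. When $s=0$ one has $T^+=1$ by the empty-set convention, and $\Dim_{\widehat\mu}(\Wsimbolico)\le 1\ (=T^+)$ is automatic. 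Combining the two bounds finishes the proof. The only place where care is really needed --- everything else is bookkeeping --- is to quote Sarig's positive-recurrence/exactness statement with precisely its hypotheses (BIP, Walters, $V_1<\infty$, finite $P_G$) for every $t\phi$ with $t\in(0,1]$, and to note that the genuine (non-local) Gibbs property is what makes (\ref{localm}) free and reconciles the ``summable variations'' running assumption of Section~\ref{mudim} (for a Gibbs measure $K_n$ is bounded, and the proofs there use only $V_1(\phi)<\infty$ together with the Gibbs estimates and the Walters-type control of Birkhoff-sum variations).
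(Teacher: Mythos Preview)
Your proposal is correct and follows essentially the same route as the paper: the paper's justification for Theorem~\ref{BIP1} is the sentence preceding it, which invokes Sarig's BIP result to get that every $t\phi$ (with Walters, $V_1<\infty$, $P_G<\infty$) is positive recurrent with an exact, genuinely non-local $\sigma$-Gibbs RPF measure, so that condition~(\ref{localm}) is automatic and the bounds of Section~\ref{mudim} apply for all $w$. Your write-up simply unpacks this in more detail, including the scaling of the Walters condition and the $s=0$ boundary case for $T^+$.
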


\begin{remark} 
If 
\begin{equation}\label{entropiafinita}
-\sum_{i\in{\cal I}}{\mhat_t(C_i)}\log \mhat_t(C_i)<\infty 
\end{equation}
then  $\mhat_t$ has finite entropy.  But, if BIP holds and  $\phi$ has summable variations, $\sup\phi<\infty$, $P_G(t\phi)<\infty$,  and $\mhat_t$ has finite entropy, then $\phi\in L^1({\mhat}_t)$ and
$\mhat_t$ is an equilibrium measure for $t\phi$ (see section \ref{TForm} for references).  
Hence, under these hypothesis, if there exists $0<t_1\leq 1$ such that
 $\infty>P_{G}(t_1\phi)-P_{G}(\phi)t_1>st_1$ and (\ref{entropiafinita}) holds for all $t_1\leq t\leq 1$, then 
$$T^-=\sup\{t\in [t_1,1]: P_{G}(t\phi)-P_{G}(\phi)t>st\}$$
If $\widehat\mu$ is $\sigma$-Gibbs (for example if it is the RPF measure for $\phi$) then the condition (\ref{entropiafinita}) is equivalent to
\begin{equation}\label{entropiafinita2}
 -\sum_{i\in{\cal I}}{\widehat\mu(C_i)^t}\log \widehat\mu(C_i)<\infty 
\end{equation}
 \end{remark}

\begin{remark}\label{igualdad} 
For infinite alphabet  the function 
$$
G(t):=P_{G}(t\phi-P_G(\phi)t)=P_G(t\phi)-P_G(\phi)t
$$ 
is  not  necessarily continuous, this was  first notice by Mauldin and Urbanski  in \cite{MU} with their pressure. Hence,  
the equation $P_{G}(t\phi)-P_{G}(\phi)t=st$ might not have a root.  However, Sarig proved that if BIP holds, $\phi$ is weakly H\"older continuous and $P_G(t_1\phi)<\infty$, then $t\longrightarrow P_G(t\phi)$ is  real analytic for all $t>t_1$. See   \cite{Sa3}   and \cite{Sa5} for further results  on the non-analyticity of $G(t)$ in relation with  the properties of recurrence of $t\phi$. 
We recall that $G(t)$ is convex and $G(1)=0$ and so if $0<G(t_1)<\infty$, then $G(t_2)<G(t_1)$ for all $t_2>t_1$. 
\end{remark}

From theorem \ref{BIP1} and  the last two remarks follow that:

\begin{theorem}\label{intro}
If $\Sigma_A^{\cal I}$  satisfies the BIP property,  $\widehat\mu$ is a $\sigma$-Gibbs measure for $\phi$ with 
$ \sum_{n\geq 1}V_n(\phi)<\infty$, $\sup \phi<\infty$, 
and there exists $0<t_1\leq 1$ such that
 $\infty>P_{G}(t_1\phi)-P_{G}(\phi)t_1>st_1$ and (\ref{entropiafinita2}) holds for  $t=t_1$, then
 $$
 \Dim_{\widehat\mu}(W_{\sigma}(\widehat P,\ell_n,w)))=\sup\{t\in[t_1,1]: P_{G}(t\phi)-P_{G}(\phi)t>st\}=\inf\{t\in(0,1]: P_{G}(t\phi)-P_{G}(\phi)t<st\} .
$$
And  if moreover $\phi$ is weakly H\"older continuous, then  
$$ \Dim_{\widehat\mu}(W_{\sigma}(\widehat P,\ell_n,w)))=T\quad  \mbox{ with } \quad 
P_{G}(T\phi)-P_{G}(\phi)T=sT
$$
\end{theorem}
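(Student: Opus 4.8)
The plan is to deduce the theorem from Theorem \ref{BIP1} together with the two remarks immediately preceding it in the intended argument — the finite–entropy/equilibrium remark following Theorem \ref{BIP1} and Remark \ref{igualdad}. Write $G(t):=P_G(t\phi)-P_G(\phi)t$, so that $G$ is convex, $G(1)=0$ and $G(t_1)>st_1$; there are really only three points to verify.

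\textbf{First}, that the hypothesis ``(\ref{entropiafinita2}) holds at $t_1$'' propagates to all $t\in[t_1,1]$. Since $\widehat\mu$ is a probability, $\widehat\mu(C_i)\le 1$ for every symbol, and $x\mapsto x^{t}$ is non-increasing on $(0,1]$, so for $t_1\le t\le 1$
$$-\sum_{i\in{\cal I}}\widehat\mu(C_i)^{t}\log\widehat\mu(C_i)\ \le\ -\sum_{i\in{\cal I}}\widehat\mu(C_i)^{t_1}\log\widehat\mu(C_i)\ <\ \infty ,$$
using $-\log\widehat\mu(C_i)\ge 0$. As $\widehat\mu$ is $\sigma$-Gibbs for $\phi$, (\ref{entropiafinita2}) at $t$ is equivalent to (\ref{entropiafinita}) for the RPF measure $\mhat_t$ of $t\phi$; hence $\mhat_t$ has finite entropy on all of $[t_1,1]$, and by the remark following Theorem \ref{BIP1} (Sarig's equilibrium criterion under BIP, summable variations, $\sup\phi<\infty$ and $P_G(t\phi)<\infty$) we get $\phi\in L^1(\mhat_t)$ and that $\mhat_t$ is an equilibrium measure for every $t\in[t_1,1]$. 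By that same remark this already identifies $T^-=\sup\{t\in[t_1,1]:G(t)>st\}$, the first expression in the claimed formula.

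\textbf{Second}, that $T^-=T^+$, where $T^+=\inf\{t\in(0,1]:G(t)<st\}$. The inequality $T^-\le T^+$ is immediate from Theorem \ref{BIP1} (both bounds sandwich $\Dim_{\widehat\mu}(\Wsimbolico)$). For the reverse, suppose $T^-<T^+$; then each $t\in(T^-,T^+)$ satisfies $G(t)\le st$ (being above $\sup\{G>st\}$) and $G(t)\ge st$ (being below $\inf\{G<st\}$), so $G$ coincides with the affine map $t\mapsto st$ on $(T^-,T^+)$. This contradicts convexity of $G$ together with $G(1)=0$ when $s>0$: for $T^-<t<t'<\min\{T^+,1\}$ the graph point $(t',st')$ would have to lie on or below the chord from $(t,st)$ to $(1,0)$, forcing $t'\le t$, and if $T^+=1$ letting $t\to 1^-$ along the affine piece gives $0=G(1)\ge s$. (When $s=0$, convexity with $G(t_1)>0$ and $G(1)=0$ forces $G>0$ on $[t_1,1)$, so $T^-=T^+$ trivially.) Writing $T$ for the common value, Theorem \ref{BIP1} gives $\Dim_{\widehat\mu}(\Wsimbolico)=T$.

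\textbf{Third}, the weakly Hölder refinement: by Remark \ref{igualdad}, if $\phi$ is weakly Hölder continuous then $t\mapsto P_G(t\phi)$ is real analytic on $(t_1,\infty)$, so $G$ is continuous there; since $G(t_1)-st_1>0$ and $G(1)-s\le 0$, the intermediate value theorem produces $T\in(t_1,1]$ with $P_G(T\phi)-P_G(\phi)T=sT$, which by the second step equals the $\widehat\mu$-dimension. The delicate point is the first step: the lower bound $T^-$ in Theorem \ref{BIP1} is burdened with the side condition $\phi\in L^1(\mhat_t)$, and without establishing it on the whole interval $[t_1,1]$ one could only conclude $\Dim_{\widehat\mu}(\Wsimbolico)\ge$ a value possibly strictly below $T^+$; the self-improving monotonicity $\widehat\mu(C_i)^{t}\le\widehat\mu(C_i)^{t_1}$ and Sarig's passage from finite entropy to $L^1$-integrability are exactly what resolve it, while the remaining steps are elementary convex geometry and the cited analyticity of the Gurevich pressure.
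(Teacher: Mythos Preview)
Your proposal is correct and follows exactly the route the paper indicates (it merely says the result ``follows from theorem \ref{BIP1} and the last two remarks''); your three steps are precisely the content of those remarks, fleshed out. One simplification worth noting: your second step can be shortened using the last sentence of Remark \ref{igualdad}, namely that convexity of $G$ together with $G(1)=0$ and $0<G(t_1)<\infty$ forces $G$ to be \emph{strictly decreasing} on $[t_1,1]$; since $t\mapsto st$ is nondecreasing, $G(t)-st$ is strictly decreasing and can vanish at most at one point, giving $T^-=T^+$ immediately without the chord argument. Also, in your third step the IVT is applied on $[t_1,1]$ while analyticity is only asserted on $(t_1,\infty)$; it is cleaner to argue directly that at $T:=T^-=T^+>t_1$ continuity of $G$ forces $G(T)=sT$ (the paper is equally informal on this edge point).
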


\begin{remark} \label{Gaussmu}
Theorem C of introduction  follows from above theorem and remark \ref{abajoo}. Notice that $\phi(w)=-\log|g'\circ\pi(w)|$ with $g(x)=\frac 1x - \lfloor \frac{1}{x }\rfloor$ the Gauss map.
\end{remark}

\subsubsection{Non-H\"older potentials} \label{NoH}

In the previous two sections  we have mainly considered potentials $\phi:\Sigma_A^{\cal I}\longrightarrow\R$  with some H\"older regularity property  on a topologically mixing Markov chain $\Sigma_A^{\cal I}$ verifying  BI or BIP. These conditions were used to guarantee the existence of mixing local $\sigma$-Gibbs measures for $t\phi$; however, theorems in section \ref{mudim} only required mixing local  weak $\sigma$-Gibbs measures.

The existence of weak Gibbs measures for non-H\"older potentials have been studied for several authors.
For example,  Hu considered  in \cite{Hu} potentials  $\phi :\Sigma_A^{\cal I}\longrightarrow\R$, with  finite alphabet ${\cal I}=\{0,1, \ldots r^*\}$ and $a_{0,0}=1$,  which satisfy the H\"older condition everywhere  except at the  fixed point $0$ and its preimages. He defined a new metric $\widetilde d$ on $\Sigma_A^{\cal I}$,  the definition of this metric involves two fixed parameters $\kappa\in(0,1)$ and $\gamma>0$, and in particular, he  considered  potentials $\phi$ such that:
\begin{itemize}
\item [(i)] $\phi$ is continuous.
\item [(ii)] There exists $\theta\in(0,1]$ and $\alpha\in[0,\theta(1+\gamma))$ such that  for all $w\in{\cal B}_k$ and $v\in{\cal B}_{k-1}\cup{\cal B}_k\cup{\cal B}_{k+1}$
$$
|\phi(w)-\phi(v)|\leq C k^{\alpha-1} {\widetilde d}(w,v)^{\theta} \quad \mbox{ for some constant  } \quad C>0.
$$
\item [(iii)] There exists $\beta>-1$ and $K_1>0$ such that for all $k\geq K_1$ if $w\in{\cal B}_k$ and $0=(0,0,\ldots)$, then 
$$
|\phi(0)-\phi(w)-\frac{\beta+1}{k+1}|\leq C \frac{1}{(k+1)^{1+\delta}}
$$
with $0<\delta\leq 1$ and $C=C(\delta)>0$  independent of $k$ and $w$.  
\end{itemize}
 \noindent  In properties (ii) and (iii) ${\cal B}_k$  denotes the set  of words $w=(w_0,w_1,\ldots)\in \Sigma_A^{\cal I}$ with $w_0=w_1=\cdots=w_{k-1}=0$ and $w_k\neq0$.

Due to definition of $\widetilde d$ and the restrictions on the values of $\alpha$, the condition (ii) implies a H\"older condition. Also,  even though the condition (iii)  implies that $\phi$ is not  a H\"older function with the usual metric, the potential $\phi$  satisfies  H\"older condition with respect to the metric $\widetilde d$.

Let us assume that $\widehat\mu$ is a weak $\sigma$-Gibbs measure for the potential $\phi$ with $P=P_{top}(\phi)=0$ and $\phi$ verifying properties (i)-(iii).  The measure $\widehat\mu$ is an ergodic equilibrium measure for $\phi$ (see lemma 7.4  and theorem D in  \cite{Hu}). Moreover, for all $0\leq t\leq 1$ there exists a
 exact weak $\sigma$-Gibbs  measure for  the potential $t\phi$ with $P=P_{top}(t\phi)$, see \cite{Hu}.
  Notice that since $P_{top}(0)>0$ and $P_{top}(\phi)=0$, follows from the convexity of $G(t):=P_{top}(t\phi)$ that $G(t)$ is decreasing.

 Hence,  we have  that
 $$
\Dim_{\widehat\mu}(W_{\sigma}(\widehat P,\ell_n,w)))=T\geq \frac {h_\musymb}{h_\musymb+s}
$$
with $T$ the root of the equation
$
P_{top}(t\phi)=st
$.

\medskip

We would like to mention that M.  Yuri  (\cite{Yu1}, \cite {Yu2}) has  established  a thermodynamic formalism for  {\it countable to one transitive Markov systems with finite range structure (FRS) satisfying  a property of local exponential instability},  
and he has proved the existence  of  equilibrium and weak $\sigma$-Gibbs measures  for some  non-H\"older potentials. See also \cite{FFY}.  Examples of such systems are  some non-hyperbolic system which exhibit intermittent phenomena, as the Manneville-Pomeau map,   we will  discuss them with more detail in section \ref{intermittent}.

\section{Markov Transformations with BI property} \label{MarkovT}

Let $\lan$ be Lebesgue measure in $[0,1]$. A map
$f:[0,1]\longrightarrow [0,1]$ is a {\it Markov transformation} if
there exists a  finite or numerable family ${\cal P}_0=\{P^0_i\}_{i\in {\cal I}}$ of disjoint
open intervals in $[0,1]$ such that

\begin{itemize}
\item[(a)] $\lan([0,1]\setminus \cup_j P^0_j)=0$.

\item[(b)] For each $j$, there exists a set $K$ of indices such
that $f(P^0_j)=\cup_{k\in K} P^0_k$ (mod $0$).

\item[(c)] $f$ is derivable in $\cup_j P^0_j$ and there exists
$\sigma>0$ such that $|f'(x)|\ge \sigma$ for all $x\in\cup_j P^0_j$.

\item[(d)] There exists $\gamma>1$ and a non zero natural number
$n_0$ such that if $f^m(x)\in \cup_j P^0_j$ for all $0\le m \le
n_0-1$, then $|(f^{n_0})'(x)|\ge\gamma$.

\item[(e)] Given $P_i, \, P_j \in {\cal P}_0$ there is $n_0\in\N$ such that
$\lambda (f^{-n}(P_i)\cap P_j) > 0$, for all $n\ge n_0$\,.

\item[(f)]  For some $P\in {\cal P}_0$, \
$\sum_n\lan(f^{-n}(P))=\infty$ and $\sum_n n\,\lan([\varphi_P=n])<\infty$, with $\varphi_P : P\longrightarrow P$ the first return time to $P$, i.e. $\varphi_P(x) =\inf\{k : f^k(x)\in P\}$.

\item[(g)] There exist constants $c>0$ and $0<\alpha\le 1$ such that, for all $x,y \in P^0_j$,
$$
\Big| \frac{f'(x)}{f'(y)}-1 \Big| \le c|x-y|^\alpha \,.
$$
\end{itemize}

Let us define  the following collections $\{{\cal P}_i\}$ of partitions:
$$
{\cal P}_{n}=\bigcup_{P^0_i\in{\cal
P}_0}\{(f\big|_{P^0_i})^{-1}(P_j)\, :\, P_j\in{\cal P}_{n-1} \,, \
P_j \subset f(P^0_i)\}=\bigvee_{j=0}^n f^{-j}({\cal P}_0);
$$
 from (a) and (b) we have that
$
 \bigcap_{n=0}^\infty \bigcup_{P\in {\cal P}_n} \mbox{cl}(P)
$
has full Lebesgue measure. 
Let us  recall some well known properties of Markov transformation (see e.g  \cite{M}, \cite{Aa}). 
From properties (d) and (g)  we get the following expansiveness property:  there exists $c>0$ and $0<\beta<1$ such that for all $x,y$ in the same interval of ${\cal P}_n$ 
\begin{equation}\label{expansive}
|x-y|\leq c\,  \beta^n |f^n(x)-f^n(y)|
\end{equation}
In particular
 we have that 
 $
 \displaystyle\sup_{P\in{\cal P}_n} \diam(P) \to 0 \mbox{ as } n\to\infty.
 $

Also, it is easy to check that 
there exists an absolute constant $c>0$ such that for all natural number $n$
we have that if $y,z$ belong to  the same interval of ${\cal P}_n$  then
$$
\frac {(f^s)'(y)}{(f^s)'(z)} \le c \,, \qquad \hbox{for $s=1,\dots,n+1$}\,.
$$
And therefore we have the following  bounded distortion property:

\begin{proposition} \label{elchachi}
If $P$ is an element of ${\cal P}_n$ and $A$ is a measurable subset of $P$ then
$$
\frac{\lambda(A)}{\lambda(P)} \asymp
\, \frac {\lambda(f^{j}(A))}{\lambda
(f^{j}(P))}\, , \qquad \hbox{ for } j=1,\dots, n+1\,.
$$
\end{proposition}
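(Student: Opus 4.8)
The plan is to combine the two ingredients that were just assembled before the statement: the injectivity of the iterates of $f$ on elements of ${\cal P}_n$, and the uniform bound $(f^s)'(y)/(f^s)'(z)\le c$ valid for $y,z$ in a common element of ${\cal P}_n$ and $1\le s\le n+1$, where $c$ is an absolute constant.

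First I would fix $j\in\{1,\dots,n+1\}$ and an element $P\in{\cal P}_n$, and check that $f^j$ is injective on $P$. Since $P\in{\cal P}_n=\bigvee_{i=0}^n f^{-i}({\cal P}_0)$, each of $P, f(P),\dots,f^{n}(P)$ is contained in a single interval of ${\cal P}_0$, on which $f$ is injective by property (c); composing, $f^j$ is injective on $P$ for every $j\le n$, and for $j=n+1$ one writes $f^{n+1}=f\circ f^n$ and uses that $f^n(P)$ lies in one element of ${\cal P}_0$. Then, since $f$ is $C^1$ on each $P^0_i$, the change of variables formula gives, for any measurable $A\subset P$,
\[
\lambda(f^j(A))=\int_A |(f^j)'(x)|\,dx,\qquad \lambda(f^j(P))=\int_P |(f^j)'(x)|\,dx .
\]

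Next I would pick any point $x_0\in P$. By the distortion bound recalled before the statement, applied with $s=j\le n+1$ and with $y,z$ ranging over $P$, we get $c^{-1}|(f^j)'(x_0)|\le |(f^j)'(x)|\le c\,|(f^j)'(x_0)|$ for every $x\in P$. Integrating this over $A$ and over $P$ respectively yields
\[
c^{-1}|(f^j)'(x_0)|\,\lambda(A)\le \lambda(f^j(A))\le c\,|(f^j)'(x_0)|\,\lambda(A),
\]
and the same with $A$ replaced by $P$. Dividing the two chains of inequalities (note $\lambda(P)>0$, as every element of ${\cal P}_n$ is a nonempty open interval, so the right-hand side is meaningful) gives
\[
c^{-2}\,\frac{\lambda(A)}{\lambda(P)}\le \frac{\lambda(f^j(A))}{\lambda(f^j(P))}\le c^{2}\,\frac{\lambda(A)}{\lambda(P)},
\]
which is exactly the asserted comparability, with the absolute constant $c^2$.

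There is no genuine obstacle here: the argument is a one-line change of variables plus the already-proven distortion estimate. The only point requiring care is the range of $j$ — one must ensure both the injectivity of $f^j$ on $P$ and the distortion bound remain valid up to $s=j=n+1$, which is precisely the form in which the preliminary lemma was stated, and which is why $f^n(P)$ being a subset of a single element of ${\cal P}_0$ is used for the last iterate.
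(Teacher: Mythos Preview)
Your proof is correct and is exactly the argument the paper has in mind: the paper does not write out a proof of this proposition but simply states, immediately before it, the derivative distortion bound $(f^s)'(y)/(f^s)'(z)\le c$ for $y,z$ in the same element of ${\cal P}_n$ and $s\le n+1$, and then says ``And therefore we have the following bounded distortion property''. Your change-of-variables computation, combined with that bound and the injectivity of $f^j$ on $P$, is precisely the intended one-line justification.
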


There exists  
 a $f$-invariant  probability measure  $\mu$ which is absolutely
continuous with respect to Lebesgue  measure $\lambda$, usually called the ACIP measure. The measure $\mu$  is 
comparable to $\lambda$ in the initial intervals of ${\cal P}_0$, it is exact and has entropy $h_{\mu}>0$.

Any Markov transformation is a shift modeled transformation in $[0,1]$ (see section \ref{SMT}). Let us denote by  $(\Sigma_A^{\cal I},d,\sigma)$
the  symbolic representation  of $f$. Notice that the Markov chain   $\Sigma_A^{\cal I}$   is { topologically mixing} by  property (e).

\medskip

 Let  $-\log|\widehat f' |$ denote the { continuous extension to  $\Sigma_A^{\cal I}$ of  the } potential $-\log|f'\circ\pi |$  in $\pi^{-1}(X_{\Pibf}^{1-1})$. Property (f) says that the potential $-\log|\widehat f' |$ is positive recurrent  and 
by property (g)  and (\ref{expansive}) follow that there exists $c>0$ and $0<\theta<1$ such that 
\begin{equation}\label{regularidadpotencial}
V_n(-\log|\widehat f' |)\leq c\, \theta^{n}  \qquad \mbox{for} \quad n\geq 1
\end{equation}
Also, by property (d) we have that there exists $\eta>0$  such that 
$
-\log|{(\widehat f^{n_0})'} | \leq -\eta 
$
and therefore
for all $\sigma$-invariant probability $\bf \widehat m$ we have that
$
\int \log|{\widehat f^\prime} | \,d{\bf \widehat m}>0.
$
}

\medskip 

We  denote by $\musymb$ the measure in $\Sigma_A^{\cal I}$ corresponding to de ACIP measure $\mu$, i.e $\musymb=\mu\circ\pi$. 
 If $f$ satisfies  the BI property, i.e. $\inf\{\lan(f(P_i^0): P_i^0\in{\cal P}_0\}>0$,  then the measure $\widehat\mu$ is a  local   $\sigma$-Gibbs measure for the potential $-\log|\widehat f' |$ with {$P=P_G(-\log|\widehat f' |)=0$ } (see proposition \ref{localGibb}).

   \medskip
     
Hereafter, we will assume that  $f$ (and therefore $ \Sigma_A^{\cal I}$)  satisfies the BI property.
We recall that if we have BIP property, then  $\widehat\mu$ is a $\sigma$-Gibbs measure.

\medskip

Notice that since $\mu$ is comparable to $\lan$ in  each initial block $P^0_i\in {\cal P}_0$, then  from lemma \ref{gridmu} we have that 
$$
\Dim_{\Pibf}(\pi(\Sigma))=\Dim_{\widehat\mu}(\Sigma) \quad \mbox{ for all } \Sigma\subset \Sigma_A^{\cal I}
$$

\subsection{Target-block sets for Markov transformations with BI}\label{ShrinkingM}

We say that $x\in X_{\Pibf}$ is a $\mu$-hitting point iff  there exists  a $\musymb$-hitting point  $w\in \Sigma_A^{\cal I}$ such that $\pi(w)=x$. Notice that the set of $\mu$-hitting points  has $\mu$-full measure (see remark \ref{R}), and moreover  if 
 BIP property holds then $\mu$ is $\sigma$-Gibbs and so 
all  point in $X_{\Pibf}$  is a  $\mu$-hitting point. 

Given  a $\mu$-hitting point $x\in[0,1]$ with $\pi(w)=x$,  a block $P\in{\cal P}_N$, and a sequence $\{\ell_n\}\subset \N$ such that
$
\limsup_{n\to\infty}{\ell_n}/{n}<\infty
$
we want to estimate the size of the set
$$
W_f(P,\ell_n,x)=\{y\in{P}: \ f^k(y) \in
P(\ell_k,x) \ \hbox{for infinitely many }k\}.
$$
In a similar way that in the proof of proposition \ref{cotasupformalismo} by using that $\mu$ and $\lan$ are comparable in the initial blocks and proposition \ref{elchachi} we get the following upper bound for the Hausdorff dimension.

\begin{proposition}\label{arriba} Let
$
{\underline s}= \liminf_{n\to\infty} -[\log {\lan(P(\ell_n,x))}]/n<\infty.
$
Then
$$
\Dim(W_f(P,\ell_n,x))  \le T^+:=\inf\{t>0: \, P_G(-t\log|\widehat f^\prime|)<\unders t\}
$$ 
\end{proposition}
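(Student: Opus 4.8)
The plan is to transcribe the proof of Proposition \ref{cotasupformalismo} to the interval, passing from cylinders to grid intervals by means of the bounded distortion property (Proposition \ref{elchachi}) and the comparability of $\mu$ and $\lambda$ on the initial blocks of ${\cal P}_0$. Fix a symbolic representative $w$ of $x$ with $\pi(w)=x$, let $i$ be the first symbol of $P$, and for each $N$ cover $W_f(P,\ell_n,x)$ by
$$\bigcup_{n\ge N}\Big\{\,Q\in{\cal P}_{n+\ell_n}\ :\ Q\subset P,\ f^n(Q)=P(\ell_n,x)\,\Big\}.$$
This is a covering: if $y\in W_f(P,\ell_n,x)$ and $f^k(y)\in P(\ell_k,x)$ with $k\ge N$, then the block of ${\cal P}_{k+\ell_k}$ containing $y$ has symbolic address $(i_0,\dots,i_{k-1},w_0,\dots,w_{\ell_k})$, and by the Markov structure of $f$ it is mapped by $f^k$ exactly onto $P_{w_0\dots w_{\ell_k}}=P(\ell_k,x)$, so it is one of the blocks $Q$ above.

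Next I would estimate $\diam(Q)$ for such a $Q$ with parent $P(n,y)\in{\cal P}_n$. Since $Q\subset P(n,y)$, $f^n(Q)=P(\ell_n,x)$ and $f^n(P(n,y))=P^0_{w_0}$, Proposition \ref{elchachi} (applied with $j=n$) gives, with distortion constant independent of $n$,
$$\diam(Q)=\lambda(Q)\asymp\lambda(P(n,y))\,\frac{\lambda(P(\ell_n,x))}{\lambda(P^0_{w_0})}\asymp\lambda(P(n,y))\,\lambda(P(\ell_n,x)).$$
Because $P(n,y)\subset P^0_i$, $\mu\asymp\lambda$ on $P^0_i$, and $\widehat\mu=\mu\circ\pi$ is a local $\sigma$-Gibbs measure for $\phi=-\log|\widehat f'|$ with $P_G(\phi)=0$ (Proposition \ref{localGibb}), one has $\lambda(P(n,y))\asymp\widehat\mu(C(n,z))\asymp\exp\big(\sum_{j=0}^{n}\phi(\sigma^j(z))\big)$, all constants depending only on the fixed symbols $i$ and $w_0$.

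Then, for $t>0$, summing $\diam(Q)^t$ over the covering and using that the blocks $Q$ with $f^n(Q)=P(\ell_n,x)$, $Q\subset P^0_i$, are in bijection with the blocks $P(n,y)\subset P^0_i$ satisfying $f^n(P(n,y))=P^0_{w_0}$, I get
$$\sum_{n\ge N}\sum_{Q}\diam(Q)^t\ \lesssim\ \sum_{n\ge N}\lambda(P(\ell_n,x))^t\sum_{\substack{P(n,y)\subset P^0_i\\ f^n(P(n,y))=P^0_{w_0}}}\exp\Big(t\sum_{j=0}^{n}\phi(\sigma^j(z))\Big).$$
For $\ep>0$ and $n$ large, $\lambda(P(\ell_n,x))^t<e^{-nt(\unders-\ep)}$ by definition of $\unders$; and since $\Sigma_A^{\cal I}$ is topologically mixing and $\phi$ is weakly H\"older (see (\ref{regularidadpotencial})), the inner sum is $\lesssim Z_{n+k_0}(t\phi,i)\le e^{(n+k_0)(P_G(t\phi)+\ep)}$ for $n$ large, exactly as in the proof of Proposition \ref{cotasupformalismo}. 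Hence the whole sum is bounded by $c\sum_{n\ge N}e^{-n(t\unders-P_G(t\phi)-c\ep)}$. If $t$ satisfies $P_G(-t\log|\widehat f'|)<\unders t$ (in particular $P_G(t\phi)<\infty$), choosing $\ep$ small makes the exponent negative, so the sum tends to $0$ as $N\to\infty$; since also $\sup\{\diam(Q):Q\in{\cal P}_{n+\ell_n}\}\to 0$ by (\ref{expansive}), we conclude $H_{\diam^t}(W_f(P,\ell_n,x))=0$, i.e. $\Dim(W_f(P,\ell_n,x))\le t$. Taking the infimum over all such $t$ yields $\Dim(W_f(P,\ell_n,x))\le T^+$.

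The main obstacle I anticipate is the bookkeeping in the bounded-distortion step: checking carefully that $f^n$ sends the relevant $(n+\ell_n)$-block onto $P(\ell_n,x)$ and its parent onto a fixed initial interval, so that Proposition \ref{elchachi} applies with a constant independent of $n$, and that all the constants hidden in $\lambda\asymp\mu\asymp\widehat\mu$ and in the local Gibbs estimate depend only on $i$ and $w_0$. Everything else is a direct transcription of the proof of Proposition \ref{cotasupformalismo}. Alternatively, one may simply combine $\Dim\le\Dim_{\Pibf}=\Dim_{\widehat\mu}$ (Remark \ref{comparacioncongrid} and Lemma \ref{gridmu}) with Proposition \ref{cotasupformalismo} applied to $\phi=-\log|\widehat f'|$, noting that $\widehat\mu(C(\ell_n,w))\asymp\lambda(P(\ell_n,x))$ with constants depending only on $w_0$, so the two notions of $\unders$ coincide.
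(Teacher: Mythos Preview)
Your argument is correct and follows essentially the same route as the paper's own proof: the same covering by $(n+\ell_n)$-blocks, the same use of Proposition \ref{elchachi} to factor $\diam(Q)\asymp\lambda(P(n,y))\,\lambda(P(\ell_n,x))$, and the same passage via the local Gibbs property and the partition function bound $Z_{n+k_0}(t\phi,i)\le e^{(n+k_0)(P_G(t\phi)+\ep)}$ to make the series converge. Your final alternative (using $\Dim\le\Dim_{\Pibf}=\Dim_{\widehat\mu}$ together with Proposition \ref{cotasupformalismo}) is also valid; note that the bare inequality $\Dim\le\Dim_{\Pibf}$ needs only the remark after Definition 2.2, not Remark \ref{comparacioncongrid}.
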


\begin{proof} 
For each $N\in\N$  we have the following covering of $W_f(P,\ell_n,x)$
$$
\bigcup_{n=N}^\infty \{P(n+\ell_n,y): \ f^n(y)=x, \quad y\in P^0_i\}\,.
$$
with $P^0_i$  the $0$-block such that $P\subset P^0_i$. Given $\ep>0$ and $t>0$, 
from   proposition \ref{elchachi}, definition of  ${\underline s}$ and since  $f^n(y)=x$, we have for $n$ large that
$$
\frac{\lan(P(n+\ell_n,y))}{\lan(P(n,y))}\leq c \, \frac{\lan(P(n,x))}{\lan (P(0,x))}\leq c \, \frac{e^{-n(\underline s-\ep/(2t))}}{\lan(P(0,x))}, 
$$
and since $\mu$ is comparable to $\lan$ in $P^0_i$
$$
\lan(P(n+\ell_n,y))\leq c\, e^{-n(\underline s-\ep/(2t))}\widehat\mu(C(n,z))
$$
with $\pi(z)=y$ and $c>0$ depending on the initial block $P^0_i$.
Hence for $t>0$ and $N$ large
$$
\sum_{n\ge N} \sum_{f^n(y)=x, y\in P^0_i} \diam( P(n+\ell_n,y))^t \le 
 c\,  \sum_{n\ge N} e^{-n(\underline s t-\ep/2)} \sum_{\sigma^n(z)=w, z_0=i}   \, \widehat\mu ( C(n,z))^t\,.
$$
But 
$$
{\musymb (C(n,z))}^t\leq  c(i)^{t}  \,  \exp[-t\sum_{j=0}^n \log|\widehat  f^\prime\circ\sigma^j(z)|]
$$
and so for $n$ large 
$$
 \sum_{\sigma^n(z)=w, z_0=i}  \, {\musymb (C(n,z))}^t\leq  c(i)^{t}   \sum_{\sigma^n(z)=w, z_0=i} e^{\sum_{j=0}^n (-t\log|\widehat f^\prime\circ\sigma^j(z)|)} \leq c\, e^{n[P_G(-t\log|\widehat f^\prime|)+\ep/2]}
 $$
Therefore,
$$
\sum_{n\ge N} \sum_{f^n(y)=x, y\in P^0_i} \diam( P(n+\ell_n,x))^t 
  \leq c \sum_{n\ge N}  e^{-n[\unders t-P_G(-t\log|\widehat f^\prime|)-\ep]} \,.
$$
For $\ep>0$ and $t>0$ such that $\unders t-P_G(-t\log|\widehat f^\prime|)-\ep>0$ we have that 
$$
\sum_{n\ge N} \sum_{f^n(y)=x, y\in P^0_i} \diam( P(n+\ell_n,x))^t \to 0 \qquad \mbox{ as } \qquad N\to\infty
$$
\end{proof}
\begin{remark} \label{uppermodeled}
A similar result holds for $f$ 
 a shift modeled transformation in $[0,1]$  (not necessarily a Markov transformation) with    a $f$-invariant  measure  $\mu$ such that  $\mu\asymp\lan $ in the block $P$ and $\widehat\mu$   a weak $\sigma$-invariant Gibbs measure with potential $-\log |\widehat f^{\prime}|$  and $P_G(-\log |\widehat f^{\prime}|)=0$.  In this case we should change $\lan$ by $\mu$ in the definition of ${\underline s}$ (or ask for $\mu\asymp\lan $ in $P(\ell_n,x)$ for all $n$ large enough).
\end{remark}

In this section let us assume the following condition on the center $x$  of the target-block
\begin{equation} \label{hipx}
\liminf_{n\to\infty}\frac{\lan(P(n+1,x))}{\lan(P(n,x))^{1+\gamma}}>0  \qquad \mbox{ for all } \quad \gamma>0
\end{equation}
Notice that this condition holds for all $x$  with $\gamma=0$ in the case of finite alphabet (follows  from proposition \ref{elchachi}).
Moreover, in the infinite case,  since $\mu\asymp \lan$ in $P(0,x)$ and $\mu$ is ergodic, then by Shannon-MacMillan-Breiman theorem follows  that the set of points $x$ such that  (\ref{hipx}) holds  has full $\lan$-measure. We recall that $h_\mu>0$.

\noindent We also assume that 
\begin{equation} \label{hipx2}
{\overline s}:= \limsup_{n\to\infty} \frac 1n \log \frac 1{\lan(P(\ell_n,x))}<\infty 
\end{equation}

\begin{theorem} \label{markovT1}
$$
\Dim(W_f(P,\ell_n,x))  \ge T^-
$$ 
where $T^-$ is the supremum of all $t>0$ such that:
$$
 P_G(-t\log|{\widehat f^\prime} |)>{\overline s} t
$$
and  there exists a  mixing  local weak $\sigma$-Gibbs measure ${\mhat}$ for the potential $-t\log|{\widehat f^\prime} |$
verifying 
(\ref{localm}),  $-t\log|{\widehat f^\prime} |\in L^1(\mhat)$, and such that
 the collection $\{C(0, \sigma^{p_j}(w)) : p_j\in{\cal I}(w) \}$
  is $\ep$-uniformly $\widehat{\bf m}$-good for all $\ep>0$ small enough.
 Here $\pi(w)=x$  and $w$ is a $\widehat\mu$-hitting point with sequence ${\cal I}(w)$.
 \end{theorem}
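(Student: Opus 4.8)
The strategy is to transfer the lower bound for the $\widehat\mu$‑dimension in the symbolic model (Corollary~\ref{cantorsimbf} applied to the potential $-t\log|\widehat f'|$) to a Hausdorff dimension bound on $[0,1]$ via the grid $\Pibf$. By Lemma~\ref{gridmu} we already know $\Dim_\Pibf(\pi(\Sigma))=\Dim_{\widehat\mu}(\Sigma)$ for any $\Sigma\subset\Sigma_A^{\cal I}$, so the only thing missing is to pass from the grid dimension to the genuine Hausdorff dimension. For that we invoke Proposition~\ref{parareg1} (and Remark~\ref{comparacioncongrid}): if the Cantor‑like set $\pi({\cal Z})$ inside $W_f(P,\ell_n,x)$ sits inside finitely many $0$‑blocks and satisfies, for every $\gamma>0$, a bound $\diam(P(n-1,y))^{1+\gamma}/\diam(P(n,y))\le C$ on its points, then $\Dim_\Pibf=\Dim$ there. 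So the plan is: fix $t<T^-$, produce the pattern set ${\cal Z}$ from Theorem~\ref{cantorsimbolicoformalismo} with the extra regularity property of Remark~\ref{paradim}, check the grid/Hausdorff comparison on $\pi({\cal Z})$, and conclude $\Dim(W_f(P,\ell_n,x))\ge\Dim(\pi({\cal Z}))=\Dim_\Pibf(\pi({\cal Z}))=\Dim_{\widehat\mu}({\cal Z})\ge t$; letting $t\uparrow T^-$ finishes the proof.

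First I would set up the dictionary: $x$ is a $\mu$‑hitting point, so there is a $\widehat\mu$‑hitting point $w$ with $\pi(w)=x$ and a sequence ${\cal I}(w)=\{p_j\}$; since $\mu\asymp\lan$ on each $P^0_i$ and $\widehat\mu=\mu\circ\pi$, by Proposition~\ref{elchachi} and (\ref{misma0}) we have $\diam(P(n,\cdot))\asymp\lan(\mbox{cl}(P(n,\cdot))\cap X_\Pibf)\asymp\widehat\mu(C(n,\cdot))$ on each fixed initial block. Hence the hypotheses (\ref{hipx2}) on $x$ translate into $\overline s=\limsup_n \tfrac1n\log\tfrac1{\widehat\mu(C(\ell_n,w))}<\infty$, i.e. (\ref{lineal}), and (\ref{hipx}) together with $\lan\asymp\widehat\mu$ gives $\liminf_n \widehat\mu(C(n+1,w))/\widehat\mu(C(n,w))>0$ and $\unders=\liminf_n\tfrac1n\log\tfrac1{\widehat\mu(C(\ell_n,w))}>0$. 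Now fix $0<t<T^-$; by definition of $T^-$ there is a mixing local weak $\sigma$‑Gibbs measure $\mhat$ for $-t\log|\widehat f'|$ with $P_G(-t\log|\widehat f'|)>\overline s\,t$, with $-t\log|\widehat f'|\in L^1(\mhat)$, satisfying (\ref{localm}), and with ${\cal P}=\{C(0,\sigma^{p_j}(w))\}$ being $\ep$‑uniformly $\mhat$‑good for all small $\ep$. Note $P_G(\phi)=0$ for $\phi=-\log|\widehat f'|$, so the pressure hypothesis (\ref{Presionys}) reads $P_G(-t\log|\widehat f'|)\ge t\overline s+\ep$ for some $\ep>0$, which holds; and $P_G(-t\log|\widehat f'|)-t\int(-\log|\widehat f'|)\,d\mhat=P_G(-t\log|\widehat f'|)+t\int\log|\widehat f'|\,d\mhat>0$ using $\int\log|\widehat f'|\,d\mhat>0$. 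Thus every hypothesis of Theorem~\ref{cantorsimbolicoformalismo} and Remark~\ref{paradim} is in force.

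Applying Theorem~\ref{cantorsimbolicoformalismo}, I obtain a $(\widetilde{\cal J}_j,{\cal J}_j)$‑pattern set ${\cal Z}\subset\Wsimbolico$ with $\Dim_{\widehat\mu}({\cal Z})\ge t$ (via Theorem~\ref{azulesyrojos3n}); applying Remark~\ref{paradim} I may moreover assume that for every $\gamma\ge 3\ep/(tP_G(\phi)-t\int\phi\,d\mhat-\ep)$ there is $c>0$ with
$$
\frac{\widehat\mu(\sigma^{\widetilde d_j}(C(n+1,z)))}{\widehat\mu(\sigma^{\widetilde d_j}(C(n,z)))^{1+\gamma}}\ge c
\qquad\text{for }d_j\le n<\widetilde d_{j+1},\ z\in{\cal Z},
$$
and $\sigma^{\widetilde d_{j+1}}(C(n,z))=C(n-\widetilde d_{j+1},w)$ for $\widetilde d_{j+1}\le n\le d_{j+1}$; since $\ep$ may be taken arbitrarily small, the displayed bound is available for every $\gamma>0$. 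Translating through $\pi$ and Proposition~\ref{elchachi}: for $z\in{\cal Z}$, writing $y=\pi(z)$ and using that $\diam(P(n,y))\asymp\lan(f^{\widetilde d_j}(P(n,y)))\asymp\widehat\mu(\sigma^{\widetilde d_j}(C(n,z)))$ (up to constants depending only on finitely many initial blocks, thanks to condition (\ref{localm}) pinning down the relevant symbols at times $\widetilde d_j$), the above gives $\diam(P(n,y))\ge c\,\diam(P(n-1,y))^{1+\gamma}$ on the block range $d_j\le n<\widetilde d_{j+1}$, while on $\widetilde d_{j+1}\le n\le d_{j+1}$ the blocks map onto cylinders $C(n-\widetilde d_{j+1},w)$ of $w$ and the bound follows from the hypothesis $\liminf_n \widehat\mu(C(n+1,w))/\widehat\mu(C(n,w))>0$ together with $\lim_n\tfrac1n\log\widehat\mu(C(n,w))\ge -P_G(\phi)+\int\phi\,d\widehat\mu$ (so $\widehat\mu(C(n-1,w))^{1+\gamma}/\widehat\mu(C(n,w))$ is bounded for every $\gamma>0$ when $n$ is large). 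Hence $\pi({\cal Z})$ satisfies (\ref{tau}) for every $\gamma>0$ and is contained in finitely many blocks of ${\cal P}_0$ (indeed in $P\subset P^0_i$), so Remark~\ref{comparacioncongrid} yields $\Dim(\pi({\cal Z}))=\Dim_\Pibf(\pi({\cal Z}))=\Dim_{\widehat\mu}({\cal Z})\ge t$. Finally, since the construction arranges $\sigma^{\widetilde d_j}(z)\in C(\ell_{\widetilde d_j},w)$ for all $j$, one has $\pi({\cal Z})\subset W_f(P,\ell_n,x)$: $f^{\widetilde d_j}(y)\in P(\ell_{\widetilde d_j},x)$ infinitely often. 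Therefore $\Dim(W_f(P,\ell_n,x))\ge t$, and letting $t\uparrow T^-$ completes the proof.

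The main obstacle is the grid‑to‑Hausdorff comparison, i.e. verifying condition (\ref{tau}) uniformly on $\pi({\cal Z})$ for arbitrarily small $\gamma$. This is exactly why one needs the strengthened Cantor construction of Remark~\ref{paradim} rather than Theorem~\ref{cantorsimbolicoformalismo} alone: the ratio of a child's $\widehat\mu$‑measure to its parent's can degenerate wildly along the construction (that is the whole point of the Hungerford‑type lemmas), and only the hypotheses $P_G(\phi)-\int\phi\,d\mhat>0$, $\liminf \widehat\mu(C(n+1,w))/\widehat\mu(C(n,w))>0$, $\unders>0$, and the $\ep$‑uniform $\mhat$‑goodness of $\{C(0,\sigma^{p_j}(w))\}$ guarantee that the children are nonetheless geometrically well separated in $[0,1]$, so that the grid and Hausdorff dimensions of $\pi({\cal Z})$ coincide. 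Everything else — the transfer via $\pi$, the pressure bookkeeping with $P_G(-\log|\widehat f'|)=0$, and the final supremum over $t$ — is routine once this point is secured.
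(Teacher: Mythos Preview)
Your approach matches the paper's: construct the pattern set ${\cal Z}$ via Theorem~\ref{cantorsimbolicoformalismo} together with the refined regularity from Remark~\ref{paradim}, transfer $\Dim_{\widehat\mu}({\cal Z})\ge t$ to $\Dim_\Pibf(\pi({\cal Z}))\ge t$ by Lemma~\ref{gridmu}, and then verify condition~(\ref{tau}) on $\pi({\cal Z})$ for every $\gamma>0$ so that Remark~\ref{comparacioncongrid} upgrades grid dimension to Hausdorff dimension. One slip to fix: the intermediate claim $\diam(P(n,y))\asymp\lan(f^{\widetilde d_j}(P(n,y)))$ is false (the implied constant is of order $|(f^{\widetilde d_j})'|^{-1}$, which is not uniform in $j$); the paper instead applies Proposition~\ref{elchachi} directly to the \emph{ratio} $\diam(P(n+1,\pi(z)))/\diam(P(n,\pi(z)))^{1+\gamma}$, which produces a harmless extra factor $\big(\lan(f^{\widetilde d_j}(P(\widetilde d_j,\pi(z))))/\lan(P(\widetilde d_j,\pi(z)))\big)^\gamma\ge c$, and then uses (\ref{hipx}) on the range $\widetilde d_{j+1}\le n<d_{j+1}$ and the bound from Remark~\ref{paradim} on $d_j\le n<\widetilde d_{j+1}$.
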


\begin{proof}
Let  $\widehat P$ be the cylinder  such that $\pi(\widehat P)=P$ and $\phi=-\log|{\widehat f^\prime} |$. {We recall that $P_G(\phi)=0$ and $- \int \phi \, d\mhat>0$.}
 Let us suppose that there exist
 $t>0$ such that 
 $
P_G(t\phi)>t\overs+\ep
 $ for some $\ep>0$
and  there exists  a mixing weak $\sigma$-Gibbs measure $\mhat$ for the potential $t\phi$.  From theorem \ref{cantorsimbolicoformalismo} we know that  there exists a pattern like set $ {\cal Z}\subset  W_{\sigma}(\widehat P,\ell_n,w)$ with $\Dim_{\widehat\mu}({\cal Z}) \geq t$ and  verifying also remark \ref{paradim}.  
Hence $\pi({\cal Z})$ is contained in $W_{f}(P,\ell_n,x)$  and by lemma \ref{gridmu} $\Dim_{\bf\Pi}(\pi({\cal Z}))\geq t$.  Next, we want to use 
remark \ref{comparacioncongrid} to conclude that $\Dim(\pi({\cal Z}))=\Dim_{\bf\Pi}(\pi({\cal Z}))$. Notice that in the case of finite alphabet we have (\ref{hipx}) for any point in $[0,1]$ and then we are done.  Now in the infinite countable case, for all $\gamma>0$ we get  the following by using (twice) proposition \ref{elchachi}:

\medskip

For $\widetilde d_{j+1}\leq n < d_{j+1}$  
\begin{align}
\frac{\diam(P(n+1,\pi(z)))}{\diam(P(n,\pi(z)))^{1+\gamma}}
&\asymp  
\frac{\lan(f^{\widetilde d_{j+1}}(P(n+1,\pi(z))))}
{\lan(f^{\widetilde d_{j+1}}(P(n,\pi(z)))) ^{1+\gamma}}
 \, \left (   \frac{\lan(f^{ \widetilde d_{j+1}}(P(n,\pi(z)))}{\lan(P(n,\pi(z))}\right )^{\gamma}
\notag \\
&\asymp 
\frac{\lan(f^{\widetilde d_{j+1}}(P(n+1,\pi(z))))}{\lan(f^{\widetilde d_{j+1}}(P(n,\pi(z))))^{1+\gamma}}  \,
 \, \left (   \frac{\lan(f^{ \widetilde d_{j+1}}(P( \widetilde d_{j+1},\pi(z)))}{\lan(P( \widetilde d_{j+1},\pi(z))}\right )^{\gamma} 
   \notag
\end{align}
Notice that  $f^{\widetilde d_{j+1}}(P(n,\pi(z))=P(n-\widetilde d_{j+1},x)$ for all  $\widetilde d_{j+1}\leq n \leq  d_{j+1}$,  since $\sigma^{\widetilde d_{j+1}}(C(n,z))=C(n-\widetilde d_{j+1},w)$ and $\pi(w)=x$, (see remark \ref{paradim}). Hence, by using
 (\ref{hipx}) we get  that
\begin{equation}\label{primera}
\frac{\diam(P(n+1,\pi(z)))}{\diam(P(n,\pi(z)))^{1+\gamma}}
\asymp
\frac{\lan(P(n+1-\widetilde d_{j+1},x))}{\lan(P(n-\widetilde d_{j+1},x))^{1+\gamma}}  \,
 \, \left (   \frac{\lan(P( 0,x)}{\lan(P( \widetilde d_{j+1},\pi(z))}\right )^{\gamma} 
 \geq c_1
\end{equation}
for some  constant $c_1>0$ (depending on $x$). 

\medskip

For $d_j\leq n<\widetilde d_{j+1}$, we get in a similar way that 
$$
\frac{\diam(P(n+1,\pi(z)))}{\diam(P(n,\pi(z)))^{1+\gamma}}
\asymp 
 \frac{\lan(f^{ \widetilde d_j}(P(n+1,\pi(z))))}{\lan(f^{\widetilde d_j}(P(n,\pi(z))))^{1+\gamma}} \, \left (   \frac{\lan(f^{ \widetilde d_j}(P( \widetilde d_j,\pi(z)))}{\lan(P( \widetilde d_j,\pi(z))}\right )^{\gamma} 
 \geq c_2\,  \frac{\lan(f^{ \widetilde d_j}(P(n+1,\pi(z))))}{\lan(f^{  \widetilde d_j}(P(n,\pi(z))))^{1+\gamma}} 
$$
for some constant $c_2>0$ (depending on $x$). We recall that 
 $\sigma^{ \widetilde d_{j}}(z)\in C(\ell_{d_{j}},w)\subset C(0,w)$ for all $z\in{\cal Z}$  with $\pi(w)=x$,  and therefore $f^{ \widetilde d_j}(P( \widetilde d_j,\pi(z))=P(0,x)$.
Moreover, by using that 
 $\mu$ is comparable to $\lan$ in the block $P(0,x)$ (and  $\sigma^{\widetilde d_j}(z)\in C(0,w)$) we  obtain from    remark \ref{paradim} that 
\begin{equation}\label{segunda}
\frac{\diam(P(n+1,\pi(z)))}{\diam(P(n,\pi(z)))^{1+\gamma}}\geq c_1\,  \frac{\lan(f^{ \widetilde d_j}(P(n+1,\pi(z))))}{\lan(f^{  \widetilde d_j}(P(n,\pi(z))))^{1+\gamma}}\asymp 
\frac{\widehat\mu(\sigma^{\widetilde d_j}(C(n+1,z)))}{\widehat\mu(\sigma^{\widetilde d_j}(C(n,z)))^{1+\gamma}}
\geq c
\end{equation}
Therefore, from (\ref{primera}), (\ref{segunda}) and  remark \ref{comparacioncongrid}  we have that $\Dim(\pi({\cal Z}))=\Dim_{\bf\Pi}(\pi({\cal Z}))\geq t$.

\end{proof}

If the potential $-t\log|{\widehat f^\prime} |$ is positive recurrent, then there exists the RPF measure $\mhat_t$  (see section \ref{WG}) which is exact; since we have BI property and (\ref{regularidadpotencial}) holds, the measure $\mhat_t$  is  local $\sigma$-Gibbs. Moreover,  if there exists an increasing  sequence $\{p_j \}\subset\N$  such that
\begin{equation}\label{recurrent}
 \#\{ P(0,f^{p_j}(x) )\}<\infty
\end{equation}
for $x$ the 
center of the target-block set (by Poincare recurrence theorem this happen $x$ $\mu$-a.e), then (\ref{localm}) holds and 
 the collection $\{C(0, \sigma^{p_j}(w)) : p_j\in{\cal I}(w) \}$
  is $\ep$-uniformly $\widehat{\bf m}$-good for all $\ep>0$ (since it is a finite collection).
Hence we have the following corollary:
  
\begin{corollary} For all $x\in X_{\bf\Pi}$ verifying  (\ref{hipx}) and  (\ref{recurrent}) 
$$
\Dim(W_f(P,\ell_n,x))  \ge T^- \quad \mbox{with}
$$  
$$T^-:=
\sup\{t\in(0,1]: \infty>P_G(-t\log|{\widehat f^\prime} |)>{\overline s}t\,,  -t\log|{\widehat f^\prime} | \mbox{ positive recurrent} \mbox{ and  }-t\log|{\widehat f^\prime} |\in L^1({\mhat}_t)\} \\
$$ 
Here ${\mhat}_t$ is the RPF measure for  the potential $ -t\log|{\widehat f^\prime} |$.
\end{corollary}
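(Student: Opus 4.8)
The plan is to deduce the corollary directly from Theorem \ref{markovT1}, the point being simply to produce, for every admissible parameter $t$, a concrete mixing local weak $\sigma$-Gibbs measure for the potential $-t\log|{\widehat f^\prime}|$ — namely its RPF measure — and to check the finiteness conditions that Theorem \ref{markovT1} demands. Fix $t\in(0,1]$ lying in the set defining $T^-$ in the statement, so that $\infty>P_G(-t\log|{\widehat f^\prime}|)>\overline s\,t$, the potential $-t\log|{\widehat f^\prime}|$ is positive recurrent, and $-t\log|{\widehat f^\prime}|\in L^1({\mhat}_t)$. First I would recall, from Sarig's generalized Ruelle--Perron--Frobenius theorem (section \ref{WG}), that positive recurrence together with finite Gurevich pressure and summable variations yields the RPF measure ${\mhat}_t$, which is $\sigma$-invariant and exact, hence ergodic and strongly mixing. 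Moreover, since $f$ (and therefore $\Sigma_A^{\cal I}$) has the BI property and, by (\ref{regularidadpotencial}), $V_n(-t\log|{\widehat f^\prime}|)=t\,V_n(-\log|{\widehat f^\prime}|)\le ct\,\theta^n$ — so $-t\log|{\widehat f^\prime}|$ is weakly H\"older continuous with $V_1<\infty$ — Proposition \ref{localGibb} would apply and show that ${\mhat}_t$ is a \emph{local} $\sigma$-Gibbs measure for $-t\log|{\widehat f^\prime}|$.

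Next I would extract from hypothesis (\ref{recurrent}) the recurrence sequence required by Theorem \ref{markovT1} together with the remaining finiteness conditions. Pick $w\in\Sigma_A^{\cal I}$ with $\pi(w)=x$. By (\ref{recurrent}) there is an increasing sequence $\{p_j\}\subset\N$ with $\#\{P(0,f^{p_j}(x))\}<\infty$; since $f\circ\pi=\pi\circ\sigma$, the point $f^{p_j}(x)=\pi(\sigma^{p_j}(w))$ lies in the initial block indexed by $w_{p_j}$, so that $\#\{w_{p_j}:j\in\N\}<\infty$. From this single fact three things follow. First, as $\widehat\mu$ is local $\sigma$-Gibbs, its localness constants $c_{\widehat\mu}(w_{p_j})$ are bounded (the supremum ranging over finitely many symbols), whence $w$ is a $\widehat\mu$-hitting point with sequence $\mathcal{I}(w)=\{p_j\}$. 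Second, the identical argument with ${\mhat}_t$ in place of $\widehat\mu$ gives $\sup_j c_{{\mhat}_t}(w_{p_j})<\infty$, which is exactly condition (\ref{localm}). Third, the collection $\mathcal{P}=\{C(0,\sigma^{p_j}(w)):p_j\in\mathcal{I}(w)\}$ consists of $0$-cylinders determined by the finitely many symbols $w_{p_j}$, hence is finite; since ${\mhat}_t$ is ergodic, Remark \ref{aproxfinito} then gives that $\mathcal{P}$ is $\ep$-uniformly ${\mhat}_t$-good for every $\ep>0$.

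Collecting this: for the fixed $t$, the measure $\mhat={\mhat}_t$ is a mixing local weak $\sigma$-Gibbs measure for $-t\log|{\widehat f^\prime}|$ satisfying (\ref{localm}), with $-t\log|{\widehat f^\prime}|\in L^1({\mhat}_t)$, and with $\mathcal{P}$ being $\ep$-uniformly ${\mhat}_t$-good for all small $\ep$; together with $P_G(-t\log|{\widehat f^\prime}|)>\overline s\,t$ and the standing hypothesis (\ref{hipx}) on $x$, these are precisely the conditions under which Theorem \ref{markovT1} places $t$ at or below its lower bound for $\Dim(W_f(P,\ell_n,x))$. Hence $t\le\Dim(W_f(P,\ell_n,x))$ for every admissible $t$, and taking the supremum over all such $t$ yields $\Dim(W_f(P,\ell_n,x))\ge T^-$. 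If instead $\overline s=\infty$, so that (\ref{hipx2}) fails, the defining set is empty, $T^-=0$ by the convention on suprema, and the inequality is trivial.

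The argument is essentially bookkeeping and I do not anticipate a genuine obstacle; the point that demands care is that all three finiteness requirements of Theorem \ref{markovT1} — that $w$ be a hitting point, that (\ref{localm}) hold for ${\mhat}_t$, and that the $0$-cylinder collection along the orbit of $x$ be $\ep$-uniformly ${\mhat}_t$-good — must be read off the \emph{same} recurrence sequence furnished by (\ref{recurrent}), and that Proposition \ref{localGibb} is invoked for the rescaled potential $-t\log|{\widehat f^\prime}|$, not for $-\log|{\widehat f^\prime}|$ itself, using the scaling of variations recorded in (\ref{regularidadpotencial}).
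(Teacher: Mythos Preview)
Your proposal is correct and follows essentially the same route as the paper: the paragraph preceding the corollary serves as its proof, observing that for positive recurrent $-t\log|\widehat f'|$ the RPF measure $\mhat_t$ is exact and, by Proposition \ref{localGibb} together with the BI property and (\ref{regularidadpotencial}), local $\sigma$-Gibbs; and that the finiteness hypothesis (\ref{recurrent}) simultaneously makes $w$ a hitting point, forces (\ref{localm}), and renders the finite collection of $0$-cylinders $\ep$-uniformly $\mhat_t$-good, so Theorem \ref{markovT1} applies. Your write-up is somewhat more explicit (noting the scaling $V_n(-t\log|\widehat f'|)=t\,V_n(-\log|\widehat f'|)$, citing Remark \ref{aproxfinito} by name, and disposing of the degenerate case $\overline s=\infty$), but the argument is the same.
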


\begin{remark} If $\overline s=0$, then $\Dim(W_f(P,\ell_n,x)) =1$ 
\end{remark}

Sarig proved 
that if the  BIP property holds, then any  potential  $\phi$ with  the Walters condition  and $V_1(\phi)<\infty$ and $P_G(\phi) <\infty$ is positive recurrent. And moreover, the RPF measure is a $\sigma$-Gibbs measure which is exponentially continued fraction mixing. So the collection of all $0$-cylinders is $\ep$-uniformly $\widehat{\bf m}$-good for all $\ep>0$ (see section 3.3). Therefore

\begin{corollary} \label{BIPmenos} If  $\Sigma_A^{\cal I}$ satisfies the BIP property, then for all $x\in X_{\bf\Pi}$ verifying  (\ref{hipx}) 
$$
\Dim(W_f(P,\ell_n,x))  \ge T^-
$$  with
$$ \quad T^-:=
\sup\{t\in(0,1]: \infty>P_G(-t\log|{\widehat f^\prime} |)>{\overline s}t\,,  \mbox{ and  }-t\log|{\widehat f^\prime} |\in L^1({\mhat}_t)\} \\
$$ 
Here ${\mhat}_t$ is the RPF measure for the potential $-t\log|{\widehat f^\prime} |$.
\end{corollary}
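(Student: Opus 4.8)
The plan is to obtain the bound as a direct instance of Theorem \ref{markovT1}: I would show that, once the BIP property is assumed, every $t\in(0,1]$ admitted in the definition of $T^-$ — that is, with $\infty>P_G(-t\log|\widehat f'|)>\overline s\,t$ and $-t\log|\widehat f'|\in L^1(\mhat_t)$ — automatically satisfies the remaining hypotheses of that theorem, so that $\Dim(W_f(P,\ell_n,x))\ge t$ for every such $t$ and hence $\Dim(W_f(P,\ell_n,x))\ge T^-$. First I would dispose of two easy reductions, for $x$ satisfying the standing assumption (\ref{hipx}). Since BIP forces $\widehat\mu=\mu\circ\pi$ to be a \emph{non-local} $\sigma$-Gibbs measure, Remark \ref{R} shows that every $x\in X_{\bf\Pi}$ is a $\mu$-hitting point, and picking $w$ with $\pi(w)=x$ one may take ${\cal I}(w)=\N$; consequently the collection $\{C(0,\sigma^{p_j}(w)):p_j\in{\cal I}(w)\}$ is just a subcollection of the family of all $0$-cylinders. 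Moreover, condition (\ref{localm}) is vacuous for any $\sigma$-Gibbs measure, its localness constants being uniformly bounded.

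Next, for a fixed admissible $t$ I would set $\phi_t:=-t\log|\widehat f'|$ and check the regularity and recurrence inputs. By (\ref{regularidadpotencial}) the potential $\phi_t$ is weakly Hölder continuous, hence has summable variations, satisfies the Walters condition and $V_1(\phi_t)<\infty$; together with the BIP property and $P_G(\phi_t)<\infty$, Sarig's results (quoted in Section \ref{TForm} and in the discussion preceding the corollary) give that $\phi_t$ is positive recurrent and that its RPF measure $\mhat_t$ is a $\sigma$-Gibbs measure with $P=P_G(\phi_t)$; and, arguing as in the proof of Proposition \ref{localGibb} (Sarig's first-return induced map together with the Aaronson--Denker--Urbanski/Schweiger estimates), $\mhat_t$ is exponentially continued fraction mixing, in particular mixing. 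Thus $\mhat_t$ is a mixing local weak $\sigma$-Gibbs measure for $\phi_t$ satisfying (\ref{localm}), and $\phi_t\in L^1(\mhat_t)$ is precisely the standing hypothesis. Finally I would invoke Remark \ref{fracmixing} to turn exponential continued fraction mixing into a summable uniform rate of mixing of order $3$ for $\mhat_t$ in the collection ${\cal P}$ of all $0$-cylinders, and then Theorem \ref{mejoregorof} with Corollary \ref{comogauss} to conclude that ${\cal P}$ — hence its subcollection $\{C(0,\sigma^{p_j}(w))\}$ — is $\ep$-uniformly $\mhat_t$-good for every small $\ep>0$. With all hypotheses of Theorem \ref{markovT1} verified, it gives $\Dim(W_f(P,\ell_n,x))\ge t$, and a supremum over the admissible $t$ finishes the argument.

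The step I expect to be the main obstacle is the very last one: Theorem \ref{mejoregorof} is stated under the assumption $\phi_t\in L^2(\mhat_t)$, whereas the corollary only postulates $\phi_t\in L^1(\mhat_t)$. Since $\phi_t$ is bounded above (by property (c), $|f'|\ge\sigma>0$) but $\log|\widehat f'|$ can be unbounded along the countably many branches, $L^1$ does not by itself imply $L^2$. So a rigorous write-up would have to either add $\phi_t\in L^2(\mhat_t)$ as a hypothesis, or check it in the concrete examples of Section 6.3 — where the rapid (e.g.\ polynomial) decay of the Gibbs tails $\mhat_t(C_i)$ makes $\phi_t\in L^2(\mhat_t)$ a consequence of $\phi_t\in L^1(\mhat_t)$ — or adapt the variance estimate (\ref{variance}) in the proof of Theorem \ref{mejoregorof} so as to exploit the exponential decay $\Psi(\ell)=\theta^\ell$ and relax the moment requirement. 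The remainder is a routine assembly of results already available in the paper.
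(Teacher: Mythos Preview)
Your proposal follows essentially the same route as the paper. The paper's own argument is the short paragraph immediately preceding the corollary: under BIP, Sarig's results give that $-t\log|\widehat f'|$ (which is weakly H\"older with $V_1<\infty$) is positive recurrent whenever $P_G(-t\log|\widehat f'|)<\infty$, the RPF measure $\mhat_t$ is $\sigma$-Gibbs and exponentially continued fraction mixing, hence by Section~\ref{SUM} the full collection of $0$-cylinders is $\ep$-uniformly $\mhat_t$-good; one then feeds this into Theorem~\ref{markovT1}. Your write-up makes the same chain explicit, including the vacuity of (\ref{localm}) and the $\mu$-hitting property for all $x$ once the measure is genuinely (non-local) Gibbs.

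Your flagged concern about $\phi_t\in L^2(\mhat_t)$ versus $L^1(\mhat_t)$ is a fair reading of the hypotheses of Theorem~\ref{mejoregorof}, and the paper does not address it explicitly either --- it simply refers to Section~\ref{SUM}. So this is not a divergence between your argument and the paper's; it is a point on which the paper is equally terse.
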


Since $\sup -t\log|{\widehat f^\prime} |<\infty$ (due to the property (c) of the Markov transformation), by Sarig results we also know that, under BIP property, if  the  RPF measure ${\mhat}_t$ has finite entropy, then $-t\log|{\widehat f^\prime} |\in L^1({\mhat}_t)$ and 
${\mhat}_t$ is an equilibrium measure.
Notice that  if $-\sum_{i\in{\cal I}}{\mhat}_t(C_i)\log {\mhat}_t(C_i)<\infty$, then ${\mhat}_t$ has finite entropy, but 
$$
-\sum_{i\in{\cal I}}{\mhat}_t(C_i)\log {\mhat}_t(C_i)\asymp -t\sum_{i\in{\cal I}}\widehat\mu(C_i)^t\log \widehat\mu(C_i)
$$
Hence, from   proposition \ref{arriba} and corollary \ref{BIPmenos} (see also remark \ref{igualdad}) follows that:

\begin{corollary} \label{paraBIP0}If  $\Sigma_A^{\cal I}$ satisfies the BIP property,  
$
s=\lim_{n\to\infty} -[\log{\lan(P(\ell_n,x))}]/n<\infty, 
$
and 
$$
\sum_{i\in{\cal I}}\widehat\mu(C_i)^{t_1}\log \widehat\mu(C_i)<\infty \quad \mbox{ and } \quad { \infty>} P_G(-t_1\log|{\widehat f^\prime} |)>st_1 \quad   \mbox{ for some }  0<t_1\leq 1
$$
 then  for all $x\in X_{\bf\Pi}$ verifying  (\ref{hipx}) 
$$
\Dim(W_f(P,\ell_n,x))=\sup\{t\geq t_1: P_G(-t\log|{\widehat f^\prime} |)>st\}= \inf\{t>0: \, P_G(-t\log|\widehat f^\prime|)<s t\}=T
$$  
with $T$ such that  $P_G(-T\log|{\widehat f^\prime} |)=sT$
\end{corollary}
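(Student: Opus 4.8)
The plan is to sandwich $\Dim(W_f(P,\ell_n,x))$ between the upper bound of Proposition \ref{arriba} and the lower bound of Corollary \ref{BIPmenos}, and then to use convexity, lower semicontinuity of the Gurevich pressure, and Sarig's analyticity result to identify both bounds with the root $T$ of $P_G(-t\log|\widehat f'|)=st$. Throughout I would write $G(t):=P_G(-t\log|\widehat f'|)$; since $P_G(-\log|\widehat f'|)=0$ this is the function $t\mapsto P_G(t\phi)-tP_G(\phi)$ with $\phi=-\log|\widehat f'|$, so $G$ is convex on $[0,1]$, $G\ge 0$, $G(1)=0$, and hence $G$ is non-increasing. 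Because the limit $s$ exists we have $\unders=\overs=s$ in Proposition \ref{arriba} and Corollary \ref{BIPmenos}. If $s=0$ the assertion reduces to $\Dim(W_f(P,\ell_n,x))=1$ by the remark following Corollary \ref{BIPmenos} (and $T=1$), so I would assume $s>0$. Proposition \ref{arriba} then already gives $\Dim(W_f(P,\ell_n,x))\le T^+:=\inf\{t>0:G(t)<st\}$.

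For the lower bound I would first remove the integrability constraint appearing in Corollary \ref{BIPmenos}. Property (c) of a Markov transformation gives $|f'|\ge\sigma>0$, so $\sup(-t\log|\widehat f'|)<\infty$ for every $t>0$; by (\ref{regularidadpotencial}) the potential $-\log|\widehat f'|$ is weakly H\"older continuous, hence satisfies the Walters condition with $V_1<\infty$; and since $G(t_1)<\infty$, convexity gives $G(t)\le\frac{1-t}{1-t_1}G(t_1)<\infty$ for all $t\in[t_1,1]$. Thus, for each such $t$, the BIP property makes $-t\log|\widehat f'|$ positive recurrent and its RPF measure $\mhat_t$ a $\sigma$-Gibbs measure with $\mhat_t(C_i)\asymp\widehat\mu(C_i)^t$; since $0<\widehat\mu(C_i)<1$ and $t\ge t_1$,
$$-\sum_{i\in{\cal I}}\mhat_t(C_i)\log\mhat_t(C_i)\;\asymp\;-t\sum_{i\in{\cal I}}\widehat\mu(C_i)^t\log\widehat\mu(C_i)\;\le\;-t\sum_{i\in{\cal I}}\widehat\mu(C_i)^{t_1}\log\widehat\mu(C_i)\;<\;\infty$$
by hypothesis, so $\mhat_t$ has finite entropy; by Sarig's results (Section \ref{TForm}) this forces $-t\log|\widehat f'|\in L^1(\mhat_t)$ and makes $\mhat_t$ an equilibrium measure. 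Hence the $L^1$ clause in Corollary \ref{BIPmenos} is automatic on $[t_1,1]$, and that corollary gives $\Dim(W_f(P,\ell_n,x))\ge T^-:=\sup\{t\in[t_1,1]:\infty>G(t)>st\}$.

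It then remains to check $T^-=T^+=T$. Sarig's analyticity theorem (Remark \ref{igualdad}), applicable because $-\log|\widehat f'|$ is weakly H\"older continuous and $P_G(-t_1\log|\widehat f'|)<\infty$, shows that $G$ is real analytic, in particular continuous, on $(t_1,1]$. At the left endpoint, the variational principle for the Gurevich pressure exhibits $G(t)$ as a supremum of affine functions of $t$, so $G$ is lower semicontinuous, whence $G(t_1)\le\lim_{t\to t_1^+}G(t)$; combined with the convexity bound $\lim_{t\to t_1^+}G(t)\le G(t_1)$ this gives right-continuity of $G$ at $t_1$. Therefore $G$ is continuous and non-increasing on $[t_1,1]$, so $t\mapsto G(t)-st$ is strictly decreasing there, equal to $G(t_1)-st_1>0$ at $t_1$ and to $-s<0$ at $1$; it has a unique zero $T\in(t_1,1)$, i.e. $P_G(-T\log|\widehat f'|)=sT$, with $G(t)>st$ on $[t_1,T)$ and $G(t)<st$ on $(T,1]$. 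Thus $T^-=T=T^+$ (consistent with $T^-\le T^+$ from Remark \ref{cotasbien}), and the two bounds collapse to $\Dim(W_f(P,\ell_n,x))=T$, which is the assertion (the displayed middle expression $\inf\{t>0:P_G(-t\log|\widehat f'|)<st\}$ being exactly $T^+$).

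The main obstacle I anticipate is the last paragraph, and specifically the behaviour of $G$ at $t_1$: since $P_G(0)$ is the Gurevich entropy of $\sigma$, which may be $+\infty$ for a countable alphabet, the point $t_1$ can sit at the left edge of the finiteness interval of $G$, where a convex function is free to jump downward --- which would decouple $T^-$ from $T^+$ and destroy the clean identification with a root of the pressure equation. The remedy is precisely the interplay between Sarig's real analyticity just to the right of $t_1$ and lower semicontinuity of the pressure at $t_1$, which together pin $G$ down to be genuinely continuous on all of $[t_1,1]$.
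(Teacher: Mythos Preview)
Your proof is correct and follows exactly the route the paper indicates---combining Proposition~\ref{arriba}, Corollary~\ref{BIPmenos}, the entropy discussion preceding the corollary, and Remark~\ref{igualdad}. Your treatment of right-continuity of $G$ at $t_1$ via lower semicontinuity (from the variational principle) together with the convexity bound is a genuine detail that the paper glosses over, and it is needed to guarantee that the pressure equation actually has a root; otherwise the argument is the paper's own, just written out in full.
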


As in the proof of theorem \ref{markovT1}, but  using  now theorem \ref{cantorsimbolico} and remark \ref{paradimsin}, we  also get the following  lower bound from the Hausdorff dimension

\begin{theorem}\label{markovT2} 
If  $\int \log|f^\prime|\, d{{\mu}} <\infty$, then
$$
\Dim(W_f(P,\ell_n,x))  \ge 
\frac{\int \log|f^\prime|\, d{{\mu}}}{\int \log|f^\prime|\, d{{\mu} }+\overs} \geq \frac {h_\mu}{h_\mu+\overs}\,.
$$
for  all  $\widehat\mu$-hitting point  $x$ verifying (\ref{hipx}) and such that  the collection $\{C(0, \sigma^{p_j}(w)) : p_j\in{\cal I}(w) \}$
  is $\ep$-uniformly $\widehat\mu$-good for all $\ep>0$ small enough.
\end{theorem}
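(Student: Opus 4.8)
The plan is to deduce the theorem from the symbolic constructions of Section \ref{balltarget}, in complete analogy with the way Theorem \ref{markovT1} was derived from Theorem \ref{cantorsimbolicoformalismo} and Remark \ref{paradim}, but now using the ``without formalism'' versions Theorem \ref{cantorsimbolico}, Remark \ref{paradimsin} and Corollary \ref{dimpicantorsimbolico}. First I would set $\phi=-\log|\widehat f'|$ and $\widehat\mu=\mu\circ\pi$. By Proposition \ref{localGibb} and the discussion preceding it, $\widehat\mu$ is a mixing (in fact exact) local $\sigma$-Gibbs measure for $\phi$ with $P_G(\phi)=0$, and (\ref{regularidadpotencial}) gives $\sum_{n\ge1}V_n(\phi)<\infty$ and $V_1(\phi)<\infty$. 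Since $|f'|\ge\sigma>0$, the potential $\phi$ is bounded above and $\log|f'|$ is bounded below, so the hypothesis $\int\log|f'|\,d\mu<\infty$ forces $\phi\in L^1(\widehat\mu)$; moreover $P_G(\phi)-\int\phi\,d\widehat\mu=\int\log|\widehat f'|\,d\widehat\mu=\int\log|f'|\,d\mu$, which is strictly positive by property (d) of the Markov transformation. I would then fix a $\widehat\mu$-hitting point $w$ with $\pi(w)=x$ and sequence ${\cal I}(w)=\{p_j\}$, and the cylinder $\widehat P$ with $\pi(\widehat P)=P$; because $\mu\asymp\lan$ on the fixed initial block containing $P$, the number $\overs$ of (\ref{hipx2}) equals $\limsup_n \frac1n\log \frac1{\widehat\mu(C(\ell_n,w))}$, and it is finite by assumption.

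Next, for each small $\ep>0$ with $P_G(\phi)-\int\phi\,d\widehat\mu>\ep$, I would apply Theorem \ref{cantorsimbolico}: all its hypotheses hold here (in particular the collection ${\cal P}=\{C(0,\sigma^{p_j}(w))\}$ is $\ep$-uniformly $\widehat\mu$-good, which is part of the present assumptions), so one obtains a $(\widetilde{\cal J}_j,{\cal J}_j)$-pattern set ${\cal Z}_\ep\subset W_\sigma(\widehat P,\ell_n,w)$ satisfying properties (i)--(v) there, and, by Corollary \ref{dimpicantorsimbolico}, $\Dim_{\widehat\mu}({\cal Z}_\ep)\ge \frac{P_G(\phi)-\int\phi\,d\widehat\mu-2\ep}{P_G(\phi)-\int\phi\,d\widehat\mu+\overs+4\ep}$. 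Using Remark \ref{paradimsin} I would in addition arrange that, with $\gamma(\ep):=\frac{2\ep}{P_G(\phi)-\int\phi\,d\widehat\mu-\ep}$, there is $c>0$ with $\widehat\mu(\sigma^{d_j}(C(n+1,z)))\ge c\,\widehat\mu(\sigma^{d_j}(C(n,z)))^{1+\gamma(\ep)}$ for all $z\in{\cal Z}_\ep$ and $d_j\le n<\widetilde d_{j+1}$, while $\sigma^{\widetilde d_{j+1}}(C(n,z))=C(n-\widetilde d_{j+1},w)$ for $\widetilde d_{j+1}\le n\le d_{j+1}$.

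Then I would descend to the interval: $f\circ\pi=\pi\circ\sigma$ gives $\pi({\cal Z}_\ep)\subset W_f(P,\ell_n,x)$, and since ${\cal Z}_\ep$ lies in the single $0$-cylinder $J_0$ the set $\pi({\cal Z}_\ep)$ meets only one block of ${\cal P}_0$, so (\ref{ngrande}) holds; by Lemma \ref{gridmu}, $\Dim_{\Pibf}(\pi({\cal Z}_\ep))=\Dim_{\widehat\mu}({\cal Z}_\ep)$. The core computation, exactly as in the proof of Theorem \ref{markovT1}, is to convert the two symbolic diameter-ratio bounds into the Euclidean condition (\ref{tau}): via the bounded distortion property (Proposition \ref{elchachi}) and $\mu\asymp\lan$ on initial blocks, iterating by $f^{\widetilde d_{j+1}}$ on the range $\widetilde d_{j+1}\le n\le d_{j+1}$ (where $f^{\widetilde d_{j+1}}(P(n,\pi(z)))=P(n-\widetilde d_{j+1},x)$, so (\ref{hipx}) applies) and by $f^{d_j}$ on the range $d_j\le n<\widetilde d_{j+1}$ (where $f^{d_j}(P(d_j,\pi(z)))$ is an initial block, so the extra property of ${\cal Z}_\ep$ applies, the auxiliary distortion factors being $\ge1$), one gets $C>0$ with $\diam(P(n-1,\pi(z)))^{1+\gamma(\ep)}/\diam(P(n,\pi(z)))\le C$ for $z\in{\cal Z}_\ep$ and $n$ large. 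Proposition \ref{parareg1} (with $\gamma=\gamma(\ep)$, $E=\pi({\cal Z}_\ep)$) then yields $\Dim(\pi({\cal Z}_\ep))\ge\Dim_{\Pibf}(\pi({\cal Z}_\ep))-\gamma(\ep)=\Dim_{\widehat\mu}({\cal Z}_\ep)-\gamma(\ep)$; combining with the lower bound for $\Dim_{\widehat\mu}({\cal Z}_\ep)$ and letting $\ep\to0$ (so $\gamma(\ep)\to0$) gives $\Dim(W_f(P,\ell_n,x))\ge\frac{P_G(\phi)-\int\phi\,d\widehat\mu}{P_G(\phi)-\int\phi\,d\widehat\mu+\overs}=\frac{\int\log|f'|\,d\mu}{\int\log|f'|\,d\mu+\overs}$. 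The last inequality $\ge h_\mu/(h_\mu+\overs)$ follows since, by the variational principle, $P_G(\phi)-\int\phi\,d\widehat\mu\ge h_{\widehat\mu}=h_\mu$, and $t\mapsto t/(t+\overs)$ is increasing for $t>0$.

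I expect the main obstacle to be precisely the distortion translation in the third step: keeping track of which forward iterate of $f$ is applied on which range of block-indices so that Proposition \ref{elchachi} is legitimately applicable and the auxiliary distortion factors come out harmless, and handling the fact that for a fixed $\ep$ one only controls (\ref{tau}) for the single exponent $\gamma(\ep)>0$ — so the clean equality $\Dim=\Dim_{\Pibf}$ is not available for a single ${\cal Z}_\ep$ and one must instead use the quantitative loss in Proposition \ref{parareg1}, recovering the sharp bound only in the limit $\ep\to0$.
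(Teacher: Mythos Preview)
Your proposal is correct and follows exactly the approach indicated in the paper, which simply says ``As in the proof of Theorem~\ref{markovT1}, but using now Theorem~\ref{cantorsimbolico} and Remark~\ref{paradimsin}''. You are in fact more careful than the paper's own argument for Theorem~\ref{markovT1}: you correctly observe that for a fixed $\ep$ the diameter-ratio control (\ref{tau}) is only available for a single $\gamma=\gamma(\ep)>0$, so one must invoke Proposition~\ref{parareg1} quantitatively and pass to the limit $\ep\to0$, rather than appealing directly to Remark~\ref{comparacioncongrid}.
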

Recall that  conditions on $x$ in the above theorem  hold $\lan$-a.e.,   and if BIP holds then any collection of $0$-cylinders is  $\ep$-uniformly $\widehat\mu$-good.

Notice that by Sarig results we know that the  variational principle holds for  $-\log|{\widehat f^\prime} |$ because it  has summable variations and $\sup -\log|{\widehat f^\prime} |<\infty$. Moreover,  If $-\sum_{i\in{\cal I}}\mu(P^0_i)\log \mu(P^0_i)<\infty$, then  $h_\mu= \int \log|f^\prime|\, d{{\mu}} <\infty$ and 
$\widehat\mu$ is an equilibrium measure.

\subsection {Target-ball sets  for Markov transformations}\label{ball}

 Let $x\in X_{\bf\Pi}$  be a $\mu$-hitting point verifying  (\ref{hipx}), $P$ a block  in ${\cal P}_N$, and  $\{r_n\}$ be a decreasing sequence of positive numbers. We define 
$$
\widetilde W_f(P,r_n,x)= \{y\in P: \ |f^n(y)-x|\le r_n \ \hbox{for infinitely many $n$}\}.
$$
Recall that from Borel-Cantelli lemma  and 
 theorem 4.1 in \cite{FMP2}, we have the following dichotomy :
\begin{equation} \label{BC+}
\begin{aligned}
\quad \mbox{If} \qquad &\sum_nr_n<\infty &\implies 
\liminf_{n\to\infty} \frac {|f^n(y)- x|}{r_n} = \infty
 \quad \mbox { for } \lan-a.e.\;  y \in [0,1]
\\
\quad \mbox{If} \qquad &\sum_nr_n^{\alpha}=\infty \mbox{ for some } \alpha>1 &\implies 
\liminf_{n\to\infty} \frac {|f^n(y)- x|}{r_n} = 0
 \quad \,\mbox { for } \lan-a.e.\;  y \in [0,1]
 \end{aligned}
\end{equation}
If  condition  (\ref{hipx}) holds for $\gamma=0$, then we can take $\alpha=1$ in the second implication.
\medskip

Notice that
\begin{equation}\label{liminf=0}
\widetilde W_f(P,r_n/n,x)\subset \left\{y\in P: \liminf_{n\to\infty} \frac {|f^n(y)- x|}{r_n} = 0\right\}\subset \widetilde W_f(P,r_n,x)
\end{equation}
and our interest is the case $\sum_n r_n<\infty$.
We will assume that 
 $
u:=\lim_{n\to\infty}-[ \log{r_n}]/n<\infty . 
 $

For  $x\not\in\partial{\cal P}_0$ 
let $I_n:=[x-r_n,x+r_n]$; if $x\in\partial{\cal P}_0$ then $I_n$ is half interval, i.e. an interval  the length $r_n$ and $x$ as one of the boundary points, for example if $x=0$, $I_n:=[0,r_n]$ and if $x=1$, $I_n:=[1-r_n,1]$. For all $n$ large enough $I_n\subset P(0,x)\cup\{x\}$, and
therefore  $f^{-n}(I_n)$ is a disjoint union of the intervals $\mbox{cl}(f^{-n}(I_n)\cap P(n,y))$ with $f^n(y)=x$. For each $N$ large we have the following covering of $\widetilde W_f(P,r_n,x)$
$$
\bigcup_{n=N}^\infty \{\mbox{cl}(f^{-n}(I_n)\cap P(n,y)): \ f^n(y)=x, \quad y\in P\}\,.
$$
But we have (see e.g. proposition 3.1 in \cite{FMP2})
$$
\diam(f^{-n}(I_n)\cap P(n,y))=\lan(f^{-n}(I_n)\cap P(n,y))\leq C\, r_n\lan(P(n,y))
$$
Hence, proceeding as in proposition \ref{arriba} we get that
$$
\Dim(\widetilde W_f(P,r_n,x))  \le \inf\{t>0: \, P_G(-t\log|\widehat f^\prime|)<u\, t\}
$$

On the other hand, to look for  lower bounds for the  dimension of this set we should notice the following: Let us define $s_k:=\text{diam\,}(P(p_k,x))$ with ${\cal I}(x)=\{p_i\}$ 
the increasing sequence of the $\mu$-hitting point $x$, and for each $s_k$ let $n(k)$ be  the greatest natural number such that $s_k\le r_{n(k)}$. We denote by  $\widetilde {\cal D}$  this sequence of indexes $\{n(k): \; k\in\N\}$. Since $s_k \to 0$ as $k\to\infty$,  and $r_n\to 0$ as $n\to\infty$ by hypothesis, we have that $n(k)\to\infty$ as $k\to\infty$.
We will write $\widetilde{{\cal D}}=\{\widetilde d_i\}$ with $\widetilde d_i< \widetilde d_{i+1}$ for all $i$. Therefore, we have that if $\widetilde d\in \widetilde {\cal D}$ then there exists $\ell(\widetilde d)\in {\cal I}(x)$ (maybe there is more than one then we choose one) such that
$$
r_{\widetilde d+1} < \text{diam\,} (P(\ell(\widetilde d),x)) \le r_{\widetilde d }\qquad \text{and} \qquad
P(\ell(d),x) \subset I_{\widetilde d}
$$
We define the sequence $\{\ell_n\}$ by $\ell_n:= \ell(\widetilde d_i)$ for $\widetilde d_i\leq n< \widetilde d_{i+1}$.  Since $r_n\to 0$  as $n\to\infty$ we have that $\ell_n\to\infty$ as $n\to\infty$. Notice that
$$
 V:= \{y\in P: \ f^{\widetilde d_i}(y)\in P(\ell_{\widetilde d_i},x)\ \hbox{for infinitely many $i$}\} \subset  \widetilde W_f(P,r_n,x) \,,
 $$
 the sequence $\{a_n:= -[\log \lan(P(\ell_n,x))]/n\}$ is strictly decreasing for $\widetilde d_i\leq n<\widetilde d_{i+1}$,  and so
$$
\limsup_{n\to\infty} \frac 1n \log \frac 1{\lan(P(\ell_n,x))}= \limsup_{i\to\infty} \frac 1{\widetilde d_i} \log \frac 1{\lan(P(\ell(\widetilde d_i),x))}=\lim_{n\to\infty}\frac 1n \log\frac{1}{r_n}:=u
$$
We recall that the lower bounds of the Hausdorff dimension in previous section  came from the $\widehat\mu$-dimension of the $(\widetilde{\cal  J}_j,{\cal J}_j)$ pattern set $\cal Z$ constructed in theorems \ref{cantorsimbolicoformalismo} and \ref{cantorsimbolico}. Moreover, it is clear from the proofs of the mentioned theorems that we can construct these pattern sets with the property that $\widetilde d_j\in\cal D$, and therefore $\cal Z\subset V$ and  again $\Dim_{\widehat\mu}(\cal Z)$ give us the lower bounds for the Hausdorff dimension of $\widetilde W_f(P,r_n,x) $. More precisely, we have similar lower bounds for $\Dim (\widetilde W_f(P,r_d,x))$ as  the ones in section  \ref{ShrinkingM} but with $\overline s=u$.
In particular, we get the following corollary

\begin{corollary}\label{anterior}
 If (\ref{hipx}) and  the BIP property holds and 
there exists $0<t_1\leq 1$ such that 
$$
-\sum_{i\in{\cal I}}\widehat\mu(C_i)^{t_1}\log \widehat\mu(C_i)<\infty \qquad \mbox{ and } \qquad { \infty >}P_G(-t_1\log|{\widehat f^\prime} |)>{ u}t_1,
$$
then 
\begin{align}
\Dim(\widetilde W_f(P,r_n,x))&=\sup\{t\in[t_1,1]: P_G(-t\log|{\widehat f^\prime} |)>ut\}= \inf\{t\in(0,1]: \, P_G(-t\log|\widehat f^\prime|)<u t\} \notag \\ 
&=T\geq 
 \frac{h_{\mu}}{h_{\mu}+u} \notag
\end{align}
with  $T$ such that $P_G(-T\log|{\widehat f^\prime} |)=uT$ 
\end{corollary}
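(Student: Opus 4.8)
The plan is to deduce Corollary \ref{anterior} from the lower bounds obtained for the target-\emph{block} set in Section \ref{ShrinkingM} together with the upper bound already established a few lines above for $\Dim(\widetilde W_f(P,r_n,x))$, and from the reduction of the target-ball problem to the target-block problem carried out in the preceding discussion. First I would recall the chain of inclusions that reduces the ball problem to a block problem: with $\{\ell_n\}$ defined by $\ell_n:=\ell(\widetilde d_i)$ for $\widetilde d_i\le n<\widetilde d_{i+1}$ as in the paragraph above, we have $V=\{y\in P:\ f^{\widetilde d_i}(y)\in P(\ell_{\widetilde d_i},x)\ \text{i.o.}\}\subset \widetilde W_f(P,r_n,x)$ and the key identity $\limsup_{n\to\infty}\frac1n\log\frac1{\lan(P(\ell_n,x))}=u$. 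So the relevant quantity $\overline s$ from Section \ref{ShrinkingM} equals $u$ in this setting.

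Next, I would invoke the construction behind Theorems \ref{cantorsimbolicoformalismo} and \ref{cantorsimbolico}: as remarked, the pattern set $\cal Z$ can be built with $\widetilde d_j\in\cal D$, so that $\pi(\cal Z)\subset V\subset\widetilde W_f(P,r_n,x)$, and then Lemma \ref{gridmu} together with Remark \ref{comparacioncongrid} (whose hypothesis (\ref{gridvsH})/(\ref{hipx}) is precisely what is being assumed, via the argument in the proof of Theorem \ref{markovT1}) gives $\Dim(\widetilde W_f(P,r_n,x))\ge\Dim_{\widehat\mu}(\cal Z)$. Under the BIP property the potential $-\log|{\widehat f^\prime}|$ has summable variations with $\sup(-\log|{\widehat f^\prime}|)<\infty$ and $P_G(-\log|{\widehat f^\prime}|)=0$; Sarig's results give that $-t\log|{\widehat f^\prime}|$ is positive recurrent and its RPF measure $\mhat_t$ is a $\sigma$-Gibbs measure, exponentially continued fraction mixing, so by Section 3.3 every collection of $0$-cylinders is $\ep$-uniformly $\widehat{\bf m}$-good; moreover $\mhat_t$ is local $\sigma$-Gibbs by Proposition \ref{localGibb} and condition (\ref{localm}) is automatic since the measures are not local. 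The hypothesis $-\sum_i\widehat\mu(C_i)^{t_1}\log\widehat\mu(C_i)<\infty$ is, up to the comparability $-\sum_i\mhat_{t_1}(C_i)\log\mhat_{t_1}(C_i)\asymp -t_1\sum_i\widehat\mu(C_i)^{t_1}\log\widehat\mu(C_i)$, exactly the statement that $\mhat_{t_1}$ has finite entropy, hence (Sarig) $-t_1\log|{\widehat f^\prime}|\in L^1(\mhat_{t_1})$ and $\mhat_{t_1}$ is an equilibrium measure. Therefore the analogue of Corollary \ref{BIPmenos} with $\overline s=u$ applies and yields $\Dim(\widetilde W_f(P,r_n,x))\ge\sup\{t\in[t_1,1]:P_G(-t\log|{\widehat f^\prime}|)>ut\}$.

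For the matching upper bound I would use the covering estimate derived just above the corollary: $\diam(f^{-n}(I_n)\cap P(n,y))=\lan(f^{-n}(I_n)\cap P(n,y))\le C\,r_n\lan(P(n,y))$, which upon running the argument of Proposition \ref{arriba} gives $\Dim(\widetilde W_f(P,r_n,x))\le\inf\{t>0:P_G(-t\log|{\widehat f^\prime}|)<ut\}$. Finally, the convexity of $G(t):=P_G(-t\log|{\widehat f^\prime}|)=P_G(-t\log|{\widehat f^\prime}|)-tP_G(-\log|{\widehat f^\prime}|)$ together with $G(1)=0$ and $G(t_1)>0$ (Remark \ref{igualdad}) forces $G$ to be strictly decreasing on $[t_1,1]$, and real-analyticity of $t\mapsto P_G(-t\log|{\widehat f^\prime}|)$ for $t>t_1$ (Sarig, under BIP and weak H\"older, which holds by (\ref{regularidadpotencial})) shows the sup and the inf coincide with the unique root $T$ of $P_G(-T\log|{\widehat f^\prime}|)=uT$; the bound $T\ge h_\mu/(h_\mu+u)$ comes from $P_G(-t\log|{\widehat f^\prime}|)\ge h_\mu - t\int\log|f^\prime|\,d\mu$ evaluated appropriately, i.e.\ from the variational principle and $h_\mu=\int\log|f^\prime|\,d\mu$ (valid when the entropy is finite, which follows from the standing hypothesis). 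The main obstacle I anticipate is bookkeeping: making sure that the pattern set $\cal Z$ of Theorems \ref{cantorsimbolicoformalismo}/\ref{cantorsimbolico} can genuinely be forced to have its generation times $\widetilde d_j$ inside the sparse set $\cal D$ without destroying the ratio estimates (i),(ii),(iii) that feed into Theorem \ref{azulesyrojos3n}, and checking that the switch from $\lan(P(\ell_n,x))$ to $r_n$ in all the $\overline s$-dependent quantities is harmless — both are essentially contained in the displayed computations above, so this should be routine but needs to be stated carefully.
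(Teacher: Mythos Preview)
Your proposal is correct and follows essentially the same route as the paper: the corollary is not given a separate proof there, but is stated as an immediate consequence of the discussion preceding it (reduction of the ball problem to the block problem via the sequence $\{\ell_n\}$ with $\overline s=u$, and the observation that the pattern sets of Theorems \ref{cantorsimbolicoformalismo}/\ref{cantorsimbolico} can be built with $\widetilde d_j\in\widetilde{\cal D}$) together with Corollary \ref{paraBIP0} for the equality $\sup=\inf=T$ and Theorem \ref{markovT2} for the entropy lower bound. The bookkeeping concerns you flag are exactly the ones the paper waves through with ``it is clear from the proofs''; your more explicit accounting of why BIP gives $\sigma$-Gibbs RPF measures, uniform goodness, and finite entropy from the summability hypothesis matches the reasoning behind Corollaries \ref{BIPmenos} and \ref{paraBIP0}.
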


\begin{remark}
Notice that it follows from (\ref{liminf=0}) that under the hypothesis of corollary \ref{anterior}
$$
\Dim(\widetilde W_f(P,r_n,x))=\Dim \left\{y\in P: \liminf_{n\to\infty} \frac {|f^n(y)- x|}{r_n} = 0\right\}
$$
\end{remark} 

\subsection{Some examples}
\subsubsection{Gauss transformation and not Bernoulli modification of Gauss map}
The Gauss transformation is the map $\phi:[0,1]\longrightarrow [0,1]$
defined by $\phi(0)=0$ and  $\phi(x)=\frac 1x -\lfloor{\frac 1x}\rfloor$ for $x\neq 0$, where  $\lfloor{x}\rfloor$ denotes the integer part of $x$. 
The map $\phi$
is a Markov transformation with partition ${\cal P}_0=\{P^0_i:=(1/(i+1),1/i) : \, i \in \N\setminus\{0\} \}$ and such that $\phi(P^0_i)=(0,1)$ for all $i$. 
The  symbolic representation of $\phi$ is the full shift $(\Sigma^{\Z^+}, \sigma)$,  and   if $w=(w_0,w_1,\ldots)\in\Sigma^{\Z^+}$, then the corresponding point in $[0,1]$ is the  irrational point with continued fraction expansion $[\,w_0,w_1,\dots \,]$. Notice that $\Sigma^{\Z^+}$ satisfies Bernoulli property and therefore the BIP property. The ACIP measure is the 
Gauss measure which is defined by
$$
\mu(A)= \frac 1{\log 2} \int_A \frac 1{1+x} \, d\lambda .
$$
If $x$ is an irrational in $[0,1]$  with continued fraction expansion $[\,w_0,w_1,  \dots \,]$,  then  ${\lan(P(n,x))}\asymp1/ |(\phi^{n+1})^{\prime}(x)|$ and 
$$
 \frac{1} {((w_0+1)(w_1+1)\ldots (w_n+1))^2}\leq  \frac{1}{|(\phi^{n+1})^{\prime}(x)|}\leq \frac{1} {(w_0w_1\ldots w_n)^2}
$$
and  in order to have (\ref{hipx}) we require for all $\gamma>0$
$$
\liminf_{n\to\infty}\frac{(w_0 \,w_1\ldots w_{n-1})^{\gamma}}{w_n}>0
$$
for the point $x=[\,w_0,\, w_1,  \dots \,]$ which will be the center of the target. Now, let 
 ${\ell_n}\subset\N$ be a sequence such that $\limsup_{n\to\infty} \ell_n/n<\infty$, we ask for 
$$
\infty>s=s(\ell_n,x):=\lim_{n\to\infty}\frac{1}{n}\log\frac{1}{ \lan(P(\ell_n,x))}\asymp \lim_{n\to\infty}\frac 1n \log \, (w_0w_1\ldots w_{\ell_n})
$$
If $\lim_{n\to\infty}\ell_n/n=:\tau$, then by Shannon-MacMillan-Breiman theorem  $s=\tau h_{\mu}$ for $\lan$-almost all $x$.

For all $t>1/2$ we have that
$$
-\sum_{i=1}^{\infty}\mu(P^0_i)^t\log \mu(P^0_i)\asymp \sum_i^\infty \frac{\log i}{i^{2t}}<\infty.
$$
and also 
for $t>1/2$,  we have that the function $G(t)=P_G(-t\log|\widehat{\phi^{\prime}})$ is analytic, strictly convex and has a logarithmic singularity at $t=1/2$, see e.g  \cite{PW}. Hence, since $\lim_{t\to (1/2)^+}G(t)=+ \infty$ and $G(1)=0$, there exists $1/2<T\leq 1$ such that $G(T)=sT$

\medskip

Given an irrational $x\in[0,1]$ and a sequence $\{\ell_n\}$ with the above properties, we have for all block $P\in{\cal P}_N$ (for some $N$) the following result for recurrent patterns in continued fraction expansions (follows from corollary  \ref{paraBIP0}, 
and theorem \ref{markovT2})

\begin{theorem} {\sc [Continued Fractions]} \label{gausscode}
$
\mbox{If } \quad \sum_n \dfrac 1{(w_0\cdots w_{\ell_n})^2} <\infty \implies \lan(W_\phi(P,\ell_n,x))=0 \quad \mbox{ but  } \quad 
 \Dim (W_\phi(P, \ell_n,x))=T \ge \frac{h_{\mu}}{h_{\mu}+s} $

\medskip

 \noindent  with  $1/2<T\leq 1$ the unique solution of   $P_G(-t\log|\widehat{\phi^{\prime}}|)=ts$, and 
 $h_{\mu}=\frac{\pi^2}{6\log2}$, the entropy of $\phi$.
\end{theorem}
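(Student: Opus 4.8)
The plan is to derive Theorem \ref{gausscode} as a direct specialization of the general machinery already assembled for Markov transformations with the BIP property. First I would verify that the Gauss transformation $\phi$ fits the framework of Section \ref{MarkovT}: its symbolic model is the full shift $(\Sigma^{\Z^+},\sigma)$, which trivially has the Bernoulli (hence BIP) property; the potential $-\log|\widehat{\phi^\prime}|$ has summable variations by the standard continued-fraction distortion estimates (this is (\ref{regularidadpotencial})), is positive recurrent, and its Gurevich pressure satisfies $P_G(-\log|\widehat{\phi^\prime}|)=0$ since the Gauss (ACIP) measure is the RPF measure. Then $\widehat\mu$ is the $\sigma$-Gibbs measure attached to this potential, by Proposition \ref{localGibb} (indeed BIP makes it genuinely Gibbs, not merely local).

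Next I would check the two quantitative hypotheses of Corollary \ref{paraBIP0} with $f=\phi$, $u=s$, and target center $x=[\,w_0,w_1,\dots\,]$. The hypothesis $\sum_{i}\widehat\mu(C_i)^{t_1}\log\widehat\mu(C_i)<\infty$ holds for any $t_1>1/2$ because $\widehat\mu(C_i)\asymp\mu(P^0_i)\asymp i^{-2}$, so the series behaves like $\sum_i i^{-2t_1}\log i$. The hypothesis $\infty>P_G(-t_1\log|\widehat{\phi^\prime}|)>s t_1$ for some $0<t_1\le 1$ follows from the known analytic behavior of $G(t):=P_G(-t\log|\widehat{\phi^\prime}|)$: it is finite, real-analytic and strictly convex on $(1/2,\infty)$ with a logarithmic singularity $G(t)\to+\infty$ as $t\to(1/2)^+$, and $G(1)=0$ (all recalled in Section 6.3.1, citing \cite{PW}). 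Since $G$ is continuous and decreasing on $(1/2,1]$ with $\lim_{t\to(1/2)^+}G(t)=+\infty$ and $G(1)=0<s$, the equation $G(t)=st$ has a unique root $T\in(1/2,1)$, and for any $t_1\in(1/2,T)$ we have $G(t_1)>st_1$, giving the required $t_1$. I must also check condition (\ref{hipx}) on the center $x$: this is exactly the hypothesis $\liminf_n (w_0\cdots w_{n-1})^\gamma/w_n>0$ for all $\gamma>0$, which is part of the theorem's standing assumption on $x$ and, as noted in the paper, holds for $\lan$-a.e. $x$.

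With these verifications in place, Corollary \ref{paraBIP0} gives directly $\Dim(\widetilde W_\phi(P,\ell_n,x))=\Dim(W_\phi(P,\ell_n,x))=\sup\{t\ge t_1:G(t)>st\}=\inf\{t\in(0,1]:G(t)<st\}=T$ with $G(T)=sT$. The lower bound $T\ge h_\mu/(h_\mu+s)$ comes from Theorem \ref{markovT2} applied to $\phi$: here $\int\log|\phi^\prime|\,d\mu = h_\mu$ since $\widehat\mu$ is the equilibrium/RPF measure of $-\log|\widehat{\phi^\prime}|$ (Rokhlin's formula / Sarig's variational principle, using that $-\sum_i\mu(P^0_i)\log\mu(P^0_i)<\infty$ so the entropy is finite and equal to $\int\log|\phi^\prime|\,d\mu$), and $h_\mu=\pi^2/(6\log 2)$ is the classical entropy of the Gauss map. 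The zero-Lebesgue-measure part is immediate from the Borel--Cantelli dichotomy (\ref{BC+}) (or its $n$-cylinder analogue): $\sum_n 1/(w_0\cdots w_{\ell_n})^2\asymp\sum_n\lan(P(\ell_n,x))=\sum_n\widehat\mu(C(\ell_n,w))<\infty$ forces $\lan(W_\phi(P,\ell_n,x))=0$.

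The main obstacle I anticipate is not any single deep step but the careful bookkeeping of translating between the target-\emph{block} set $W_\phi(P,\ell_n,x)$ appearing in the statement and the $\widehat\mu$-dimension of the symbolic target-ball set: one must confirm that the $\mu$-hitting-point hypothesis is automatic here (true since $\widehat\mu$ is Gibbs under BIP, by Remark \ref{R}), that the collection of $0$-cylinders is $\ep$-uniformly $\widehat\mu$-good for all small $\ep$ (true under BIP by exponential continued-fraction mixing, as used in Corollary \ref{BIPmenos} and Theorem \ref{markovT2}), and that condition (\ref{hipx}) transfers from the center $x$ to the Cantor-like sets constructed inside, which is precisely the content of the grid-versus-Hausdorff comparison carried out in the proof of Theorem \ref{markovT1}. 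Once these are in hand, Theorem \ref{gausscode} is essentially a dictionary entry for the general results, with the only genuinely analytic input being the pressure function's singularity structure at $t=1/2$, which I would simply cite.
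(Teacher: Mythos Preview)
Your proposal is correct and follows essentially the same route as the paper: the text immediately preceding the theorem states that it ``follows from corollary \ref{paraBIP0}, and theorem \ref{markovT2}'', which is precisely your plan---verify the BIP/regularity/pressure hypotheses for the Gauss map, invoke Corollary \ref{paraBIP0} for $\Dim=T$, invoke Theorem \ref{markovT2} for the entropy lower bound, and use Borel--Cantelli for the zero-measure assertion. Your additional bookkeeping remarks (condition (\ref{hipx}), $\mu$-hitting points automatic under BIP, $\ep$-uniform goodness via exponential continued-fraction mixing) are exactly the checks the paper's framework requires and are all already dispatched in the discussion surrounding the cited results.
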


\begin{remark} If 
$
 \sum_n \frac 1{(w_0\cdots w_{\ell_n})^2} =\infty \ ,
$
then $\lan(W_\phi(P,\ell_n,x))=1$, see theorem 3.1 in \cite{FMP2}.
\end{remark}

\begin{remark} If $s\geq h_{\mu}$ then $h_{\mu}/(h_{\mu}+s)\leq 1/2$ and the lower bound is irrelevant.  We recall that
$s=\tau h_{\mu}$ for $\lan$-almost all $x$, with  
 $\tau:=\lim_{n\to\infty}\ell_n/n$, and so $h_{\mu}/(h_{\mu}+s)=1/(1+\tau)$.
\end{remark}

For 
$\{r_n\}$ be a decreasing sequence of positive numbers the dichotomy (\ref{BC+}) holds for $\phi$, 
and 
moreover,  if 
$
 u:=\lim_{n\to\infty}-[\log{r_n}]/n<\infty,
$
 we have the following dimension result for any block $P\in{\cal P}_N$

\begin{theorem}\label{Gauss2} Let $1/2<T\leq 1$ be  the unique solution of  $P_G(-t\log|\widehat{\phi^{\prime}}|)=tu$, then 
$$
\Dim \left\{y\in P\subset [0,1]: \ \liminf_{n\to\infty} 
\frac{|\phi^n(y)-x| }{r_n}=0 \right\} = T  \ge \frac{h_{\mu}}{h_{\mu}+u}=\frac{\pi^2}{\pi^2+(6\log2)u}
$$
\end{theorem}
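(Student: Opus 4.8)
The plan is to recognize the statement as the target-ball problem for the Gauss map $\phi$ settled by corollary~\ref{anterior} and theorem~\ref{markovT2} (themselves resting on the reduction, carried out in section~\ref{ball}, of a target-ball set to a target-block set and then to a symbolic Cantor-like set), to verify the hypotheses of those results for $\phi$, and to transfer the conclusion across the sandwich (\ref{liminf=0}). First I would assemble the structural facts about $\phi$: it is a Markov transformation with partition ${\cal P}_0=\{P^0_i=(1/(i+1),1/i):i\geq1\}$ and $\phi(P^0_i)=(0,1)$, so its symbolic representation is the full shift over the positive integers and the BIP property holds; by (\ref{regularidadpotencial}) the potential $-\log|\widehat{\phi'}|$ is weakly H\"older continuous with $V_1<\infty$, it is positive recurrent, and $P_G(-\log|\widehat{\phi'}|)=0$; hence by proposition~\ref{localGibb} the symbolic lift $\widehat\mu=\mu\circ\pi$ of the Gauss measure is a $\sigma$-Gibbs measure for it, and (BIP again) every point of $X_{\Pibf}$ is a $\mu$-hitting point. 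I would also check that for $\lan$-a.e.\ $x=[w_0,w_1,\ldots]$ the condition (\ref{hipx}) holds, i.e.\ $\liminf_n(w_0\cdots w_{n-1})^{\gamma}/w_n>0$ for every $\gamma>0$: indeed $\frac1n\log w_n\to0$ a.e.\ (from $\lan(w_n>M)\asymp 1/M$ and the Borel-Cantelli lemma), while $\frac1n\sum_{k<n}\log w_k$ converges a.e.\ to a positive constant by the Shannon-MacMillan-Breiman theorem; the theorem is understood for $x$ in this full-measure set.

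Next I would verify the remaining hypothesis of corollary~\ref{anterior}: the existence of $t_1\in(1/2,1]$ with $-\sum_i\widehat\mu(C_i)^{t_1}\log\widehat\mu(C_i)<\infty$ and $\infty>G(t_1)>ut_1$, where $G(t):=P_G(-t\log|\widehat{\phi'}|)$. Since $\widehat\mu(C_i)\asymp\mu(P^0_i)\asymp 1/i^2$, the first sum is comparable to $\sum_i (\log i)/i^{2t_1}$, which is finite exactly when $t_1>1/2$. For the pressure I would invoke the classical spectral analysis of the Gauss transfer operator (see e.g.\ \cite{PW}): on $(1/2,\infty)$ the function $G$ is real analytic, strictly convex and strictly decreasing, with $G(1)=0$, $G(t)>0$ for $1/2<t<1$, and $G(t)\to+\infty$ as $t\to(1/2)^+$; hence $G(t)/t\to+\infty$ as $t\to(1/2)^+$, so $G(t_1)>ut_1$ for any $t_1$ slightly above $1/2$. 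Moreover $t\mapsto G(t)-ut$ is strictly convex, equals $+\infty$ at $(1/2)^+$ and equals $-u\leq0$ at $t=1$, so the equation $G(T)=uT$ has a unique solution $T\in(1/2,1]$ (with $T=1$ iff $u=0$), which simultaneously equals the lower bound $T^-$ of corollary~\ref{anterior} and, by proposition~\ref{arriba} adapted to ball sets in section~\ref{ball}, the upper bound $T^+$. Corollary~\ref{anterior} then yields $\Dim(\widetilde W_\phi(P,r_n,x))=T$ and $T\geq h_\mu/(h_\mu+u)$.

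To obtain the set in the statement I would use the sandwich (\ref{liminf=0}): $\widetilde W_\phi(P,r_n/n,x)\subset\{y\in P:\liminf_n|\phi^n(y)-x|/r_n=0\}\subset\widetilde W_\phi(P,r_n,x)$. Applying corollary~\ref{anterior} once more with $r_n$ replaced by $r_n/n$ — which leaves the exponent $\lim_n-\frac1n\log(r_n/n)=u$ and all the hypotheses unchanged — gives $\Dim(\widetilde W_\phi(P,r_n/n,x))=T$ as well, so the middle set also has dimension exactly $T$. Finally, inserting the classical Gauss-Kuzmin value $h_\mu=\int\log|\phi'|\,d\mu=\pi^2/(6\log2)$ into $T\geq h_\mu/(h_\mu+u)$ gives $h_\mu/(h_\mu+u)=\pi^2/(\pi^2+(6\log2)u)$, which completes the statement.

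The paper's machinery does the real work here, so the genuine difficulties are the two external ingredients. The main one is the qualitative behaviour of $G(t)=P_G(-t\log|\widehat{\phi'}|)$ near $t=1/2$ — real analyticity and divergence to $+\infty$ — since this is what forces the existence and uniqueness of the root $T\in(1/2,1]$ and the matching of the upper and lower dimension bounds; this relies on transfer-operator spectral theory for the Gauss map rather than on anything in this paper. The secondary point is the verification that the target $x$ may be assumed to satisfy (\ref{hipx}), i.e.\ controlling how fast the continued-fraction digits $w_n$ of $x$ grow relative to the running product $w_0\cdots w_{n-1}$, which is what guarantees that the Cantor-like set built inside the target-block set has equal grid and Hausdorff dimensions.
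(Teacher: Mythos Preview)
Your proposal is correct and follows essentially the same route as the paper: the paper does not write out a separate proof of this theorem, but presents it as a direct application of corollary~\ref{anterior} (together with the remark after it identifying $\Dim(\widetilde W_f(P,r_n,x))$ with the dimension of the $\liminf$ set via (\ref{liminf=0})) once the Gauss-specific hypotheses --- BIP, condition~(\ref{hipx}) for $\lan$-a.e.\ $x$, the finiteness of $\sum_i\widehat\mu(C_i)^{t}\log\widehat\mu(C_i)$ for $t>1/2$, and the analytic behaviour of $G(t)=P_G(-t\log|\widehat{\phi'}|)$ with its blow-up at $t=1/2$ from \cite{PW} --- have been checked in the paragraphs preceding the statement. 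Your write-up reproduces precisely these verifications and the same invocation of corollary~\ref{anterior} and theorem~\ref{markovT2}.
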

\noindent In particular for $u=0$ we have that $T=1$.

\medskip

We can modify the Gauss map and consider the following Markov transformation which is not Bernoulli

$$
f(x)=
\begin{cases}
\phi(x)\,, \quad &\text{if } \frac 12<x\leq 1
\,, \vspace{3pt}
\\
\left(1-\frac{1}{\lfloor{\frac 1x}\rfloor}\right)\phi(x)+\frac{1}{\lfloor{\frac 1x}\rfloor}\,, \quad &\text{if } 0<x\leq \frac 12 \,.
\end{cases}
$$
The initial partition is the same that the one for the Gauss  transformation, i.e ${\cal P}_0$, but in this case 
$
\phi(P^0_1)=(0,1)$ and $\phi(P^0_i)=(1/i,1)$ for $i\geq 2$.
The symbolic representation of $f$  is  $(\Sigma_A^{\Z^+},\sigma)$ with transition matrix $A=(a_{i,j})$ such that $a_{1,j}=1$ for all $j\in\Z^+$ and for $i>1$ we have that $a_{i,j}=1$ for $1\leq j<i$ and $a_{i,j}=0$ otherwise. 
We would like to remark  that $f$ satisfies 
 BIP property but it is not Bernoulli. Also, it is easy to check that if $w=(w_0,w_1,\ldots)\in \Sigma_A^{\Z^+}$,  then for the corresponding point $x=\pi(w)\in[0,1]$ we have that  ${\lan(P(n,x))}\asymp1/ |(f^{n+1})^{\prime}(x)|$ 
 $$
 \prod_{j=0}^n \frac{1}{g(w_j)(w_j+1)^2}\leq   \frac{1}{|(f^{n+1})^{\prime}(x)|}\leq \prod_{j=0}^n \frac{1}{g(w_j)w_j^2}
 $$
 with $g(w_j)=1$ if $w_j=1$  and $g(w_j)=1-1/w_j$ otherwise.
We have the following similar result for $f$ with  $x$ and $u$ as in the Gauss case and $x=\pi(w)$ such that
for all $\gamma>0$
$$
\liminf_{n\to\infty}\frac{(g(w_0)^{1/2}w_0 \,g(w_1)^{1/2}w_1\ldots g(w_{n-1})^{1/2}w_{n-1})^{\gamma}}{g(w_n)^{1/2}w_n}>0
$$
This inequality  implies  condition (\ref{hipx}) which we recall holds  for $x$ $\lan$-a.e.

\begin{theorem} Let $1/2<T\leq 1$ be  the unique solution of  $P_G(-t\log|\widehat{f^{\prime}}|)=tu$, then 
$$
\Dim \left\{y\in P\subset  [0,1]: \ \liminf_{n\to\infty} 
\frac{|f^n(y)-x| }{r_n}=0 \right\} = T  \ge \frac{\int \log|f^\prime|\, d{{\mu}}}{\int \log|f^\prime|\, d{{\mu} }+u} 
$$
with $\mu$ the ACIP measure.
\end{theorem}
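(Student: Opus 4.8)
The plan is to apply the general machinery already developed for Markov transformations with the BIP property (Corollary \ref{anterior} together with Theorem \ref{markovT2}) to this concrete map $f$, so the proof reduces to verifying the hypotheses of those results and identifying the relevant pressure equation. First I would record that $f$ is a Markov transformation in the sense of Section \ref{MarkovT}: the initial partition is ${\cal P}_0=\{P^0_i=(1/(i+1),1/i)\}$, the branches $f:P^0_i\to f(P^0_i)$ are onto unions of initial intervals (with $f(P^0_1)=(0,1)$ and $f(P^0_i)=(1/i,1)$ for $i\ge 2$), and the expansion, distortion and recurrence conditions (a)--(g) hold exactly as for the Gauss map because on each $P^0_i$ the map $f$ is an affine reparametrization of $\phi$ composed with the same inverse-branch geometry; in particular $|f'|\ge 1$ off a fixed point and property (f) holds. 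Crucially $f$ satisfies the BIP property: the transition matrix has $a_{1,j}=1$ for all $j$ and $a_{i,j}=1$ iff $1\le j<i$ for $i>1$, so the symbol $1$ can precede every symbol and every symbol can be preceded by suitably many small symbols, giving a finite set ${\cal I}_0$ (e.g. $\{1,2\}$) witnessing BIP. Hence $\widehat\mu=\mu\circ\pi$ is a $\sigma$-Gibbs measure for the H\"older-regular potential $-\log|\widehat f'|$ with $P_G(-\log|\widehat f'|)=0$.

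Next I would check the two quantitative hypotheses of Corollary \ref{anterior} with potential $-t\log|\widehat f'|$ and target rate $u=-\lim_n\tfrac1n\log r_n$. Using the stated estimate $|(f^{n+1})'(x)|\asymp\prod_{j=0}^n g(w_j)w_j^2$ with $g(w_j)\in[1/2,1]$, we get $\widehat\mu(C_i)\asymp\lan(P^0_i)\asymp 1/i^2$ (the factor $g(i)$ being bounded above and below), so for any $t>1/2$,
$$
-\sum_{i\in{\cal I}}\widehat\mu(C_i)^{t}\log\widehat\mu(C_i)\asymp\sum_i\frac{\log i}{i^{2t}}<\infty .
$$
For the pressure, $-t\log|\widehat f'|$ differs from the Gauss potential $-t\log|\widehat{\phi'}|$ by the bounded H\"older-regular term $-t\log g(w_0)$, which has finite first variation and $\sup<\infty$, so $G(t):=P_G(-t\log|\widehat f'|)$ is, like in the Gauss case, real analytic and strictly convex on $(1/2,1]$ with $\lim_{t\to(1/2)^+}G(t)=+\infty$ and $G(1)=0$; therefore for $u<\infty$ there is a unique $1/2<T\le1$ with $G(T)=uT$, and choosing any $t_1\in(1/2,T)$ gives $\infty>G(t_1)>u t_1$. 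Thus Corollary \ref{anterior} (via the target-ball construction of Section \ref{ball}, using $\overline s=u$) yields, for $x=\pi(w)$ satisfying (\ref{hipx}) (equivalently the displayed $\liminf$ condition on the $g(w_j)^{1/2}w_j$, which holds for $\lan$-a.e.\ $x$),
$$
\Dim\big(\widetilde W_f(P,r_n,x)\big)=\inf\{t>0:G(t)<ut\}=T ,
$$
and by the remark following Corollary \ref{anterior} this equals $\Dim\{y\in P:\liminf_n|f^n(y)-x|/r_n=0\}$.

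Finally I would supply the lower bound $T\ge \int\log|f'|\,d\mu/(\int\log|f'|\,d\mu+u)$: since BIP holds, the collection of all $0$-cylinders is $\ep$-uniformly $\widehat\mu$-good (the RPF/Gibbs measure is exponentially continued fraction mixing, Section 3.3), and $-\log|\widehat f'|$ has summable variations with $\sup<\infty$, so Theorem \ref{markovT2} applies and gives
$$
\Dim\big(W_f(P,\ell_n,x)\big)\ge\frac{\int\log|f'|\,d\mu}{\int\log|f'|\,d\mu+\overline s}\ge\frac{h_\mu}{h_\mu+\overline s},
$$
and with the target-ball reduction $\overline s=u$ this is exactly the claimed inequality; note $\int\log|f'|\,d\mu=h_\mu$ precisely when $-\sum_i\mu(P^0_i)\log\mu(P^0_i)<\infty$, which holds here. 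The main obstacle, and the only place real work is needed, is the careful verification that the affine modification on $(0,1/2]$ does not destroy the bounded-distortion property (g) and the positive-recurrence condition (f): one must check that $f'(x)/f'(y)-1=O(|x-y|^\alpha)$ on each $P^0_i$ uniformly — this follows because on $P^0_i$, $f=(1-1/i)\phi+1/i$ is affine in $\phi$, hence $|f'|=(1-1/i)|\phi'|$ and the distortion is identical to that of $\phi$ — and that property (f) still holds since the first-return tails are comparable to the Gauss ones up to the bounded factors $g$. Once these standard checks are in place, everything else is a direct citation of the results in Sections \ref{MarkovT} and \ref{ball}.
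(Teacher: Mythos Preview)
Your proposal is correct and follows essentially the same route as the paper: the paper does not give an explicit proof of this theorem but presents it as a direct consequence of the general BIP machinery (Corollary \ref{anterior} and Theorem \ref{markovT2}) once one has checked that $f$ is a Markov transformation with BIP, that $\widehat\mu(C_i)\asymp i^{-2}$ gives the needed summability for $t>1/2$, and that the pressure $G(t)=P_G(-t\log|\widehat f'|)$ is analytic on $(1/2,1]$ with $G(1)=0$ and $G(t)\to+\infty$ as $t\to(1/2)^+$. Your verification of these points is accurate (in fact ${\cal I}_0=\{1\}$ already witnesses BIP, and the distortion check via $f'=(1-1/i)\phi'$ on $P^0_i$ is exactly the right observation), so this is the intended argument with the details filled in.
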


\subsubsection{Luroth expansion}
Consider the piecewise linear Markov transformation $f:[0,1]\longrightarrow [0,1]$
defined by $f(0)=0$, $f(1)=1$ and 
$$
f(x)=
n(n+1)x - n \,, \qquad \mbox{if  } \quad x\in\left[\frac{1}{n+1},\frac 1n\right) 
$$
The  initial partition is ${\cal P}_0=\{P^0_i:=(1/(i+1),1/i) \, :  i \in \N\setminus\{0\} \}$, and the symbolic representation  of $f$ is the 
full shift $(\Sigma^{\Z^+},\sigma)$. If $w=(w_0,w_1,\ldots)\in\Sigma^{\Z^+}$, then the corresponding point  $\pi(w)=x\in [0,1]$ is the  irrational point with {\it L\"uroth  expansion} $[\,w_0,w_1,\dots \,]$, i.e. (see e.g. \cite{DK})
$$
x= \frac{1}{w_0+1}+\frac{1}{(w_0+1)w_0(w_1+1)}+\frac{1}{(w_0+1)w_0(w_1+1)w_1(w_2+1)}+\cdots =\sum_{i=0}^{\infty}\frac{w_i}{\prod_{k=0}^n(w_k+1)w_k}
$$
Moreover,
$$
\lan(P(n,x))=\frac{1}{(w_0+1)w_0(w_1+1)w_1\cdots (w_n+1)w_n}
$$
and   we require (\ref{hipx})
for  $x$ the center of the target.
 We have that
$$
G(t)=P_G(-t\log|\widehat{f^{\prime}}|)=\log \sum_{n=1}^{\infty}\left(\frac{1}{n(n+1)}\right)^t,
$$
and so $G(t)=\infty$ for $0\leq t\leq 1/2$ and for $t>1/2$ we have that $G(t)$ is real analytic, strictly decreasing and convex and has a unique zero at $t=1$. 
Hence, we obtain for $f$ similar results  to theorems \ref{gausscode} and \ref{Gauss2}, we just should write $\log \sum_{n=1}^{\infty}{1}/{[n(n+1)]^t}$
instead of   $P_G(-t\log|\widehat{\phi^{\prime}}|)$, and now $h_{\mu}$ denotes  the entropy of $f$ with respect to $\lan$, in fact 
$h_{\mu}=\sum_{n=1}^{\infty}{\log[n(n+1)]}/{n(n+1)}$.

\medskip

We refers to \cite{Ba} and \cite{FLMW} for some  dimension results for other class of sets  defined in terms of digit frequencies in L\"uroth expansion.

\subsubsection{Inner functions}
The classical Fatou's theorem asserts that a bounded holomorphic function $F:\D\longrightarrow \C$, from the unit disc $\D$ into the complex plane $\C$, has radial limits almost everywhere. An 
holomorphic function $F$
defined on $\D$ and with values in $\D$ is called an {\it inner function} if the radial limits
\begin{equation}\label{radiallimit}
F^*(\xi):=\lim_{r\to 1^-}  F(r\xi)
\end{equation}
\noindent (which exists for almost every $\xi$ by Fatou's theorem) have
modulus $1$ for almost every $\xi\in\p\D$.
It is well known that any inner function can be written as
$$
F(z) = e^{i\phi} B(z) \, \exp
\left( -\int_{\p\D} \frac{\xi+z}{\xi-z} \,d\nu(\xi) \right)
$$
where $B(z)$ is a Blaschke product and $\nu$ is a finite positive singular measure on $\p\D$. We recall that given a sequence $\{a_n\}$ in $\D$ such that $\sum_{n=1}^\infty (1-|a_n|)<\infty$, 
a Blaschke product  $B(z)$ is a complex function of the type
$$
B(z)= z^m\prod_{a_n\neq 0} \frac {|a_n|}{a_n} \frac{z-a_n}{1-\overline{a_n}z}\,,
$$
where $m$ is the number of elements of the sequence $\{a_n\}$ equal to zero.
If $F$ is inner  with a fixed point $p$ in $\D$ (but $F$  not
conjugated to a rotation),  Aaronson \cite{Aa0} and Neuwirth \cite{N}
proved, independently, that $F^*$ is exact with respect to
harmonic measure $\omega_p$, (see \cite{DM}). 
The mixing properties of inner functions are stronger, 
 Pommerenke \cite{P} proved   that if 
$F$ is an inner function with $F(0)=0$,  but
not a rotation,  then there exists a positive absolute constant
$K$ such that
$$
\left| \frac{\lan[B\cap (F^*)^{-n}(A)]}{\lan(A)} - \lan(B) \right| \le
K\,e^{-\alpha n} \,,
$$
for all $n\in \N$, for all arcs $A,B\subset\p\D$. Here
$\alpha=\max\{1/2, |F'(0)|\}$ and $\lan$ denotes the normalized Lebesgue measure.
If  $F$ is inner with $F(p)=p$ and  $p\in\D$ the above mixing result holds for the harmonic measure $\omega_p$. In the terminology of \cite{FMP1} this imply that $F$
is uniformly mixing at any point $\eta$  of $\p\D$ with respect to the harmonic measure $\omega_p$. 
 From Borel-Cantelli lemma  and 
 theorem 3 in \cite{FMP1}, we have the following dichotomy for any decreasing sequence $\{r_n\}$ of positive numbers:

$$
\mbox{If} \qquad \sum_nr_n<\infty \implies 
\liminf_{n\to\infty} \frac {d((F^*)^n(\xi), \eta)}{r_n} = \infty
 \qquad \mbox { for } \lan-a.e.\; \xi\in\p\D
$$
$$
\mbox{If} \qquad \sum_nr_n=\infty \implies  \liminf_{n\to\infty} \frac {d((F^*)^n(\xi), \eta)}{r_n} =0\qquad \,  \mbox { for } \lan-a.e.\;  \xi\in\p\D
$$
Here, $d$ denotes the choral distance in $\partial\D$.

Let  $F$ be an  inner function with $F(p)=p$ and  $p\in\D$ and 
  denote  denote $F^*(e^{2\pi i t})=e^{2\pi i S(t)}$ and   $f(t)=S(t)\mod 1$. If  $f$ is a Markov transformation, then
 the dynamic of $F^*$ on $\p\D$ is isomorphic to
the dynamic of  $f$ and 
we will inherit for $F^*$ the  dimension results obtained  in section \ref{MarkovT} for Markov transformations and sequences $\{r_n\} $ with 
$
u:=\lim_{n\to\infty}-[\log{r_n}]/n<\infty.
$

\

\noindent{\it Example 1:} For $B:\D \longrightarrow \D$ a  finite Blaschke product with 
 a fixed point $p\in \D$, 
but not an automorphism which is conjugated
to a rotation, we have 
\begin{equation} \label{dime}
\Dim \left\{ \xi\in\p\D: \liminf_{n\to\infty} \frac
{d((B^*)^n(\xi), \eta)}{r_n} =0\right\} =T\ge \frac {h}{h+u}
\end{equation}
where $T$ is the unique root of the equation $P_{top}(-t\log|f^\prime|)=ut$ and $h=
\int_{\p\D} \log |B'(z)|\, d\lan(z)$. For $B(z)=z^N$ we have that $T= {h}/{(h+u)}$ and $h=\log N$.

\medskip

The dynamic of $B^*$ on $\p\D$ is isomorphic to
the dynamic of a Markov transformation $f$ with a finite partition
${\cal P}_0$ (the number of  elements of ${\cal P}_0$ is  the number of factors of $B(z)$) and $f(P)=[0,1]$ for all $P\in {\cal P}_0$. The harmonic measure $\omega_p$ is  the ACIP measure.

\medskip

Next we will consider other two examples where the associate transformation  $f$ is  Markov with countable (but not finite) initial partition and having  BIP property. 

\medskip

\noindent{\it Example 2:} Consider the  infinite Blaschke product
$$
B(z)= z \prod_{k=1}^\infty \frac{z-a_k}{1-a_kz}\,, \qquad \hbox{$a_k=1-2^{-k}$}\,.
$$
\noindent Notice that $B$ is defined and $C^\infty$ in $\p\D\setminus\{1\}$ 
$$
|B'(z)| = \sum_{k=0}^\infty \frac {1-a_k^2}{|z-a_k|^2} \,,
\qquad \hbox{if $|z|=1, \ z\ne1$}\,,
$$
and  $S'(t)=|B'(e^{2\pi it})|>C>1$. Moreover,
it follows from Phragm\'en-Lindel\"of theorem that the image of $S(t)$ is $(-\infty,\infty)$
and so for $j\in\Z$ we can define the intervals $P_j= \{t\in (0,1): \ j<S(t)<j+1\}$. The transformation
$f:[0,1]\longrightarrow [0,1]$ given by $f(t)=S(t)$ (mod $1$), $f(0)=f(1)=0$, is a Markov transformation with partition ${\cal P}_0=\{P_j\}$  such that $f(P_j)=(0,1)$ and $\lan(P_j)\asymp 2^{-|j|}. $
 The dimension result   (\ref{dime})  holds ($\lan$-a.e $\eta\in \p\D$)  for  $B$ with   $T>0$  the unique root of the equation $P_{G}(-t\log|\widehat f^\prime|)=ut$.

\medskip

\noindent{\it Example 3:}
Consider the singular inner functions
$$
F(z)=e^{c\frac {z+1}{z-1}} \,, \qquad \hbox{for $c>2$}.
$$
These inner functions have only
one singularity at $z=1$ and its Denjoy-Wolff point $p$ is real
and it verifies $0<p<1$. We have that 
 $S(t) = -\frac
{c}{2\pi} \, \cot \pi t$ for $t\in [0,1]$,  
and $f(t)=S(t)$
(mod 1) is a Markov transformation with countable initial partition 
 ${\cal P}_0=\{P_j: j\in\Z\}$ where $P_j = \{t\in (0,1):
\ j<S(t)<j+1\}$ such that 
$f(P_j)=(0,1)$  and $\lan(P_j)=\arctan(2\pi c/(c^2+4\pi^2j(j+1)))$. We refers to  \cite{Mar}.
 The dimension result   (\ref{dime})  holds ($\lan$-a.e $\eta\in \p\D$)  for  $F$ with   $T>1/2$  the unique root of the equation $P_{G}(-t\log|\widehat f^\prime|)=ut$
and $h= \log \left( \frac {1}{1-p^2} \log \frac 1{p^2} \right)$.

\section{Induced Markov transformation. Intermittent systems} \label{intermittent}

We say that the map $F:[0,1]\longrightarrow [0,1]$ has {\it an induced Markov transformation } if there exist a finite or countable partition ${\cal P}_0$ of the interval $[0,1]$ and a {\it return time function} $R:\bigcup_j P^0_j\longrightarrow \Z^+$ which is constant on each block $P^0_j$, such that $R$ is not constant almost everywhere in $[0,1]$ and:
\begin{itemize}
\item[(i)] The induced map $f:[0,1]\longrightarrow [0,1]$, defined by $f(y)=F^{R(y)}(y)$ (with $R(y)=0$ in $[0,1]\setminus\cup_j P^0_j$) is a Markov transformation with partition ${\cal P}$
\item[(ii)] The return time function satisfies $\int R d\mu<\infty$ with $\mu$ the ACIP measure associated to the Markov transformation $f$.
\end{itemize}
If the ACIP measure $\mu$  is comparable to the Lebesgue measure $\lan$ in the whole interval $[0,1]$ (for example if BIP property holds), then we can write property (ii) as $\int R d\lan<\infty$. 
Above  definition includes some   transformations  modelled by a Young tower or  intermittent interval maps as  the Manneville-Pommeau transformation or the Liverani-Saussol-Vaienti transformation.

Let $\{r_n\}$ be a decreasing sequence   of positive numbers. For the Markov transformation $f$ 
 Borel Cantelli results state in (\ref{BC+}) hold. Moreover,
we also have  (see  \cite{FMP2} )  the following result  for $F$ (instead of $f$) for sequences such that $r_n\leq C\, r_{2n}$ for all $n$ with $C$ a positive constant (as $r_n=1/n^{1/\alpha})$:
$$
\mbox{If} \qquad \sum_nr_n^{\alpha}=\infty  \mbox{ for some } \alpha >1\implies  \liminf_{n\to\infty}\frac{|F^n(y)-x|}{r_n}=0\qquad \,  \mbox { for } \lan-a.e.\;  y\in [0,1]
$$
If  $\lan(P(n,x))\asymp\lan(P(n+1,x)$ for all $n$ (with constants depending on $x$), then the last implication also holds for $\alpha=1$.

If we  denote by $R_n(y)=\sum_{k=0}^{n-1} R(f^k(y))$, then $f^n(y)=F^{R_n(y)}(y)$. 
We can state all the previous dimension results on  target  sets for the Markov transformation $f$ in terms of the map $F$. More precisely, we have 
 dimension results for the set
 $$
 \{ y\in[0,1] : |F^{R_n(y)}(y)-x|\leq r_n \quad \mbox{for infinitely many } n\}
 $$
 However, if we want to get  dimension results for  the set
$$
 \{ y\in[0,1] : |F^{n}(y)-x|\leq r_n \quad \mbox{for infinitely many } n\}, 
 $$
then we will need  to study the existence of weak $\sigma$-Gibbs measures for the potentials $-t\log|\widehat F^{\prime}|$  instead of $-t\log |\widehat f'|$, and also the mixing properties of these measures.   For some intermittent systems 
the existence  of equilibrium measure  which are weak $\sigma$-Gibbs (in the symbolic context) for these potentials is well know.  So, we would be able to use our results for Markov shift and weak Gibbs measures  in these settings.  
For the existence of weak Gibbs measure for  intermittent system see \cite{PY}, \cite{Yu2}, \cite{MRT},  \cite{Ke}, \cite{Hu}.

 \subsection{The Manneville-Pomeau model for intermittency }
 
 Let us consider the family of 
 transformations $F:[0,1] \longrightarrow [0,1] $ with $$F(x)=x+x^{1+\alpha} \mod 1\quad \mbox{ with } \quad 0<\alpha<1$$
 These maps are uniformly expanding out of all neighborhood of the fixed point $0$.
 We know (see \cite{PW}) that  for the countable partition $\{ P^0_j\}$
 $$
 P_0^0=(a_0,1) \,  \quad   P_j^0=(a_j,a_{j-1}) \quad
 \mbox{with } \quad  a_0+a_0^{1+\alpha}=1\quad  \mbox{ and  }\quad  F(a_{j+1})=a_j
 $$
 and the return time function defined by $R(y)=j$ iff $y\in P_{j+1}^0=(a_{j+1},a_{j})$, 
 the induced map $f(y)=F^{R(y)}(y)$  satisfies conditions (i) and (ii). Notice that $f$ has BIP property, since $f(P_j^0)=(0,1)$ for all $j$, and  the condition (ii) is $\sum_j j \, \lan(P_j^0)<\infty$ but
 $$\lan(P_j^0)=a_{j-1}-a_j=a_j^{1+\alpha}\asymp \frac{1}{(\alpha (j+1))^{1+1/\alpha}}
 $$
  If  we denote, as usual, the symbolic representation of $f'$ as  $\widehat f'$, then  it is easy to check that $G(t)=P_G(-t\log |\widehat f'|)$  for $t\in(\alpha,1]$  is continuous, convex, $G(1)=0$ and $\lim_{t\to\alpha^+}G(t)=\infty.$ 
  Also, for $t\in(\alpha,1]$ 
 $$
 -\sum_j \lan(P_j^0)^t\log \lan(P_j^0)\asymp \sum_j \frac{1}{j^{t(1+1/\alpha)}}\log (\alpha j)  <\infty
 $$
We ask $x\in X_{\bf\Pi}$ be a point such that (\ref{hipx}) holds,
 and also we require the decreasing sequence $\{r_n\}$ verifies $\sum_n r_n<\infty$ and  $0\leq u:=\lim_{n\to\infty} -(\log {r_n})/n<\infty$. 
For any  block $P\in{\cal P}_N$ for some $N$, the set 
 $$\widetilde W_f(P,r_n,x)=\left\{ y\in P :\,  \liminf_{n\to\infty}\frac{|F^{R_n(y)}(y)-x|}{ r_n }=0 \right\}$$
with $f(y)=F^{R(y)}(y)$ verifies the following:
  \begin{theorem} Let $\alpha<T\leq 1$  be 
 the unique solution of $P_G(-t\log|\widehat{f^{\prime}}|)=tu$, then 
 $$
 \lan(\widetilde W_f(P,r_n,x))=0\quad \mbox{and}
 \quad 
 \Dim (\widetilde W_f(P,r_n,x))=T \, 
 $$ 
 \noindent Moreover,  for $u> (1+\frac {1}{\alpha})h_{\mu}$ 
 $$T\geq h_{\mu}/(h_{\mu}+u)$$
 Here $h_{\mu}$ is the entropy of  the ACIP measure of $f(y)=F^{R(y)}(y)$.
\end{theorem}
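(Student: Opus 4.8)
The plan is to recognize this as an application of the machinery already developed for the induced Markov transformation $f(y)=F^{R(y)}(y)$ together with the earlier target-ball results for Markov transformations with the BIP property. First I would verify that $f$ is indeed a Markov transformation with the BIP property: the partition $\{P^0_j\}$ defined via $F(a_{j+1})=a_j$ satisfies $f(P^0_j)=(0,1)$ for every $j$, so the symbolic model is the full shift on a countable alphabet, which has the Bernoulli property and hence BIP. Condition (ii) $\int R\,d\mu<\infty$ reduces (since BIP makes $\mu\asymp\lan$) to $\sum_j j\,\lan(P^0_j)<\infty$, which holds because $\lan(P^0_j)\asymp (\alpha(j+1))^{-(1+1/\alpha)}$ and $1+1/\alpha>2$. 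This puts us in the setting of Section \ref{intermittent}, and in particular the target set $\widetilde W_f(P,r_n,x)$ with the ``$F^{R_n(y)}$'' time change falls directly under Corollary \ref{anterior} applied to $f$.

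Next I would check the two hypotheses of Corollary \ref{anterior}. For the summability condition: for $t\in(\alpha,1]$,
$$
-\sum_j\widehat\mu(C_j)^t\log\widehat\mu(C_j)\asymp-\sum_j\lan(P^0_j)^t\log\lan(P^0_j)\asymp\sum_j\frac{\log(\alpha j)}{j^{t(1+1/\alpha)}}<\infty
$$
since $t(1+1/\alpha)>\alpha(1+1/\alpha)=\alpha+1>1$. For the pressure condition: the function $G(t):=P_G(-t\log|\widehat f'|)=P_G(-t\log|\widehat f'|)-tP_G(-\log|\widehat f'|)$ (using $P_G(-\log|\widehat f'|)=0$) is convex with $G(1)=0$, is finite and real-analytic on $(\alpha,1]$, and $\lim_{t\to\alpha^+}G(t)=+\infty$ (all standard facts for the Manneville--Pomeau induced map, cf. \cite{PW}). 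Hence $G$ is strictly decreasing on $(\alpha,1]$, and since $u<\infty$ the equation $G(t)=ut$ has a unique solution $T\in(\alpha,1]$; moreover there exists $t_1\in(\alpha,T)$ with $\infty>G(t_1)>ut_1$, so the hypotheses of Corollary \ref{anterior} are met. Here I should replace $\overline s$ by $u$ as explained in Section \ref{ball}: the sequence $\{\ell_n\}$ is built from the radii $\{r_n\}$ so that $\limsup_n\frac1n\log\frac1{\lan(P(\ell_n,x))}=u$, and one checks $\limsup_n\ell_n/n<\infty$ because $u<\infty$ and the derivative of $f$ is bounded below. I would also require $x$ to satisfy (\ref{hipx}) — this holds for $\lan$-a.e. $x$ by Shannon--McMillan--Breiman since $h_\mu>0$.

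Applying Corollary \ref{anterior} (with $\overline s=u$) then gives $\lan(\widetilde W_f(P,r_n,x))=0$ (from $\sum_nr_n<\infty$ and Borel--Cantelli, recalling (\ref{BC+}) and that BIP makes the relevant measures comparable to $\lan$) and
$$
\Dim(\widetilde W_f(P,r_n,x))=T\geq\frac{h_\mu}{h_\mu+u},
$$
where $T$ solves $P_G(-t\log|\widehat f'|)=ut$. The only subtlety is the claim that the lower bound $h_\mu/(h_\mu+u)$ is nonvacuous precisely when $u>(1+\frac1\alpha)h_\mu$: this comes from locating the singularity $t=\alpha$ of $G$ and the value of the slope there. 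Since $G$ is convex, $G(1)=0$, and $\lim_{t\to\alpha^+}G(t)=\infty$, the line $t\mapsto ut$ meets the graph of $G$ at $t=T$ with $T>\alpha$, and $T\geq h_\mu/(h_\mu+u)$ is equivalent to $G(h_\mu/(h_\mu+u))\geq uh_\mu/(h_\mu+u)$; using the tangent-line (entropy) bound $G(t)\geq h_\mu(1-t)$ for $t$ near $1$ and the fact that $h_\mu/(h_\mu+u)>\alpha$ exactly when $u<(1-\alpha)h_\mu/\alpha=(1/\alpha-1)h_\mu$... I would need to reconcile the stated threshold $u>(1+1/\alpha)h_\mu$ here, which I expect is the main technical point: it requires knowing the precise behaviour of $G$ and of the $F$-ACIP entropy near the indifferent fixed point, via Abramov's formula $h_\mu(f)=h_{\mu_F}(F)\int R\,d\mu_F$ relating the induced and original entropies. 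That identification — tracking how the intermittency exponent $\alpha$ enters the comparison between $h_\mu$ and the slope of the pressure function at the boundary of its domain of finiteness — is where the real care is needed; everything else is a direct invocation of Corollary \ref{anterior} and the earlier thermodynamic input.
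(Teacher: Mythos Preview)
Your main line is exactly what the paper does (implicitly --- there is no separate proof in the text; the theorem is stated as an immediate consequence of the setup preceding it). You correctly verify that the induced map $f$ has the full-shift symbolic model and hence BIP, that the entropy-type series $-\sum_j\lan(P^0_j)^t\log\lan(P^0_j)$ converges for $t>\alpha$, and that $G(t)=P_G(-t\log|\widehat f'|)$ is continuous, convex, vanishes at $t=1$, and blows up as $t\to\alpha^+$; this is precisely the list of facts the paper assembles before stating the theorem, so Corollary~\ref{anterior} gives both the zero-measure statement and $\Dim(\widetilde W_f(P,r_n,x))=T$.

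Where you go astray is the ``Moreover'' clause. The inequality $T\ge h_\mu/(h_\mu+u)$ is not a separate estimate to be proved: it is part of the conclusion of Corollary~\ref{anterior} (equivalently of Theorem~\ref{markovT2}, since BIP gives $\int\log|f'|\,d\mu=h_\mu<\infty$) and holds for every $u\ge 0$ without any extra hypothesis. No Abramov-formula argument, no analysis of the tangent to $G$, and no comparison with the $F$-entropy is needed. The qualifier on $u$ in the statement is of the same nature as the remark after Theorem~\ref{gausscode} (``if $s\ge h_\mu$ then $h_\mu/(h_\mu+s)\le 1/2$ and the lower bound is irrelevant''): it records the numerical range in which the bound $h_\mu/(h_\mu+u)$ compares with the a priori information $T>\alpha$. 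Your own computation $h_\mu/(h_\mu+u)\le\alpha\iff u\ge(1/\alpha-1)h_\mu$ is the only thing required here; the detour you outline is unnecessary, and in any case would not produce the constant $(1+1/\alpha)$, which appears to be a misprint in the paper for the threshold you computed.
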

\noindent  If $\sum_n r_n=\infty$ we have  from Borel-Cantelli lemma that  $\lan(\widetilde W_f(P,r_n,x))=1$.

 \medskip

 Next, we will look for a Hausdorff dimension result for $F$ instead of $f(y)=F^{R(y)}(y)$.
Let us  consider now the initial partition ${\cal P}^0=\{ (0,a_0), (a_0,1)\}$. Notice that the full shift $(\Sigma^{\{0,1\}},\sigma)$ gives a symbolic representation of $F$ and
we have that $P_{top}(-\log |F'|)=0$. It is known  (see \cite{P}) that there exists an ACIP measure $\mu$. The  density function $h(x)$  of $\mu$ verifies 
$h(x)\asymp x^{-\alpha} $ for all $x>0$, and there exist constants $C>C'>0$ and $n_0>0$ such that for any measurable set $E\subset[0,1]$ and any $m$-block $P\in{\cal P}_m=\bigvee_{j=0}^m f^{-j}({{\cal P}^{0}})$
$$
|\mu(F^{-k}(E)\cap P)-\mu(E)\mu(P)|\leq C\, \frac{m^{\frac1\alpha-1}}{(k-m-n_0)^{\frac1\alpha-1}}\mu(E)\mu(P) \quad \mbox{ for all } k>n_0+m
$$
and if $P=P(m,0)$ with $m\geq n_0$
$$
|\mu(F^{-k}(E)\cap P)-\mu(E)\mu(P)|\geq C\, \frac{m^{\frac1\alpha-1}}{k^{\frac1\alpha-1}}\mu(E)\mu(P) \quad \mbox{ for all } k>m
$$
We refers to  \cite{Hu0} and \cite{Hu}, an also \cite{Y}. Notice that in particular,
$$
\mu(F^{-k}(E)\cap P)\leq (1+C\, \frac{m^{\frac1\alpha-1}}{(k-m-n_0)^{\frac1\alpha-1}}) \, \mu(E)\mu(P) \quad \mbox{ for all } k>n_0+m
$$
and so for any $k$-block $P(k,z)$ 
\begin{equation}\label{quasi}
\mu(F^{-k}(E)\cap P(k,z))\leq \mu(F^{-k}(E)\cap P(k-n_0-1,z))\leq C' \mu(E)\mu(P(k-n_0-1,z)
\end{equation}
with $C'=1+C(k-n_0-1)^{\frac1\alpha-1}$.

Moreover, the measure $\widehat\mu$  in $\Sigma^{\{0,1\}}$ is a weak $\sigma$-Gibbs measure for the  potential $\phi:=-\log|\widehat F^{\prime}|$ and an equilibrium measure (see \cite{Hu}). 
 In fact,  for all $0\leq t\leq 1$ there exists an exact  weak $\sigma$-Gibbs measure $\bf\widehat m_t$ for the potential $t\phi$ with $P=P_{top}(t\phi)$ which is an equilibrium measure (see theorem F in \cite{Hu}).  
Notice that, since $\widehat\mu$ is weak Gibbs  it follows from (\ref{quasi})  that
given $\ep>0$ for all $k$ large enough  (depending on $\ep$, $\alpha$ and $n_0$)

\begin{equation}\label{quasi2}
\mu(F^{-k}(E)\cap P(k,z))\leq \mu(F^{-k}(E)\cap P(k-n_0-1,z))\leq e^{\ep k} \mu(E)\, \widehat\mu(C(k,w))
\end{equation}
with $w\in \Sigma^{\{0,1\}}$ such that $\pi(C(k,w))=\mbox{cl}(P(k,z))$

Let $P$ denote a $N$-block of ${\cal P}_N$ such that $0\not\in P$ (this condition allows to use that $\lan\asymp \mu$ in $P$),  $x$ be (any) point in $[0,1]$, and  let us suppose again that $\sum_n r_n<\infty$ and 
\begin{equation}\label{defutilde}
\widetilde u:=\lim_{n\to\infty}-(\log \mu(I_n))/n<\infty
\end{equation}
with $I_n:=[x-r_n,x+r_n]$ if $x\neq 0,a_0,1$,  and $I_n:=[0,r_n]$ if $x=0$, $I_n=[a_0-r_n,a_0]$ if $x=a_0$ and $I_n:=[1-r_n,1]$  if $x=1$. 
Notice that if $u:=\lim_{n\to\infty}-(\log r_n)/n$, then since the density function of $\mu$ verifies $h(x)\asymp x^{-\alpha}$ we have that
 $$
  \widetilde u=
 \begin{cases}
u,\qquad \qquad \quad \text{ if } x\neq 0 \\
(1-\alpha)u, \qquad \text{ if } x=0.
 \end{cases}
 $$ 
 Then we have the following result:

  \begin{theorem} \label{buenoMP} Let $0\leq T\leq 1$  be 
 the unique solution of $P_{top}(-t\log|{F^{\prime}}|)=t\widetilde u$, then 
 $$
 \Dim \left\{ y\in P :\,  \liminf_{n\to\infty}\frac{|F^{n}(y)-x|}{ r_n }=0 \right\}=T \geq h_{\mu}/(h_{\mu}+\widetilde u)
 $$ 
 with $h_{\mu}$ is the entropy of  the ACIP measure of $F$.
\end{theorem}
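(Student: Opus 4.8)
The plan is to reduce Theorem \ref{buenoMP} to the symbolic results of Section \ref{mudim} applied to the full shift $(\Sigma^{\{0,1\}},\sigma)$ with the weak $\sigma$-Gibbs measure $\widehat\mu$ for the potential $\phi=-\log|\widehat F'|$, together with the grid/Hausdorff comparison machinery of Sections \ref{SMT} and \ref{ShrinkingM}. First I would fix the symbolic set-up: since $P\subset X_{\bf\Pi}$ avoids $0$, we have $\mu\asymp\lan$ on $P$, and since $\widehat\mu$ is weak Gibbs (so automatically \emph{not} local, condition (\ref{condloc2}) holds trivially) with $P_{top}(\phi)=0$, the exact weak $\sigma$-Gibbs measures $\widehat{\bf m}_t$ for $t\phi$ exist for all $0\le t\le 1$ (theorem F of \cite{Hu}), are ergodic, are equilibrium measures, and satisfy $P_{top}(t\phi)-\int t\phi\,d\widehat{\bf m}_t=t\,h_{\widehat{\bf m}_t}$. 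Because the alphabet is finite, every point $w$ is a $\widehat\mu$-hitting point (remark \ref{R}) and every collection of $0$-cylinders is $\ep$-uniformly good since $\widehat{\bf m}_t$ is mixing (remark \ref{aproxfinito}); also condition (\ref{hipx}) holds with $\gamma=0$ is \emph{not} automatic here (the alphabet is finite but $F$ is not uniformly expanding near $0$), so this is one place I must be careful — see below.

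The core of the argument is the translation between the ball target set for $F$ and a block target set. Using the density estimate $h(x)\asymp x^{-\alpha}$ one shows that the $\mu$-measure of $I_n=[x-r_n,x+r_n]$ decays at rate $\widetilde u$, which is exactly $u$ for $x\ne 0$ and $(1-\alpha)u$ for $x=0$; this is the computation already indicated in the excerpt. Then, exactly as in Section \ref{ball}, I would choose a subsequence $\{\widetilde d_i\}$ of hitting times and lengths $\{\ell_n\}$ so that $P(\ell_{\widetilde d_i},x)\subset I_{\widetilde d_i}$ and $\limsup_n \frac1n\log\frac1{\mu(P(\ell_n,x))}=\widetilde u$, giving the sandwich $V\subset \widetilde W_F(P,r_n,x)\subset \widetilde W_F(P,\cdot,x)$ with $V$ a block target set. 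For the upper bound I invoke remark \ref{uppermodeled} (the shift-modeled version of proposition \ref{arriba}) with $\mu$ replacing $\lan$, which gives $\Dim(\widetilde W_F(P,r_n,x))\le \inf\{t>0: P_{top}(t\phi)<\widetilde u\,t\}=T$ since the finite-alphabet pressure $G(t)=P_{top}(t\phi)$ is continuous, convex, decreasing with $G(1)=0$. For the lower bound I apply theorem \ref{cantorsimbolicoformalismo} (with $\overline s=\widetilde u$) for each $t<T$, using $\widehat{\bf m}_t$, to produce a pattern set ${\cal Z}\subset W_\sigma(\widehat P,\ell_n,w)$ with $\Dim_{\widehat\mu}({\cal Z})\ge t$; projecting via $\pi$ and using lemma \ref{gridmu} gives $\Dim_{\bf\Pi}(\pi({\cal Z}))\ge t$, and I must upgrade this to Hausdorff dimension.

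The main obstacle, and the step requiring the quasi-independence estimate (\ref{quasi})--(\ref{quasi2}), is precisely this grid-to-Hausdorff upgrade near the indifferent fixed point. The issue is that for $x=0$ (or points whose orbit approaches $0$) the ratios $\diam(P(n,z))/\diam(P(n-1,z))^{1+\gamma}$ of proposition \ref{parareg1} / remark \ref{comparacioncongrid} need not stay bounded below, because the cylinders $P(n,0)$ shrink subexponentially and condition (\ref{hipx}) can fail. To handle this I would use remark \ref{paradim}: the extra property built into ${\cal Z}$ gives, for $\gamma\ge 3\ep/(tP_{top}(\phi)-t\int\phi\,d\widehat{\bf m}_t-\ep)=3\ep/(t\,h_{\widehat{\bf m}_t}-\ep)$, the lower bound $\widehat\mu(\sigma^{\widetilde d_j}(C(n+1,z)))/\widehat\mu(\sigma^{\widetilde d_j}(C(n,z)))^{1+\gamma}\ge c$, and since $\sigma^{\widetilde d_{j+1}}(C(n,z))=C(n-\widetilde d_{j+1},w)$ coincides with cylinders over the target center $x$, the remaining factors are controlled by $\mu(I_n)$ via (\ref{quasi2}) and $\mu\asymp\lan$ on $P$; this is exactly the bookkeeping carried out in the proof of theorem \ref{markovT1}, transported from $f$ to $F$. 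The key new input that makes it work for $F$ rather than $f$ is that $\widehat\mu$ is still weak Gibbs for $-\log|\widehat F'|$ and satisfies the polynomial quasi-independence bound, so $\ep$ can be sent to $0$, $\gamma$ can be sent to $0$, and remark \ref{comparacioncongrid} then gives $\Dim(\pi({\cal Z}))=\Dim_{\bf\Pi}(\pi({\cal Z}))\ge t$ for every $t<T$, hence $\Dim(\widetilde W_F(P,r_n,x))\ge T$. Finally, the bound $T\ge h_\mu/(h_\mu+\widetilde u)$ follows from corollary \ref{DimPisimbolicosinformalismo} / the variational principle as in theorem \ref{introA}, using that $\widehat\mu$ is the equilibrium measure so $P_{top}(\phi)-\int\phi\,d\widehat\mu=h_\mu$; combining upper and lower bounds gives equality.
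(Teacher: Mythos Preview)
Your upper-bound argument and the overall architecture (reduce to a block-target set, build a pattern set ${\cal Z}$ via theorem \ref{cantorsimbolicoformalismo}, project, upgrade grid to Hausdorff) match the paper. But you have misdiagnosed the main obstacle and consequently misassigned the role of the quasi-independence estimate (\ref{quasi2}).

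Your claim that condition (\ref{hipx}) ``can fail'' near the indifferent fixed point is incorrect: subexponential shrinking means \emph{consecutive} ratios stay bounded away from $0$, not that they collapse. Concretely $\lan(P(n,0))=a_n\asymp n^{-1/\alpha}$, so $\lan(P(n+1,0))/\lan(P(n,0))\to 1$. The paper makes this uniform in $x$ via the distortion Lemma \ref{distorsion2} and Corollary \ref{MPdistorsion}, arriving at Remark \ref{facil}: $\inf_x\liminf_n \lan(P(n+1,x))/\lan(P(n,x))>0$. With this in hand, proposition \ref{parareg1} with $\gamma=0$ gives $\Dim_{\bf\Pi}(\pi({\cal Z}))=\Dim(\pi({\cal Z}))$ immediately, and the lower bound follows. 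There is no need for remark \ref{paradim} here, and the machinery you propose to import from the proof of theorem \ref{markovT1} actually rests on the bounded-distortion proposition \ref{elchachi}, which is unavailable for $F$; your sketch does not explain how you would pass from the $\widehat\mu$-ratios supplied by remark \ref{paradim} to the $\lan$-diameter ratios required by proposition \ref{parareg1} when $\mu\not\asymp\lan$ near $0$.

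Finally, the estimate (\ref{quasi2}) is used in the paper for the \emph{upper} bound, to control $\diam(F^{-n}(I_n)\cap P(n,y))$ in the covering (this is where $\widetilde u$ enters as the $\mu$-decay rate of $I_n$), not for the grid-to-Hausdorff upgrade. Once you see that (\ref{hipx}) holds with $\gamma=0$ for every $x$, the lower bound needs no quasi-independence input at all.
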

\noindent Notice that the Hausdorff dimension is bigger for $x=0$.

\medskip

We will need the following distortion estimates whose proof is similar to the one of proposition 2.3 in \cite{Hu}. Recall $F(a_0)=a_0+a_0^{1+\alpha}=1$ and $F(a_{j+1})=a_j$; we define $a_{-1}=1$.
\begin{lemma}\label{distorsion2}
There exist $C_1,C_2>0$ and  $j_0\geq 0$ such that  if
$$
z,y\in (a_j,a_{j-1} ] \quad  \mbox{ and } \quad  F(z), F(y)\in (a_k,a_{k-1} ] \quad  \mbox{ for some } \quad j,k\geq 0
$$
then 
$$
\Delta(z,y)\,  \frac{F^{\prime}(z)}{F^{\prime}(y)}\leq \Delta(F(z),F(y)) 
$$
with
$$
\Delta(u,v)=
\begin{cases}
1+C_1\dfrac{|u-v|}{u}, \quad \mbox{ if } u,v\in (a_j,a_{j-1} ] \, \mbox{ with }  j>j_0\\
1+C_2|u-v|, \, \quad \mbox{ if } u,v\in (a_j,a_{j-1} ] \, \mbox{ with }  j\leq j_0
\end{cases}
$$
\end{lemma}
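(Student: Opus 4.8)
The plan is to follow, with the obvious changes, the proof of Proposition~2.3 in \cite{Hu} (compare also the distortion estimates for intermittent maps in \cite{PW}). The starting point is the explicit form of $F$ on the branch containing the indifferent fixed point $0$: there $F(x)=x+x^{1+\alpha}$ and $F'(x)=1+(1+\alpha)x^{\alpha}$, so $F'$ is positive and strictly increasing, $F'(x)\to1$ as $x\to0^{+}$, and
\[
z\,F'(z)-F(z)=\alpha\,z^{1+\alpha}\ge 0,\qquad 0<z\le a_0 ,
\]
i.e.\ $F'(z)\ge F(z)/z$, with a defect that is small in the adapted sense on deep intervals (recall $(a_{j-1}-a_j)/a_j\asymp 1/j\to 0$, so $|z-y|/z$ is uniformly small on $(a_j,a_{j-1}]$ for $j$ large). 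On the other, compact branch $(a_0,1]$ the map $F$ is $C^{2}$ with $|F'|$ bounded above and bounded away from $0$ and bounded nonlinearity, hence $\log F'$ is Lipschitz there. I would also record that $F$ maps $(a_j,a_{j-1}]$ diffeomorphically onto $(a_{j-1},a_{j-2}]$ for $j\ge1$ (so that $k=j-1$ in that case) while $F$ maps $(a_0,1]$ onto $(0,1]$; thus, apart from the branch $(a_0,1]$, the only level at which a ``deep'' source meets a ``shallow'' target is $j=j_0+1$.

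The argument then splits according to whether the source interval $(a_j,a_{j-1}]$ and the target interval $(a_k,a_{k-1}]$ are deep ($j,k>j_0$) or shallow ($\le j_0$). \textbf{Deep source, deep target.} Here one would prove
\[
\Bigl(1+C_1\tfrac{|z-y|}{z}\Bigr)\frac{F'(z)}{F'(y)}\ \le\ 1+C_1\,\frac{|F(z)-F(y)|}{F(z)}
\]
by separating subcases. If $z<y$ then $F'(z)/F'(y)\le1$, so the left side is at most $1+C_1|z-y|/z$; by the mean value theorem $|F(z)-F(y)|=F'(\xi)|z-y|$ with $\xi\in(z,y)$, and $F'(\xi)\ge F'(z)$ together with $F(z)=z(1+z^{\alpha})$ gives $|F(z)-F(y)|/F(z)\ge\frac{F'(z)}{1+z^{\alpha}}\frac{|z-y|}{z}\ge\frac{|z-y|}{z}$, so the right side already dominates. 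The subcase $z>y$ is the delicate one: now $F'(z)/F'(y)>1$ and this excess must be absorbed. Using $F'(\xi)\ge F'(y)$, the concavity estimate $z^{\alpha}-y^{\alpha}\le\alpha y^{\alpha-1}(z-y)$, and $z/y\to1$ on deep intervals, I would reduce the inequality to a comparison of the form $\frac{F'(z)}{F'(y)}-1\le C_1\frac{|z-y|}{z}\bigl(\frac{F'(y)}{1+z^{\alpha}}-\frac{F'(z)}{F'(y)}\bigr)$, whose two sides are both of order $y^{\alpha-1}(z-y)$, the left carrying a factor $\sim(1+\alpha)\alpha$ and the right a factor $\sim C_1\alpha$; choosing $C_1$ large enough and then $j_0$ large enough (so that the $O(z^{\alpha})$ corrections stay below a small prescribed threshold on deep intervals, and in particular $F'(y)^2-F'(z)(1+z^\alpha)>0$ there) closes it. \textbf{Shallow source.} If $z,y$ lie in one of the finitely many intervals with $j\le j_0$ (in particular if $z,y\in(a_0,1]$), the Lipschitz bound on $\log F'$ gives $F'(z)/F'(y)\le1+C'|z-y|$ with $C'=C'(j_0)$, and $|F(z)-F(y)|\ge c(j_0)|z-y|$; when the target is also shallow this yields $(1+C_2|z-y|)(1+C'|z-y|)\le1+C_2|F(z)-F(y)|$ once $C_2$ is large, while when the target is deep (which forces $z,y$ near $a_0^{+}$, so $F(z)=z+z^{1+\alpha}-1\le1$ is bounded) one uses $|F(z)-F(y)|/F(z)\ge c(j_0)|z-y|$ instead and the right side again dominates for $C_1$ large. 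The single mixed level $j=j_0+1$ is handled exactly like the shallow-source case.

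The main obstacle is the subcase $z>y$ with both points (and their images) in deep intervals: there the crude bounds $F'\approx1$, $F(z)/z\approx1$ are insufficient and one must keep the leading $z^{\alpha}$-corrections, verify that the relevant combination $F'(y)^{2}-F'(z)(1+z^{\alpha})$ is positive of the right size, and fix the constants in the correct order ($C_1$ first, then $j_0$, then $C_2$) so that all cases hold simultaneously. Everything else is routine: the branch $(a_0,1]$ is ordinary bounded distortion, and the passage from the $\Delta$-inequalities to linear comparisons in $|z-y|$ (resp.\ $|z-y|/z$) uses only $(1+a)(1+b)\le1+a+b+ab$ and the elementary facts collected above.
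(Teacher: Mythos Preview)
Your proposal is correct and follows exactly the route the paper indicates: the paper gives no proof of its own for this lemma, merely stating that it ``is similar to the one of proposition~2.3 in \cite{Hu}'', and your sketch is precisely an adaptation of that argument with the case split deep/shallow and the delicate subcase $z>y$ in deep intervals correctly isolated. Your ordering of constants ($C_1$ first, then $j_0$, then $C_2$) and the verification that $F'(y)^2-F'(z)(1+z^\alpha)\sim \alpha y^\alpha>0$ on deep intervals are the right key points.
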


\

Notice that  since $F((a_{j+1},a_{j} ] )=(a_j,a_{j-1} ] $ for $j\geq 0$ we have the following :

\noindent Let  $\pi(z_0,z_1,\cdots))=z$ and   $\pi(y_0,y_1,\cdots))=y$  (with $\pi:\Sigma^{\{0,1\}}\longrightarrow [0,1]$ the natural projection) and ${\cal Q}=\{ (a_j,a_{j-1}] : j\geq 0\}$
\begin{itemize}
\item[(a)]If $z_m=1$ and $y_n=z_n$ for $0\leq n\leq m$, then 
$F^{n}(z), F^{n}(y)$ belong to the same element of ${\cal Q}$
\item[(b)] If $y_n=0$ for $0\leq n\leq m-1$, then $y\in[0,a_{m-1}]$ and $F^{n}(y)\in[F^{n-1}(y), a_{m-1-n}]$. Moreover, if $y_m=1$, then $y\in[a_m,a_{m-1}]$

\end{itemize}

From this observation and lemma \ref{distorsion2} we have that:

\begin{lemma}  \label{jacobiano}
\begin{itemize}
\item[\rm (i)] If $y\in P(m,z)$ and $z_m=1$, then  for all $0<s\leq m$
$$
{(F^{s})^{\prime}(z)}\asymp {(F^{s})^{\prime}(y)}
$$
\item[ \rm (ii)] If $y\in P(m,z)\neq P(m,0)$ with $z_r=1$ and $z_i=0$ for $r+1\leq i\leq m$, then $(F^{r})^{\prime}(y)\asymp (F^{r})^{\prime}(z)$  and  for all $r<s\leq m$
$$
{(F^{s})^{\prime}(y)}\asymp {(F^{r})^{\prime}(z)}{(F^{s-r-1})^{\prime}(a_{k(y)-r})}
$$
with $k(y)=\min\{k: k>m \mbox { and } y_k=1\}$. If $y\in P(m,0)$, then ${(F^{s})^{\prime}(y)}\asymp {(F^{s})^{\prime}(a_{k(y)})}$
\end{itemize}
\end{lemma}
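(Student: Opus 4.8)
The plan is to derive both statements from the chain rule together with telescoping applications of Lemma \ref{distorsion2}, using observations (a) and (b) above to locate each iterate inside a single element of $\mathcal{Q}=\{(a_j,a_{j-1}]:j\ge 0\}$. Two elementary facts will be used throughout. First, since $a_{j-1}=F(a_j)=a_j+a_j^{1+\alpha}$, one has $a_{j-1}-a_j=a_j^{1+\alpha}$ and $a_{j-1}/a_j=1+a_j^{\alpha}\le 1+a_0^{\alpha}$; hence whenever $u,v$ lie in a common element $(a_j,a_{j-1}]$ of $\mathcal{Q}$, the quantity $\Delta(u,v)$ of Lemma \ref{distorsion2} is bounded by $C_\Delta:=\max\{1+C_1 a_0^{\alpha},\,1+C_2\}$, independently of $j$. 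Second, $F'(x)=1+(1+\alpha)x^{\alpha}$ gives $1\le F'(a_j)\le 1+(1+\alpha)a_0^{\alpha}$ for every $j\ge 0$, so a telescoping of the ratio shows that $(F^p)'(a_j)\asymp(F^p)'(a_{j'})$ with absolute constants whenever $|j-j'|$ is uniformly bounded (in particular for $j'=j\pm1$); this lets me pass freely between nearby base points $a_j$.

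For part (i), suppose $y\in P(m,z)$ and $z_m=1$. By observation (a), $F^n(z)$ and $F^n(y)$ lie in the same element of $\mathcal{Q}$ for every $0\le n\le m$, so for $1\le s\le m$ I may telescope
\[
\frac{(F^s)'(z)}{(F^s)'(y)}=\prod_{n=0}^{s-1}\frac{F'(F^n(z))}{F'(F^n(y))}\le\prod_{n=0}^{s-1}\frac{\Delta(F^{n+1}(z),F^{n+1}(y))}{\Delta(F^n(z),F^n(y))}=\frac{\Delta(F^s(z),F^s(y))}{\Delta(z,y)}\le C_\Delta ,
\]
where the first inequality is Lemma \ref{distorsion2}. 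The same bound with $y$ and $z$ interchanged gives $(F^s)'(z)\asymp(F^s)'(y)$.

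For part (ii), let $r=\max\{i\le m:z_i=1\}$, which exists because $P(m,z)\neq P(m,0)$; then $z_r=1$ and $z_{r+1}=\cdots=z_m=0$. Applying part (i) at depth $r$ (legitimate since $P(m,z)\subseteq P(r,z)$) yields $(F^r)'(y)\asymp(F^r)'(z)$. For $r<s\le m$ I factor $(F^s)'(y)=(F^r)'(y)\cdot F'(F^r(y))\cdot(F^{s-r-1})'(F^{r+1}(y))$. Since $y_r=1$, $F^r(y)$ lies in $(a_0,1)$, a piece bounded away from the indifferent fixed point $0$, so $F'(F^r(y))\asymp1$. Since $y_{r+1}=\cdots=y_m=0$ and $y_{k(y)}=1$ with $k(y)$ the first index $>m$ carrying symbol $1$, observation (b) puts $F^{r+1}(y)$ into the closure of a single element of $\mathcal{Q}$ at depth $k(y)-r-1$; moreover $F^n$ maps that interval bijectively one $\mathcal{Q}$-element downward for $0\le n\le s-r-1<k(y)-r-1$, so a telescoping application of Lemma \ref{distorsion2} against the corresponding orbit gives $(F^{s-r-1})'(F^{r+1}(y))\asymp(F^{s-r-1})'(a_{k(y)-r-1})$, and the second elementary fact above lets me replace $a_{k(y)-r-1}$ by $a_{k(y)-r}$. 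Collecting these estimates proves $(F^s)'(y)\asymp(F^r)'(z)\,(F^{s-r-1})'(a_{k(y)-r})$. Finally, if $y\in P(m,0)$ then $y$ has symbol string $0^{k(y)}1\cdots$, so by observation (b) $y\in[a_{k(y)},a_{k(y)-1}]$ and $F^n(y)$ lies in the closure of the element of $\mathcal{Q}$ at depth $k(y)-n$ for $0\le n\le s$; a telescoping distortion bound against the orbit of the included endpoint $a_{k(y)-1}$, followed by the index shift to $a_{k(y)}$, gives $(F^s)'(y)\asymp(F^s)'(a_{k(y)})$.

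I do not expect a genuine obstacle here: once Lemma \ref{distorsion2} and observations (a)--(b) are in hand, the argument is bookkeeping. The points that require care are (i) verifying that over the full range of indices entering each telescope the two orbits indeed stay in a common element of $\mathcal{Q}$ (this is precisely what (a) and (b) guarantee, and one must check $s\le k(y)$ so that one never runs past the neutral fixed point), and (ii) the harmless mismatch between the half-open elements $(a_j,a_{j-1}]$ and the closed intervals produced by observation (b); the latter is absorbed either by continuity of $F'$ and of $\Delta$ up to the endpoints, or by telescoping against an included endpoint and then using the uniform bounds on $F'(a_j)$ to shift the base point by one element.
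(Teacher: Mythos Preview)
Your proposal is correct and follows essentially the same route as the paper: telescope the chain rule using Lemma~\ref{distorsion2} together with observations (a) and (b), bound $\Delta$ uniformly via $a_{j-1}-a_j=a_j^{1+\alpha}$, and for part (ii) factor $(F^s)'(y)=(F^r)'(y)\cdot F'(F^r(y))\cdot(F^{s-r-1})'(F^{r+1}(y))$ and handle each factor separately. The only cosmetic difference is that you first compare $(F^{s-r-1})'(F^{r+1}(y))$ with $(F^{s-r-1})'(a_{k(y)-r-1})$ and then shift the index to $a_{k(y)-r}$ via your telescoping identity $\prod F'(a_\ell)$, whereas the paper applies part (i) directly with the reference point $a_{k-r}$, tacitly absorbing the half-open boundary issue you flag; your version is slightly more careful on this point but not substantively different.
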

\begin{proof}
Part (i) follows from lemma \ref{distorsion2} (as in  proposition 2.3 (ii) in \cite{Hu}) and property (a) since
$$
\frac{(F^{s})^{\prime}(z)}{(F^{s})^{\prime}(y)}=\prod_{j=0}^{s-1}\frac{(F)^{\prime}(F^j(z))}{(F)^{\prime}(F^j(y))}\leq \frac{\Delta(F^s(z),F^s(y)) }{\Delta(z,y)}
$$
We recall that $F^{n}(z), F^{n}(y)$ (for $0\leq n\leq m$) belongs to the same element of ${\cal Q}$, say $(a_j,a_{j-1}]$, and so
$$
\Delta(F^s(z),F^s(y))-1\leq \max\{C_1,C_2\}\frac{a_{j-1}-a_j}{a_j}=\max\{C_1,C_2\} a_j^{\alpha}\leq ctte
$$
From part (ii) notice that  $y_r=y_k=1$ and $y_i=0$ for $r<i<k$, then from (i) we  have that 
$(F^{r})^{\prime}(y)\asymp (F^{r})^{\prime}(z)$ and 
$(F^{s-r-1})^{\prime}(F^{r+1}(y))\asymp (F^{s-r-1})^{\prime}(a_{k-r})$. Recall $(F)^{\prime}(F^r(y))\asymp 1$.
\end{proof}

\begin{corollary} \label{MPdistorsion}
Let $1\leq s\leq n$.
\begin{itemize}
\item[\rm  (i)] If  $z_i=1$ for some $s\leq i\leq n$, then  
$$
\frac{\lan(F^s(P(n+1,z))}{\lan(F^s(P(n,z)))}\asymp \frac{\lan(P(n+1,z)}{\lan(P(n,z))}
$$

\item[\rm (ii)] If $z_r=1$  for some $0\leq r<s$  and $z_i=0$ for $r+1\leq i\leq n$, then  for all $r<s\leq n$
$$
\frac{\lan(P(n+1,z)}{\lan(P(n,z))}\asymp \left ( \frac{n-r}{n+1-r}\right)^{1/\alpha} \geq c\, \, \frac{\lan(F^s(P(n+1,z))}{\lan(F^s(P(n,z)))} \quad \text{ if } z_{n+1}=0
$$
and 
$$
\frac{\lan(P(n+1,z)}{\lan(P(n,z))}\asymp \frac{(n-r)^{1/\alpha}}{(n+1-r)^{1+1/\alpha}} \geq c\, \, \frac{\lan(F^s(P(n+1,z))}{\lan(F^s(P(n,z)))} \quad \text{ if } z_{n+1}=1
$$
for some  positive constant $c$.
\item[ \rm (iii)] If $z_i=0$ for $0\leq i\leq n$, then  for all $0\leq s\leq n$  the estimates in {\rm (ii) }holds with $r=-1$.
\end{itemize}

\end{corollary}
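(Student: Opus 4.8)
The plan is to reduce the statement to two ingredients: an explicit description, via the structure of $F$ near the neutral fixed point $0$, of the forward images of the cylinders $P(n,z)$ and $P(n+1,z)$; and the bounded distortion estimates of Lemma~\ref{jacobiano} (which rest on Lemma~\ref{distorsion2}), together with the asymptotics $a_j\asymp(\alpha(j+1))^{-1/\alpha}$, equivalently $\lan(P^0_j)=a_j^{1+\alpha}\asymp(\alpha(j+1))^{-(1+1/\alpha)}$, already recorded in this section. First I would note the elementary combinatorial fact that, $F$ being a full-branch Markov map for ${\cal P}^0=\{(0,a_0),(a_0,1)\}$ with $F\big((a_{k+1},a_k)\big)=(a_k,a_{k-1})$ (where $a_{-1}:=1$), one has $F^j\big(P(m,z)\big)=P(m-j,\sigma^j z)$ for $0\le j\le m$; hence whenever $z_i=0$ for $s\le i\le m$ one gets $F^s\big(P(m,z)\big)=\{w:F^i(w)\in(0,a_0),\ 0\le i\le m-s\}=(0,a_{m-s})$, while if in addition $z_{m+1}=1$ then $F^s\big(P(m+1,z)\big)=(a_{m-s+1},a_{m-s})$, an interval of length $\lan(P^0_{m-s+1})=a_{m-s+1}^{1+\alpha}$. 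In particular, under the hypotheses of (ii), taking $s=r+1$, this gives $F^{r+1}\big(P(n,z)\big)=(0,a_{n-r-1})$, and $F^{r+1}\big(P(n+1,z)\big)$ equal to $(0,a_{n-r})$ or to $(a_{n-r},a_{n-r-1})$ according as $z_{n+1}=0$ or $1$.

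For part (i) the argument is short: choose $i_0\in[s,n]$ with $z_{i_0}=1$. By Lemma~\ref{jacobiano}(i) the Jacobian $(F^s)'$ has bounded distortion over $P(i_0,z)$, hence over $P(n,z)\supset P(n+1,z)$, and $F^s$ is injective on these cylinders (being a restriction of the diffeomorphism $F^s|_{P(s-1,z)}$). Therefore $\lan\big(P(n,z)\big)\asymp\lan\big(F^s(P(n,z))\big)/(F^s)'(\xi)$ and $\lan\big(P(n+1,z)\big)\asymp\lan\big(F^s(P(n+1,z))\big)/(F^s)'(\xi)$ with one and the same comparable value $(F^s)'(\xi)$, $\xi\in P(n+1,z)$; dividing yields the asserted comparability of the two ratios.

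For parts (ii) and (iii) I would write $P(n,z)=\big(F^{r+1}|_{P(r,z)}\big)^{-1}\big((0,a_{n-r-1})\big)$ and observe, using Lemma~\ref{jacobiano} (the case $z_r=1$ controls the prefix $F^r$, and $F'$ is Lipschitz on $[a_0,1]$ for the last step), that $(F^{r+1})'$ has bounded distortion over $P(n,z)\supset P(n+1,z)$, so that for a common $\xi\in P(n+1,z)$
\[
\lan\big(P(n,z)\big)\asymp\frac{a_{n-r-1}}{(F^{r+1})'(\xi)},\qquad
\lan\big(P(n+1,z)\big)\asymp\frac{1}{(F^{r+1})'(\xi)}\cdot
\begin{cases}a_{n-r},& z_{n+1}=0,\\ a_{n-r}^{1+\alpha},& z_{n+1}=1.\end{cases}
\]
The prefix factor $(F^{r+1})'(\xi)$ cancels in the ratio, and inserting $a_j\asymp(\alpha(j+1))^{-1/\alpha}$ gives $\lan(P(n+1,z))/\lan(P(n,z))\asymp(n-r)^{1/\alpha}/(n+1-r)^{1+1/\alpha}$ when $z_{n+1}=1$ and $\asymp\big((n-r)/(n+1-r)\big)^{1/\alpha}$ when $z_{n+1}=0$. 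Running the same computation with $F^s$ in place of $F^{r+1}$ (now $F^s(P(n,z))=(0,a_{n-s})$ and $F^s(P(n+1,z))$ is $(0,a_{n-s+1})$ or $(a_{n-s+1},a_{n-s})$) expresses $\lan(F^s(P(n+1,z)))/\lan(F^s(P(n,z)))$ by the same formulas with $n-r$ replaced by $n-s+1$; since $r<s$ forces $n-r\ge n-s+1$, the comparison $\frac{\lan(P(n+1,z))}{\lan(P(n,z))}\ge c\,\frac{\lan(F^s(P(n+1,z)))}{\lan(F^s(P(n,z)))}$ follows by a direct comparison of these explicit expressions (when $z_{n+1}=0$ the left ratio is already bounded below by an absolute constant while the right one is at most $1$). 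Part (iii) is the degenerate case $r=-1$: the prefix is empty, $(F^{r+1})'\equiv1$, $P(n,z)=(0,a_n)$ exactly, and the estimates follow with no distortion bookkeeping.

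The step I expect to be most delicate is precisely this distortion accounting for the neutral passage: although $(F^n)'$ has unbounded distortion along $P(n,z)$, one needs the prefix Jacobian $(F^{r+1})'$ (up to the last visit to $(a_0,1)$) together with the subsequent iterates that remain near $0$ to be controlled by constants uniform in $n$, $s$ and $r$, which is exactly what Lemmas~\ref{distorsion2} and \ref{jacobiano} are designed to provide; once that is in place, everything reduces to routine algebra with the sequence $\{a_j\}$.
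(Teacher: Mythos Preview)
Your proposal is correct and follows essentially the same route as the paper. Both arguments use Lemma~\ref{jacobiano} to control the distortion of $(F^{r+1})'$ on $P(n,z)$, identify $F^{r+1}(P(n,z))=(0,a_{n-r-1})$ and $F^{r+1}(P(n+1,z))$ as $(0,a_{n-r})$ or $(a_{n-r},a_{n-r-1})$ according to $z_{n+1}$, insert the asymptotics $a_j\asymp (\alpha(j+1))^{-1/\alpha}$, and finish the comparison with the $F^s$-images by the same explicit arithmetic; your statement of the combinatorial identity $F^j(P(m,z))=P(m-j,\sigma^j z)$ is a slight expansion of what the paper leaves implicit, and your justification of the $\ge c$ inequality in the case $z_{n+1}=0$ via uniform lower/upper bounds is a minor variant of the paper's monotonicity argument.
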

\begin{remark} \label{facil}
Notice that in particular we have that 
$$
\inf_x\liminf_{n\to\infty}\lan(P(n+1,x))/\lan(P(n,x)>0.
$$
\end{remark}

\begin{proof}
From  lemma  \ref{jacobiano} (i) we have $\lan(F^s(P(k,z)))\asymp {(F^{s})^{\prime}(z)}\lan(P(k,z))$  (for $k=n,n+1$), and we get part (i).
 From lemma \ref{jacobiano} (ii)  we know that if $y\in P(n,z)$ then $(F^{r+1})^{\prime}(y)\asymp (F^{r+1})^{\prime}(z)$.
 Therefore if $z_{n+1}=0$
\begin{equation}
\begin{aligned}
\frac{\lan(P(n+1,z))}{\lan(P(n,z))}&=\frac{\lan(P(n+1,z)) (F^{r+1})^{\prime}(z)}{\lan(P(n,z))(F^{r+1})^{\prime}(z)}\asymp \frac{\lan(F^{r+1}(P(n+1,z))}{\lan(F^{r+1}(P(n,z))}=
\frac{\lan(P(n-r,0)}{\lan(P(n-r-1,0))}=
\frac{a_{n-r}}{a_{n-r-1}} \\ \notag
&\asymp \left ( \frac{n-r}{n+1-r}\right)^{1/\alpha}\geq  \left ( \frac{n-(s-1)}{n+1-(s-1)}\right)^{1/\alpha}\asymp \frac{\lan(P(n+1-s,0)}{\lan(P(n-s,0))}=\frac{\lan(F^{s}(P(n+1,z))}{\lan(F^{s}(P(n,z))}
\end{aligned}
\end{equation}
In a similar way, if $z_{n+1}=1$, then $\lan(F^{r+1}(P(n+1,z))= \lan(P(n-r-1,0)-\lan(P(n-r,0)=a_{n-r-1}-a_{n-r}=a_{n-r}^{1+\alpha}$ and we get 
$$
\frac{\lan(P(n+1,z))}{\lan(P(n,z))}\asymp \frac{a_{n-r}^{1+\alpha}}{a_{n-r-1}} \asymp \frac{(n-r)^{1/\alpha}}{(n+1-r)^{1+1/\alpha}}.
$$
By using that $\lan(F^{s}(P(n+1,z))= \lan(P(n-s,0)-\lan(P(n+1-s,0)=a_{n-s}-a_{n+1-s}=a_{n+1-s}^{1+\alpha}$ we get (ii). Finally (iii) follows from taking $r=-1$  in the proof of (ii)
\end{proof}

\begin{proof}{ \it of theorem \ref{buenoMP}. } Since (\ref{liminf=0}) holds for $F$, an upper bound for the dimension follows from  the upper bound for $\Dim (\widetilde W_F(P,r_n,x))$
with 
$$
\widetilde W_F(P,r_n,x)=\left\{ y\in P :\, {|F^{n}(y)-x|}\leq { r_n }\, \mbox{ for infinitely many } n\right\}
$$
For all $N$ large enough we have the following covering of $\widetilde W_F(P,r_n,x)$
$$
\bigcup_{n=N}^\infty \{\mbox{cl}(f^{-n}(I_n)\cap P(n,y)): \ f^n(y)=x, \quad y\in P\}\,.
$$
with $I_n:=[x-r_n,x+r_n]$ if $x\neq 0,a_0,1$,  and $I_n:=[0,r_n]$ if $x=0$, $I_n=[a_0-r_n,a_0]$ if $x=a_0$ and $I_n:=[1-r_n,1]$  if $x=1$.
 Notice that for $n$ large enough $I_n\subset P(0,x)\cup\{x\}$.

From (\ref{quasi2}) and since $\lan\asymp\mu$ in $P$, we have that the diameter of each interval $F^{-n}(I_n)\cap P(n,y)$ verifies

$$
\diam(F^{-n}(I_n)\cap P(n,y))=\lan(F^{-n}(I_n)\cap P(n,y))\leq e^{\ep n}\,  \mu(I_n)\, \widehat\mu(C(n,w))
$$
with $w\in \Sigma^{\{0,1\}}$ such that $\pi(C(n,w))=\mbox{cl}(P(n,y))$
From this inequality and by definition of $\widetilde u$, see (\ref{defutilde}),
we can proceed 
as in proposition \ref{arriba}  (see remark \ref{uppermodeled}) and to get
$$
\Dim(\widetilde W_F(P,r_n,x))  \le \inf\{t>0: \, P_{top}(-t\log| F^\prime|)<\widetilde u\, t\}
$$ 
Again, since (\ref{liminf=0}) holds for $F$ we can get a lower bound for the dimension  of our set  by obtaining a lower bound for $\Dim(\widetilde W_F(P,r_n/n,x)) $. Notice that the value of $\widetilde u $ for the sequence $\{r_n/n\}$ coincides with the corresponding value for the  sequence $\{r_n\}$.  We  apply the same arguments that in the case of  target-ball sets for Markov transformations (see section \ref{ball}), we  define (in the same way) a  target-block set $W_F(P,\ell_n,x)\subset \widetilde W_F(P,r_n/n,x)) $. 
As in the proof of theorems \ref{markovT1}  and \ref{markovT2}, we use the symbolic patterns sets $\cal Z$ given by theorems
\ref{cantorsimbolicoformalismo}  and  \ref{cantorsimbolico}  . Notice that we have remark \ref{facil} and  so,   from  proposition \ref{parareg1},  we have equality between the Hausdorff dimension and the grid dimension of $\pi(\cal Z)$ 
\end{proof}

\

\begin{tabular}{cccc}
\small
\parbox{7cm}{
Mar\'{\i}a Victoria Meli\'an P\'erez\\
Departamento de Matem\'aticas\\
Universidad Aut\'onoma de Madrid \\
Campus de Cantoblanco\\
28049 Madrid, Spain  \\
E-mail: mavi.melian@uam.es 
}
\end{tabular}

\end{document}